\newtheorem{theorem}{Theorem}[section]
\newtheorem{lemma}[theorem]{Lemma}
\newtheorem{proposition}{Proposition}[section]
\newtheorem{corollary}{Corollary}[section]
\theoremstyle{definition}
\theoremstyle{remark}
\newtheorem{remark}[theorem]{Remark}
\newcommand{\ud}{\,\mathrm{d}}
\newcommand{\p}{\ensuremath{\partial}}
\newcommand{\n}{\ensuremath{\nonumber}}
\newcommand{\eps}{\ensuremath{\varepsilon}}
\newcommand\be{\begin{equation}}
\newcommand\ee{\end{equation}}
\newcommand\bea{\begin{eqnarray}}
\newcommand\eea{\end{eqnarray}}
\newcommand\bi{\begin{itemize}}
\newcommand\ei{\end{itemize}}
\newcommand\ben{\begin{enumerate}}
\newcommand\bena{\begin{enumerate}[(a)]}
\newcommand\een{\end{enumerate}}
\newcommand\bp{\begin{proof}}
\newcommand\ep{\end{proof}}
\title{Stability of the Favorable Falkner-Skan Profiles \\ for the Stationary Prandtl Equations}
\author{ Sameer Iyer \footnote{Department of Mathematics, University of California, Davis, email: \url{sameer@math.ucdavis.edu}}}
\begin{document}

\maketitle

\begin{abstract}
The (favorable) Falkner-Skan boundary layer profiles are a one parameter ($\beta \in [0,2]$) family of self-similar solutions to the stationary Prandtl system which describes the flow over a wedge with angle $\beta \frac{\pi}{2}$. The most famous member of this family is the endpoint Blasius profile, $\beta = 0$, which exhibits pressureless flow over a flat plate. In contrast, the $\beta > 0$ profiles are physically expected to exhibit a \textit{favorable pressure gradient}, a common adage in the physics literature. In this work, we prove quantitative scattering estimates as $x \rightarrow \infty$ which precisely captures the effect of this favorable gradient through the presence of new ``CK" (Cauchy-Kovalevskaya) terms that appear in a quasilinear energy cascade. 
\end{abstract}

\setcounter{tocdepth}{2}
{\small\tableofcontents}

\section{Introduction}

The stationary Prandtl system reads as follows: 
\begin{align}
\left.  \begin{aligned} \label{Pr:intro:1}
    & u_P \p_x u_P + v_P \p_y u_P - \p_y^2 u_P =  - \p_x p_E(x)  \\
    & \p_x u_P + \p_y v_P = 0   \\
    & u_P|_{y = 0} = 0, \qquad v_P|_{y = 0} = 0, \qquad \lim_{y \rightarrow \infty} u_P = u_E(x)  \\
    & u_P|_{x = 1} = u_{P, IN}(y)  
        \end{aligned}\right| (x, y) \in (1, \infty) \times \mathbb{R}_+
\end{align}
The unknown field $[u_P, v_P]$ (formally) models the behavior of a steady incompressible flow near a physical boundary (here represented by $\{y = 0\}$). More precisely the presence of physical boundaries creates boundary layers which bridge the no-slip boundary condition at $\{y = 0\}$ to the Eulerian velocity field in the bulk. These equations were derived by L. Prandtl in his seminal work, \cite{Prandtl}. 

Above, the Eulerian quantities $u_E(x)$ and $p_E(x)$ are related through Bernoulli's law:
\begin{align}
u_E \p_x u_E = - \p_x p_E. 
\end{align}
From the point of view of the Prandtl system, these quantities are prescribed externally. Physically, they represent how the bulk flow communicates the physics of flow to the boundary layer. 

Mathematically, one notices that the equation \eqref{Pr:intro:1} is parabolic in $x$ (as long as $u_P \ge 0$), and therefore we think of $x$ as a time-like variable. For this reason, boundary data is provided at $\{y = 0\}$ and $\{y = \infty\}$, whereas initial datum is provided at $\{x = 1\}$. Note that the choice of $\{x = 1\}$ is a convention we take in this paper, and we could just as well prescribe data at any other slice $\{x = x_0\}$. This paper concerns the asymptotic in $x$ behavior of \eqref{Pr:intro:1} (which plays the role of the global in time behavior for a parabolic equation) of solutions prescribed near classical self-similar solutions, known as the Falkner-Skan profiles, which we will describe shortly. 

The essential qualitative features one notices from \eqref{Pr:intro:1} are that (a) the equation is quasilinear (the transport term); (b) it is nonlocal (the quantity $v_P$ is recovered from $u_P$ through the formula $v_P = - \int_0^y \p_x u_P$); (c) it is degenerate (the coefficient $u_P$ in front of the $\p_x$ vanishes linearly as $y \downarrow 0$); and (d) it is scalar (this is perhaps the most notable simplification compared to the Navier-Stokes equations).

Overall, there are two categories of questions concerning \eqref{Pr:intro:1}:
\begin{itemize}
\item[(PR)] The study of the Prandtl system, \eqref{Pr:intro:1}, as a stand-alone equation. Typically these questions mirror the types of questions one would ask of any parabolic Cauchy problem (local/ global wellposedness, large time behavior, singularity formation, etc...). The methods used here are normally parabolic techniques, and are often times available precisely due to the scalar feature of the equations.  
\item[(NS)] The stability/ instability of \eqref{Pr:intro:1} in the inviscid limit from Navier-Stokes. There are far fewer results in this direction; the available methods are limited, primarily due to the fact that the equations are not scalar. 
\end{itemize}

These two categories are, of course, connected. On the one hand, the importance of (PR) stems from the belief that \eqref{Pr:intro:1} affirmatively describes the Navier-Stokes velocity field at low viscosities. On the other hand, in order to prove convergence results, (NS), one typically needs even stronger information on the background flow and hence turns to results in (PR).  Recently there have been many exciting works in the (NS) category. The interested reader may turn to \cite{GuoIyerCPAM}, \cite{GVMSteady}, \cite{IM20}, \cite{GZ} and the references therein for some recent works in this direction. This article falls into the (PR) category, and hence the forthcoming discussion will exclusively be about the relevant works falling into this first category. 

The local-wellposedness of \eqref{Pr:intro:1} is a classical result due to Oleinik, \cite{Oleinik}. The result of Oleinik imposes structural or geometric criteria on the data (for example, nonnegativity $u_0 \ge 0$ and monotonicity at $0$ $u_0'(0) > 0$) as well as quantitative regularity criteria. Subsequently, a unique strong solution is obtained that has relatively low regularity (for instance $u_{yy}, u_x$ are continuous functions). The regularity theory for \eqref{Pr:intro:1} is relatively nontrivial, and does not follow immediately from Oleinik's work. Higher regularity estimates for local solutions were obtained relatively recently by in \cite{MR4232771}. More recently, the work of \cite{cinfreg} obtained $C^\infty$ regularity of solutions. 

Let us turn now to the discussion of asymptotic in $x$ behavior of solutions to \eqref{Pr:intro:1}, sometimes referred to as ``downstream" dynamics. This question is more subtle, as one might expect. As pointed out by Prandtl himself, essentially two types of behavior could occur: 
\begin{itemize}
\item[] \textbf{Stability \& Self-Similarity:} In this regime, the flow is expected to become asymptotically self-similar as $x \rightarrow \infty$. This regime is physically expected to arise from a \textit{favorable pressure gradient}, namely $\p_x p_E(x) \le 0$.

\item[] \textbf{Separation \& Reversal:} In this regime, the flow is expected to generically form singularities. This regime is expected to arise from an \textit{adverse pressure gradient}, namely $\p_x p_E(x) > 0$.
\end{itemize}

There have been a number of very important advances in recent years regarding the singularity phenomenon of boundary layer separation. The interested reader should consult the work of \cite{DalibardMasmoudi} which demonstrated separation using self-similar blowup techniques, the work of \cite{WangZhangSep} which demonstrated a different regime of separation using parabolic maximum principle based techniques. After separation, one expects the flow to reverse. Studying the Prandtl system in this regime is incredibly challenging due to the sign change of $u_P$, which places such flows entirely outside of the classical point-of-view. For an introduction to this ``mixed-type" phenomenon, the reader should see the works of \cite{IM22}, \cite{IM22b}, as well as the work of \cite{DMR}. 

This work, however, is concerned with the Prandtl system in the asymptotically stable regime, that is in the presence of a favorable pressure gradient. In this regime, the dynamics are expected to be asymptotically self-similar. We now turn to the specifics.

\subsection{Favorable Falkner-Skan Profiles}

Perhaps the simplest type of nontrivial outer Euler flow $u_E(x)$ one could take as a boundary condition are \textit{power-laws}:
\begin{align} \label{power:law}
u_E(x) = a x^m, \qquad m \ge 0, \qquad a > 0.
\end{align}
The choice of $a$ plays essentially no role, and we will set $a = 1$. While the boundary condition $u_E(x)$ is prescribed from the point-of-view of the Prandtl equations, this particular choice of Euler trace turns out to arise naturally from \textit{wedge-flows}. We include in Appendix \ref{app:pl} a computation of a solution to steady Euler around a wedge, and demonstrate that the resulting Euler trace is precisely of power-law type. In particular, one discerns the following relation between two important parameters:
\begin{align}
m = \frac{\beta}{2-\beta}, \qquad \beta = \frac{2m}{m+1}, 
\end{align}
where $\beta \frac{\pi}{2}$ corresponds to the angle of the wedge. It is useful to keep in mind the image, Figure \ref{fig:HS}, from \cite{Schlichting}.
\begin{figure} \label{fig:HS}
\centering
\includegraphics[scale=.5]{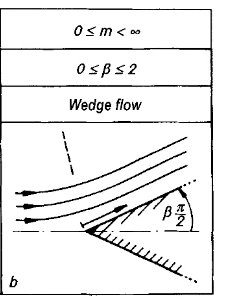}
\caption{Flow over a wedge, \cite{Schlichting}}
\end{figure}

It turns out due to the special structure imposed by the power law, the Prandtl equations admit exact self-similar profiles, the \textbf{Falkner-Skan (FS) profiles}. They are of the following form for a function $f = f(\eta)$:
\begin{align} \label{fs:Form:1}
f''' + f f'' + \beta (1 - (f')^2) = 0, \qquad 0 < \eta < \infty, \\ \label{fs:Form:2}
f(0) = f'(0) = 0, \qquad \lim_{\eta \rightarrow \infty} \frac{f(\xi)}{\xi} = 1, 
\end{align}
The function $f$ gives rise to a solution to \eqref{Pr:intro:1} with $u_E(x) = x^m$ as follows (see, for instance, \cite{Schlichting}, P. 172):
\begin{align} \label{psiFSform}
\psi_{FS; m}(x, y) = \sqrt{\frac{2}{m+1}} x^{\frac{1}{2} + \frac{m}{2}} f_{FS; m}(\xi), \qquad \xi :=\sqrt{\frac{m+1}{2}} \eta, \qquad \eta := \frac{y}{x^{\frac12(1-m)}}.
\end{align}
We note that the special case $m = 0, \beta = 0$ is the famous \textbf{Blasius} flow, which physically corresponds to flow over a flat plate. 

In this work, we are inspired by the following original result due to Serrin:
\begin{theorem}[Serrin, \cite{Serrin}] Let $m \ge 0$, and set $u_E(x) = x^m$. Let $f_{FS}$ be the corresponding solution to \eqref{fs:Form:1} -- \eqref{fs:Form:2}, with $\beta = \frac{2m}{m+1}$. Let $u_P$ be any solution to \eqref{Pr:intro:1}. Then the following scattering estimate holds \footnote{Of course, in comparison with our results below, it is more natural to multiply through by $x^m$ and state this as: $|u_P - u_{FS; m}| = o(1 + m \ln(x))$}:   
\begin{align}
\Big|\frac{u_P}{x^m} - f_{FS; m}\Big| = o(\frac{1 + m \ln(x)}{x^m}), \qquad x \rightarrow \infty.
\end{align}
\end{theorem}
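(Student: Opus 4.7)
The plan is to convert \eqref{Pr:intro:1} into a scalar quasilinear parabolic equation via the von Mises transformation, pass to self-similar coordinates in which the Falkner-Skan profile becomes stationary, and then extract the decay rate via the maximum principle with carefully chosen barriers. Setting $w(x,\psi) := \tfrac12 u_P^2$ with $\psi$ the stream function, a direct calculation converts \eqref{Pr:intro:1} into
\begin{align*}
\p_x w = \sqrt{2w}\, \p_\psi^2 w + m x^{2m-1}, \qquad w(x, 0) = 0, \qquad w(x, \infty) = \tfrac12 x^{2m},
\end{align*}
using $-\p_x p_E = m x^{2m-1}$ from Bernoulli. The Falkner-Skan stream function \eqref{psiFSform} yields a second solution $w_{FS}$ of the same boundary-value problem, so the theorem is recast as a comparison estimate between two solutions of a scalar parabolic equation.

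Next I would pass to the self-similar coordinates $\tau := \ln x$, $\zeta := \sqrt{(m+1)/2}\,\psi\, x^{-(m+1)/2}$, and normalize $W(\tau, \zeta) := x^{-2m} w(x, \psi)$. A chain-rule computation yields
\begin{align*}
\p_\tau W - \tfrac{m+1}{2}\,\zeta\, \p_\zeta W + 2m W = \tfrac{m+1}{2}\sqrt{2W}\, \p_\zeta^2 W + m,
\end{align*}
and after setting $\p_\tau = 0$ and substituting the ansatz $\Phi(\zeta) := \tfrac12 (f_{FS;m}')^2 \circ f_{FS;m}^{-1}(\zeta)$ one recovers exactly the Falkner-Skan ODE \eqref{fs:Form:1}. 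Hence $\Phi$ is a stationary solution, and the coercive zeroth-order coefficient $+2m$, proportional to the favorable-pressure parameter, is precisely the mechanism that drives $W \to \Phi$ as $\tau \to \infty$.

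Setting $D := W - \Phi$, I would linearize to obtain a parabolic equation with degenerate diffusion (the coefficient $\sqrt{2W}$ vanishes linearly at the wall since $W \sim \zeta^2$), the transport $-\tfrac{m+1}{2}\zeta \p_\zeta$, a damping of order $2m$, and boundary conditions $D(\tau, 0) = 0$, $D(\tau, \infty) \to 0$. I would then build barriers
\begin{align*}
\Psi_\pm(\tau, \zeta) := \pm\, \varepsilon\, (1 + m\tau)\, e^{-m\tau}\, h(\zeta),
\end{align*}
with $h > 0$ tuned to match the degenerate behaviour of the diffusion at $\zeta = 0$ and the far-field matching at $\zeta = \infty$. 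Substitution into the linearized equation checks that $\Psi_\pm$ are super/sub-solutions for every $\varepsilon > 0$ once $\tau \ge \tau_0(\varepsilon)$; the prefactor $(1 + m\tau)\,e^{-m\tau}$ is the threshold shape dictated by the $+2m$ damping together with a resonance at $m = 0$, and translates back to precisely $(1 + m\ln x)/x^m$ upon reverting to the original variables.

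The main obstacle is the double degeneracy of the equation: the diffusion vanishes at the wall $\zeta = 0$, while at $\zeta = \infty$ both $W$ and $\Phi$ approach the same positive limit $\tfrac12$, so that the far-field decay of $D$ must be extracted from the equation rather than imposed as a boundary condition. Handling the wall requires a weighted comparison matched to $\Phi(\zeta) \sim \Phi'(0)\,\zeta$, while handling the far field relies on the exponential approach of $f_{FS;m}'$ to $1$, a classical consequence of phase-plane analysis of \eqref{fs:Form:1}. A secondary point is that Oleinik-class solutions enjoy only limited regularity, so the comparison must either be run at the level of a weak maximum principle or justified by a standard mollification argument.
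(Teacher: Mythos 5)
This statement is not proved in the paper: it is Serrin's classical theorem, quoted from \cite{Serrin} as motivation, and the paper's own contributions (Theorems \ref{thm:1} and \ref{thm:2}) are established by an entirely different route --- a quasilinear energy cascade in physical variables built on the good unknown \eqref{vmGU}, deliberately \emph{avoiding} the von Mises transform. Your reduction is nevertheless the historically correct one: the von Mises equation $\p_x w = \sqrt{2w}\,\p_\psi^2 w + m x^{2m-1}$ is right, your self-similar change of variables is computed correctly (I checked that $\Phi = \tfrac12 (f')^2\circ f^{-1}$ is indeed a stationary solution and that the stationary equation reduces to \eqref{fs:Form:1} with $\beta = 2m/(m+1)$), and Serrin's original argument does proceed by comparison in von Mises variables.

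There are, however, genuine gaps in the comparison step. First, the barrier argument cannot be initialized as described: to start the comparison at $\tau = \tau_0$ you need $|D(\tau_0,\cdot)| \le \eps\,(1+m\tau_0)e^{-m\tau_0} h$, and for $m>0$ the right-hand side tends to zero as $\tau_0 \to \infty$, so ``choose $\tau_0(\eps)$ large'' is circular --- you would already need the decay you are trying to prove. What the maximum principle gives for free (since $\p_\psi^2 w_{FS} = u_{FS,yy}/u_{FS} \le 0$ supplies a favorable zeroth-order sign and $D$ vanishes on both lateral boundaries) is only that $\sup_\zeta |D|$ is non-increasing; upgrading non-increase to decay at the stated rate is the entire content of the theorem, and your sketch assumes it. Second, the claim that the $+2m$ damping ``is precisely the mechanism that drives $W \to \Phi$'' cannot be right, because the theorem holds at $m=0$ (Blasius), where that coefficient vanishes; the true mechanism is the inward transport $\dot\zeta = -\tfrac{m+1}{2}\zeta$ toward the absorbing wall combined with the singular damping $\Phi''/\sqrt{2\Phi} \sim f'''/(f')^2 \to -\infty$ there, and quantifying this is where the work lies. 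Third, the theorem is stated for \emph{any} Oleinik solution, with no smallness; linearizing around $\Phi$ and discarding the quadratic terms (and replacing $\sqrt{2W}$ by $\sqrt{2\Phi}$ in the diffusion) is not justified for large data --- Serrin's proof works with the quasilinear comparison principle directly, e.g., against translates of the profile, precisely to avoid this.
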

Given Serrin's result, we can ask two related questions: 
\begin{itemize}
\item[(Q1)] Let $m \ge 0$. What are the precise scattering rates as $x \rightarrow \infty$?
\item[(Q2)] Let $m > 0$. How can we quantitatively realize the effect of the favorable pressure gradient? 
\end{itemize}
Out of these questions, (Q1) has been answered for the special case $m = 0$ in \cite{MR4097332} and \cite{WangZhangBl}, which we will describe below. The aim of this paper is to provide answers to both of these questions in their full parameter regimes of relevance ($m \ge 0$ for (Q1) and $m > 0$ for (Q2)).  

\vspace{2 mm}

\noindent \underline{Blasius $(m = 0)$:} For the special case of the Blasius flow, $m = 0$, this result has been revisited by several authors, in particular \cite{MR4097332}, \cite{WangZhangBl}. These authors essentially put stronger assumptions on the initial data, $u_{IN}(y)$, and produce a more refined quantitative scattering estimates in two senses: regularity and rates. In particular, we recall here 
\begin{theorem}[Iyer, \cite{MR4097332}, informal statement] Let $m = 0$. There exists $\eps > 0$ such that the following statement is valid. Suppose initial datum to \eqref{Pr:intro:1} satisfies $\| u_{IN} - u_{FS; 0}(1, \cdot) \|_{E} < \eps$. Assume standard parabolic compatibility conditions to a high order. Then the following scattering estimates hold: 
\begin{align} \label{thm:main:est:1:BL}
| \p_x^k \p_y^j  (u_P - u_{FS; 0}) \langle \eta \rangle^{M} | \lesssim x^{-\frac12 +  \frac{1}{200} - k - \frac{j}{2}},
\end{align}
for all $0 \le k \le k_{max}, 0 \le j \le j_{max}, 0 \le M \le m_{max}$, where $k_{max}, j_{max}, m_{max}$ are determined by the initial data (as usual for parabolic IBVPs: the higher order compatibility conditions and the higher regularity on $u_{IN}$, the higher $j_{max}, k_{max}$ can be). 
\end{theorem}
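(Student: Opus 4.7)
The plan is to work in Blasius self-similar variables $\eta = y/\sqrt{x}$ and $\tau = \log x$, and to decompose the solution as $u_P(x, y) = u_{FS;0}(x, y) + w(\tau, \eta)$, with $v_P$ recovered from $u_P$ through the usual incompressibility relation. Under this change of variables the stationary Prandtl system becomes a $\tau$-evolution equation for $w$ whose linear part has the schematic form
\[
\partial_\tau w - \partial_\eta^2 w + L_B w = \mathcal{N}(w),
\]
where $L_B$ is the linearization of Prandtl around the Blasius profile and $\mathcal{N}(w)$ collects the remaining quasilinear terms, including the nonlocal contribution from $v_P = -\int_0^y \partial_x u_P$. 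Algebraic decay in $x$ will then follow from exponential decay in $\tau$, and the rate $\tfrac{1}{2} - \tfrac{1}{200}$ should reflect the spectral gap of $L_B$ modulo a small loss that will absorb borderline weight-commutator terms.

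First I would prove a base weighted energy estimate by testing the $w$-equation against $w \langle \eta \rangle^{2M}$, producing a relation of the schematic form
\[
\frac{d}{d\tau} \| w \langle \eta \rangle^M \|_{L^2}^2 + c \| \partial_\eta w \langle \eta \rangle^M \|_{L^2}^2 \leq C \| w \langle \eta \rangle^M \|_{L^2}^2 + \text{cubic},
\]
where the dissipation on the left comes from $-\partial_\eta^2$ and the Blasius transport structure yields an additional coercive contribution (exploiting $f'(\eta) \to 1$ and $f''(\eta) > 0$). Commutators between the weight $\langle \eta \rangle^{2M}$ and the transport terms are absorbed using weighted Hardy-type inequalities adapted to the Blasius profile; together they produce an effective spectral gap of size $\tfrac{1}{2} - \tfrac{1}{200}$. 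The smallness of $\| u_{IN} - u_{FS;0}(1,\cdot)\|_E$ then closes a bootstrap on the quadratic nonlinearity $\mathcal{N}(w)$.

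Second, for the higher-order scattering \eqref{thm:main:est:1:BL}, I would differentiate the $w$-equation and iterate the weighted energy argument. Each additional $\partial_\eta$ is admissible provided the initial perturbation decays sufficiently in $\eta$, while each additional $\partial_\tau$ is substituted from the equation and controlled by the preceding order. Upon passing back to $(x, y)$ variables, an $\partial_\eta$ translates to $x^{-1/2} \partial_y$ and a $\partial_\tau$ to $x^{-1} \partial_x$, which accounts exactly for the scaling exponent $-k - j/2$ in \eqref{thm:main:est:1:BL}. The integers $k_{\max}, j_{\max}, m_{\max}$ are determined respectively by the number of parabolic compatibility conditions satisfied at $x = 1$ (ensuring the traces of derivatives at $\eta = 0$ are admissible), the tangential regularity of $u_{IN}$, and the polynomial decay rate of the initial perturbation in $\eta$.

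The main obstacle is the tension created by the quasilinear, nonlocal, and degenerate structure of the transport. Since $u_{FS;0}$ vanishes linearly at $\eta = 0$, the diffusive coercivity alone does not control lower-order terms near the wall, and one must carefully pair the energy estimate with weighted Hardy inequalities tied to the Blasius profile. Simultaneously, the nonlocal contribution $v[w]\, \partial_\eta u_{FS;0}$ propagates $\eta$-growth forward, so each weighted test requires careful accounting of how the polynomial weight interacts with the integral $\int_0^\eta \partial_\tau w \, d\eta'$. It is precisely this competition between nonlocal weight amplification and Hardy coercivity at the wall that forces a small loss, producing the $\tfrac{1}{200}$ correction in the decay exponent.
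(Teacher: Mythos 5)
Your proposal has a genuine gap at its foundation: the base energy estimate obtained by testing the perturbation equation against $w\langle\eta\rangle^{2M}$ does not close. Writing the linearization around Blasius as $\bar u\,\partial_x w+\bar v\,\partial_y w+\bar u_x\, w+\bar u_y\, v[w]-\partial_y^2 w$, the pairing of the nonlocal term $\bar u_y\, v[w]=\partial_y u_{FS}\int_0^y\partial_x w$ with $w$ is indefinite and of exactly the same order as the energy itself, with no small parameter; likewise $\int \bar u_x\,|w|^2$ carries the wrong sign. There is no ``spectral gap of $L_B$'' available at the level of the raw unknown $w$, and no amount of weighted Hardy inequalities repairs this, because the problem is structural (a non-sign-definite zeroth-order and nonlocal part), not a matter of wall degeneracy. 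This is precisely why the cited work \cite{MR4097332} passes to von Mises variables, and why the present paper instead introduces the renormalized good unknown $U_k=\partial_y(\psi^{(k)}/\bar u)$ (see \eqref{QU} and \eqref{Pr:lin2}): in that variable the transport part recombines into $\bar u^2\partial_x U+\bar u\bar v\,\partial_y U+\bar u_{yyy}Q+2(\bar u_{yy}-\partial_xp_E)U$, whose dangerous pieces either cancel against diffusive commutators (compare \eqref{TMain:dec} with \eqref{Dmain:dec}) or are sign-favorable. Without some such change of unknown, your first display inequality is false as stated, and the whole scheme collapses before the nonlinearity is even reached.

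Two further points. First, your attribution of the $\tfrac{1}{200}$ loss to a ``competition between nonlocal weight amplification and Hardy coercivity'' is not how either proof works: the loss is inserted by hand through the time-weight $x^{-1/100}$ in the energy functionals, whose sole purpose is to manufacture the Cauchy--Kovalevskaya terms $\mathcal{CK}_{k,n}$ that absorb commutators and nonlinear errors; it is a choice, not a forced spectral defect. Second, even granting a closed base estimate, the higher-order cascade is more delicate than ``differentiate and iterate'': a loss of $x$-derivative at top order forces the paper to pass to the fully quasilinear unknown $\mathcal{U}_k=\partial_y(\psi^{(k)}/\mu)$ with $\mu=\bar u+\eps u$ at the highest level, and the $L^\infty$ scattering bounds are recovered from space-time integrated dissipation via the embeddings \eqref{jurassic:1}--\eqref{trader:joes:1}, not from a pointwise-in-$\tau$ exponential decay. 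I would encourage you to rebuild the argument starting from the identity $u^{(k)}=\bar u\,U_k+\bar u_y Q_k$ and verify which terms the good unknown actually cancels.
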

Above, we leave the norm $E$ unspecified, though we remark it is a high order, weighted Sobolev norm. The work of \cite{MR4097332} relies on the so-called von-Mise transform, as did the original work of \cite{Serrin}. In the current work, we insist on avoiding the so-called von-Mise coordinate system and work entirely in physical variables (which is extremely important for applications to Navier-Stokes). In this sense, there is a parallel with the work \cite{IM21}, and also a parallel in the time-dependent setting with the works of \cite{MR3385340}, \cite{AWXY12} who introduced several new techniques to avoid using the Crocco transform (also motivated by eventual applicability to questions of Navier-Stokes convergence).   

We also refer the reader to the recent work of \cite{WangZhangBl}. Compared to the result of \cite{MR4097332}, the work \cite{WangZhangBl} uses a delicate maximum principle based approach and obtains logarithmic improvements on the decay rate. Moreover, the smallness assumption of the data in \cite{MR4097332} is replaced with a concavity assumption.

\vspace{2 mm}

\noindent \underline{FS Wedge-Flows $(0 < m < \infty)$:} As stated above, the main purpose of the present article is to generalize the work of \cite{MR4097332} for the complete family of Falkner-Skan boundary layer profiles. 

\begin{theorem}[Scattering Estimates, $m \ge 0$] \label{thm:1} Fix any $m \ge 0$. There exists an $\eps_\ast = \eps_\ast(m) > 0$ and numbers $k_0, M_0$ such that the following statement is valid. Assume for $\eps < \eps_\ast(m)$,  
\begin{align}
\| u_{P,IN} - u_{FS; m}(1, \cdot) \langle y \rangle^{M_0} \|_{H^{3k_0}_y} \le \eps. 
\end{align}
Assume further the parabolic compatibility conditions at the corner $(x, y) = (1, 0)$ up to order $k_0$ as described in \eqref{comp:cond:0}, \eqref{comp:cond:1} -- \eqref{comp:cond:2}. Then the following scattering estimates hold for $k \le k_0 -2$ and $j = 0, 1$: 
\begin{align} \label{thm:main:est:1}
| \p_x^k \p_y^j  (u_P - u_{FS; m}) \langle \eta \rangle^{M_{k,j}} | \lesssim x^{-(\frac14 + \frac{5m}{4} - \frac{1}{200}) - k - \frac{j}{2}(1 - m) },
\end{align}
where $M_{k,j} := M_0 - (k+1) - 2j$.
\end{theorem}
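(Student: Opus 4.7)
The plan is to reformulate \eqref{Pr:intro:1} in self-similar variables adapted to $f_{FS;m}$, derive a drift-diffusion equation for the perturbation, and close weighted energy estimates whose coercivity is supplied by the favorable pressure gradient. Introduce $s := \log x$ and $\xi := \sqrt{(m+1)/2}\, y\, x^{-(1-m)/2}$, and write $u_P(x,y) = x^m (f'_{FS;m}(\xi) + q(s,\xi))$ with a companion expansion for $v_P$ recovered from divergence-freeness. Then \eqref{Pr:intro:1} becomes, schematically,
\begin{equation*}
\p_s q + \LL_{FS;m} q = \mathcal{N}(q), \qquad \LL_{FS;m} q = -\p_\xi^2 q + f\, \p_\xi q + \tfrac{1-m}{2}\xi\, f'' q + 2m f' q + \text{(lower order)},
\end{equation*}
where $\mathcal{N}$ is the quasilinear nonlinearity in $q$ and the derivatives of $v$. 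The transported FS equation \eqref{fs:Form:1} converts the pressure term $\beta(1-(f')^2)$, after linearization, into a zeroth-order piece proportional to $-\beta f'$ in $\LL_{FS;m}$; this is the mechanism behind the ``CK'' contribution announced in the abstract.

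Next I would carry out a weighted $L^2$ energy estimate for $q$ with polynomial weight $\langle \xi\rangle^{M_0}$. Testing against $\langle\xi\rangle^{M_0} q$ produces the dissipative term $\int \langle\xi\rangle^{M_0}(\p_\xi q)^2$, boundary and commutator terms that are controlled since $f(\xi)\sim\xi$ at infinity and $f(0)=0$, together with a coercive contribution of the form $c(m)\int \langle\xi\rangle^{M_0} q^2$ coming from the $2m f'$ term (and the $\tfrac{1-m}{2}\xi f''$ term after integration by parts) — the CK mechanism. For $m>0$ this yields a spectral gap implying exponential decay in $s$ at rate $\tfrac14+\tfrac{5m}{4}$, with a $1/200$ slack absorbing commutators and constants coming from the quasilinear terms. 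The smallness of the initial perturbation is used here through a standard continuity argument to bootstrap control of $\mathcal{N}(q)$ by $\|q\|$ itself.

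To upgrade to $\p_x^k\p_y^j$ with $k\le k_0-2$ and $j\in\{0,1\}$, I would commute with $\p_\xi$ (to pick up the $\p_y$) and with $\p_s - \tfrac{1-m}{2}\xi\p_\xi$ (to pick up $x\p_x$), running an induction on $k$ and $j$ with a stratified family of weighted energies $E_{k,j}(s)=\int \langle\xi\rangle^{M_{k,j}}(\p_\xi^{a}\p_s^{b} q)^2\, d\xi$, $a+2b\le 2k+j$. Each commutator costs one power of weight (hence the choice $M_{k,j}=M_0-(k+1)-2j$), produces lower-order terms absorbed by the induction hypothesis, and — crucially — preserves the CK coercivity. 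The parabolic compatibility conditions of order $k_0$ at the corner $(1,0)$ guarantee the initial data makes sense in $E_{k_0,1}(0)$. Translating $\p_s$ and $\p_\xi$ back to $\p_x$ and $\p_y$ produces the scaling factors $x^{-k}$ and $x^{-j(1-m)/2}$ respectively, which combine with the energy decay rate $x^{-(\frac14+\frac{5m}{4}-\frac{1}{200})}$ to give \eqref{thm:main:est:1}.

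The main obstacle is the base energy estimate of the second step. Working in physical rather than Crocco/von Mise variables, the drift coefficient $f(\xi)$ grows linearly, so after integration by parts against the polynomial weight the transport contribution is sign-indefinite and cannot be controlled by the parabolic dissipation alone; closing the estimate requires the coercive CK term produced by $-\beta f'$, whose strength is precisely $\tfrac{1}{4}+\tfrac{5m}{4}$ and which degenerates as $m\to 0$. Extracting this coercivity uniformly, together with handling the degenerate boundary $\xi=0$ (where $f\sim\xi$ and $f''(0)>0$) so as not to lose constants that would close off the $1/200$ slack, is the delicate heart of the argument and is what forces both the precise value of the exponent and the restriction $j\in\{0,1\}$ (so that derivative weights remain compatible with the Hardy inequalities used near $\xi=0$).
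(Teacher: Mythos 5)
Your proposal takes a genuinely different route from the paper (self-similar variables $(s,\xi)$ plus a claimed spectral gap, versus the paper's physical-variable energy cascade on a renormalized von Mises unknown), but it contains two gaps that are fatal as written.

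First, you never engage with the nonlocal term. The linearization of \eqref{Pr:intro:1} around $u_{FS;m}$ contains $\p_y u_{FS;m}\,v$ with $v=-\int_0^y\p_x u$, which in your variables is a term of the schematic form $f''(\xi)\int_0^\xi q$; you have folded it into ``(lower order)'', but it is not lower order. Tested against $\langle\xi\rangle^{M_0}q$ it is sign-indefinite, is not controlled by $\int\langle\xi\rangle^{M_0}(\p_\xi q)^2$, and at the top order of tangential derivatives it loses a derivative through $v^{(k)}\p_y u$. This is the central structural obstruction for stationary Prandtl; the paper resolves it by passing to the good unknown $U_k=\p_y(\psi^{(k)}/\bar u)$ (and the quasilinear version $\mathcal{U}_k=\p_y(\psi^{(k)}/\mu)$ at top order), for which the transport part collapses to $\bar u^2\p_x U_k+\bar u\bar v\p_y U_k+(\text{zeroth order})$ and the dangerous term disappears. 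Your scheme has no analogue of this cancellation, so the base energy estimate does not close.

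Second, the decay mechanism is misattributed. A polynomially weighted $L^2$ space does not give $\LL_{FS;m}$ a spectral gap of size $\tfrac14+\tfrac{5m}{4}$ (even for the heat equation in self-similar variables, polynomial weights yield only algebraic rates tied to the weight, and here the drift $f\p_\xi$ with $f\sim\xi$ pushes mass outward). In the paper the energy is merely \emph{bounded} after weighting by $x^{3m}$, and the exponent is assembled from three separate ingredients: (i) the $L^1_x$-integrability of the dissipation and CK functionals converted into pointwise decay by a Sobolev-type embedding, which is where $\tfrac14-\tfrac m4$ comes from; (ii) the pressure term $\mathcal{CK}^{(P)}\sim\tfrac{3m}{2x}\mathcal E$, which upon integration gives the extra $x^{-3m}$ on the energy, i.e.\ $x^{-3m/2}$ on the solution; (iii) the artificial $x^{-1/100}$ weight supplying the $\tfrac1{200}$ slack. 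Your single ``coercivity of strength $\tfrac14+\tfrac{5m}{4}$'' conflates these and is internally inconsistent at $m=0$: there the pressure term vanishes identically, yet the $x^{-1/4}$ rate survives because it is an embedding effect, not a spectral one. Finally, the coercivity that actually tames the linearly growing drift is not the pressure term but the sign of the combination in \eqref{beebop:1}--\eqref{beebop:2}: the $-m\eta$ leading part of $v_{FS;m}$ pairs favorably with the weight commutator $\tfrac n2(1-m)\int\bar u^2U_k^2\langle\eta\rangle^{2n-1}\eta\,x^{2k-1}$, a weight-dependent statement tied to the downward cascade $M_{k,j}$, not a spectral gap.
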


\begin{theorem}[Enhanced Scattering, $m \ge 0$] \label{thm:2} Fix any $m \ge 0$. There exists an $\eps_\ast = \eps_\ast(m) > 0$ and numbers $k_0, M_0$ such that the following statement is valid. Assume for $\eps < \eps_\ast(m)$, 
\begin{align}
\| u_{P,IN} - u_{FS; m}(1, \cdot) \langle y \rangle^{M_0} \|_{H^{3k_0}_y} \le \eps. 
\end{align}
Assume further the parabolic compatibility conditions at the corner $(x, y) = (1, 0)$ up to order $k_0$ as described in  \eqref{comp:cond:0}, \eqref{comp:cond:1} -- \eqref{comp:cond:2}. Then the following scattering estimates hold for $k \le k_0 -2$ and $j = 0, 1$: 
\begin{align} \label{thm:main:est:2}
| \p_x^k \p_y^j  (u_P - u_{FS; m}) \langle \eta \rangle^{M_{k,j}} | \lesssim x^{-(\frac12 + \frac{3}{2}m - \frac{1}{200}) - k - \frac{j}{2}(1 - m) },
\end{align}
where $M_{k,j} := M_0 -(k+1) - 2j - 1$.
\end{theorem}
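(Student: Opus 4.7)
The goal is to upgrade Theorem \ref{thm:1} by an additional factor of $x^{-(1+m)/4}$ of decay, at the expense of one unit of polynomial weight (compare $M_{k,j} = M_0 - (k+1) - 2j - 1$ here with $M_0 - (k+1) - 2j$ in Theorem \ref{thm:1}). The strategy rests on the ``CK'' coercivity generated by the favorable pressure gradient $-\p_x p_E = m\, x^{2m-1}$, which is absent when $m=0$ and grows with $m$. The plan is to feed Theorem \ref{thm:1} into the right-hand side of a refined energy hierarchy in self-similar variables, and extract the enhanced rate from a Gronwall inequality whose coercivity is precisely the CK term.

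First I would pass to self-similar coordinates $\tau = \ln x$, $\eta = y / x^{(1-m)/2}$, and write $u_P(x,y) = x^m \bar u(\tau,\eta)$, so that $u_{FS;m}$ is the $\tau$-stationary profile $f_{FS;m}'(\eta)$. Setting $\bar u = f' + w$, the perturbation solves a parabolic-in-$\tau$ equation whose principal part combines the Falkner-Skan ODE linearization with a self-similar transport-diffusion operator. The linearization of the reaction $\beta(1-(f')^2)$ generates the coefficient $-2\beta f' = -\tfrac{4m}{m+1} f'$, and together with the $\eta\p_\eta$ transport, this produces a strictly positive zeroth-order CK term in the $\tau$-energy identity, of size $\sim (m+1)/2$ when $j=0$ and enlarged by the derivative count when commuting $\p_\tau^k\snabla^j$ through the equation.

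Next, for each admissible $(k,j)$ I would define a weighted energy
\begin{equation}
E_{k,j}(\tau) := \int_0^\infty \langle \eta\rangle^{2 M_{k,j}} \big|\p_\tau^k \snabla^j w(\tau,\eta)\big|^2 \, \mathrm{d}\eta,
\end{equation}
and compute $\tfrac{d}{d\tau} E_{k,j}$. The $\p_\eta^2$ term gives the usual Dirichlet dissipation; the reaction, self-similar transport, and weight commutators combine into a strictly positive CK coercivity $c_{m,k,j} E_{k,j}$ with $c_{m,k,j} \geq (m+1)/4$ uniformly, at the price of the one extra weight sacrificed in passing from $M_{k,j}^{\text{Thm }1}$ to $M_{k,j}^{\text{Thm }2}$. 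The quasilinear error $Q(w,w)$ and the top-order commutators (including terms like $w \cdot \p_\tau^k \p_\eta^{j+1} f$) are then treated as forcing: by Theorem \ref{thm:1}, they are bounded in the appropriate weighted norm by $C\, e^{-2(1/4 + 5m/4 - 1/200)\tau}$ plus corrections of order $e^{-\tau/100}$. This yields
\begin{equation}
\frac{d}{d\tau} E_{k,j}(\tau) + 2 c_{m,k,j}\, E_{k,j}(\tau) \le C\, e^{-2(\frac12 + \frac{3m}{2} - \frac{1}{200})\tau},
\end{equation}
and Gronwall, translated back to $\tau = \ln x$, gives exactly \eqref{thm:main:est:2}.

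The main obstacle is calibrating the weight budget so that the CK coercivity actually exceeds the exponent needed to unlock the extra $(1+m)/4$ of decay at every derivative level simultaneously. The top-order quasilinear terms $w \cdot \p_x^{k_0} u_{FS;m}$ in particular force one to commute $\p_\tau^k$ through the $\eta$-transport, and each commutation either produces a weight-losing term or consumes a factor of the CK coercivity; the ``$-1$'' in $M_{k,j}$ for Theorem \ref{thm:2} is precisely the accumulated cost of these commutators. The technical heart of the proof is thus a careful bookkeeping showing that, under the polynomial-weight loss of exactly one unit, one still has $2 c_{m,k,j} \ge (1+m)/2 + 2k + j(1-m) - 2(\tfrac14 + \tfrac{5m}{4})$ for every admissible $(k,j)$, which is the arithmetic identity that closes the bootstrap.
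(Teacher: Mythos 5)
Your proposal takes a genuinely different route from the paper, and as written it has a gap at its central step. The paper's proof of Theorem \ref{thm:2} (Section \ref{sec:8}) does \emph{not} obtain the extra decay from a linear spectral gap/Gronwall argument; it obtains it from (i) a virial estimate, i.e.\ a uniform-in-$x$ bound on the energy functionals weighted by an additional factor $\langle \psi \rangle$ (this is where the extra ``$-1$'' in $M_{k,j}$ is spent, since $\psi \sim \bar{u} y$ costs roughly one power of $\langle \eta \rangle x^{\frac12(1+m)}$), (ii) a Nash-type inequality lower-bounding the dissipation by a superlinear power of the energy divided by powers of the virial quantity $A^2 = \int \bar{u}^2 U^2\, y\, q(\eta) x^m \ud y$, and (iii) a comparison/ODE lemma for the resulting \emph{nonlinear} differential inequality $\frac{\p_x}{2}\Gamma + c_\ast \min\{x^{-\frac14+\frac{3m}{4}}\Gamma^{5/2}, x^m \Gamma^3\} \le 0$, whose solutions universally decay like $x^{-\frac12-\frac{m}{2}}$. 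This is the parabolic $L^1\!\to\!L^2$ smoothing mechanism, not a damping term.

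The gap in your argument is the assertion that ``the reaction, self-similar transport, and weight commutators combine into a strictly positive CK coercivity $c_{m,k,j}E_{k,j}$ with $c_{m,k,j}\ge (m+1)/4$ uniformly.'' This is precisely the content of the theorem and is not justified; note that for $m=0$ the pressure CK term $\mathcal{CK}^{(P)}$ vanishes identically (it carries a prefactor of $m$), so the claimed gap of size $1/4$ would have to come entirely from the Blasius linearization in self-similar variables, for which no such coercivity is established (nor used) anywhere in the paper — the only unconditional damping available is the artificial $\frac{1}{100}$-CK term. Separately, your Gronwall step does not close even granting the coercivity: estimating the quasilinear forcing $\mathcal{Q}[w,w]$ using only Theorem \ref{thm:1} gives a source in the energy identity decaying no faster than $x^{-\frac34-\frac{15m}{4}+O(\frac{1}{100})}$ (a cubic expression in quantities decaying like $x^{-\frac14-\frac{5m}{4}}$), which is slower than the target energy decay $x^{-1-3m}$ since $\frac34 < 1$; so the inhomogeneity dominates and Gronwall returns only the Theorem \ref{thm:1} rate. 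The paper sidesteps this because the nonlinear sources are absorbed into the dissipation \emph{before} the Nash step, so the enhanced rate emerges from the homogeneous nonlinear ODE rather than from the decay of a forcing term.
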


\begin{remark} Even though Theorem \ref{thm:2} is strictly stronger than Theorem \ref{thm:1}, we have purposefully chosen to separate out the two theorems for the sake of presentation. The main new ideas we introduce are already contained in the proof of Theorem \ref{thm:1}, and therefore the reader may focus on these ideas which are implemented in Sections \ref{sec:2} -- \ref{sec:7}. On the other hand, essentially two additional ingredients are required to upgrade Theorem \ref{thm:1} to Theorem \ref{thm:2}. These are a virial estimate with an appropriately designed weight, which is then coupled with a Nash-type inequality. The presentation therefore allows for the isolation of these extra ingredients, which is contained in Section \ref{sec:8}. 
\end{remark}

\begin{remark} \textit{The role of $x^{-\frac{1}{200}}$:} The reader will notice the peculiar weight of $x^{-\frac{1}{200}}$ appearing in \eqref{thm:main:est:1}. Such a time-decaying weight is introduced to produce extra $CK$ terms that are used in turn to absorb nonlinear contributions. We chose the specific value of $x^{-\frac{1}{200}}$ as opposed to another parameter for the sake of making a concrete choice. It is likely that we can choose $x^{-\delta}$, where $0 < \delta << \delta_\ast(\eps)$ where $\delta_\ast(\eps)$ depends on the size of the initial datum. One may thus interpret this factor as allowing room for nonlinear fluctuations. It is an interesting question to determine the necessity of this loss-factor. 
\end{remark}

\begin{remark} \textit{The role of $\langle \eta \rangle^{M_{k,j}}$:} One of the main complicating factors in the case $m > 0$ compared to $m = 0$ is the fact that for $m > 0$, $v_{FS; m}$ grows linearly (like $\eta$) for $\eta \rightarrow \infty$. On the other hand, when $m = 0$, $v_{FS; 0}$ remains bounded when $\eta \rightarrow \infty$. This is quantified below in \eqref{v:deco}. This growth creates some analytical difficulties in our energy scheme, and is the reason we need to introduce a decreasing weight cascade as quantified by the $M_{k,j}$.   
\end{remark}

\begin{remark} \textit{The case $m = 0$:} In particular, when we set $m = 0$, we recover the scattering estimates in \cite{MR4097332}. Therefore, one interpretation of the present work is to generalize \cite{MR4097332} to all $m > 0$. Moreover, we observe a gain in decay in \eqref{thm:main:est:2} as $m$ increases. This gain is precisely our quantitative characterization of the favorable pressure gradient. As we will discuss below, this gain appears from the extraction of new negative definite CK terms due to the sign of the favorable pressure. 
\end{remark}

\begin{remark} \textit{The asymptotics as $m \rightarrow \infty$:} In this work, we do not pursue the asymptotics as $m \rightarrow \infty$. In particular, there is no claim regarding the uniformity of the basin of attraction $\lim_{m \rightarrow \infty} \eps_\ast(m)$. We leave this interesting question for future work. 
\end{remark}

\begin{remark} \textit{Sharp Scattering Rates:} It is unclear to the author if the rates obtained in this article are sharp. This is an interesting question to determine.  
\end{remark}

One immediate purpose of this paper is to answer the natural follow-up question to Serrin's result of establishing precise quantitative decay rates. However, perhaps an even more important purpose for future applications, is to introduce an energetic framework which is completely different from Serrin's approach (and that which uses crucially the structure of the background FS profiles) which could feasibly become applicable in more general settings. We turn now to an explanation of the ingredients in our framework.

\subsection{Main Ideas} 

Overall, the proof proceeds by introducing perturbations of the form $u_{FS; m} + \eps u$, $v_{FS; m} + \eps v$, and analyzing the resulting nonlinear equation on $[u, v]$. The linearized Prandtl operator around the FS profiles reads
\begin{align}
\mathcal{L}_m[u,v] = u_{FS; m} \p_x u + v_{FS; m} \p_y u + \p_x u_{FS; m} u + \p_y u_{FS; m} v - \p_y^2 u,
\end{align} 
which is the operator that in fact governs all higher $x$ derivatives of $[u, v]$ as well. 

\vspace{2 mm}

\noindent \underline{\textsc{Good Unknown \& Self-Similarity:}} One of the core issues in studying the Cauchy problem 
\begin{align}
&\mathcal{L}_m[u, v] = F, \\
&u|_{x = 0} = u_{IN}(y), \\
&u|_{y = 0} = 0, \qquad u|_{y = \infty} = 0, 
\end{align}
is the to determine the correct transport unknown. To address this issue, we follow the strategy used by the author with N. Masmoudi in the work \cite{IM20}, and work on the renormalized ``von-Mise" unknown:
\begin{align} \label{vmGU}
U =  \frac{1}{u_{FS; m}}(u - \frac{\p_y u_{FS; m}}{u_{FS; m}} \psi),
\end{align}
which enjoys subtle cancellations when paired with the linearized operator $\mathcal{L}$. 

Due to a possible loss of derivative at the top order of tangential derivative, it turns out we need to modify this renormalized good unknown with nonlinear quantities. More precisely, at the top order of derivative we define
\begin{align}
U_{\text{top}} =  \frac{1}{u_{FS; m} + \eps u }(u - \frac{\p_y u_{FS; m} + \eps \p_y u}{u_{FS; m} + \eps u} \psi).
\end{align} 
A large portion of the technical work in Section \ref{sec:3} is to translate between these unknowns and the original $[u, v]$ in appropriate norms. The process of doing so requires a precise understanding of the self-similar structure of $u_{FS; m}$, as weights in $\eta$ (and the corresponding tradeoffs between $y$ and $x^{\frac12(1-m)}$)play a crucial role. 

\vspace{2 mm}

\noindent \underline{\textsc{Favorable Pressure Gradient \& New CK Terms:}} One of our primary observation is to actually quantify how the ``favorable pressure" created by the outer Euler flow, $u_E(x) = x^m$ for $m \ge 0$ enters to create enhanced decay. Indeed, this is achieved through the presence of newly identified Cauchy-Kovalevskaya (``CK") Terms. Define 
\begin{align}
\mathcal{CK}^{(P)}(x) := & \int_{\mathbb{R}_+} (-\frac{3}{2}\p_x p_E(x))U^2 \ud y = \frac32 \int_{\mathbb{R}_+} m x^{2m-1} U^2  \ud y.
\end{align}
It turns out our use of the good unknown, $U$, \eqref{vmGU}, allows us to extract the CK term above in an energy argument: 
\begin{align*}
\frac{\p_x}{2} \mathcal{E}(x) + \mathcal{CK}^{(P)}(x) + \mathcal{D}(x) \le 0,
\end{align*} 
for appropriate energy-dissipation functionals. In turn, this pressure-CK term enhances the decay of the energy by a factor of $x^{3m}$, which explains the gain of decay in estimate \ref{thm:main:est:1} as compared to the $m = 0$ case in \eqref{thm:main:est:1:BL}. 
\vspace{2 mm}

\noindent \underline{\textsc{Downward Weight Cascade:}} One core issue that emerges when one adds $x$-dependence to the background $u_E(x)$ (in other words, when $m > 0$ as opposed to $m = 0$ in \eqref{power:law}) is the corresponding linear growth in $v_{FS}$. Indeed, by the divergence free condition 
\begin{align*}
v_{FS} = - \int_0^y \p_x u_{FS} = - \int_0^y \p_x u_{E}(x) \ud y' - \int_0^y\p_x [u_{FS} - u_E(x)] \ud y',
\end{align*}
and the first term on the right-hand side clearly grows in $y$ (but is absent when $m = 0$). In the context of a higher-order energy scheme, this creates the following type of difficulty. After one performs the basic energy estimate, one wants to compute the inner product of the linearized operator $\mathcal{L}_m$ with the differentiated unknown, $\p_x U$, (along with some weights in $x$ which for the purpose of this introduction, let us caricature this as $x^1$):
\begin{align} \n
\int_{\mathbb{R}_+} \mathcal{L}_m[u, v] \p_x U x \ud y = & \frac{\p_x}{2} \underbrace{\int_{\mathbb{R}_+} u_{FS; m} |\p_y U|^2 x \ud y}_{\text{Energy}} + \underbrace{\int_{\mathbb{R}_+} u_{FS; m}^2 |\p_x U|^2 x  \ud y}_{\text{Diffusion}} \\ \n
&+ \underbrace{ \int_{\mathbb{R}_+} v_{FS; m} \p_y U \p_x U x \ud y}_{\text{Growing Term}}  +  \text{ Controllable Commutator Terms}
\end{align}
The term ``growing term" above is referring to the growth of $v_{FS}$ at $y = \infty$. Indeed, due to the lack of structure in such a term, we are forced to put both the $\p_y U$ and the $\p_x U$ term in $L^2$, but due to the fact that $v_{FS}$ grows at $y = \infty$, we ``lose one weight" on the lower order quantity. 

Due to this structure, we introduce a ``downward cascade" of weights: we will choose the weight (really the weight in $\langle \eta \rangle$ as opposed to $y$) to depend as an inverse function of the regularity. This is seen more precisely in the form of our energy functionals, \eqref{dfr:1}, where the index of weight changes based on the regularity level. This is a core new feature compared to the Blasius work, \cite{MR4097332}. 

\vspace{2 mm}

\noindent \underline{\textsc{Virial-Type Coercivity:}} Such a downward cascade has appeared in local in $x$, or finite in $x$, results for example the work of \cite{IM22}. However, for global problems, implementing this downward cascade is challenging and requires some further structural properties which we crucially capitalize on in this work. Indeed, the starting point will be a weighted in $\eta$ estimate at the bottom order derivative. To make matters precise, we start the scheme by performing the inner product (for any $n \ge 1$): 
\begin{align}
\int_{\mathbb{R}_+} \mathcal{L}_m[u, v] U \langle \eta \rangle^{2n} \ud y.
\end{align}
Doing so produces commutator terms of the form: 
\begin{align} \n
&\text{Commutator Term with Fastest Growth at $n \rightarrow \infty$} \\ \label{bass:1}
& \qquad = \frac{n}{2}(1-m) \int_{\mathbb{R}_+} u_{FS; m}^2 U^2 \langle \eta \rangle^{2n-1} \eta x^{-1} \ud y  - n \int_{\mathbb{R}_+} u_{FS; m}v_{FS; m} U^2 \frac{ \langle \eta \rangle^{2n-1}}{x^{\frac12(1-m)}}  \ud y,
\end{align}
which would normally be controllable for small $x$ or finite $x$, but does not obviously integrate globally in $x$. Such a commutator term could potentially destroy a global energy scheme such as the one we are implementing. Our key observation is that the structure of $v_{FS; m}$ for $m \ge 0$ gives coercivity of this combination. More precisely, we have 
\begin{lemma} For all $m$, there exists a decomposition
\begin{align} \label{v:deco}
v_{FS; m}(x, y) = -m  \eta  x^{-\frac12(1-m)} + v_{FS,\ast}(x, y)x^{-\frac12(1-m)},
\end{align}
where $|v_{FS,\ast}| \lesssim 1$.
\end{lemma}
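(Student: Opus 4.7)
The plan is to compute $v_{FS;m}$ directly from the formula \eqref{psiFSform}, isolate the linearly-growing piece in $\eta$, and then verify the remainder is bounded using known asymptotics of the Falkner--Skan ODE \eqref{fs:Form:1}--\eqref{fs:Form:2}.

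Setting $C = \sqrt{2/(m+1)}$ and $\xi = \sqrt{(m+1)/2}\,\eta$ as in \eqref{psiFSform}, the chain rule gives $\partial_x \xi = -\tfrac{1-m}{2} \xi / x$, and therefore
\begin{align*}
v_{FS;m} = -\partial_x \psi_{FS;m} = -\tfrac{C}{2} x^{-\frac12(1-m)} \bigl[(1+m) f(\xi) - (1-m)\,\xi f'(\xi)\bigr].
\end{align*}
The algebraic identity
\begin{align*}
(1+m) f(\xi) - (1-m)\,\xi f'(\xi) = 2m \xi + (1+m)\bigl[f(\xi) - \xi f'(\xi)\bigr] + 2m\,\xi\bigl[f'(\xi) - 1\bigr]
\end{align*}
lets us split off the leading growth: using $C\xi = \eta$, the first term produces exactly $-m\eta\, x^{-\frac12(1-m)}$. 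Accordingly, I would define
\begin{align*}
v_{FS,\ast}(x,y) := -\tfrac{C}{2}\Bigl\{(1+m)\bigl[f(\xi) - \xi f'(\xi)\bigr] + 2m\,\xi\bigl[f'(\xi) - 1\bigr]\Bigr\},
\end{align*}
so that the required decomposition \eqref{v:deco} holds by construction.

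It remains to show $|v_{FS,\ast}| \lesssim 1$, i.e.\ the bracketed function of $\xi$ is bounded uniformly on $[0,\infty)$. Near $\xi = 0$, the boundary conditions $f(0) = f'(0) = 0$ in \eqref{fs:Form:2} guarantee both $f - \xi f'$ and $\xi(f'-1)$ vanish at the origin; since $f\in C^\infty$, the bracket is $O(\xi^2)$ locally. The core issue is the behavior as $\xi\to\infty$. Here I would invoke the classical asymptotics of Falkner--Skan: under the favorable regime $\beta\in[0,2]$, the solution satisfies $f'(\xi)\to 1$ and $f(\xi) - \xi \to -B$ for some displacement constant $B = B(m) \in \mathbb{R}$, with Gaussian-type (or exponential, depending on $\beta$) convergence. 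Quantitatively, $|1 - f'(\xi)| + |f(\xi) - \xi + B| \lesssim e^{-c\xi^2}$ for some $c > 0$, which one extracts by linearizing \eqref{fs:Form:1} about $f'\equiv 1$ and comparing with the resulting Airy-type ODE, or by the classical shooting argument of Hartman/Weyl. Granting this, $\xi(f'(\xi)-1) \to 0$ and $f(\xi) - \xi f'(\xi) = (f(\xi) - \xi) + \xi(1-f'(\xi)) \to -B$, so the bracket remains bounded.

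The only genuine content beyond bookkeeping is the exponential-decay asymptotics for $1 - f'$ and $f - \xi + B$. This is the main obstacle, but it is classical and largely a citation issue; a self-contained argument proceeds by noting that $w := 1 - f'$ satisfies $w'' + fw' + 2\beta f' w = 0$ on $\{\xi \gg 1\}$ where $f \sim \xi$, which via an integrating factor $e^{\xi^2/2}$ yields the desired super-polynomial decay and concomitantly decay of $f - \xi + B$ upon integration. All constants implicit in ``$\lesssim$'' then depend only on $m$, as the lemma requires.
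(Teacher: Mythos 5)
Your proposal is correct and follows essentially the same route as the paper: differentiate \eqref{psiFSform} in $x$, split off the exact linear-in-$\xi$ piece (your algebraic identity reproduces the paper's $\frac{2m}{m+1}\xi + E_2(\xi)$ decomposition), and absorb $f-\xi f'$ and $\xi(f'-1)$ into the bounded remainder; you merely supply the Falkner--Skan asymptotics that the paper takes for granted when asserting ``$E_i(\xi)$ are bounded.'' One small slip in your sketch: linearizing \eqref{fs:Form:1} gives $w'' + fw' - \beta(1+f')w = 0$ for $w = 1-f'$ (note the sign of the zeroth-order term), which is actually the favorable case since the algebraic branch then grows like $\xi^{2\beta}$ and the condition $w\to 0$ forces the Gaussian-decaying branch.
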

\begin{proof} We differentiate \eqref{psiFSform} in $x$ to obtain
\begin{align}
v_{FS; m} = - \sqrt{\frac{m+1}{2}} x^{\frac{m-1}{2}}(f(\xi) + \frac{m-1}{m+1}\xi f'(\xi)).
\end{align}
We subsequently use that the leading term as $\xi \rightarrow \infty$ is of the form 
\begin{align*}
f(\xi) + \frac{m-1}{m+1}\xi f'(\xi) = & f'(\xi)[\xi + \frac{m-1}{m+1}\xi] + E_1(\xi) \\
= &  (\xi + \frac{m-1}{m+1}\xi) + (f'(\xi) - 1) [\xi + \frac{m-1}{m+1}\xi]+ E_1(\xi) \\
= & (\xi + \frac{m-1}{m+1}\xi)  + E_2(\xi) \\
= & \frac{2m}{m+1}\xi + E_2(\xi)
\end{align*}
where $E_i(\xi)$ are bounded functions. Subsequently inserting above, we extract 
\begin{align*}
v_{FS; m} = & - \sqrt{\frac{m+1}{2}} x^{\frac{m-1}{2}} (\frac{2m}{m+1}\xi + E_2(\xi)) \\
= &  - \sqrt{\frac{m+1}{2}} x^{\frac{m-1}{2}} (\frac{2m}{m+1}\sqrt{\frac{m+1}{2}}\eta + E_2(\xi)) \\
= & ( - m \eta + v_{FS, \ast}) x^{\frac{m-1}{2}},
\end{align*}
as required in the lemma. 
\end{proof}

Inserting the representation \eqref{v:deco}, we essentially extract the leading order term from \eqref{bass:1} as 
\begin{align*}
\eqref{bass:1}  \sim &  \frac{n}{2}(1-m) \int_{\mathbb{R}_+} \bar{u}^2 |U_k|^2 \langle \eta \rangle^{2n}  x^{2k-1- \frac{1}{100}} \ud y + nm \int_{\mathbb{R}_+} \bar{u}^2 |U_k|^2 \langle \eta \rangle^{2n}  x^{2k-1- \frac{1}{100}} \ud y \\
 \ge & 0 \text{ for } m \ge 0. 
\end{align*}
This coercivity is the crucial ingredient which allows us to close the downward cascade energy scheme (at the linear level), and is more precisely seen in lines \eqref{beebop:1} -- \eqref{beebop:2}.

\section{Derivation of Canonical Systems} \label{sec:2}

In this section, we derive precisely the nonlinear equations on the quantities we will be analyzing. 

\subsection{Commutation Properties}

First of all, we use as shorthand notation 
\begin{align}
[\bar{u}, \bar{v}] = [u_{FS}, v_{FS}].
\end{align}
In this article, we consider small perturbations of the Falkner-Skan profiles: 
\begin{align}
u_P = \bar{u} + \eps u, \qquad v_P = \bar{v} + \eps v, \qquad 0 < \eps << 1. 
\end{align}
Around these profiles, we have the following system 
\begin{align}
\left. \begin{aligned} \label{lov:1}
&\bar{u} \p_x u + \bar{v} \p_y u + \bar{u}_x u + \bar{u}_y v - \p_y^2 u = -\eps \mathcal{Q}[u, v], \\
&\p_x u + \p_y v = 0, \\
&u|_{y = 0} = 0, \qquad \lim_{y \rightarrow \infty} u(x, y)= 0, \\
&u|_{x = 1} = u_{IN}(y) = u_{P,IN}(y) - u_{FS,m}(1,y), \\
&\mathcal{Q}[u, v]:= u \p_x u + v \p_y u, \\
\end{aligned} \right| (x, y) \in (1, \infty) \times \mathbb{R}_+
\end{align}
We now apply $\p_x^k$ of the above system. This produces for $k \ge 1$:
\begin{align}
\left. \begin{aligned}
&\bar{u} u^{(k)}_x + \bar{u}_x u^{(k)} + \bar{u}_y v^{(k)} + \bar{v} u^{(k)}_y - \p_y^2 u^{(k)} = F_{comm, k} - \eps \p_x^k \mathcal{Q}[u, v], \\
&\p_x u^{(k)} + \p_y v^{(k)} = 0, \\
&u^{(k)}|_{y = 0} = 0, \qquad \lim_{y \rightarrow \infty} u^{(k)}(x, y)= 0, \\
&u^{(k)}|_{x = 1} = u_{IN, k}(y), 
\end{aligned} \right| (x, y) \in (1, \infty) \times \mathbb{R}_+
\end{align}
where we define the forcing terms contributed by commutators via 
\begin{align}  \n
F_{comm, k} := &\sum_{k' = 0}^{k-1} \binom{k}{k'} \p_x^{k-k'} \bar{u} u^{(k'+1)} + \sum_{k' = 0}^{k-1} \binom{k}{k'} \p_x^{k-k'+1} \bar{u} u^{(k')} \\ \label{Fcommk}
& + \sum_{k' = 0}^{k-1} \binom{k}{k'} \p_x^{k-k'} \bar{v} u_y^{(k')} + \sum_{k' = 0}^{k-1} \binom{k}{k'} \p_x^{k-k'} \bar{u}_{y} v^{(k')} =: \sum_{i = 1}^4 F^{(i)}_{comm, k}.
\end{align}
We note that the Cauchy data, $u_{IN,k}(\cdot)$, for $k \ge 1$, will be computed as a functional of $u_{IN}$ itself in the standard manner using the equation. This is computed in Section \ref{sec:data}.

It is useful at this stage to the introduce a notation for the linearized Prandtl operator: 
\begin{align}
\mathcal{L}[u, v] := \bar{u} u_x + \bar{u}_x u + \bar{u}_y v + \bar{v} u_y - \p_y^2 u.
\end{align}

\subsection{Linear Good Variables \& Algebraic Structure}

We introduce the good-unknown: 
\begin{align} \label{QU}
Q_k := \frac{\psi^{(k)}}{\bar{u}}, \qquad U_k = \p_y Q_k = \p_y \Big( \frac{\psi^{(k)}}{\bar{u}} \Big) = \frac{1}{\bar{u}}(u^{(k)} - \frac{\bar{u}_y}{\bar{u}} \psi^{(k)}). 
\end{align}
We now perform some algebraic manipulations. First, we record the inversion formula
\begin{align} \label{inv:form}
u^{(k)} = \bar{u} U_k + \bar{u}_y Q_k
\end{align}
Next, we rewrite the entire transport part of the operator in terms of $U_k, Q_k$: 
\begin{align} \n
&\overline{u} \p_x u^{(k)} + u^{(k)} \p_x \overline{u} + \overline{v} \p_y u^{(k)} + v^{(k)} \p_y \overline{u} \\
= &\bar{u}^2 \p_x U_k + \bar{u} \bar{v} \p_y U_k + (\bar{u} \bar{u}_{xy} + \bar{v} \bar{u}_{yy})Q_k + 2 (\bar{u} \bar{u}_x + \bar{v} \bar{u}_y)U_k \\
= &\bar{u}^2 \p_x U_k + \bar{u} \bar{v} \p_y U_k + \bar{u}_{yyy}Q_k + 2 (\bar{u}_{yy} - \p_x p_E(x))U_k.
\end{align}
We thus retain the following form of the linearized Prandtl equation: 
\begin{align} \label{Pr:lin2}
\left.  \begin{aligned}
    &\bar{u}^2 \p_x U_k + \bar{u} \bar{v} \p_y U_k + \bar{u}_{yyy}Q_k + 2 (\bar{u}_{yy} - \p_x p_E(x))U_k  - \p_y^2 u^{(k)} = G_k  \\
    & u^{(k)}|_{y = 0} = 0, \qquad \lim_{y \rightarrow \infty} u^{(k)} = 0,   \\
    & u^{(k)}|_{x = 1} = u_{IN, k}(y)   \\
    &G_k :=  F_{comm, k} - \eps \p_x^k \mathcal{Q}[u, v]
        \end{aligned}\right| (x, y) \in (1, \infty) \times \mathbb{R}_+.
\end{align}

\subsection{Nonlinear Good Variables \& Algebraic Structure}

We start with the identity 
\begin{align} \n
\eps \p_x^k \mathcal{Q}[u, v] = & \eps \p_x^k \{ u \p_x u + \p_y u v \} \\ \n
= & \eps u \p_x u^{(k)} + \eps u_y v^{(k)} +  \eps \sum_{k' = 0}^{k-1} \binom{k}{k'} u^{(k-k')} u^{(k'+1)} + \p_y u^{(k-k')} v^{(k')} \\
= & \eps u \p_x u^{(k)} + \eps u_y v^{(k)} + \eps  \mathcal{Q}_{k, \text{lo}}[u, v],
\end{align}
where we defined 
\begin{align} \label{def:qklo}
\mathcal{Q}_{k, \text{lo}}[u, v] := \sum_{k' = 0}^{k-1} \binom{k}{k'} u^{(k-k')} u^{(k'+1)} + \p_y u^{(k-k')} v^{(k')}.
\end{align}
Motivated by the forthcoming identity, we denote our nonlinear corrections follows 
\begin{align}
\mu := \bar{u} + \eps u, \qquad \nu := \bar{v} + \eps v. 
\end{align}
We now rewrite our linear operator to include these ``quasilinear" terms as follows: 
\begin{align} \n
\mathcal{L}[u^{(k)}, v^{(k)}] + \eps \p_x^k \mathcal{Q}[u, v] = & (\overline{u} + \eps u) \p_x u^{(k)} + v^{(k)} ( \p_y \overline{u} + \eps u_y)  + u^{(k)} \p_x \overline{u} + \overline{v} \p_y u^{(k)} +  \eps  \mathcal{Q}_{k, \text{lo}}[u, v] \\
= & \mu \p_x u^{(k)}  + v^{(k)}  \p_y \mu  + u^{(k)} \p_x \overline{u} + \overline{v} \p_y u^{(k)} +  \eps  \mathcal{Q}_{k, \text{lo}}[u, v]
\end{align}
Motivated by the first two ``Rayleigh" terms appearing on the right-hand side above, we define 
\begin{align} \label{nl:defns}
\mathcal{Q}_k := \frac{\psi^{(k)}}{\mu}, \qquad \mathcal{U}_k := \p_y \mathcal{Q}_k, \qquad \mathcal{V}_k := - \p_x \mathcal{Q}_k.
\end{align}
From these identities, we have 
\begin{align}
u^{(k)} =& \p_y \psi^{(k)} = \p_y \{ \mu \mathcal{Q}_k \} = \mu \mathcal{U}_k + \p_y \mu \mathcal{Q}_k, \\
v^{(k)} =& - \p_x \psi^{(k)} = - \p_x \{ \mu \mathcal{Q}_k \} = \mu \mathcal{V}_k - \p_x \mu \mathcal{Q}_k 
\end{align}
after which we have 
\begin{align} \n
 \mu \p_x u^{(k)}  + v^{(k)}  \p_y \mu  = & \mu \p_x \{ \mu \mathcal{U}_k + \p_y \mu \mathcal{Q}_k \} + \p_y \mu \{   \mu \mathcal{V}_k - \p_x \mu \mathcal{Q}_k \} \\
 = & \mu^2 \p_x \mathcal{U}_k + \mu \mu_x \mathcal{U}_k + \mu \mu_{xy} \mathcal{Q}_k + \mu \p_y \mu \p_x \mathcal{Q}_k + \mu_{y} \mu \mathcal{V}_k - \mu_y \mu_x \mathcal{Q}_k \\
 = & \mu^2 \p_x \mathcal{U}_k + \mu \mu_x \mathcal{U}_k + (\mu \mu_{xy} - \mu_x \mu_y ) \mathcal{Q}_k 
\end{align}
Using these identities, we can write 
\begin{align}
\mathcal{L}[u^{(k)}, v^{(k)}] + \eps \p_x^k \mathcal{Q}[u, v] = &  \mu^2 \p_x \mathcal{U}_k + ( \mu \mu_x + \bar{u}_x \mu + 2 \bar{v} \mu_y ) \mathcal{U}_k \\
&+ (\mu \mu_{xy} + \bar{v} \mu_{yy} -( \mu_x - \bar{u}_x) \mu_y ) \mathcal{Q}_k + \bar{v} \mu \p_y \mathcal{U}_k \\
&+ \eps  \mathcal{Q}_{k, \text{lo}}[u, v]
\end{align}
We give a name to the coefficients appearing above: 
\begin{align}
\bold{a} := &\mu \mu_x + \bar{u}_x \mu + 2 \bar{v} \mu_y, \\
\bold{b} := &\mu \mu_{xy} + \bar{v} \mu_{yy} -( \mu_x - \bar{u}_x) \mu_y, \\
\bold{c} := & \bar{v} \mu. 
\end{align}
With these definitions in hand, we have our ``quasilinearized" system 
\begin{align} \label{ql:sys}
\mu^2 \p_x \mathcal{U}_k + \bold{a} \mathcal{U}_k + \bold{b} \mathcal{Q}_k + \bold{c} \p_y \mathcal{U}_k - \p_y^2 u^{(k)} = F_{comm,k} - \eps \mathcal{Q}_{k,\text{lo}}[u, v] := H_k. 
\end{align}

\subsection{Cauchy Data: Iteration Scheme \& Compatibility Conditions} \label{sec:data}

We set the notation
\begin{align*}
u_{IN; k}(y) & := u^{(k)}(1, y), \qquad \psi_{IN; k}(y) := \psi^{(k)}(1, y), \\
U_{IN; k}(y) &:= U_k|_{x = 0}(y), \qquad Q_{IN; k}(y) := Q_k|_{x = 0}(y). 
\end{align*}
First, we are prescribed 
\begin{align}
u_{IN}(y), \qquad \psi_{IN}(y) := \int_0^y u_{IN}(y') \ud y',
\end{align}
and from here we may immediately compute 
\begin{align}
U_{IN}(y) := \frac{1}{\bar{u}}(u_{IN} - \frac{\bar{u}_y}{\bar{u}} \psi_{IN}).
\end{align}
For this quantity to be well-defined we require 
\begin{align} \label{comp:cond:0}
\textbf{CC}_0: u_{IN}(0) = 0. 
\end{align}
Our interest is in obtaining the quantities $U_{IN;k}(y)$. To do so in the most streamlined way, we define the operator 
\begin{align}
\text{Ray}[U] := \bar{u} U + \bar{u}_y \int_0^y U, 
\end{align}
which allows us to convert $U_{IN;k}$ to $u_{IN;k}$ according to the relation \eqref{inv:form}. Second, motivated by the equation \eqref{Pr:lin2}, 
\begin{align}
\text{Jump}_k[U] := \frac{1}{\bar{u}^2}(G_k + \p_y^2 \text{Ray}[U] - \bar{u} \bar{v} \p_y U - \bar{u}_{yyy} \int_0^y U \ud y' - 2(\bar{u}_{yy} - \p_x p_E(x)) U ). 
\end{align}
The $\text{Jump}_k[\cdot]$ operator allows us to compute $\p_x U_k|_{x = 1}$ from $U_{IN;k}$ and $u_{IN;k} = \text{Ray}[U_{IN;k}]$. Due to the discrepancy between $\p_x U_k$ and $U_{k+1}$ (a computation yields $U_{k+1} =  \p_x  U_k + \frac{\bar{u}_x}{\bar{u}} U_k + \p_y \{ \frac{\bar{u}_x}{\bar{u}} \} Q_k$), we will need to modify the expression for $\p_x U_k|_{x = 1}$ by the two extra quantities. Therefore, we also define 
\begin{align}
\text{Shift}[U] := \frac{\bar{u}_x}{\bar{u}} U + \p_y \{ \frac{\bar{u}_x}{\bar{u}} \} \int_0^y U \ud y'. 
\end{align}
We then have our map 
\begin{align}
U_{IN; k+1} = \text{Jump}_k[U_{IN; k}] + \text{Shift}[U_{IN; k}]. 
\end{align}
Of course, inductively we can write 
\begin{align} \label{defTkdata}
U_{IN; k} = (\text{Jump}_{k-1} + \text{Shift}) \circ \dots  \circ (\text{Jump}_0 + \text{Shift})[ U_{IN;0}] =: \mathcal{T}_k[U_{IN}], k \ge 1,
\end{align}
(and we put $\mathcal{T}_0[U] = U$ to be the identity map). Therefore, we may state the $k'$th order compatibility conditions as follows: 
\begin{align*}
&\Big[G_{k-1} + \p_y^2 \text{Ray}[U_{IN; k-1}] - \bar{u} \bar{v} \p_y U_{IN; k-1} - \bar{u}_{yyy} \int_0^y U_{IN; k-1} \ud y' \\
& \qquad \qquad- 2(\bar{u}_{yy} - \p_x p_E(x)) U_{IN;k-1}\Big]|_{y = 0} = 0 \\
&\p_y|_{y = 0} \Big[G_{k-1} + \p_y^2 \text{Ray}[U_{IN; k-1}] - \bar{u} \bar{v} \p_y U_{IN; k-1} - \bar{u}_{yyy} \int_0^y U_{IN; k-1} \ud y' \\
& \qquad \qquad - 2(\bar{u}_{yy} - \p_x p_E(x)) U_{IN;k-1}\Big] = 0,
\end{align*}
which of course can be rewritten as 
\begin{align} \n
\bold{CC}_{k; 1}: &\Big[G_{k-1} + \p_y^2 \text{Ray}[\mathcal{T}_{k-1}[U_{IN}]] - \bar{u} \bar{v} \p_y \mathcal{T}_{k-1}[U_{IN}] - \bar{u}_{yyy} \int_0^y \mathcal{T}_{k-1}[U_{IN}] \ud y' \\ \label{comp:cond:1}
& \qquad \qquad - 2(\bar{u}_{yy} - \p_x p_E(x)) \mathcal{T}_{k-1}[U_{IN}]\Big]|_{y = 0} = 0, \\ \n
\bold{CC}_{k; 2}:  &\p_y|_{y = 0} \Big[G_{k-1} + \p_y^2 \text{Ray}[\mathcal{T}_{k-1}[U_{IN}]] - \bar{u} \bar{v} \p_y \mathcal{T}_{k-1}[U_{IN}] - \bar{u}_{yyy} \int_0^y \mathcal{T}_{k-1}[U_{IN}] \ud y' \\ \label{comp:cond:2}
& \qquad \qquad - 2(\bar{u}_{yy} - \p_x p_E(x)) \mathcal{T}_{k-1}[U_{IN}]\Big] = 0.
\end{align}
\section{Norms \& Embedding Theorems}\label{sec:3}

\subsection{Energy-CK-Dissipation Functionals}

We will define the following ``integer-level" energy-CK-dissipation functionals 
\begin{align}
\mathcal{E}_{k,n}(x) := &\int_{\mathbb{R}_+} \bar{u}^2 U_k^2 x^{2k - \frac{1}{100}} \langle \eta \rangle^{2n} \ud y, \\
\mathcal{CK}_{k,n}(x) := &\frac{1}{100} \int_{\mathbb{R}_+} \bar{u}^2 U_k^2 x^{2k -1 - \frac{1}{100}} \langle \eta \rangle^{2n} \ud y, \\
\mathcal{CK}_{k,n}^{(P)}(x) := & \int_{\mathbb{R}_+} (-\p_x p_E(x))U_k^2 x^{2k- \frac{1}{100}} \langle \eta \rangle^{2n} \ud y  \\
\mathcal{D}_{k,n}(x) := & \int_{\mathbb{R}_+} \bar{u} |\p_y U_k|^2 x^{2k- \frac{1}{100}} \langle \eta \rangle^{2n} \ud y, \\
\mathcal{B}_{k}(x) := &  \bar{u}_y |U_k(x, 0)|^2 x^{2k- \frac{1}{100}}.
\end{align}
Above, we have introduced the $x^{-\frac{1}{100}}$ weight in order to generate the quantity $\mathcal{CK}_{k,n}$. Formally, this term is of the same type as $\mathcal{CK}_{k,n}^{(P)}$, the CK-term generated by the favorable pressure gradient. However, the term $CK_{k,n}^{(P)}$ has a prefactor of $m$, and we would like to get estimates uniform as $m \downarrow 0$. Therefore, we use $\mathcal{CK}_{k,n}(x)$ in order to absorb the majority of error terms. 

We also need the following ``half-level" energy-CK-dissipation functionals: 
\begin{align}
\mathcal{E}^{(Y)}_{k+\frac12,n}(x) := & \int_{\mathbb{R}_+} \bar{u} |\p_y U_k|^2 x^{1 - \frac{1}{100}+ 2k} \langle \eta \rangle^{2n} \ud y, \\
\mathcal{D}^{(Y)}_{k + \frac12,n}(x) := & \int_{\mathbb{R}_+} \bar{u}^2 |\p_x U_k|^2 x^{1 - \frac{1}{100}+ 2k} \langle \eta \rangle^{2n} \ud y,
\end{align} 
and 
\begin{align}
\mathcal{E}^{(Z)}_{k+\frac12,n}(x) := & \int_{\mathbb{R}_+} \bar{u}^2 |\p_y U_k|^2 x^{2k + 1 - m- \frac{1}{100}}\langle \eta \rangle^{2n} \ud y, \\
\mathcal{D}^{(Z)}_{k + \frac12,n}(x) := &\int_{\mathbb{R}_+} \bar{u} |\p_y^2 U_k|^2 x^{1 + 2k-m- \frac{1}{100}} \langle \eta \rangle^{2n} \ud y, \\
\mathcal{B}^{(Z)}_{k + \frac12}(x) := &  \bar{u}_y |\p_y U_k(x, 0)|^2 x^{2k + 1 -m- \frac{1}{100}}
\end{align} 
Our total energy functional will be as follows: 
\begin{align} \label{dfr:1}
\mathcal{E}(x) := & \sum_{k = 0}^4  \sigma_{k} \mathcal{E}_{k,10-k}(x) + \sum_{k = 0}^4  (\sigma_{k + \frac12}^{(Y)} \mathcal{E}^{(Y)}_{k+ \frac12,9-k}(x)  + \sigma_{k + \frac12}^{(Z)} \mathcal{E}^{(Z)}_{k+ \frac12,9-k}(x)) +  \sigma_{5} \mathcal{E}_{5,0}(x), \\ \label{dfr:2}
\mathcal{D}(x) := &\sum_{k = 0}^4 \sigma_{k} \mathcal{D}_{k,10-k}(x) + \sum_{k = 0}^4  (\sigma_{k + \frac12}^{(Y)}  \mathcal{D}^{(Y)}_{k+ \frac12,9-k}(x) + \sigma_{k + \frac12}^{(Z)} \mathcal{D}^{(Z)}_{k+ \frac12,9-k}(x) ) + \sigma_{5} \mathcal{D}_{5,0}(x), \\ \label{dfr:3}
\mathcal{CK}(x) := &  \sum_{k = 0}^4 \sigma_{k} \mathcal{CK}_{k,10-k}(x) + \sigma_{5} \mathcal{CK}_{5,0}(x), \\ \label{dfr:4}
\mathcal{CK}^{(P)}(x) := &  \sum_{k = 0}^4 \sigma_{k} \mathcal{CK}^{(P)}_{k,10-k}(x) + \sigma_{5} \mathcal{CK}^{(P)}_{5,0}(x), \\ \label{dfr:5}
\mathcal{B}(x) := &\sum_{k = 0}^4 \sigma_{k} \mathcal{B}_{k}(x) + \sum_{k = 0}^4 \sigma_{k + \frac12}^{(Z)} \mathcal{B}^{(Z)}_{k+ \frac12}(x) + \sigma_{5} \mathcal{B}_{5}(x).
\end{align}

\begin{remark}The quantities $\sigma_{k}, \sigma_{k + \frac12}^{(Y)}, \sigma_{k + \frac12}^{(Z)}$ are scalar weights that will be chosen, and will satisfy the ordering: 
\begin{align} \n
\sigma_{k, 0} >& \sigma_{k + \frac12, 0}^{(Y)} = \sigma_{k + \frac12, 0}^{(Z)} > \sigma_{k+1, 0} \dots 
\end{align}
The purpose of these scalars is technical in nature and can be understood as follows: if we have a lower order norm, say $\| \cdot \|_{A}$, that is closed: $\| \cdot \|_A \lesssim \text{Data}$, and then a higher order norm estimated in terms of the lower order norm $\| \cdot \|_{B} \le C \| \cdot \|_A$, then we form our total norm by taking a weighted linear combination  $\| \cdot \|_A + \sigma \| \cdot \|_B \le \text{Data} + \sigma C \| \cdot \|_A$, which, if $\sigma C < 1$ closes an estimate for both $A$ and $B$. Our energy scheme will essentially consist of a stand-alone estimate for $\mathcal{E}_0$, and then each successive $\mathcal{E}_k$ will be estimated (almost) in terms of the previous order. Hence, including these weights $\sigma_k$ accounts for this ``iterative" structure. 
\end{remark}

We now introduce our quasilinear energy: 
\begin{align}
\overline{\mathcal{E}}_k(x) :=& \int_{\mathbb{R}_+} \mu |\mathcal{U}_k|^2 x^{2k- \frac{1}{100}} \ud y, \\
\overline{\mathcal{D}}_k(x) := & \int_{\mathbb{R}_+} \mu |\p_y \mathcal{U}_k|^2 x^{2k- \frac{1}{100}} \ud y, \\
\overline{\mathcal{CK}}_k(x) :=&\frac{1}{100} \int_{\mathbb{R}_+} \bar{u}^2 |\mathcal{U}_k|^2 x^{2k-1- \frac{1}{100}} \ud y, \\
\overline{\mathcal{CK}}^{(P)}_k(x) :=& \int_{\mathbb{R}_+} (-\p_xp_E(x)) |\mathcal{U}_k|^2 x^{2k- \frac{1}{100}} \ud y, \\
\overline{\mathcal{B}}_{k}(x) := &  \bar{\mu}_y |\mathcal{U}_k(x, 0)|^2 x^{2k- \frac{1}{100}}.
\end{align}
Our total \textit{quasilinear} energy functional will be as follows: 
\begin{align} \label{mellow:1}
\mathcal{E}_{\text{Quasi}}(x) := & \sum_{k = 0}^4 \sigma_{k} \mathcal{E}_{k,10-k}(x) + \sum_{k = 0}^4 ( \sigma^{(Y)}_{k + \frac12} \mathcal{E}^{(Y)}_{k+ \frac12,9-k}(x)  + \sigma^{(Z)}_{k + \frac12} \mathcal{E}^{(Z)}_{k+ \frac12,9-k}(x))+ \sigma_{5} \overline{\mathcal{E}}_5(x),  \\ \label{mellow:2}
\mathcal{D}_{\text{Quasi}}(x) := &\sum_{k = 0}^4  \sigma_{k} \mathcal{D}_{k,10-k}(x) + \sum_{k = 0}^4 (\sigma^{(Y)}_{k + \frac12} \mathcal{D}^{(Y)}_{k+ \frac12,9-k}(x) +\sigma^{(Z)}_{k + \frac12} \mathcal{D}^{(Z)}_{k+ \frac12,9-k}(x) ) + \sigma_{5} \overline{\mathcal{D}}_5(x),  \\ \label{mellow:3}
\mathcal{CK}_{\text{Quasi}}(x) := &  \sum_{k = 0}^4  \sigma_{k}  \mathcal{CK}_{k,10-k}(x) + \sigma_{5} \overline{\mathcal{CK}}_5(x), \\ \label{mellow:4}
\mathcal{CK}_{\text{Quasi}}^{(P)}(x) := &  \sum_{k = 0}^4  \sigma_{k}  \mathcal{CK}^{(P)}_{k,10-k}(x) + \sigma_{5} \overline{\mathcal{CK}}^{(P)}_5(x), \\ 
\mathcal{B}_{\text{Quasi}}(x) := &\sum_{k = 0}^4 \sigma_{k}  \mathcal{B}_{k}(x) + \sum_{k = 0}^4 \sigma^{(Z)}_{k + \frac12}  \mathcal{B}^{(Z)}_{k+ \frac12}(x) + \sigma_{5} \overline{\mathcal{B}}_{5}(x).
\end{align}
We will introduce some auxiliary notation to help simplify various expressions in the forthcoming bounds: let 
\begin{align} \n
\mathcal{I}_{\le k}(x) := & \sum_{k' = 0}^k (  \mathcal{D}_{k,0}(x) +   \mathcal{CK}_{k,0}(x) +   \mathcal{B}_{k}(x)   )+  \bold{1}_{k \ge 1} \sum_{k' = 0}^{k-1} ( \mathcal{D}^{(Y)}_{k + \frac12,0}(x) + \mathcal{D}^{(Z)}_{k + \frac12,0}(x) \\
& + \mathcal{B}^{(Z)}_{k + \frac12}(x)  ) , \\
\mathcal{I}_{\le k + \frac12}(x) := & \sum_{k' = 0}^k (  \mathcal{D}_{k,0}(x) +   \mathcal{CK}_{k,0}(x) +   \mathcal{B}_{k}(x)   )+   \sum_{k' = 0}^{k} ( \mathcal{D}^{(Y)}_{k + \frac12,0}(x) + \mathcal{D}^{(Z)}_{k + \frac12,0}(x) + \mathcal{B}^{(Z)}_{k + \frac12}(x)  ).
\end{align}
Because eventually our choice of weight (indexed by $n$) is tied to regularity (indexed by $k$), we also will have a need for the notation: 
\begin{align} \n
\widehat{\mathcal{I}}_{ k}(x) := & \sum_{k' = 0}^k  (  \mathcal{D}_{k,10-k}(x) +   \mathcal{CK}_{k,10-k}(x) +   \mathcal{B}_{k}(x)   )+  \bold{1}_{k \ge 1} \sum_{k' = 0}^{k-1}  ( \mathcal{D}^{(Y)}_{k + \frac12,9-k}(x) + \mathcal{D}^{(Z)}_{k + \frac12,9-k}(x) \\ \label{defn:hat:IK}
& + \mathcal{B}^{(Z)}_{k + \frac12}(x)  ) , \\ \n
\widehat{\mathcal{I}}_{k + \frac12}(x) := & \sum_{k' = 0}^k  (  \mathcal{D}_{k,10-k}(x) +   \mathcal{CK}_{k,10-k}(x) +   \mathcal{B}_{k}(x)   )+   \sum_{k' = 0}^{k}  ( \mathcal{D}^{(Y)}_{k + \frac12,9-k}(x) + \mathcal{D}^{(Z)}_{k + \frac12,9-k}(x) \\
&+ \mathcal{B}^{(Z)}_{k + \frac12}(x)  ).
\end{align}

Our bootstrap hypothesis will be the following:
\begin{align} \label{boots:1}
\sup_{0 \le x \le X_\ast} \mathcal{E}_{\text{Quasi}}(x) +  \int_0^{X_\ast} (\frac{1}{10}\mathcal{D}(x) +\frac{1}{10} \mathcal{CK}(x) +  \mathcal{CK}^{(P)}(x) +\frac{1}{10} \mathcal{B}(x) ) dx \le 10 \mathcal{E}_{\text{Init}}, 
\end{align}
\begin{remark} The proof will proceed by completing an energy estimate on the quasilinear energy/dissipation/CK functionals, \eqref{mellow:1} -- \eqref{mellow:4}. We then prove equivalence of the $L^1_t$ quantities $\mathcal{D}_{\text{Quasi}}, \mathcal{CK}_{\text{Quasi}}, \mathcal{B}_{\text{Quasi}}$ to their linear counterparts, which are bootstrapped in \eqref{boots:1}. We do not attempt to prove that $\mathcal{E}_{\text{Quasi}}$ is ``equivalent" to its linear counterpart $\mathcal{E}(x)$.
\end{remark}

\subsection{$L^2$ Bounds on Good Variables}

\begin{lemma} Let $f = f(y) \in L^2(\mathbb{R}_+)$. Let $0 < \gamma << 1$. Then 
\begin{align}\label{lam:1}
x^{2m-1}\|  f \|_{L^2_y}^2 \lesssim \frac{1}{\lambda^2} \| \bar{u} f x^{-\frac12} \|_{L^2_y}^2 + \lambda \| \sqrt{\bar{u}} f_y \|_{L^2_y}^2
\end{align} 

\end{lemma}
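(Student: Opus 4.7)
I would exploit the self-similar structure of $\bar u$: recall $\bar u(x,y) = x^m f_{FS}'(\xi)$ where $\xi = \sqrt{(m+1)/2}\, y\, x^{-(1-m)/2}$, and the FS profile derivative is monotone increasing with $f_{FS}'(0) = 0$, $f_{FS}'(\infty) = 1$, and $f_{FS}'(\xi) \sim f_{FS}''(0)\, \xi$ as $\xi \to 0^+$. Performing the substitution $y \mapsto \xi$ (with Jacobian $\frac{\ud y}{\ud \xi} = \sqrt{2/(m+1)}\, x^{(1-m)/2}$), a direct bookkeeping shows every term in the desired inequality carries a common factor of $x^{(3m-1)/2}$, which cancels. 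Thus the claim reduces to the $x$-free weighted inequality
\begin{equation*}
\int_0^\infty g^2 \, \ud \xi \lesssim \frac{1}{\lambda^2} \int_0^\infty (f_{FS}')^2 g^2 \, \ud \xi + \lambda \int_0^\infty f_{FS}'\, g_\xi^2 \, \ud \xi, \qquad g(\xi) := f(y(\xi)).
\end{equation*}
For $\lambda \gtrsim 1$ the bound is straightforward (fix any $\xi_0$ with $f_{FS}'(\xi_0) > 0$ and split $\int g^2 = \int_0^{\xi_0} + \int_{\xi_0}^\infty$, controlling each piece trivially), so the delicate regime is $\lambda \in (0,1)$, which I treat below.

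Next I would split the $\xi$-domain at $\xi_\lambda := \lambda$. On the tail $\xi \ge \xi_\lambda$, monotonicity of $f_{FS}'$ gives $f_{FS}'(\xi) \ge f_{FS}'(\lambda) \ge c \lambda$, so
\begin{equation*}
\int_{\xi_\lambda}^\infty g^2 \, \ud \xi \le \frac{1}{c^2 \lambda^2} \int_0^\infty (f_{FS}')^2 g^2 \, \ud \xi,
\end{equation*}
which produces the first term on the RHS with the correct $\lambda^{-2}$ weight.

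For the interior $(0, \xi_\lambda)$, I would write $g(\xi)^2 \le 2 g(\xi_\lambda)^2 + 2 (g(\xi) - g(\xi_\lambda))^2$ and handle each piece separately. For the difference term, weighted Cauchy--Schwarz in $\tau$ using $1/f_{FS}'(\tau) \lesssim 1/\tau$ on $(0,1)$ yields
\begin{equation*}
(g(\xi) - g(\xi_\lambda))^2 \le \Big( \int_\xi^{\xi_\lambda} \frac{\ud \tau}{f_{FS}'(\tau)} \Big) \int_0^\infty f_{FS}'\, g_\tau^2 \, \ud \tau \lesssim \log(\xi_\lambda/\xi) \int f_{FS}'\, g_\xi^2 \, \ud \xi,
\end{equation*}
and integrating the (integrable) log over $\xi \in (0, \xi_\lambda)$ absorbs it into a factor $O(\xi_\lambda) = O(\lambda)$, matching the second RHS term. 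For the pointwise piece $\xi_\lambda g(\xi_\lambda)^2$, I apply a trace-averaging bound: from $g(\xi_\lambda)^2 \le 2 g(\xi')^2 + 2(\xi' - \xi_\lambda) \int_{\xi_\lambda}^{\xi'} g_\tau^2 \, \ud \tau$ and integration in $\xi' \in (\xi_\lambda, 2\xi_\lambda)$, I obtain
\begin{equation*}
\xi_\lambda g(\xi_\lambda)^2 \lesssim \int_{\xi_\lambda}^{2\xi_\lambda} g^2 \, \ud \xi + \xi_\lambda^2 \int_{\xi_\lambda}^{2\xi_\lambda} g_\xi^2 \, \ud \xi.
\end{equation*}
On the interval $(\xi_\lambda, 2 \xi_\lambda)$ one has $f_{FS}' \sim \lambda$, so converting weights gives respectively $\lambda^{-2} \int (f_{FS}')^2 g^2 \, \ud \xi$ and $\lambda \int f_{FS}'\, g_\xi^2 \, \ud \xi$; assembling the three contributions closes the estimate.

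The main obstacle is the transition region at $\xi = \xi_\lambda$: one must track the trace constants carefully so that the averaging delivers exactly the $\lambda^{-2}$ and $\lambda$ factors matching the two RHS terms, rather than some mismatched combination. The success of the argument rests on two delicate facts---the integrability of the logarithm $\log(\xi_\lambda/\xi)$ arising from Hardy--Cauchy-Schwarz against the near-linear weight $f_{FS}'(\tau)$ at the origin, and the fact that the chosen cutoff $\xi_\lambda \sim \lambda$ places both weights $f_{FS}'$ and $1/f_{FS}'$ simultaneously under comparable control on the boundary strip.
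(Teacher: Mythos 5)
Your proposal is correct, and it reaches the same two-region architecture as the paper (split at $\eta\sim\lambda$; on the far side use $\bar u\gtrsim \lambda x^m$ to pay the $\lambda^{-2}$; on the near side trade the degenerate weight for one derivative), but the mechanism you use in the interior region is genuinely different. The paper handles $\{\eta\le\lambda\}$ with a single Hardy-type integration by parts, writing $\int \p_y\{y\}\,x^{2m-1}f^2\chi(\eta\le\lambda)$, Cauchy--Schwarzing the cross term into $\|f x^{m-\frac12}\|_{L^2}\cdot\|\eta\, x^{m/2} f_y\chi\|_{L^2}$, extracting one factor of $\lambda$ from $\eta\le\lambda$ and $\sqrt{\eta}\,x^{m/2}\lesssim\sqrt{\bar u}$, and absorbing a $\tfrac{1}{10}$-multiple of the left-hand side; the cutoff derivative term is then treated like the far region. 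You instead freeze the value at the threshold, $g(\xi)^2\lesssim g(\xi_\lambda)^2+(g(\xi)-g(\xi_\lambda))^2$, control the difference by weighted Cauchy--Schwarz against $1/f_{FS}'(\tau)\lesssim 1/\tau$ (producing the integrable logarithm whose $\xi$-integral yields the factor $\lambda$), and dispatch the trace value by averaging over the strip $(\xi_\lambda,2\xi_\lambda)$ where both $f_{FS}'$ and $1/f_{FS}'$ are comparable to powers of $\lambda$. Both arguments rest on the same structural input $\bar u\gtrsim x^m\min(\eta,1)$; the paper's route is shorter and avoids the trace and logarithm entirely at the cost of the self-absorption step, while yours is more modular (no need to absorb into the left-hand side) and makes the role of the threshold $\xi_\lambda\sim\lambda$ completely explicit. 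Your preliminary reduction to the $x$-free inequality via the similarity variable is a clean bookkeeping device that the paper effectively performs inline through the $\eta$-weights.
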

\begin{proof} We decompose the left-hand side into 
\begin{align} \label{hgu:1:2}
x^{2m-1}\|  f \|_{L^2_y}^2 \le& \int_{\mathbb{R}_+} x^{2m-1} f^2 \chi(\eta \le \lambda) \ud y + \int_{\mathbb{R}_+} x^{2m-1} f^2 \chi(\eta > \lambda) \ud y :=  I_{Near} + I_{Far}.
\end{align}
For the ``near" term, we use a Hardy type inequality as follows: 
\begin{align} \n
  I_{Near} = & \int_{\mathbb{R}_+} \p_y \{ y \} x^{2m-1} f^2 \chi(\eta \le \lambda) \ud y \\ \n
  = & - \int_{\mathbb{R}_+} y x^{2m-1} 2 f f_y \chi(\eta \le \lambda) \ud y - \frac{1}{\lambda} \int_{\mathbb{R}_+} y x^{2m-1} f^2 \frac{1}{x^{\frac12(1-m)}} \chi'(\eta \le \lambda) \ud y \\
  = &  I_{Near}^{(1)} +  I_{Near}^{(2)}.  
\end{align}
For $I_{Near}^{(1)}$ we proceed by using Cauchy-Schwartz as follows: 
\begin{align*}
|I_{Near}^{(1)}| \lesssim & \| f x^{m - \frac12} \|_{L^2_y} \| \frac{y}{x^{\frac12(1-m)}} x^{\frac{m}{2}} f_y \chi(\eta \le \lambda) \|_{L^2_y} \\
\le & \frac{1}{10} \| f x^{m - \frac12} \|_{L^2_y}^2 + \| \frac{y}{x^{\frac12(1-m)}} x^{\frac{m}{2}} f_y \chi(\eta \le \lambda) \|_{L^2_y}^2 \\
\le & \frac{1}{10} \| f x^{m - \frac12} \|_{L^2_y}^2 + \lambda \| \sqrt{ \frac{y}{x^{\frac12(1-m)}}} x^{\frac{m}{2}} f_y \chi(\eta \le \lambda) \|_{L^2_y}^2 \\
\le & \frac{1}{10} \| f x^{m - \frac12} \|_{L^2_y}^2 + \lambda \| \sqrt{\bar{u}} f_y \chi(\eta \le \lambda) \|_{L^2_y}^2,
\end{align*}
where we use the factor of $1/10$ to the left-hand side of \eqref{hgu:1:2}.

For the ``far" term, we have
\begin{align}
\frac{\bar{u}x^{-m}}{\lambda} \chi(\eta > \lambda) \ge  \chi(\eta > \lambda)
\end{align}
after which 
\begin{align}
I_{Far} \le & \int_{\mathbb{R}_+} x^{2m-1} f^2 \Big(\frac{\bar{u}x^{-m}}{\lambda}\Big)^2 \chi(\eta > \lambda) \ud y \le  \int_{\mathbb{R}_+} x^{2m-1}\bar{u}^2 f^2 x^{-1} \ud y.
\end{align}
\end{proof}

We next have the following interpolation inequalities: 
\begin{lemma} Given any $0 < \lambda << 1$, the following bounds are valid: 
\begin{align}  \label{train:1}
\| \p_y U_k x^{k + \frac{m}{2} - \frac{1}{100}} \langle \eta \rangle^n\|_{L^2}^2 \le & C_{\lambda} \mathcal{D}_{k,n}(x) + \lambda \mathcal{D}_{k + \frac12,0}^{(Z)}(x), \qquad k \ge 0, \\ \label{train:2}
\| \p_x U_k x^{k + \frac12 + m - \frac{1}{100}} \langle \eta \rangle^n\|_{L^2}^2 \le & C_\lambda \mathcal{D}_{k + \frac12, n}^{(Y)}(x) + \lambda \mathcal{D}_{k + 1, 0}(x) + \lambda ( \mathcal{D}_{k,0}(x) + \mathcal{CK}_{k,0}(x)), \qquad k \ge 0, \\ \label{train:3}
\| U_{k} x^{k - \frac12 +m - \frac{1}{100}} \langle \eta \rangle^n\|_{L^2}^2 \le &C_\lambda (\mathcal{D}_{(k-1) + \frac12, n}^{(Y)}(x) + \mathcal{CK}_{k-1} + \mathcal{D}_{k-1,0})+ \lambda \mathcal{D}_{k, 0}(x), \qquad k \ge 1.
\end{align}
\end{lemma}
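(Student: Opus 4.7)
The plan is to prove all three inequalities by a common strategy: split the $y$-integral into a near region $\{\eta \le \lambda\}$ and a far region $\{\eta > \lambda\}$ via a smooth cutoff $\chi(\eta/\lambda)$, exploiting the two-sided control $c\,x^m \min(\eta, 1) \le \bar u \le C x^m$ of the self-similar profile. On the far region one has $\bar u \ge c x^m$, so powers of $x^m$ can be traded for powers of $\bar u$ and the integrand is bounded, up to a $C_\lambda = C/c(\lambda)$ constant, by the corresponding $\mathcal{D}_{k,n}$, $\mathcal{D}^{(Y)}_{k+\frac12,n}$, or $\mathcal{D}^{(Y)}_{(k-1)+\frac12,n}$ terms listed on the right-hand sides. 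The $x^{-1/50}$ vs $x^{-1/100}$ discrepancy is absorbed trivially using $x \ge 1$. This is exactly the mechanism already used in the proof of \eqref{lam:1}.

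For the near region, the strategy is the Hardy-type integration by parts against $\p_y\{y\}$, after which a factor of $y^2$ appears. The crucial arithmetic is that on $\{\eta \le \lambda\}$ one has $y \le \lambda\, x^{\frac{1}{2}(1-m)}$ together with $\bar u \gtrsim y\, x^{\frac{3m-1}{2}}$, hence $y^2 \lesssim \lambda\, \bar u\, x^{1-2m}$. For inequality \eqref{train:1}, Hardy produces the cross-term $y\, \p_y U_k \cdot \p_y^2 U_k$; Cauchy-Schwarz with this bound on $y^2$ yields exactly $\int \bar u\, |\p_y^2 U_k|^2\, x^{2k+1-m-\frac{1}{50}}\langle\eta\rangle^{2n}\,\mathrm dy$, which is controlled by $\mathcal{D}^{(Z)}_{k+\frac12,0}$ (the $\langle \eta\rangle^{2n}$ factor is harmless because $\eta \le \lambda \lesssim 1$ on the support), while the $\p_y U_k$ factor and the $\chi'$ boundary term reabsorb into $C_\lambda \mathcal{D}_{k,n}$.

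For \eqref{train:2}, the same Hardy step produces a factor $y\,\p_x U_k \cdot \p_y\p_x U_k$. One then uses the shift identity $U_{k+1} = \p_x U_k + \frac{\bar u_x}{\bar u} U_k + \p_y\{\frac{\bar u_x}{\bar u}\} Q_k$ to rewrite $\p_y\p_x U_k = \p_y U_{k+1} + (\text{scalar self-similar coefficients}) \cdot (\p_y U_k, U_k, Q_k)$. The leading $y\,\p_y U_{k+1}$ piece, combined with $y^2 \lesssim \lambda \bar u\, x^{1-2m}$, yields $\lambda \mathcal{D}_{k+1,0}$; the commutator pieces are lower order and absorb into $\lambda(\mathcal{D}_{k,0}+\mathcal{CK}_{k,0})$, noting that $Q_k$ is controlled by $U_k$ through a one-dimensional Hardy inequality (a factor of $1/\bar u$ converts to $\mathcal{CK}_{k,0}$). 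Inequality \eqref{train:3} is handled by first invoking the same shift identity in the opposite direction, $U_k = \p_x U_{k-1} + \frac{\bar u_x}{\bar u}U_{k-1} + \p_y\{\frac{\bar u_x}{\bar u}\}Q_{k-1}$: the $\p_x U_{k-1}$ piece is then estimated exactly as the left-hand side of \eqref{train:2} but one regularity order lower, giving the $\mathcal{D}^{(Y)}_{(k-1)+\frac12,n}$ term on the far region and, via the Hardy step upgraded by the same shift identity, the $\lambda \mathcal{D}_{k,0}$ term on the near region, while the shift remainder terms contribute to $\mathcal{CK}_{k-1}$ and $\mathcal{D}_{k-1,0}$.

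The main obstacle, and what makes the statement delicate, is bookkeeping the exponents of $x$: every step has to align the exponent produced by the near-region substitution $y^2 \lesssim \lambda \bar u\, x^{1-2m}$ with the precise exponent appearing in $\mathcal{D}^{(Z)}_{k+\frac12}$, $\mathcal{D}_{k+1,0}$, or $\mathcal{D}_{k,0}$. The self-similar scaling conspires so that the shifts of $(1-m)$, $(1-2m)$, and $m$ cancel exactly, but one must verify this term-by-term for each of the three inequalities. A secondary technical point is the $\chi'$ boundary error, which is localized at $\eta \sim \lambda$ where $\bar u \sim \lambda x^m$; a direct comparison converts it into a large multiple (depending on $\lambda$) of the same lower-order term on the right-hand side, and is therefore harmless once $\lambda$ is fixed.
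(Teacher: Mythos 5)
Your proposal is correct and follows essentially the same route as the paper: the near/far localization in $\eta$ with $\bar u\gtrsim x^m$ in the far region, the Hardy-type integration by parts against $\p_y\{y\}$ in the near region (which the paper packages as the preceding interpolation inequality \eqref{lam:1}), and the shift identities \eqref{union:2}--\eqref{union:3} to convert $\p_x U_k$ and $\p_x\p_y U_k$ into $U_{k+1}$, $\p_y U_{k+1}$ plus commutators controlled by $\mathcal{CK}_{k,0}$ and $\mathcal{D}_{k,0}$. The exponent bookkeeping you describe ($y^2\lesssim\lambda\,\bar u\,x^{1-2m}$ on $\{\eta\le\lambda\}$) is exactly the cancellation the paper exploits.
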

\begin{proof}[Proof of \eqref{train:1}] First of all, we have 
\begin{align} \n
\| \p_y U_k x^{k + \frac{m}{2}- \frac{1}{100}} \langle \eta \rangle^n\|_{L^2}^2 \le & \| \p_y U_k x^{k + \frac{m}{2}- \frac{1}{100}} \langle \eta \rangle^n \chi(\eta \ge 1)\|_{L^2}^2 + \| \p_y U_k x^{k + \frac{m}{2}- \frac{1}{100}} \langle \eta \rangle^n \chi(\eta \le 1)\|_{L^2}^2 \\ \label{yas}
\le & \| \p_y U_k x^{k + \frac{m}{2}- \frac{1}{100}} \langle \eta \rangle^n \chi(\eta \ge 1)\|_{L^2}^2 + \| \p_y U_k x^{k + \frac{m}{2}- \frac{1}{100}}\|_{L^2}^2.
\end{align}
For the first term on the right-hand side, we may use the fact that $\bar{u} \chi(\eta \ge 1) \gtrsim x^m \chi(\eta \ge 1)$, and therefore 
\begin{align*}
 \| \p_y U_k x^{k + \frac{m}{2}- \frac{1}{100}} \langle \eta \rangle^n \chi(\eta \ge 1)\|_{L^2}^2 \lesssim  \| \sqrt{\bar{u}} \p_y U_k x^{k- \frac{1}{100}} \langle \eta \rangle^n \chi(\eta \ge 1)\|_{L^2}^2 \lesssim \mathcal{D}_{k,n}(x).
\end{align*}
It therefore remains for us to estimate the second term on the right-hand side of \eqref{yas}, which follows immediately from \eqref{lam:1}. 
\end{proof}
\begin{proof}[Proof of \eqref{train:2}] As before, we localize based on $\eta$ as follows:
\begin{align*}
\| \p_x U_k x^{k + \frac12 + m- \frac{1}{100}} \langle \eta \rangle^n\|_{L^2}^2 \le & \| \p_x U_k x^{k + \frac12 + m- \frac{1}{100}} \langle \eta \rangle^n \chi(\eta \ge 1)\|_{L^2}^2 + \| \p_x U_k x^{k + \frac12 + m- \frac{1}{100}} \langle \eta \rangle^n \chi(\eta \le 1)\|_{L^2}^2 \\
\lesssim & \| \bar{u} \p_x U_k x^{k + \frac12- \frac{1}{100}} \langle \eta \rangle^n\|_{L^2}^2 +  \| \p_x U_k x^{k + \frac12 + m- \frac{1}{100}} \langle \eta \rangle^n \chi(\eta \le 1)\|_{L^2}^2 \\
\lesssim & \mathcal{D}_{k + \frac12, n}^{(Y)} +  \| \p_x U_k x^{k + \frac12 + m- \frac{1}{100}} \|_{L^2}^2.
\end{align*}
Therefore, it suffices to estimate the latter contribution above. To do so, we first invoke \eqref{lam:1} to bound 
\begin{align*}
 \| \p_x U_k x^{k + \frac12 + m- \frac{1}{100}} \|_{L^2}^2 \le& \lambda  \| \sqrt{\bar{u}} \p_x \p_y U_k x^{k + 1- \frac{1}{100}} \|_{L^2}^2 + C_\lambda \| \bar{u} \p_x U_k x^{k + \frac12- \frac{1}{100}} \|_{L^2}^2 \\
 \le & \lambda  \| \sqrt{\bar{u}} \p_x \p_y U_k x^{k + 1- \frac{1}{100}} \|_{L^2}^2 + C_\lambda  \mathcal{D}_{k + \frac12, n}^{(Y)}.
\end{align*}
Therefore, it now suffices to estimate the first quantity appearing on the right-hand side of the previous line. To do so, we first record the identity 
\begin{align}
\bar{u} Q_{k+1} = \psi^{(k+1)} = \p_x \psi^{(k)} = \p_x \{ \bar{u} Q_k \} = \bar{u}_x Q_k + \bar{u} \p_x Q_k,
\end{align}
from which the following identities follow upon further differentiating in $y$: 
\begin{align} \label{union:1}
Q_{k+1} = & \p_x Q_k + \frac{\bar{u}_x}{\bar{u}} Q_k, \\ \label{union:2}
U_{k+1} = & \p_x  U_k + \frac{\bar{u}_x}{\bar{u}} U_k + \p_y \{ \frac{\bar{u}_x}{\bar{u}} \} Q_k, \\ \label{union:3}
\p_y U_{k+1} = & \p_{x} \p_y U_k + \frac{\bar{u}_x}{\bar{u}} \p_y U_k + 2 \p_y \{\frac{\bar{u}_x}{\bar{u}} \} U_k + \p_y^2 \{ \frac{\bar{u}_x}{\bar{u}} \} Q_k.
\end{align}
From here, it follows that 
\begin{align*}
\| \sqrt{\bar{u}} \p_{x} \p_y U_k x^{k+1- \frac{1}{100}} \|_{L^2_y}^2 \lesssim & \mathcal{D}_{k+1}(x) + \| \frac{\bar{u}_x}{\bar{u}} \sqrt{\bar{u}} \p_y U_k x^{k+1- \frac{1}{100}} \|_{L^2_y} + \| x^{\frac{m}{2}} \p_y \{ \frac{\bar{u}_x}{\bar{u}} \} U_k x^{k+1- \frac{1}{100}} \|_{L^2_y}^2 \\
& + \| \p_y^2 \{ \frac{\bar{u}_x}{\bar{u}} \} Q_k x^{\frac{m}{2}- \frac{1}{100}} \|_{L^2_y}^2 \\
\lesssim &  \mathcal{D}_{k+1}(x) + \Big\| \frac{\bar{u}_x}{\bar{u}} x \Big\|_{L^\infty}^2 \| \sqrt{\bar{u}} \p_y U_k x^{k- \frac{1}{100}} \|_{L^2}^2 \\
&+ \Big\| \p_y \{ \frac{\bar{u}_x}{\bar{u}} \} x^{\frac{3}{2} - \frac{m}{2}} \Big\|_{L^\infty}^2 \| U_k x^{k + m - \frac12- \frac{1}{100}} \|_{L^2_y}^2 \\
& +  \Big\| y \p_y^2 \{ \frac{\bar{u}_x}{\bar{u}} \} x^{\frac{3}{2} - \frac{m}{2}} \Big\|_{L^\infty}^2 \| \frac{Q_k}{y} x^{k + m - \frac12- \frac{1}{100}} \|_{L^2_y}^2 \\
\lesssim &  \mathcal{D}_{k+1}(x) +  \mathcal{D}_{k}(x) + \mathcal{CK}_k(x). 
\end{align*}
This now completes the proof of this lemma. 
\end{proof}
\begin{proof}[Proof of \eqref{train:3}] Here we rely upon \eqref{union:2} applied with $k-1$ instead of $k$ which reads upon rearrangement: 
\begin{align}
U_k = \p_x U_{k-1} + \frac{\bar{u}_x}{\bar{u}} U_{k-1} + \p_y \{ \frac{\bar{u}_x}{\bar{u}} \} Q_{k-1}.
\end{align}
We apply \eqref{train:2} to the first term on the right-hand side. For the remaining two terms, we have 
\begin{align*}
\|  \frac{\bar{u}_x}{\bar{u}} U_{k-1} x^{k -\frac12 + m- \frac{1}{100}} \langle \eta \rangle^n \|_{L^2_y} \lesssim & \| \frac{\bar{u}_x}{\bar{u}} x \|_{L^\infty} \|  U_{k-1} x^{(k-1) -\frac12 + m- \frac{1}{100}} \|_{L^2_y} \lesssim \sqrt{\mathcal{CK}_{k-1,n}}, \\
\|   \p_y \{\frac{\bar{u}_x}{\bar{u}}\} Q_{k-1} x^{k -\frac12 + m- \frac{1}{100}}\langle \eta \rangle^n \|_{L^2_y} \lesssim & \| \langle \eta \rangle^n y \p_y \{ \frac{\bar{u}_x}{\bar{u}}\} x \|_{L^\infty} \|  U_{k-1} x^{(k-1) -\frac12 + m- \frac{1}{100}} \|_{L^2_y} \lesssim \sqrt{\mathcal{CK}_{k-1,0}}.
\end{align*}
\end{proof}

\subsection{$L^p_x L^q_y$ Bounds on Original Variables}

We define the following functions of $x$: 
\begin{align}
d_{k,n}(x) := & \int_{\mathbb{R}_+} |u^{(k)}_y|^2 x^{-m+2k- \frac{1}{100}} \langle \eta \rangle^{2n} \ud y, \\
c_{k,n}(x) := & \int_{\mathbb{R}_+} |u^{(k)}|^2 x^{2k-1- \frac{1}{100}} \langle \eta \rangle^{2n} \ud y, \\
d_{k + \frac12, n}^{(Y)}(x) := & \int_{\mathbb{R}_+} |u^{(k)}_x|^2 x^{2k+1- \frac{1}{100}} \langle \eta \rangle^{2n} \ud y, \\
d_{k + \frac12, n}^{(Z)}(x) := &  \int_{\mathbb{R}_+} |u^{(k)}_{yy}|^2 x^{2k+1-2m- \frac{1}{100}} \langle \eta \rangle^{2n} \ud y,
\end{align}
The purpose of these functions is that they approximately ``mirror" the functionals $\mathcal{D}_{k,n}$, $\mathcal{CK}_{k,n}$, $\mathcal{D}_{k + \frac12,n}^{(Y)}$, and  $\mathcal{D}_{k + \frac12,n}^{(Z)}$ respectively in the original $u^{(k)}$ unknown, as opposed to the good-unknown, $U_k$. It is convenient to have a means to measure $u^{(k)}$ itself for instance in Lemmas \ref{lem:61}, \ref{lem:62}, \ref{lem:63}, as well as for the Sobolev embeddings which appear below (\eqref{jurassic:1} -- \eqref{trader:joes:1}). This ``equivalence" is quantified in the following lemma.   
\begin{lemma} The functions $d_{k,n}(x)$, $c_{k,n}(x)$, $d_{k + \frac12, n}^{(Y)}(x)$, and $d_{k + \frac12, n}^{(Z)}(x)$ satisfy the following estimates: 
\begin{align} \label{G:League:na:1}
d_{k,n}(x) \lesssim & \mathcal{D}_{k,n}(x) + \mathcal{CK}_{k,n}(x), \\ \label{G:League:na:2}
c_{k,n}(x) \lesssim &\mathcal{CK}_{k,n}(x), \\ \label{G:League:na:3}
d_{k + \frac12, n}^{(Y)}(x) \lesssim & \lambda D_{k+1,0}(x) + C_{\lambda} (D^{(Y)}_{k + \frac12,n}(x) + \mathcal{I}_{\le k, 0}(x) ), \\ \label{G:League:na:4}
d_{k + \frac12, n}^{(Z)}(x) \lesssim &  \mathcal{D}_{k + \frac12, n}^{(Z)}(x)  + \mathcal{I}_{\le k, 0}(x).
\end{align}
\end{lemma}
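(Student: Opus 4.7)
The plan is to reduce each of the four equivalences to the inversion identity \eqref{inv:form} combined with localized Hardy-type estimates and the interpolation bounds \eqref{train:1}--\eqref{train:3}. The common pointwise building block is the formula obtained by differentiating $u^{(k)} = \bar{u} U_k + \bar{u}_y Q_k$:
\begin{align*}
u^{(k)}_y &= \bar{u}\,\p_y U_k + 2\bar{u}_y U_k + \bar{u}_{yy} Q_k, \\
u^{(k)}_{yy} &= \bar{u}\,\p_y^2 U_k + 3\bar{u}_y\,\p_y U_k + 3\bar{u}_{yy} U_k + \bar{u}_{yyy} Q_k.
\end{align*}
To handle the $Q_k$ terms I will use the crucial boundary identity $Q_k|_{y=0}=0$ (which follows because $\psi^{(k)}|_{y=0} = 0$ while $\bar{u}$ vanishes linearly), so that the standard weighted Hardy $\int (Q_k/y)^2 W\,\ud y \lesssim \int U_k^2 W\,\ud y$ is available; to handle $\bar{u}_y, \bar{u}_{yy}, \bar{u}_{yyy}$, I will use the self-similar scaling of $\bar{u}_{FS;m}$ to extract the elementary pointwise bounds $|\bar{u}_y\,y| \lesssim \bar{u}$ and $|\bar{u}_{yy}\,y^2| \lesssim \bar{u} x^{m-1}$, both derived in the variable $\xi = c \eta$ from the boundedness of $f'', f'''$ and the behavior $f'(\xi) \sim f''(0)\xi$ near $\xi = 0$.

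For \eqref{G:League:na:1} and \eqref{G:League:na:2} the argument is: square the inversion identity, apply $|u^{(k)}|^2 \lesssim \bar{u}^2 U_k^2 + \bar{u}_y^2 Q_k^2$ and $|u^{(k)}_y|^2 \lesssim \bar{u}^2|\p_y U_k|^2 + \bar{u}_y^2 U_k^2 + \bar{u}_{yy}^2 Q_k^2$, then estimate each piece. The $\bar{u}^2|\p_y U_k|^2$ term uses $\bar{u} \lesssim x^m$ to trade $\bar{u}^2 x^{-m} \le \bar{u}$, producing $\mathcal{D}_{k,n}$. The $\bar{u}_y^2 U_k^2$ and $\bar{u}_y^2 Q_k^2$ terms split along $\eta = 1$: in $\{\eta \ge 1\}$ one has $\bar{u}^2 \gtrsim x^{2m}$ and $\bar{u}_y^2 \lesssim \bar{u}^2 x^{-1}$, so the $\mathcal{CK}_{k,n}$ weight dominates directly; in $\{\eta \le 1\}$, writing $\bar{u}_y U_k = (\bar{u}_y y)\cdot(U_k/y)$ combined with the Hardy-type bound applied to $Q_k/y$ (or an averaged Poincaré argument for $U_k$, using that the main contribution $(f''/f')^2$ near $\xi=0$ is absorbed by $\mathcal{D}_{k,n} + \mathcal{CK}_{k,n}$) closes the estimate. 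The $\bar{u}_{yy} Q_k$ term is treated identically using $|\bar{u}_{yy} y^2| \lesssim \bar{u} x^{m-1}$.

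For \eqref{G:League:na:3}, the key observation is that $u^{(k)}_x = u^{(k+1)}$, so applying the inversion identity at level $k{+}1$ gives $u^{(k)}_x = \bar{u} U_{k+1} + \bar{u}_y Q_{k+1}$. The $\bar{u}_y Q_{k+1}$ piece is handled as in (2). For $\bar{u} U_{k+1}$, I substitute \eqref{union:2} to obtain $U_{k+1} = \p_x U_k + \frac{\bar{u}_x}{\bar{u}} U_k + \p_y\{\frac{\bar{u}_x}{\bar{u}}\} Q_k$. The main term $\bar{u}^2|\p_x U_k|^2 x^{2k+1-\frac{1}{100}}\lesssim |\p_x U_k|^2 x^{2k+1+2m - \frac{1}{100}}$ is exactly the left-hand side of \eqref{train:2}, which produces the $C_\lambda \mathcal{D}^{(Y)}_{k+\frac12,n} + \lambda \mathcal{D}_{k+1,0}$ contribution together with lower-order $\mathcal{I}_{\le k, 0}$ pieces; the remaining commutator terms involving $\bar{u}_x/\bar{u}$ and its derivative (bounded by $x^{-1}$ and $x^{-1} y^{-1}$ respectively via self-similar scaling) are absorbed into $\mathcal{I}_{\le k, 0}$ using Hardy for $Q_k/y$. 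For \eqref{G:League:na:4}, I use the second-derivative identity above; $\bar{u}^2|\p_y^2 U_k|^2 x^{2k+1-2m-\frac{1}{100}} \lesssim \bar{u}|\p_y^2 U_k|^2 x^{2k+1-m-\frac{1}{100}}$ gives $\mathcal{D}^{(Z)}_{k+\frac12,n}$, while the three remaining pieces are controlled by $\mathcal{I}_{\le k, 0}$ via the same Hardy-plus-scaling toolkit used for (1), noting that $\bar{u}_{yyy}$ scales as $\bar{u}_y \cdot x^{m-1}$ so the $Q_k$ term behaves in the self-similar variable exactly like the $\bar{u}_{yy} Q_k$ term in (1) shifted by a factor of $x^{m-1}$.

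The main obstacle is the $\{\eta \le 1\}$ region where $\bar{u}_y$ is order one while $\bar{u}$ vanishes linearly: here one cannot hope for a pointwise bound $\bar{u}_y^2 \lesssim \bar{u}^2 x^{-1}$, and instead must trade a factor of $y$ from the weight $\bar{u}$ against one $y$ from Hardy applied to $Q_k$ (which is free since $Q_k|_{y=0}=0$), which is why both $\mathcal{D}_{k,n}$ and $\mathcal{CK}_{k,n}$ appear on the right of \eqref{G:League:na:1}. A secondary difficulty in \eqref{G:League:na:3} is navigating the exact weight in $x$ and $\langle\eta\rangle$ so as to land inside the precise Hardy regime of \eqref{train:2}, and in \eqref{G:League:na:4} the placement of the $\lambda \mathcal{D}_{k+1,0}$ term requires (unlike in the $\mathcal{D}^{(Z)}$ case) using a small constant to absorb the top-order residue.
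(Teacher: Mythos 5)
Your proposal is correct and follows essentially the same route as the paper: the inversion identity \eqref{inv:form} and its $y$-derivatives, Hardy for $Q_k/y$ (using $Q_k|_{y=0}=0$), weighted $L^\infty$ bounds on $\bar u_y,\bar u_{yy},\bar u_{yyy}$ from self-similarity, and reduction of the $(Y)$-estimate to $U_{k+1}$ via \eqref{union:2} together with \eqref{train:2}--\eqref{train:3}. One caution: Hardy cannot be applied to $U_k/y$ itself since $U_k(x,0)\neq 0$ in general; your fallback --- the interpolation \eqref{lam:1} trading the near-boundary region into $\mathcal{D}_{k,n}+\mathcal{CK}_{k,n}$ --- is the correct mechanism and is exactly why both functionals appear on the right of \eqref{G:League:na:1}.
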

\begin{proof}[Proof of \eqref{G:League:na:1}] We use \eqref{inv:form} to obtain the identity 
\begin{align} \label{mama:1}
\p_y u^{(k)} = \bar{u} \p_y U_k + 2 \bar{u}_y U_k + \bar{u}_{yy}Q_k, 
\end{align}
after which we compute 
\begin{align*}
d_{k,n}(x) \lesssim & \int \bar{u}^2 |\p_y U_k|^2 x^{-m + 2k- \frac{1}{100}} \langle \eta \rangle^{2n} \ud y + \int \bar{u}_y^2 |U_k|^2 x^{-m + 2k- \frac{1}{100}} \langle \eta \rangle^{2n} \ud y \\
&+ \int \bar{u}_{yy}^2 |Q_k|^2 x^{-m + 2k- \frac{1}{100}} \langle \eta \rangle^{2n} \ud y \\
\lesssim & \mathcal{D}_{k,n}(x) + ( \| \bar{u}_y x^{\frac12 - \frac{3m}{2}} \langle \eta \rangle^{2n} \|_{L^\infty}^2 + \| y \bar{u}_{yy} x^{\frac12 - \frac{3m}{2}} \langle \eta \rangle^{2n} \|_{L^\infty}^2 \Big) \| U_k x^{m + k - \frac12- \frac{1}{100}} \|_{L^2_y}^2 \\
\lesssim & \mathcal{D}_{k,n}(x) + \mathcal{CK}_{k,0}(x).
\end{align*}
\end{proof}
\begin{proof}[Proof of \eqref{G:League:na:2}] We directly use \eqref{inv:form} to estimate 
\begin{align*}
c_{k,n}(x) \lesssim & \int \bar{u}^2 U_k^2 x^{2k-1} \langle \eta \rangle^{2n} \ud y + \int \bar{u}_y^2 Q_k^2 x^{2k-1- \frac{1}{100}}\langle \eta \rangle^{2n} \ud y \\
\lesssim & ( \| \bar{u} x^{-m} \|_{L^\infty}^2 + \| y \p_y \bar{u} x^{-m}\langle \eta \rangle^{n} \|_{L^\infty}^2  ) \| U_k x^{k + m - \frac12- \frac{1}{100}} \langle \eta \rangle^{n} \|_{L^2_y}^2 \\
\lesssim & \mathcal{CK}_{k,n}(x). 
\end{align*}
\end{proof}
\begin{proof}[Proof of \eqref{G:League:na:3}] We apply \eqref{inv:form} with index $k + 1$ to generate the estimate 
\begin{align*}
d_{k + \frac12, n}^{(Y)}(x) \lesssim & \int \bar{u}^2 |U_{k+1}|^2 x^{2k+1- \frac{1}{100}} \langle \eta \rangle^{2n} + \int \bar{u}_y^2 Q_{k+1}^2 x^{2k+1- \frac{1}{100}} \langle \eta \rangle^{2n} \\
\lesssim & (\| \bar{u} x^{-m} \|_{L^\infty}^2 + \| y \bar{u}_y x^{-m} \langle \eta \rangle^n \|_{L^\infty}^2) \| U_{k+1} x^{(k+1) - \frac12 + m- \frac{1}{100}} \|_{L^2_y}^2 \\
\lesssim & \lambda D_{k+1,0}(x) + C_{\lambda} (D^{(Y)}_{k + \frac12,n}(x) + \mathcal{CK}_{k}(x) + \mathcal{D}_{k,0}(x) ),
\end{align*}
where we have invoked the bound \eqref{train:3} in the final step. 

\end{proof}
\begin{proof}[Proof of \eqref{G:League:na:4}] We differentiate further the identity \eqref{mama:1} to obtain 
\begin{align} \label{mama:1}
\p_y^2 u^{(k)} = \bar{u} \p_y^2 U_k + 2 \bar{u}_y \p_y U_k + 2 \bar{u}_{yy}U_k + \bar{u}_{yyy} Q_k.
\end{align}
From this, we have 
\begin{align*}
d_{k + \frac12, n}^{(Z)}(x) \lesssim & \int \bar{u}^2 |\p_y^2 U_k|^2 x^{2k+1-2m- \frac{1}{100}} \langle \eta \rangle^{2n} +  \int \bar{u}_y^2 |\p_y U_k|^2 x^{2k+1-2m- \frac{1}{100}} \langle \eta \rangle^{2n} \\
& + \int \bar{u}_{yy}^2 |U_k|^2 x^{2k+1-2m- \frac{1}{100}} \langle \eta \rangle^{2n} +  \int \bar{u}_{yyy}^2 |Q_k|^2 x^{2k+1-2m- \frac{1}{100}} \langle \eta \rangle^{2n} \\
\lesssim & \| \bar{u} x^{-m} \|_{L^\infty} \mathcal{D}_{k + \frac12, n}^{(Z)} + \| \bar{u}_y x^{\frac12 - \frac{3m}{2}} \langle \eta \rangle^{2n} \|_{L^\infty}^2 \| \p_y U_k x^{k + \frac{m}{2}- \frac{1}{100}} \|_{L^2_y}^2 \\
& +( \| \bar{u}_{yy} x^{1 - 2m} \langle \eta \rangle^{2n} \|_{L^\infty}^2 + \| y \bar{u}_{yyy} x^{1 - 2m} \langle \eta \rangle^{2n} \|_{L^\infty}^2) \| U_k x^{k + m - \frac12- \frac{1}{100}} \|_{L^2_y}^2 \\
\lesssim & \mathcal{D}_{k + \frac12, n}^{(Z)}  + \mathcal{I}_{\le k, 0}. 
\end{align*}
\end{proof}

We will need the following $L^\infty$ bounds to close our nonlinear bootstrap. 

\begin{lemma} Assume the bootstrap \eqref{boots:1}. Let $0 \le k \le 4$. Then the following bounds hold:
\begin{align} \label{jurassic:1}
\| u^{(k)} x^{\frac14 + k - \frac{m}{4}- \frac{1}{200}} \|_{L^\infty} \le & \eps^{-\frac12} 
\end{align}
Let $0 \le k \le 3$. Then the following bounds hold:
\begin{align} \label{jurassic:2}
\| u^{(k)} x^{\frac14 + k - \frac{m}{4}- \frac{1}{200}} \langle \eta \rangle^3 \|_{L^\infty} \le &  \eps^{-\frac12} , \\  \label{jurassic:4}
\| u^{(k)}_y x^{\frac34 + k - \frac{3m}{4}- \frac{1}{200}} \langle \eta \rangle^3 \|_{L^\infty} \le &  \eps^{-\frac12}, \\  \label{jurassic:3}
\| \frac{1}{\bar{u}} u^{(k)} x^{\frac14 + k + \frac{3m}{4}- \frac{1}{200}} \langle \eta \rangle^3 \|_{L^\infty} \le &   \eps^{-\frac12}.
\end{align}
Let $0 \le k \le 2$. Then the following bound holds:
\begin{align} \label{jurassic:5}
\| v^{(k)} x^{\frac34 + k + \frac{m}{4}- \frac{1}{200}} \|_{L^\infty} \le &  \eps^{-\frac12}\\ \label{jurassic:6}
\| \frac{1}{\bar{u}} v^{(k)} x^{\frac34 + k + \frac{5m}{4}- \frac{1}{200}} \|_{L^\infty} \le &  \eps^{-\frac12}.
\end{align}
Let $0 \le k \le 1$. Then 
\begin{align} \label{trader:joes:1}
\| u^{(k)}_y \langle \eta \rangle^8 x^{\frac34 + k - \frac{3m}{4}- \frac{1}{200}} \|_{L^\infty} \le  \eps^{-\frac12}. 
\end{align}
\end{lemma}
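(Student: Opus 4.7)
The plan is to reduce every $L^\infty$ bound to a one-dimensional Sobolev embedding in $y$, and then to pointwise-in-$x$ $L^2_y$ control of the relevant quantity, which in turn follows from the bootstrap \eqref{boots:1} together with the equivalence lemmas \eqref{G:League:na:1}--\eqref{G:League:na:4} and the inversion identities \eqref{union:1}--\eqref{union:3}. The workhorse is the 1D Gagliardo--Nirenberg estimate
\begin{equation*}
\| g \|_{L^\infty_y}^2 \lesssim \| g \|_{L^2_y}\, \| \partial_y g \|_{L^2_y},
\end{equation*}
valid for $g$ vanishing at $y=0$, with the variant $\| g \|_{L^\infty_y}^2 \lesssim \| g \|_{L^2_y}\|\partial_y g\|_{L^2_y} + \|g\|_{L^2_y}^2$ when $g|_{y=0}\neq 0$ (needed for $u^{(k)}_y$). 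Weighted bounds of the form $\| f \langle \eta \rangle^M \|_{L^\infty_y}$ are obtained by applying the above to $g := f\langle \eta \rangle^M$ and using $|\partial_y \langle \eta \rangle^M| \lesssim \langle \eta \rangle^{M-1}x^{-\frac12(1-m)}$, so that the derivative of the weight only lowers the polynomial order and can be absorbed into the available $\eta$-decay.

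First I would prove \eqref{jurassic:1} and \eqref{jurassic:2}: Gagliardo--Nirenberg applied to $u^{(k)}\langle \eta \rangle^3$ reduces the task to controlling $\| u^{(k)} \langle \eta \rangle^3\|_{L^2_y}$ and $\| \partial_y u^{(k)} \langle \eta \rangle^3 \|_{L^2_y}$ pointwise in $x$ with appropriate $x$-weights. The first piece is supplied by $c_{k,3}(x) \lesssim \mathcal{CK}_{k,3}(x) = \tfrac{1}{100x}\mathcal{E}_{k,3}(x) \lesssim \mathcal{E}_{\text{Quasi}}(x)/x$ via \eqref{G:League:na:2}. For the second piece, the relevant dissipation $d_{k,3}(x)$ is \emph{not} bootstrapped pointwise in $x$, so I instead control it by the half-integer energy $\mathcal{E}^{(Z)}_{k+\frac12, 3}(x)$: namely, use identity \eqref{mama:1} and the equivalence $d_{k,n}\lesssim \mathcal{D}_{k,n}+\mathcal{CK}_{k,n}$, upgrading the $\mathcal{D}_{k,n}$ part by bounding its integrand pointwise through $\mathcal{E}^{(Z)}_{k+\frac12,n}/x^{1-m}$ (since $\mathcal{E}^{(Z)}$ has one extra $x^{1-m}$ weight compared to $\mathcal{D}$). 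Multiplying the two $L^2_y$ factors and matching the powers of $x$ yields precisely the exponent $\tfrac14 + k - \tfrac{m}{4} - \tfrac{1}{200}$ claimed in \eqref{jurassic:1}. The same scheme handles \eqref{jurassic:4} and \eqref{trader:joes:1} applied to $u^{(k)}_y$ (with the boundary-term variant of Gagliardo--Nirenberg), with the second $L^2_y$ factor now coming from $\mathcal{D}^{(Z)}_{k+\frac12}$ via \eqref{G:League:na:4}. The bound \eqref{jurassic:3} follows from \eqref{jurassic:2} using $\bar{u} \gtrsim \min(x^m, y x^{m-\frac12(1-m)})$ and a small $\eta$-expansion near $y=0$.

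For the $v^{(k)}$ bounds \eqref{jurassic:5}--\eqref{jurassic:6}, I would exploit divergence-free: $v^{(k)}(x,y) = -\int_0^y u^{(k+1)}(x,y')\,dy'$. Applying Cauchy--Schwartz against the weight $\langle \eta \rangle^{-2}$ gives
\begin{equation*}
|v^{(k)}(x,y)|^2 \le \Big\| u^{(k+1)} \langle \eta \rangle^2 \Big\|_{L^2_y}^2 \Big\| \langle \eta \rangle^{-2} \Big\|_{L^2_y}^2 \lesssim x^{\frac12(1-m)}\Big\| u^{(k+1)} \langle \eta \rangle^2 \Big\|_{L^2_y}^2,
\end{equation*}
and the $L^2_y$ norm on the right is handled by $c_{k+1,2}(x) \lesssim \mathcal{E}_{\text{Quasi}}(x)/x$ as in Step 1. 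This consumes one level of regularity (bounds for $v^{(k)}$ require $u^{(k+1)}$ data), which is why the stated range is $0\le k\le 2$ while $u^{(k)}$ estimates go up to $k=4$. The bound on $\bar{u}^{-1} v^{(k)}$ is then obtained by dividing by $\bar{u} \gtrsim x^m$ (globally on $\eta \gtrsim 1$), combined with Hardy-type handling at $\eta \to 0$ exactly as in the proof of \eqref{train:3}.

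The main obstacle is bookkeeping: matching the precise powers of $x$ (with the ubiquitous $-\tfrac{1}{200}$ slack), the $\langle \eta \rangle$ weights, and the index $n$ at each level of regularity, while verifying that every $L^2_y$ factor is ultimately bounded by an $L^\infty_x$-bootstrapped \emph{energy} (not an $L^1_x$ dissipation). The factor $\eps^{-\tfrac12}$ on the right of every estimate is a crude placeholder: the bootstrap itself gives $O(1)$ bounds on the energy functional, and Gagliardo--Nirenberg yields $O(1)$ $L^\infty$ bounds, so $\eps^{-\tfrac12}$ is trivially admissible for $\eps$ small enough while leaving a genuine smallness gain $\eps^{+\tfrac12}$ available for closing nonlinear estimates of the form $\eps\,\mathcal{Q}[u,v]$.
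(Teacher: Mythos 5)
Your fixed-$x$ Gagliardo--Nirenberg strategy works for \eqref{jurassic:1}, \eqref{jurassic:2} and \eqref{jurassic:5}--\eqref{jurassic:6}, essentially because both $L^2_y$ factors there ($u^{(k)}$ and $u^{(k)}_y$, suitably weighted) can be tied to quantities that the bootstrap controls \emph{pointwise in $x$}: indeed $c_{k,n}(x)\lesssim \mathcal{CK}_{k,n}(x)=\tfrac{1}{100x}\mathcal{E}_{k,n}(x)$ and, more usefully than the $\mathcal{E}^{(Z)}$ route you propose, $\mathcal{D}_{k,n}(x)=x^{-1}\mathcal{E}^{(Y)}_{k+\frac12,n}(x)$ identically, with $\mathcal{E}^{(Y)}_{k+\frac12,9-k}$ appearing in $\mathcal{E}_{\text{Quasi}}$. (Your stated reduction of $\mathcal{D}_{k,n}$ to $\mathcal{E}^{(Z)}_{k+\frac12,n}x^{-(1-m)}$ does not hold pointwise near $y=0$, since it would require $\bar{u}\lesssim\bar{u}^2$ there; use $\mathcal{E}^{(Y)}$ instead.) The exponents then match \eqref{jurassic:2} exactly.

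The genuine gap is in \eqref{jurassic:4} and \eqref{trader:joes:1} (and hence \eqref{jurassic:3}, which needs \eqref{jurassic:4} near $y=0$). To bound $\|u^{(k)}_y\langle\eta\rangle^3\|_{L^\infty_y}$ at a fixed $x$ by your method you need $\|u^{(k)}_{yy}\langle\eta\rangle^3 x^{\cdots}\|_{L^2_y}$ at that same $x$, i.e.\ $d^{(Z)}_{k+\frac12,3}(x)$, which by \eqref{G:League:na:4} reduces to $\mathcal{D}^{(Z)}_{k+\frac12,3}(x)$. But $\mathcal{D}^{(Z)}_{k+\frac12,n}$ is a \emph{dissipation} functional: the bootstrap \eqref{boots:1} controls it only in $L^1_x$, and no functional in $\mathcal{E}_{\text{Quasi}}$ controls two $y$-derivatives of $U_k$ (the top pointwise-in-$x$ information is $\p_y U_k$ via $\mathcal{E}^{(Y)},\mathcal{E}^{(Z)}$). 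So the second factor in your interpolation is simply not available slicewise, and the fixed-$x$ scheme cannot close for these estimates. The paper avoids this by an anisotropic, space-time version of the same idea: it writes $|u^{(k)}_y|^2\langle x\rangle^{\cdots}\langle\eta\rangle^6=\int_y^\infty 2u^{(k)}_y u^{(k)}_{yy}\cdots$, then applies the fundamental theorem of calculus in $x$ from the initial slice and Cauchy--Schwarz in $(x,y)$ jointly, so that the result is bounded by products such as $\|\sqrt{d_{k+1,3}}\|_{L^2_x}\|\sqrt{d^{(Z)}_{k+\frac12,3}}\|_{L^2_x}$ --- precisely the $L^1_x$ space-time quantities the bootstrap does control. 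To repair your argument you would need to adopt this space-time embedding (at least for the $u^{(k)}_y$ bounds), or add a second-derivative energy to the bootstrap, which the paper deliberately does not do.
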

\begin{proof}[Proof of \eqref{jurassic:1}] We have 
\begin{align*}
&|u^{(k)}|^2 \langle x \rangle^{\frac12 + 2k - \frac{m}{2}- \frac{1}{100}} \\
= & \int_y^\infty u^{(k)} \p_y u^{(k)} \langle x \rangle^{\frac12 + 2k - \frac{m}{2}- \frac{1}{100}} \\
= & \int_y^\infty u^{(k)} \p_y u^{(k)} \langle x \rangle^{\frac12 + 2k - \frac{m}{2}- \frac{1}{100}} \Big|_{x = 0}+  \int_y^\infty\int_0^x u_x^{(k)} \p_y u^{(k)} \langle x \rangle^{\frac12 + 2k - \frac{m}{2}- \frac{1}{100}} \\
& + \int_y^\infty\int_0^x u^{(k)} \p_y u^{(k+1)} \langle x \rangle^{\frac12 + 2k - \frac{m}{2}- \frac{1}{100}} +C_{k,m}  \int_y^\infty\int_0^x u^{(k)} \p_y u^{(k)} \langle x \rangle^{-\frac12 + 2k - \frac{m}{2}- \frac{1}{100}} \\
\lesssim & C(u_{IN,k}) + \| \sqrt{ d_{k,0}} \|_{L^2_x} \| \sqrt{ d^{(Y)}_{k + \frac12,0}} \|_{L^2_x} + \| \sqrt{ c_{k,0}} \|_{L^2_x} \| \sqrt{ d_{k + 1,0}} \|_{L^2_x} \\
& +  \| \sqrt{ c_{k,0}} \|_{L^2_x} \| \sqrt{ d_{k,0}} \|_{L^2_x}
\end{align*}
\end{proof}
\begin{proof}[Proof of \eqref{jurassic:2}] We have 
\begin{align} \n
&|u^{(k)}|^2 \langle x \rangle^{\frac12 + 2k - \frac{m}{2}- \frac{1}{100}} \langle \eta \rangle^6 \\ \n
= & \int_y^\infty u^{(k)} \p_y u^{(k)} \langle x \rangle^{\frac12 + 2k - \frac{m}{2}- \frac{1}{100}} \langle \eta \rangle^6 +6 \int_y^\infty |u^{(k)}|^2 \langle \eta \rangle^5 x^{2k- \frac{1}{100}} \\ \n
= & \int_y^\infty u^{(k)} \p_y u^{(k)} \langle x \rangle^{\frac12 + 2k - \frac{m}{2}- \frac{1}{100}} \langle \eta \rangle^6 \Big|_{x = 0}+  \int_y^\infty\int_0^x u_x^{(k)} \p_y u^{(k)} \langle x \rangle^{\frac12 + 2k - \frac{m}{2}- \frac{1}{100}}\langle \eta \rangle^6 \\ \n
& + \int_y^\infty\int_0^x u^{(k)} \p_y u^{(k+1)} \langle x \rangle^{\frac12 + 2k - \frac{m}{2}- \frac{1}{100}}\langle \eta \rangle^6 +C_{k,m}  \int_y^\infty\int_0^x u^{(k)} \p_y u^{(k)} \langle x \rangle^{-\frac12 + 2k - \frac{m}{2}- \frac{1}{100}} \langle \eta \rangle^6\\ \n
& +\int_y^\infty |u^{(k)}|^2 \langle \eta \rangle^5 x^{2k- \frac{1}{100}}\Big|_{x = 0} + \int_y^\infty \int_0^x u^{(k)} u^{(k)}_x \langle \eta \rangle^5 x^{2k- \frac{1}{100}} + C_k \int_y^\infty \int_0^x |u^{(k)}|^2 \langle \eta \rangle^5 x^{2k-1- \frac{1}{100}} \\ \n
\lesssim  & C(u_{IN,k}) + \| \sqrt{ d_{k,3}} \|_{L^2_x} \| \sqrt{ d^{(Y)}_{k + \frac12,3}} \|_{L^2_x} + \| \sqrt{ c_{k,3}} \|_{L^2_x} \| \sqrt{ d_{k + 1,3}} \|_{L^2_x} \\ \label{greer:1}
& +  \| \sqrt{ c_{k,3}} \|_{L^2_x} \| \sqrt{ d_{k,3}} \|_{L^2_x} +  \| \sqrt{ c_{k,3}} \|_{L^2_x} \| \sqrt{ d^{(Y)}_{k + \frac12,3}} \|_{L^2_x} +  \| \sqrt{ c_{k,3}} \|_{L^2_x}^2.
\end{align}
\end{proof}
\begin{proof}[Proof of \eqref{jurassic:4}] We have 
\begin{align} \n
&|u^{(k)}_y|^2 \langle x \rangle^{\frac32 + 2k - \frac{3m}{2}- \frac{1}{100}} \langle \eta \rangle^6 \\ \n
= & \int_y^\infty 2 u^{(k)}_y u^{(k)}_{yy} \langle x \rangle^{\frac32 + 2k - \frac{3m}{2}- \frac{1}{100}} \langle \eta \rangle^6 \ud y +  6\int_y^\infty 2 |u^{(k)}_y|^2 \langle x \rangle^{1 + 2k - m- \frac{1}{100}} \langle \eta \rangle^5 \ud y \\ \n
= & C(u_{IN,k}) +  \int_y^\infty \int_0^x  u^{(k+1)}_y u^{(k)}_{yy} \langle x \rangle^{\frac32 + 2k - \frac{3m}{2}- \frac{1}{100}} \langle \eta \rangle^6 \ud y +  \int_y^\infty  \int_0^x u^{(k)}_y u^{(k+1)}_{yy} \langle x \rangle^{\frac32 + 2k - \frac{3m}{2}- \frac{1}{100}} \langle \eta \rangle^6 \ud y \\ \n
& + C_1  \int_y^\infty  \int_0^x u^{(k)}_y u^{(k)}_{yy} \langle x \rangle^{1 + 2k - \frac{3m}{2}- \frac{1}{100}} \langle \eta \rangle^6 \ud y + C_2\int_y^\infty \int_0^x  u^{(k)}_y u^{(k+1)}_y \langle x \rangle^{1 + 2k - m- \frac{1}{100}} \langle \eta \rangle^5 \ud y \\ \n
& + C_3 \int_y^\infty \int_0^x |u^{(k)}_y|^2 \langle x \rangle^{ 2k - m- \frac{1}{100}} \langle \eta \rangle^5 \ud y \\ \n
\lesssim & C(u_{IN,  k}) + \| \sqrt{d_{k+1,3}} \|_{L^2_x} \| \sqrt{d_{k + \frac12,3}^{(Z)}} \|_{L^2_x} + \| \sqrt{d_{k,3}} \|_{L^2_x} \| \sqrt{d_{(k+1) + \frac12,3}^{(Z)}} \|_{L^2_x} +  \| \sqrt{d_{k,3}} \|_{L^2_x} \| \sqrt{d_{k + \frac12,3}^{(Z)}} \|_{L^2_x} \\ \label{reprep:1}
& +  \| \sqrt{d_{k,3}} \|_{L^2_x} \| \sqrt{d_{k+1,3}} \|_{L^2_x} +  \| \sqrt{d_{k,3}} \|_{L^2_x}^2.
\end{align}
\end{proof}
\begin{proof}[Proof of \eqref{jurassic:3}] This bound follows trivially from \eqref{jurassic:2} and \eqref{jurassic:4}. 
\end{proof}
\begin{proof}[Proof of \eqref{jurassic:5}] We have 
\begin{align*}
v^{(k)} x^{\frac34 + k + \frac{m}{4}- \frac{1}{100}}  = &  x^{\frac34 + k + \frac{m}{4}- \frac{1}{100}} \int_0^y u^{(k)}_x(x, y') \ud y' =  x^{\frac34 + k + \frac{m}{4}- \frac{1}{100}} \int_0^y u^{(k)}_x(x, y') \langle \eta \rangle^3 \frac{1}{\langle \eta \rangle^3} \ud y' \\
\lesssim & x^{\frac34 + k + \frac{m}{4}- \frac{1}{100}} \| u^{(k+1)} \langle \eta \rangle^3 \|_{L^\infty_y} \int_0^\infty \frac{1}{\langle \eta \rangle^3} \ud y \\
\lesssim & x^{\frac34 + k + \frac{m}{4}- \frac{1}{100}} \| u^{(k+1)} \langle \eta \rangle^3 \|_{L^\infty_y} x^{\frac12(1-m)},
\end{align*}
after which the result follows upon invoking \eqref{jurassic:2} with index $k + 1$ (which is admissible due to the restriction of $k \le 2$). 
\end{proof}
\begin{proof}[Proof of \eqref{jurassic:6}] First of all, we notice that when $\eta \ge 1$, $\bar{u} \gtrsim x^{m}$, and so the result trivially follows from \eqref{jurassic:5}. Therefore, we assume $\eta \le 1$, which is equivalent to $y \lesssim x^{\frac12(1-m)}$. In this case, $\bar{u} \gtrsim x^{m} \eta$, and we have \begin{align*}
v^{(k)} x^{\frac34 + k + \frac{5m}{4}- \frac{1}{100}} \frac{1}{\bar{u}} \chi(\eta \le 1) \lesssim &\frac{ v^{(k)} x^{\frac54 + k - \frac{m}{4}- \frac{1}{100}}}{y} \\
  = &  x^{\frac54 + k - \frac{m}{4}- \frac{1}{100}} \frac{1}{y} \int_0^y u^{(k)}_x(x, y') \ud y' =  x^{\frac54 + k - \frac{m}{4}- \frac{1}{100}} \| u^{(k)}_x\|_{L^\infty_y},
\end{align*}
after which the result follows upon invoking \eqref{jurassic:2} with index $k + 1$ (which is again admissible due to the restriction of $k \le 2$). 
\end{proof}
\begin{proof}[Proof of \eqref{trader:joes:1}] We follow exactly the proof of \eqref{reprep:1} with a change of weight of $\langle \eta \rangle^8$ instead of $\langle \eta \rangle^3$ which gives 
\begin{align} \n
&|u^{(k)}_y|^2 \langle x \rangle^{\frac32 + 2k - \frac{3m}{2}- \frac{1}{100}} \langle \eta \rangle^{16} \\ \n
\lesssim & C(u_{IN,  k}) + \| \sqrt{d_{k+1,8}} \|_{L^2_x} \| \sqrt{d_{k + \frac12,8}^{(Z)}} \|_{L^2_x} + \| \sqrt{d_{k,9}} \|_{L^2_x} \| \sqrt{d_{(k+1) + \frac12,7}^{(Z)}} \|_{L^2_x} +  \| \sqrt{d_{k,8}} \|_{L^2_x} \| \sqrt{d_{k + \frac12,8}^{(Z)}} \|_{L^2_x} \\ \label{reprep:2}
& +  \| \sqrt{d_{k,8}} \|_{L^2_x} \| \sqrt{d_{k+1,8}} \|_{L^2_x} +  \| \sqrt{d_{k,8}} \|_{L^2_x}^2.
\end{align}
\end{proof}

\subsection{Quasilinearized Estimates}

In addition to our main estimates (given in the previous two lemmas), we will will need the following auxiliary estimate. Define 
\begin{align}
\alpha(x) := & \int_{\mathbb{R}_+}  |\p_y \{ \frac{u}{\bar{u}} \}|^2 \langle \eta \rangle^4  x^{1+m- \frac{1}{100}} \ud y, \\
\gamma(x) := & \int_{\mathbb{R}_+} \bar{u}^2 |\p_y^2 \{ \frac{u}{\bar{u}} \}|^2 \langle \eta \rangle^4  x^{2-2m- \frac{1}{100}} \ud y.  
\end{align}

\begin{lemma} The quantities $\alpha(x), \gamma(x)$ obey the following bounds:
\begin{align} \label{ehg:alpha}
\sup_x \alpha(x) \le& \eps^{-1}, \\ \label{ehg:gamma}
\sup_x  \gamma(x) \le& \eps^{-1}.
\end{align}
\end{lemma}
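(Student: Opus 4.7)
The plan is to establish both bounds by a direct, crude argument based on the pointwise $L^\infty$ estimates \eqref{jurassic:1}--\eqref{trader:joes:1} and the self-similar structure of the Falkner-Skan profile $\bar{u}$. Since each $L^\infty$ bound has size $\eps^{-1/2}$ and the integrands defining $\alpha, \gamma$ are quadratic, squaring already gives the target $\eps^{-1}$, and one only needs to verify that the resulting integrals are finite, with enormous slack. The one genuine subtlety is the apparent $1/y$ singularity of $1/\bar{u}$ at the boundary, which I would handle algebraically via the pair of identities
\begin{align*}
\bar{u}\, \p_y(u/\bar{u}) &= u_y - (u/\bar{u}) \bar{u}_y, \\
\bar{u}\, \p_y^2(u/\bar{u}) &= u_{yy} - (u/\bar{u}) \bar{u}_{yy} - 2 \bar{u}_y\, \p_y(u/\bar{u}),
\end{align*}
whose right-hand sides are smooth in $y$ up to the boundary: the would-be $1/y$ singularities coming from the quotient rule cancel because $u(x,0) = \bar{u}(x,0) = 0$ forces $u/\bar{u} \to u_y(x,0)/\bar{u}_y(x,0)$ by L'H\^{o}pital.

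I would then split each integral at the self-similar threshold $\eta = 1$. In the far field $\eta \ge 1$ one has $\bar{u} \gtrsim x^m$, while $\bar{u}_y, \bar{u}_{yy}$ decay rapidly in $\eta$. Expanding $\p_y(u/\bar{u})$ and $\p_y^2(u/\bar{u})$ via the quotient rule and inserting the pointwise bounds \eqref{jurassic:2}, \eqref{jurassic:3}, \eqref{jurassic:4} (each of which supplies $\langle \eta \rangle^{-3}$ decay) produces integrands dominated by $\eps^{-1} \langle \eta \rangle^{-2}$ times the appropriate $x$-weights. After the change of variables $dy = x^{(1-m)/2}\, d\eta$, the explicit $x$-powers combine to a constant independent of $x$, and the resulting $\eta$-integral converges. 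For $\gamma$ one additionally needs an $L^\infty$ bound on $u_{yy}$, which is not directly listed among \eqref{jurassic:1}--\eqref{trader:joes:1}, but is recovered from the Prandtl equation \eqref{lov:1}, $\p_y^2 u = \bar{u} u_x + \bar{v} u_y + \bar{u}_x u + \bar{u}_y v + \eps\, \mathcal{Q}[u,v]$, using \eqref{jurassic:1}--\eqref{jurassic:6} on each right-hand side term.

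In the near field $\eta \le 1$, the self-similar profile satisfies $\bar{u}/y \gtrsim x^{(3m-1)/2}$ uniformly, and $|\bar{u}_y| \lesssim x^{(3m-1)/2}$, $|\bar{u}_{yy}| \lesssim x^{2m-1}$. I would apply the two identities above, writing $\p_y(u/\bar{u}) = F/\bar{u}$ and $\p_y^2(u/\bar{u}) = G/\bar{u}$ with $F := u_y - (u/\bar{u}) \bar{u}_y$ (smooth and vanishing at $y = 0$) and $G := u_{yy} - (u/\bar{u}) \bar{u}_{yy} - 2 \bar{u}_y \bar{u}^{-1} F$ (smooth). Since $|F|$ and $|G|$ admit pointwise bounds through \eqref{jurassic:1}--\eqref{jurassic:4} (and the equation, for $u_{yy}$), division by $\bar{u} \gtrsim x^{(3m-1)/2} y$ — using the vanishing of $F$ at $y = 0$ in the form $|F(x,y)| \lesssim y \|F_y\|_{L^\infty_y}$ — yields $L^\infty$ bounds on $\p_y(u/\bar{u})$ and $\p_y^2(u/\bar{u})$ on the near-field. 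Integration over the finite-volume region $y \in [0, O(x^{(1-m)/2})]$ then produces $\eps^{-1}$ once the $x$-powers are balanced. The main obstacle throughout is precisely the cancellation of the $1/y$ singularities in the near-field; by working at the level of $\bar{u}\, \p_y(u/\bar{u})$ and $\bar{u}\, \p_y^2(u/\bar{u})$ rather than the bare derivatives, this cancellation is realized at the algebraic level, and everything else reduces to bookkeeping of $x$-powers, for which the generous $\eps^{-1}$ target leaves ample room.
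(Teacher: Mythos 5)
Your approach is genuinely different from the paper's. The paper reduces $\sup_x\gamma(x)$ to $L^1_x$ norms of the dissipation/CK/boundary functionals via a $1D$ Sobolev interpolation in $x$, and then controls the resulting $L^2_y$ quantities through the bootstrapped dissipation norms $d_{k,n}$, $d^{(Z)}_{k+\frac12,n}$, $\mathcal{B}_0$; you instead work pointwise with the $L^\infty_{x,y}$ bounds \eqref{jurassic:1}--\eqref{trader:joes:1}. The two routes are not interchangeable, because the paper's $L^2$ approach never requires an $L^\infty$ bound on $u_{yy}$ and never needs $\p_y(u/\bar u)$ itself in $L^\infty$.

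There is a genuine gap in your near-field step. You propose to bound $\p_y(u/\bar u)=F/\bar u$ with $F:=u_y-(u/\bar u)\bar u_y$ by invoking ``$|F(x,y)|\lesssim y\,\|F_y\|_{L^\infty_y}$'' and then dividing by $\bar u\gtrsim x^{(3m-1)/2}y$. But $F_y=u_{yy}-\bar u_y\,\p_y(u/\bar u)-(u/\bar u)\bar u_{yy}$ contains the factor $\bar u_y\,\p_y(u/\bar u)=\bar u_y F/\bar u$, which near $y=0$ behaves like $F/y$. Feeding your estimate back into itself gives $\| \p_y(u/\bar u)\|_{L^\infty}\leq \|u_{yy}\|_\infty + C\,\|\p_y(u/\bar u)\|_\infty+\cdots$ where the constant $C$ arising from $\sup_{\eta\le1}\bigl(y\bar u_y/\bar u\bigr)\cdot\bigl(\text{Hardy constant}\bigr)$ is \emph{exactly} $1$ in the small-$y$ limit (since $\bar u(x,s)\approx\bar u_y(x,0)s$ forces $s\bar u_y/\bar u\to1$), so the recursion does not close. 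The same circularity contaminates your bound on $G$, whose last term $2(\bar u_y/\bar u)F$ is again $\p_y(u/\bar u)$ up to a bounded factor. Thus, as written, $\|F_y\|_{L^\infty}$ and $\|G\|_{L^\infty}$ are not controlled by \eqref{jurassic:1}--\eqref{jurassic:4} alone, and the near-field $L^\infty$ bound is unproved.

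The paper sidesteps this precisely by subtracting the boundary trace: with $\hat u:=u-x^{\frac12(1-m)}f(\eta)u_y(x,0)$ one arranges $\hat u|_{y=0}=\p_y\hat u|_{y=0}=0$, so $\hat u$ vanishes to second order at the wall and the two inverse powers of $\bar u$ (equivalently of $y$) in $\p_y^2(u/\bar u)$ are absorbed by a clean Hardy inequality that lands on $\|\hat u_{yy}\|_{L^2}$, which in turn is read off from $d^{(Z)}_{0+\frac12,0}$ and the boundary term $\mathcal B_0$. If you want to salvage an $L^\infty$-based route, you would need an analogous two-sided subtraction (modifying $u$ so the numerator $u_y\bar u-u\bar u_y$ vanishes to high enough order), or an integral representation $g(x,y)=\int_0^1u_y(x,ty)\,dt\big/\int_0^1\bar u_y(x,ty)\,dt$ which lets you bound $g_y$ and $yg_{yy}$ without the self-referential step; either way, the secondary point you flag — that $\|u_{yy}\|_{L^\infty}$ must itself be recovered from the equation \eqref{lov:1} using \eqref{jurassic:1}--\eqref{jurassic:6} — is plausible but needs to be carried out, since that term is irreducible in any pointwise treatment.
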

\begin{proof} The bounds on both $\alpha(x)$ and $\gamma(x)$ are analogous. Therefore, we prove the higher order bound, \eqref{ehg:gamma}, which is a bit more involved. According to our bootstrap assumption, \eqref{boots:1}, it suffices to establish: 
\begin{align}
\sup_x  \gamma(x)   \lesssim \| \mathcal{D}(x) \|_{L^1_x} + \| \mathcal{CK}(x) \|_{L^1_x} + \| \mathcal{B}(x) \|_{L^1_x}.
\end{align}
Moreover, through $1D$ Sobolev interpolation, it suffices to establish the following two bounds: 
\begin{align} \label{ffu:1}
\| \bar{u} \p_y^2 \{ \frac{u}{\bar{u}} \} x^{\frac12 -m- \frac{1}{100}} \langle \eta \rangle^2  \|_{L^2_{y}}^2 \lesssim &  \mathcal{D}(x) + \mathcal{CK}(x) + \mathcal{B}(x), \\ \label{ffu:2}
\| \p_x \{ \bar{u} \p_y^2 \{ \frac{u}{\bar{u}} \} x^{1 -m- \frac{1}{100}} \langle \eta \rangle^2 \} x^{\frac12}  \|_{L^2_{y}}^2 \lesssim & \mathcal{D}(x) + \mathcal{CK}(x) + \mathcal{B}(x). 
\end{align}
Both of these bounds work in largely the same manner, so for simplicity we prove \eqref{ffu:1}. To proceed, we localize as follows: 
\begin{align*}
\| \bar{u} \p_y^2 \{ \frac{u}{\bar{u}} \} x^{\frac12 -m- \frac{1}{100}} \langle \eta \rangle^2  \|_{L^2_{y}}^2 \lesssim & \| \bar{u} \p_y^2 \{ \frac{u}{\bar{u}} \} x^{\frac12 -m- \frac{1}{100}} \langle \eta \rangle^2 \chi(\eta \ge 1)  \|_{L^2_{y}}^2 + \| \bar{u} \p_y^2 \{ \frac{u}{\bar{u}} \} x^{\frac12 -m- \frac{1}{100}} \chi(\eta \le 1)  \|_{L^2_{y}}^2 \\
=: & E_{\text{Far}} + E_{\text{Near}}.
\end{align*}
The estimation of $E_{\text{Far}}$ is the simpler of the two. Indeed, an application of the product rule coupled with the fact that $\bar{u} \chi(\eta \ge 1) \gtrsim x^m \chi(\eta \ge 1)$ gives 
\begin{align*}
E_{\text{Far}} \lesssim & \| u_{yy} x^{\frac12 -m- \frac{1}{100}} \langle \eta \rangle^2 \chi(\eta \ge 1)  \|_{L^2_{y}}^2 + \| u_{y} \frac{1}{\bar{u}} \bar{u}_y \langle \eta \rangle^2 x^{\frac12 -m- \frac{1}{100}} \langle \eta \rangle^2 \chi(\eta \ge 1)  \|_{L^2_{y}}^2 \\
& + \| u \frac{\bar{u}_y^2}{\bar{u}^2} \langle \eta \rangle^2 x^{\frac12-m- \frac{1}{100}} \|_{L^2_y}^2 + \| u \frac{\bar{u}_{yy}}{\bar{u}} \langle \eta \rangle^2 x^{\frac12 -m- \frac{1}{100}} \|_{L^2_y}^2 \\
\lesssim & d^{(Z)}_{0 + \frac12, 2}(x) + \| u_y x^{-\frac{m}{2}} \|_{L^2_y}^2  \\
\lesssim & d^{(Z)}_{0 + \frac12, 2}(x) + d_0(x),
\end{align*} 
upon which we invoke \eqref{G:League:na:1} and \eqref{G:League:na:4}.

At this stage, we introduce the following technique to estimate $E_{\text{Near}}$. Define $f(\eta)$ to be a smooth, rapidly decaying function which satisfies $f(0) = 0$ and $f'(0) = 1$. Subsequently decompose
\begin{align*}
 E_{\text{Near}} =& \| \bar{u} \p_y^2 \{ \frac{u - x^{\frac12(1-m)}f(\eta) u_y(x, 0)}{\bar{u}} \} x^{\frac12 -m- \frac{1}{100}} \chi(\eta \le 1)  \|_{L^2_{y}}^2 \\
 &+ \| \bar{u} \p_y^2 \{ \frac{ x^{\frac12(1-m)}f(\eta) u_y(x, 0)}{\bar{u}} \} x^{\frac12 -m- \frac{1}{100}} \chi(\eta \le 1)  \|_{L^2_{y}}^2 =:  E_{\text{Near}, 1} +  E_{\text{Near}, 2}.
\end{align*}
To further clarify matters, define $\hat{u} := u - x^{\frac12(1-m)}f(\eta) u_y(x, 0)$, and notice that $\hat{u}|_{y = 0} = \p_y \hat{u}|_{y = 0} = 0$. In this case, we estimate $E_{\text{Near}, 1}$ as follows 
\begin{align*}
E_{\text{Near}, 1} \lesssim & \| \hat{u}_{yy} x^{\frac12 -m- \frac{1}{100}} \|_{L^2_y}^2 + \| \bar{u}_y \frac{\hat{u}_y}{\bar{u}} \chi(\eta \le 1) x^{\frac12 -m- \frac{1}{100}}\|_{L^2_y}^2 + \| \bar{u}_{yy} \frac{\hat{u}}{\bar{u}}  \chi(\eta \le 1) x^{\frac12 -m- \frac{1}{100}}\|_{L^2_y}^2 \\
&+ \| \bar{u}_y^2 \frac{\hat{u}}{\bar{u}^2}  \chi(\eta \le 1)x^{\frac12 -m- \frac{1}{100}}\|_{L^2_y}^2 \\
\lesssim & \| \hat{u}_{yy} x^{\frac12 -m- \frac{1}{100}} \|_{L^2_y}^2 + \| \bar{u}_y x^{\frac12(1-m) -m} \frac{\hat{u}_y}{y} \chi(\eta \le 1) x^{\frac12 -m- \frac{1}{100}}\|_{L^2_y}^2 \\
& + \| y x^{\frac12(1-m) - m} \bar{u}_{yy} \frac{\hat{u}}{y^2}  \chi(\eta \le 1) x^{\frac12 -m- \frac{1}{100}}\|_{L^2_y}^2 + \| \bar{u}_y^2 x^{(1-m) - 2m} \frac{\hat{u}}{y^2}  \chi(\eta \le 1)x^{\frac12 -m- \frac{1}{100}}\|_{L^2_y}^2 \\
\lesssim & \| \hat{u}_{yy} x^{\frac12 -m- \frac{1}{100}} \|_{L^2_y}^2 + \| \bar{u}_y x^{\frac12(1-m) -m} \frac{\hat{u}_y}{y} x^{\frac12 -m- \frac{1}{100}}\|_{L^2_y}^2 \\
& + \| y x^{\frac12(1-m) - m} \bar{u}_{yy} \frac{\hat{u}}{y^2} x^{\frac12 -m- \frac{1}{100}}\|_{L^2_y}^2 + \| \bar{u}_y^2 x^{(1-m) - 2m} \frac{\hat{u}}{y^2} x^{\frac12 -m- \frac{1}{100}}\|_{L^2_y}^2 \\
\lesssim & \| \hat{u}_{yy} x^{\frac12-m- \frac{1}{100}} \|_{L^2_y}^2 \\
\lesssim & \| u_{yy} x^{\frac12-m- \frac{1}{100}} \|_{L^2_y}^2 + \| \p_y^2 \{ f(\eta) x^{\frac12(1-m)} u_y(x, 0)  \} x^{\frac12-m- \frac{1}{100}} \|_{L^2_y}^2 \\
\lesssim & d^{(Z)}_{0 + \frac12,0}(x) + \| f''(\eta) x^{-(1-m)} x^{\frac12(1-m)} u_y(x, 0)   x^{\frac12-m- \frac{1}{100}} \|_{L^2_y}^2 \\
\lesssim & d^{(Z)}_{0 + \frac12,0}(x) + \| u_y(x, 0)   x^{\frac14(1-m) - \frac{m}{2}- \frac{1}{100}} \|_{L^x_y}^2 \\
\lesssim & d^{(Z)}_{0 + \frac12,0}(x) + \mathcal{B}_0(x),
\end{align*}
upon which we invoke \eqref{G:League:na:4}. The estimate of $E_{\text{Near},2}$ is essentially identical to that of $E_{\text{Far}}$. 
\end{proof}

Given the stated bootstraps, we are now able to establish the following equivalence of norms. 
\begin{lemma} Assuming the bootstrap \eqref{boots:1}, the following bounds are valid. 
\begin{align} \label{norm:equi:1}
|\mathcal{D}_5(x) - \overline{\mathcal{D}}_5(x)| \le & \eps^{\frac14} \mathcal{I}_{5}(x), \\ \label{norm:equi:2}
|\mathcal{CK}_5(x) - \overline{\mathcal{CK}}_5(x)| \le & \eps^{\frac14} \mathcal{I}_{5}(x), \\ \label{norm:equi:3}
|\mathcal{B}_5(x) - \overline{\mathcal{B}}_5(x)| \le &  \eps^{\frac14} \mathcal{I}_{5}(x), 
\end{align}
\end{lemma}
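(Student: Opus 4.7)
The three identities \eqref{norm:equi:1}--\eqref{norm:equi:3} compare the order-5 quasilinear functionals (built from $\mu = \bar{u} + \eps u$ and the nonlinear good unknowns $\mathcal{U}_5, \mathcal{Q}_5$) with their linear counterparts. The plan is to factor out an explicit $\eps$ from every difference, then dominate what remains by the $L^\infty$ bounds \eqref{jurassic:1}--\eqref{trader:joes:1} and the $L^2$ bounds \eqref{ehg:alpha}, \eqref{ehg:gamma} on $\p_y\{u/\bar{u}\}$ and $\p_y^2\{u/\bar{u}\}$. Each of the latter carries at worst an $\eps^{-1/2}$ deficit, so the product $\eps \cdot \eps^{-1/2}$ yields at worst $\eps^{1/2}$, comfortably smaller than the required $\eps^{1/4}$.

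\textbf{Algebraic identities.} From $\mathcal{Q}_5 = \psi^{(5)}/\mu$ and $Q_5 = \psi^{(5)}/\bar{u}$ one computes
\begin{equation*}
U_5 - \mathcal{U}_5 \;=\; \eps\,\p_y\!\Big(\tfrac{u}{\bar{u}}\,\mathcal{Q}_5\Big) \;=\; \eps\,\tfrac{u}{\bar{u}}\,\mathcal{U}_5 \;+\; \eps\, \p_y\{\tfrac{u}{\bar{u}}\}\,\mathcal{Q}_5,
\end{equation*}
and differentiating once more in $y$,
\begin{equation*}
\p_y U_5 - \p_y \mathcal{U}_5 \;=\; \eps\Big[\tfrac{u}{\bar{u}}\, \p_y\mathcal{U}_5 \;+\; 2\, \p_y\{\tfrac{u}{\bar{u}}\}\,\mathcal{U}_5 \;+\; \p_y^2\{\tfrac{u}{\bar{u}}\}\,\mathcal{Q}_5\Big].
\end{equation*}
At the boundary, the vanishing conditions $\psi^{(5)}|_{y=0}=\p_y\psi^{(5)}|_{y=0}=0$ combined with L'Hopital give $U_5(x,0) = u^{(5)}_y(x,0)/(2\bar{u}_y(x,0))$ and the analogous $\mathcal{U}_5(x,0) = u^{(5)}_y(x,0)/(2\mu_y(x,0))$, so that $\mathcal{U}_5(x,0) - U_5(x,0) = -\eps\, u_y(x,0)\, u^{(5)}_y(x,0)/(2\bar{u}_y\mu_y)\big|_{y=0}$.

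\textbf{Execution and main obstacle.} For \eqref{norm:equi:1}, decompose
\begin{equation*}
\overline{\mathcal{D}}_5(x) - \mathcal{D}_{5,0}(x) \;=\; \eps \int_{\mathbb{R}_+} u\,|\p_y U_5|^2\, x^{10-\tfrac{1}{100}}\, \ud y \;+\; \int_{\mathbb{R}_+} \mu\,\bigl(|\p_y\mathcal{U}_5|^2 - |\p_y U_5|^2\bigr)\, x^{10-\tfrac{1}{100}}\, \ud y.
\end{equation*}
The first summand equals $\eps \int (u/\bar{u})\cdot \bar{u}\,|\p_y U_5|^2\, x^{10-\tfrac{1}{100}}$, bounded via \eqref{jurassic:3} by $\eps^{1/2}\mathcal{D}_{5,0}(x)$ with room to spare in the $x$-weight. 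For the second summand, factor as $(\p_y\mathcal{U}_5-\p_y U_5)(\p_y\mathcal{U}_5+\p_y U_5)$ and insert the second identity above; Cauchy--Schwarz against $\sqrt{\mu}(\p_y\mathcal{U}_5 + \p_y U_5)$ produces three pieces: the $(u/\bar{u})\p_y\mathcal{U}_5$ piece is handled by \eqref{jurassic:3}; the $\p_y^2\{u/\bar{u}\}\mathcal{Q}_5$ piece is handled by \eqref{ehg:gamma} combined with the Hardy bound $\mathcal{Q}_5/y \lesssim \|\mathcal{U}_5\|_{L^\infty_y}$ locally (and weighted $L^2$ globally); the remaining middle piece is treated below. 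The CK difference \eqref{norm:equi:2} is strictly easier, using only the first (non-differentiated) identity together with \eqref{jurassic:3} and \eqref{ehg:alpha}. The boundary difference \eqref{norm:equi:3} follows directly from the trace formula above combined with \eqref{jurassic:4} and \eqref{trader:joes:1}, which control $u_y(x,0)$ and $u^{(5)}_y(x,0)$ respectively. The principal technical difficulty is the middle piece $2\eps\, \p_y\{u/\bar{u}\}\,\mathcal{U}_5$: at the top regularity $k=5$ the factor $\mathcal{U}_5$ has no direct $L^\infty$ bound, so I must interpolate pointwise estimates on $\mathcal{U}_5$ via a one-dimensional Sobolev embedding between $\mathcal{CK}_{5,0}$ and $\mathcal{D}_{5,0}$, while simultaneously upgrading $\|\p_y\{u/\bar{u}\}\|_{L^2_y}$ (from \eqref{ehg:alpha}) to a localized $L^\infty_y$ estimate using the same near-boundary Hardy device that was deployed in the proof of \eqref{ehg:gamma}. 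This careful redistribution of derivatives between the two factors is what ultimately limits the gain to $\eps^{1/4}$ rather than the naive $\eps^{1/2}$.
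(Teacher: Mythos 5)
Your overall strategy coincides with the paper's: every difference between the quasilinear and linear functionals carries an explicit factor of $\eps$, and the remaining coefficients are controlled by the bootstrap $L^\infty$ bounds together with the $L^2$ estimates \eqref{ehg:alpha}--\eqref{ehg:gamma}. However, you have transposed the linearization, and this creates a real issue you do not address. The paper writes $\mathcal{U}_5$ and $\p_y\mathcal{U}_5$ in terms of $U_5,\p_yU_5,Q_5$ with coefficients built from $\bar{u}/\mu$, so that \emph{every} error term involves only the linear quantities, which are exactly what $\mathcal{I}_5(x)$ controls (e.g. $\|U_5 x^{4.5+m-\frac{1}{100}}\|_{L^2}^2\lesssim\mathcal{CK}_{5,0}$, $|Q_5|\lesssim\sqrt{y}\,\|U_5\|_{L^2_y}$). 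Your identities instead express $U_5-\mathcal{U}_5$ and $\p_yU_5-\p_y\mathcal{U}_5$ in terms of $\mathcal{U}_5,\p_y\mathcal{U}_5,\mathcal{Q}_5$, which are \emph{not} part of $\mathcal{I}_5$; every one of your error terms therefore lands on $\overline{\mathcal{D}}_5$ or $\|\mathcal{U}_5\|_{L^2}$-type quantities, and the lemma only follows after a self-absorption step ($\overline{\mathcal{D}}_5\le(1-C\eps^{1/2})^{-1}(\mathcal{D}_5+\dots)$, and similarly for the $\mathcal{U}_5$ norms, which must then be converted back via the first identity). This is fixable but it is the crux of why the paper linearizes in the direction it does, and your write-up never performs it.

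Two further points would fail as written. First, your treatment of the term $2\eps\,\p_y\{u/\bar{u}\}\,\mathcal{U}_5$ is incoherent: you propose putting $\mathcal{U}_5$ in $L^\infty_y$ (by interpolating $\mathcal{CK}_{5,0}$ against $\mathcal{D}_{5,0}$) \emph{and simultaneously} upgrading $\p_y\{u/\bar{u}\}$ to $L^\infty_y$, leaving nothing to pair with the $L^2$ factor $\sqrt{\mu}(\p_y\mathcal{U}_5+\p_yU_5)$. The correct split (the paper's $\text{Err}_2$) puts the \emph{coefficient} $\sqrt{\bar{u}}\,\p_y\{\bar{u}/\mu\}$ into $L^\infty_y$ via the one--dimensional interpolation $\|f\|_{L^\infty}^2\lesssim\|f\|_{L^2}\|f_y\|_{L^2}$ applied to $\alpha(x)$ and $\gamma(x)$, and keeps $U_5$ in the weighted $L^2$ controlled by $\mathcal{CK}_{5,0}$; the net gain is then $\eps^2(\alpha+\gamma)\lesssim\eps$, so this term is not what "limits the gain to $\eps^{1/4}$." Second, the pointwise bound $\mathcal{Q}_5/y\lesssim\|\mathcal{U}_5\|_{L^\infty_y}$ is unavailable at top order (as you yourself note, $\mathcal{U}_5$ has no $L^\infty$ bound); the usable substitute is $|\mathcal{Q}_5|\lesssim\sqrt{y}\,\|\mathcal{U}_5\|_{L^2_y}$. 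Relatedly, \eqref{trader:joes:1} holds only for $0\le k\le1$ and cannot control $u^{(5)}_y(x,0)$ in your boundary computation; no such bound is needed, since $\bar{u}_y|U_5(x,0)|^2x^{10-\frac{1}{100}}=\mathcal{B}_5(x)\le\mathcal{I}_5(x)$ already carries that trace, and your (correct) L'Hopital identities reduce $|\mathcal{B}_5-\overline{\mathcal{B}}_5|$ to $\eps\,\|u_y/\bar{u}_y|_{y=0}\|_{L^\infty}\,\mathcal{B}_5$, which \eqref{jurassic:4} closes.
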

\begin{proof}[Proof of \eqref{norm:equi:1}] We compute $\overline{\mathcal{D}}_5$ using the following identities 
\begin{align}
\mathcal{Q}_k = & \frac{\bar{u}}{\mu} Q_k, \\ \label{red64:red64}
\mathcal{U}_k =& \frac{\bar{u}}{\mu} U_k + \p_y \{ \frac{\bar{u}}{\mu} \} Q_k,  \\
\p_y \mathcal{U}_k = & \frac{\bar{u}}{\mu} \p_y U_k + 2 \p_y \{ \frac{\bar{u}}{\mu} \} U_k +  \p_y^2 \{ \frac{\bar{u}}{\mu} \} Q_k,
\end{align}
of which we choose to linearize the final identity as follows:
\begin{align}
\sqrt{\mu} \p_y \mathcal{U}_k = &\sqrt{\bar{u}} \p_y U_k + \Big( \frac{\bar{u}}{\mu} \frac{\sqrt{\mu}}{\sqrt{\bar{u}}} - 1 \Big)\sqrt{\bar{u}} \p_y U_k + 2 \sqrt{\mu} \p_y \{ \frac{\bar{u}}{\mu} \} U_k + \sqrt{\mu} \p_y^2 \{ \frac{\bar{u}}{\mu} \} Q_k,
\end{align}
from which we obtain 
\begin{align} \n
|\overline{\mathcal{D}}_{5,0}(x) - \mathcal{D}_{5,0}(x)| \lesssim &  \|  \frac{\bar{u}}{\mu} \frac{ \sqrt{\mu}}{\sqrt{\bar{u}}} - 1 \Big\|_{L^\infty}^2  \| \sqrt{\bar{u}} \p_y U_5 x^{5- \frac{1}{100}}\|_{L^2_y}^2  + \| \sqrt{\bar{u}} \p_y \{ \frac{\bar{u}}{\mu} \} U_5 x^{5- \frac{1}{100}} \|_{L^2_y}^2 \\ \n
 &+ \| \sqrt{\bar{u}} \p_y^2 \{ \frac{\bar{u}}{\mu} \} Q_5 x^{5- \frac{1}{100}} \|_{L^2_y}^2  \\ \n
 =: & \sum_{i = 1}^3 \text{Err}_i. 
\end{align}
To estimate $\text{Err}_1$, we notice that 
\begin{align*}
\Big\|  \frac{\bar{u}}{\mu} \frac{ \sqrt{\mu}}{\sqrt{\bar{u}}} - 1 \Big\|_{L^\infty} = & \Big\|   \frac{ \sqrt{\bar{u}}}{\sqrt{\mu}} - 1 \Big\|_{L^\infty} = \|  \sqrt{1 - \frac{\eps u}{\mu}} - 1 \Big\|_{L^\infty} \lesssim \eps \| \frac{u}{\bar{u}}\|_{L^\infty} \\
\lesssim & \eps \| \frac{u}{\bar{u}} \chi(\eta \le 1)\|_{L^\infty} + \eps \| \frac{u}{\bar{u}} \chi(\eta > 1)\|_{L^\infty} \\
\lesssim & \eps x^{-m} \| u \|_{L^\infty} + \eps \| \frac{\int_0^y u_y}{x^m y} x^{\frac12(1-m)} \chi(\eta \le 1) \|_{L^\infty} \\
\lesssim & \eps x^{-m} \| u \|_{L^\infty} +  \eps \| u_y \|_{L^\infty} x^{\frac12 - \frac{3m}{2}} \\
\lesssim& \eps^{\frac12} x^{-\frac14 - \frac{3m}{4}+ \frac{1}{100}},
\end{align*}
where we have invoked \eqref{jurassic:1} and \eqref{jurassic:4}. To estimate $\text{Err}_2$, we proceed as follows: 
\begin{align*}
|\text{Err}_2| \lesssim & \| \sqrt{\bar{u}} \p_y \{ \frac{\bar{u}}{\mu} \} \|_{L^\infty_y}^2 x^{1 - 2m} \| U_5 x^{4.5+m- \frac{1}{100}} \|_{L^2_y}^2 \\
\lesssim & (\|  \p_y \{ \frac{\bar{u}}{\mu} \} \|_{L^2_y}^2 x^{-\frac12 + \frac{3m}{2}} + x^{\frac12 - \frac{3m}{2}} \| \bar{u} \p_y^2 \{ \frac{\bar{u}}{\mu} \} \|_{L^2_y}^2)\| U_5 x^{4.5+m- \frac{1}{100}} \|_{L^2_y}^2 x^{1 - 2m} \\
\lesssim & \eps^2 x^{-\frac12 - \frac{3m}{2} + \frac{2}{100}} (\alpha(x) + \gamma(x)) \mathcal{I}_5(x),
\end{align*}
where we have invoked the bounds \eqref{ehg:alpha} -- \eqref{ehg:gamma}. To estimate $\text{Err}_3$, we proceed as follows. First notice that 
\begin{align*}
|Q_5| \lesssim \sqrt{y} \| U_5 \|_{L^2_y}.
\end{align*}
Inserting this, we obtain
\begin{align*}
|\text{Err}_3| \lesssim & \| \sqrt{\bar{u}} \sqrt{y} \p_y^2 \{ \frac{\bar{u}}{\mu} \} \|_{L^2_y}^2 x^{1-2m} \| U_5 x^{4.5+m- \frac{1}{100}} \|_{L^2_y}^2 \\
\lesssim & \| \sqrt{\bar{u}} \sqrt{\eta} x^{\frac14(1-m)} \p_y^2 \{ \frac{\bar{u}}{\mu} \} \|_{L^2_y}^2 x^{1-2m} \| U_5 x^{4.5+m- \frac{1}{100}} \|_{L^2_y}^2 \\
\lesssim & \| \bar{u} x^{-\frac{m}{2}} \langle \eta \rangle x^{\frac14(1-m)} \p_y^2 \{ \frac{\bar{u}}{\mu} \} \|_{L^2_y}^2 x^{1-2m} \| U_5 x^{4.5+m- \frac{1}{100}} \|_{L^2_y}^2 \\
\lesssim & \eps^2 x^{-\frac12 - \frac{3m}{2} + \frac{2}{100}} \gamma(x) \mathcal{I}_5(x).
\end{align*}
where we have used the inequality $\sqrt{\eta} \lesssim x^{-m} \sqrt{\bar{u}} \langle \eta \rangle$. 
\end{proof}
\begin{proof}[Proof of \eqref{norm:equi:2}] For this estimate, we choose to linearize \eqref{red64:red64} in the following manner: 
\begin{align} \label{blue:68}
\mathcal{U}_k =&U_k +  \Big(\frac{\bar{u}}{\mu} -1\Big) U_k + \p_y \{ \frac{\bar{u}}{\mu} \} Q_k
\end{align}
From here, we get 
\begin{align*}
|\mathcal{CK}_5(x) - \overline{\mathcal{CK}}_5(x)| \lesssim & \| x^{4.5 + m- \frac{1}{100}}\Big(\frac{\bar{u}}{\mu} -1\Big) U_k \|_{L^2_y}^2 + \| \p_y \{ \frac{\bar{u}}{\mu} \} Q_k  x^{4.5 + m- \frac{1}{100}} \|_{L^2_y}^2 \\
\lesssim & \eps^2 \| u \|_{L^\infty}^2 \mathcal{CK}_5(x) + \Big\| y \p_y \{ \frac{\bar{u}}{\mu} \} \Big\|_{L^\infty_y}^2 \| \frac{Q_k}{y} x^{4.5+m- \frac{1}{100}} \|_{L^2_y}^2 \\
\lesssim & \eps \mathcal{CK}_5(x) + \eps^2 x^{1-2m} \| \sqrt{\bar{u}} \p_y \{ \frac{\bar{u}}{\mu} \} \|_{L^\infty_y}^2 \mathcal{CK}_5(x) \\
\lesssim & \eps \mathcal{CK}_5(x) + \eps x^{-\frac12 - \frac{3m}{2} + \frac{2}{100}} \mathcal{CK}_5(x)
\end{align*}

\end{proof}
\begin{proof}[Proof of \eqref{norm:equi:3}] From \eqref{blue:68}, we have 
\begin{align}
\sqrt{\mu_y} \mathcal{U}_k =& \sqrt{\bar{u}_y} U_k + ( \sqrt{\bar{u}_y}  -  \sqrt{\mu_y}  ) U_k + \sqrt{\mu_y}  \Big(\frac{\bar{u}}{\mu} -1\Big) U_k + \sqrt{\mu_y} \p_y \{ \frac{\bar{u}}{\mu} \} Q_k.
\end{align}
Evaluation at $y = 0$ and using thatt $Q_k|_{y = 0}$ results in 
\begin{align}
|\mathcal{B}_5(x) - \overline{\mathcal{B}_5}(x)| \lesssim \bold{a}(x)^2 | \sqrt{\bar{u}_y} U_5(x,0)x^{5- \frac{1}{100}}|^2 \lesssim \bold{a}(x)^2 \mathcal{B}_5(x)
\end{align}
where
\begin{align}
\bold{a}(x) := x^{\frac14- \frac{3m}{4}}( ( \sqrt{\bar{u}_y}  -  \sqrt{\mu_y}  )  +  \sqrt{\mu_y}  \Big(\frac{\bar{u}}{\mu} -1\Big)) =: \bold{a}_1(x) + \bold{a}_2(x). 
\end{align}
To estimate $\bold{a}_1(x)$, we have 
\begin{align*}
\bold{a}_1(x) = & x^{\frac14- \frac{3m}{4}} \sqrt{\bar{u}_y} \Big(1 - \sqrt{\frac{\mu_y}{\bar{u}_y}} \Big) =  x^{\frac14- \frac{3m}{4}} \sqrt{\bar{u}_y} \Big(1 - \sqrt{\frac{\bar{u}_y + \eps u_y}{\bar{u}_y}} \Big) \\
\lesssim & \eps \| \frac{u_y}{\bar{u}_y}|_{y = 0} \|_{L^\infty}  \\
\lesssim & \eps^{\frac12} x^{-\frac14 - \frac{3m}{4}+ \frac{1}{100}},
\end{align*}
where we have used \eqref{jurassic:4}. To estimate $\bold{a}_2(x)$, we have 
\begin{align*}
\bold{a}_2(x) \lesssim & x^{\frac14- \frac{3m}{4}} x^{-\frac14 + \frac{3m}{4}} \eps \| \frac{u}{\bar{u}} \|_{L^\infty} \lesssim  \eps^{\frac12} x^{-\frac14 - \frac{3m}{4} + \frac{1}{100}},
\end{align*}
where we have used \eqref{jurassic:3}.
\end{proof}

We can immediately deduce a ``quasilinearized" version of \eqref{train:3}. 
\begin{corollary} Given any $0 < \lambda << 1$, the following bounds are valid: 
\begin{align} \label{ohm:1}
\| \mathcal{U}_{5} x^{4.5 +m- \frac{1}{100}} \|_{L^2}^2 \le &C_\lambda \mathcal{I}_{4 + \frac12, 0}(x) + \lambda \mathcal{D}_{5,0}(x) + \eps^{\frac14} \mathcal{I}_5(x), \\ \label{ohm:2}
\| \mathcal{U}_{5} x^{4.5 +m- \frac{1}{100}} \|_{L^2}^2 \le &C_\lambda \mathcal{I}_{4 + \frac12, 0}(x) + \lambda \overline{\mathcal{D}}_{5,0}(x) + \eps^{\frac14} \mathcal{I}_5(x).
\end{align}
\end{corollary}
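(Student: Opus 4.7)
The plan is to reduce both \eqref{ohm:1} and \eqref{ohm:2} to the linear estimate \eqref{train:3} (with $k=5$, $n=0$) via the linearization identity already used in the proof of \eqref{norm:equi:2}. Namely, starting from
\begin{align*}
\mathcal{U}_5 = U_5 + \Big(\frac{\bar{u}}{\mu} - 1\Big) U_5 + \p_y\!\Big\{ \frac{\bar{u}}{\mu} \Big\} Q_5,
\end{align*}
(which is \eqref{blue:68} with $k=5$), I would apply the triangle inequality to the weighted $L^2_y$ norm and decompose
\begin{align*}
\| \mathcal{U}_5 x^{4.5+m-\frac{1}{100}} \|_{L^2_y}^2 \lesssim \| U_5 x^{4.5+m-\frac{1}{100}} \|_{L^2_y}^2 + \text{Err}_A + \text{Err}_B,
\end{align*}
where $\text{Err}_A$ and $\text{Err}_B$ are the weighted $L^2$ norms of the second and third summands above. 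The first, linear, piece is exactly what \eqref{train:3} with $k=5$, $n=0$ controls, producing $C_\lambda \,\mathcal{I}_{4+\frac12,0}(x) + \lambda \,\mathcal{D}_{5,0}(x)$.

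For $\text{Err}_A$, I would recycle the $L^\infty$ bound on $\frac{\bar{u}}{\mu} - 1$ that was extracted during the proof of \eqref{norm:equi:1}, namely $\|\tfrac{\bar u}{\mu}-1\|_{L^\infty} \lesssim \eps^{\frac12} x^{-\frac14 - \frac{3m}{4} + \frac{1}{100}}$, so that
\begin{align*}
\text{Err}_A \lesssim \eps \cdot x^{-\frac12-\frac{3m}{2}+\frac{2}{100}} \| U_5 x^{4.5+m-\frac{1}{100}} \|_{L^2_y}^2 \lesssim \eps^{\frac14}\, \mathcal{I}_5(x),
\end{align*}
where the final step absorbs the integrable $x$-weight (which is at worst $O(1)$ for $x \ge 1$) and uses $\|U_5 x^{4.5+m-\frac{1}{100}}\|_{L^2_y}^2 \lesssim \mathcal{I}_5(x)$ through \eqref{train:3} again. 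For $\text{Err}_B$, I would write $|Q_5| \lesssim \sqrt{y}\,\|U_5\|_{L^2_y}$ (the same Hardy-type trick used in the proof of \eqref{norm:equi:1}) and convert $\sqrt{y}\,\p_y\{\bar u/\mu\}$ into a quantity controlled by $\alpha(x)$ and $\gamma(x)$; combined with the uniform bounds \eqref{ehg:alpha}--\eqref{ehg:gamma}, this again yields $\eps^{\frac14}\,\mathcal{I}_5(x)$. This establishes \eqref{ohm:1}.

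For \eqref{ohm:2}, I would invoke the norm-equivalence \eqref{norm:equi:1}, which gives $\mathcal{D}_{5,0}(x) \le \overline{\mathcal{D}}_{5,0}(x) + \eps^{\frac14} \mathcal{I}_5(x)$, and substitute into \eqref{ohm:1}; the $\lambda \eps^{\frac14}\mathcal{I}_5(x)$ term produced is trivially absorbed into $\eps^{\frac14}\mathcal{I}_5(x)$. The main obstacle, really the only delicate point, is \emph{bookkeeping the $x$-powers}: one must verify that the weighted $L^\infty$-in-$y$ bounds on $\bar u/\mu - 1$ and $\p_y\{\bar u/\mu\}$, when multiplied against the weights $x^{4.5+m-\frac{1}{100}}$ appearing in the functional, produce a genuinely small factor in $\eps$ and an $x$-weight that is compatible with the definition of $\mathcal{I}_5(x)$ (where the weight in $\eta$ is $n=0$). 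All of these computations were essentially carried out in the proof of \eqref{norm:equi:1}, so the present corollary is really a repackaging of those calculations combined with the a priori estimate \eqref{train:3}.
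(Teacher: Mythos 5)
Your proposal is correct and follows essentially the route the paper intends: the corollary is stated as an immediate consequence of \eqref{train:3} (with $k=5$, $n=0$) combined with the linearization identity \eqref{blue:68} and the error bounds on $\frac{\bar u}{\mu}-1$ and $\p_y\{\frac{\bar u}{\mu}\}$ already extracted in the proofs of \eqref{norm:equi:1}--\eqref{norm:equi:2}, with \eqref{ohm:2} then obtained from \eqref{ohm:1} via the equivalence $|\mathcal{D}_{5}-\overline{\mathcal{D}}_{5}|\le\eps^{\frac14}\mathcal{I}_5$. The bookkeeping of the $x$-powers and the absorption of $\lambda\eps^{\frac14}\mathcal{I}_5$ are handled exactly as you describe.
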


\section{Linear Estimates} \label{sec:4}

\begin{lemma} \label{X:est:1} For any $k, n \ge 0$, the following energy inequality is valid: 
\begin{align} \n
&\frac{\p_x}{2} \mathcal{E}_{k,n}(x) + \mathcal{CK}_{k,n}(x) + \mathcal{CK}^{(P)}_{k,n}(x) +  \mathcal{B}_{k}(x) + \mathcal{D}_{k,n}(x) \\ \label{cal:1}
& \qquad \lesssim  \delta_{n > 0} \mathcal{CK}_{k,n-1}(x) +  \delta_{k > 0} \mathcal{CK}_{k-1,n}(x) +C |\bold{S}_{k,n}(x)|, 
\end{align}
where 
\begin{align} \label{Skn:def:1}
\bold{S}_{k,n}(x) := \int_{\mathbb{R}_+} G_k U_k x^{2k - \frac{1}{100}} \langle \eta \rangle^{2n} \ud y. 
\end{align}
\end{lemma}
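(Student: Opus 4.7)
The plan is a weighted energy identity obtained by testing equation \eqref{Pr:lin2} against $U_k \cdot w$ with $w := x^{2k - 1/100} \langle \eta \rangle^{2n}$ and integrating in $y$, identifying each resulting term with a piece of the LHS of the claimed inequality. The key structural inputs are the background Prandtl identity $\bar{u} \bar{u}_x + \bar{v} \bar{u}_y = \bar{u}_{yy} - \p_x p_E$, the sign $f''' \le 0$ along the favorable Falkner-Skan profile (immediate from \eqref{fs:Form:1} with $f, f'' \ge 0$ and $0 \le f' \le 1$), and the decomposition \eqref{v:deco} of $\bar{v}$.

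\textbf{Main calculation.} The transport pair $\bar{u}^2 \p_x U_k \cdot U_k w + \bar{u} \bar{v} \p_y U_k \cdot U_k w$ is integrated by parts in $x$ and $y$ respectively (the $y = 0$ boundary vanishes since $\bar{v}|_{y = 0} = 0$), producing $\tfrac{1}{2} \p_x \mathcal{E}_{k, n}$ together with a bulk term $- \tfrac{1}{2} \int U_k^2 (\bar{u} \bar{u}_x + \bar{v} \bar{u}_y) w$ plus weight commutators in $w_x, w_y$. Using the Prandtl identity and combining with Term IV, the bulk becomes $\tfrac{3}{2} \mathcal{CK}^{(P)}_{k, n} + \tfrac{3}{2} \int \bar{u}_{yy} U_k^2 w$. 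The diffusion $-\p_y^2 u^{(k)} \cdot U_k w$ is integrated by parts: the boundary at $y = 0$ contributes $+ 2 \mathcal{B}_k$ via the Taylor identity $U_k(x, 0) = u^{(k)}_y(x, 0) / (2 \bar{u}_y(x, 0))$; after substituting $\p_y u^{(k)} = \bar{u} \p_y U_k + 2 \bar{u}_y U_k + \bar{u}_{yy} Q_k$ and one more IBP on the cross-term $2 \int \bar{u}_y U_k \p_y U_k w$, one extracts $\mathcal{D}_{k, n} + \mathcal{B}_k$ together with additional $\bar{u}_{yy} U_k^2 w$ and $w_y$ terms; Rayleigh term III combines with the cross-term $\int \bar{u}_{yy} Q_k \p_y U_k w$ through the exact-derivative identity $\p_y(\bar{u}_{yy} Q_k U_k) = \bar{u}_{yyy} Q_k U_k + \bar{u}_{yy} U_k^2 + \bar{u}_{yy} Q_k \p_y U_k$ (with vanishing boundary as $Q_k(x, 0) = 0$). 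Collecting all bulk $\bar{u}_{yy} U_k^2 w$ contributions yields net coefficient $-\tfrac{1}{2}$, which is non-negative under $\bar{u}_{yy} \le 0$ and hence may be discarded from the LHS.

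\textbf{Weight commutators and main obstacle.} The remaining bulk contributions are the weight commutators $-\tfrac{1}{2} \int U_k^2 \bar{u}^2 w_x - \tfrac{1}{2} \int U_k^2 \bar{u} \bar{v} w_y$ plus the $w_y$ pieces generated above. Expanding $w_x = (2k - \tfrac{1}{100}) x^{2k - 1 - 1/100} \langle \eta \rangle^{2n} - n(1 - m) \eta^2 \langle \eta \rangle^{2n - 2} x^{2k - 1 - 1/100}$, the $-1/100$ factor supplies the favorable $+ \tfrac{1}{2} \mathcal{CK}_{k, n}$ on the LHS; for $k \ge 1$, the large $2k$-piece is absorbed into the natural growth of $\p_x \mathcal{E}_{k, n}$, and any residue is dominated by $\delta_{k > 0} \mathcal{CK}_{k - 1, n}$ through the recursion \eqref{union:2} together with a Hardy-type estimate. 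The decisive step is the combination of the $-n(1 - m) \eta^2$ piece of $w_x$ with $\bar{u} \bar{v} w_y$: substituting $\bar{v} = (- m \eta + v_{FS, \ast}) x^{(m - 1)/2}$ from Lemma \eqref{v:deco}, the linear-in-$\eta$ part of $\bar{v}$ combines with $w_x$ into the coercive expression $\tfrac{n}{2}(1 - m) + n m = \tfrac{n(1 + m)}{2} \ge 0$ valid uniformly for $m \ge 0$, as displayed in \eqref{bass:1}; the bounded $v_{FS, \ast}$ remainder is then absorbed into $\delta_{n > 0} \mathcal{CK}_{k, n - 1}$. A final IBP on the $\int \bar{u} \p_y U_k \cdot U_k w_y$ residue reduces it to a $U_k^2$-quadratic weight commutator, which likewise fits into $\mathcal{CK}_{k, n - 1}$. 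The principal obstacle is precisely this sign bookkeeping: without Lemma \eqref{v:deco} supplying the precise linear-in-$\eta$ growth of $\bar{v}$, the commutator with $\langle \eta \rangle^{2n}$ at high $n$ would destroy the downward-cascade scheme, so this structural input is what permits a global-in-$x$ closure at the linear level.
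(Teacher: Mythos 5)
Your proposal is correct and follows essentially the same route as the paper: the same multiplier $U_k x^{2k-\frac{1}{100}}\langle\eta\rangle^{2n}$, the same extraction of $\mathcal{CK}_{k,n}$ from the $x^{-\frac{1}{100}}$ weight, the same net coefficient $-\tfrac12\bar{u}_{yy}-\tfrac32\p_x p_E$ on $|U_k|^2$ (discarded/kept by concavity and the favorable pressure sign), the boundary term via $\p_y u^{(k)}(x,0)=2\bar{u}_y(x,0)U_k(x,0)$, and above all the coercive combination $\tfrac{n}{2}(1-m)+nm\ge 0$ obtained from \eqref{v:deco}, which is exactly the paper's key observation \eqref{beebop:1}--\eqref{beebop:2}; your handling of the Rayleigh term by an exact $\p_y$-derivative rather than passing through $\pm\tfrac12\bar{u}_{yyyy}Q_k^2$ is an equivalent reorganization. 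The one imprecise point is the $-k\int\bar{u}^2U_k^2x^{2k-1-\frac{1}{100}}\langle\eta\rangle^{2n}$ commutator: it cannot be ``absorbed into the natural growth of $\p_x\mathcal{E}_{k,n}$'' (it is exactly the term produced by that growth), and reducing it via \eqref{union:2} produces a $\p_xU_{k-1}$ contribution controlled by the half-level dissipation $\mathcal{D}^{(Y)}_{(k-1)+\frac12,n}$, not by $\mathcal{CK}_{k-1,n}$ alone — a looseness already present in the statement of \eqref{cal:1} itself, which the paper resolves only at the level of the weighted linear combination in Section \ref{sec:7}.
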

\begin{proof} We multiply equation \eqref{Pr:lin2} by $U_k \langle \eta \rangle^{2n} x^{2k- \frac{1}{100}}$. We split the treatment of the resulting terms into three steps: Transport, Diffusion, Source, as follows: 
\begin{align}
\bold{T}_{k,n}(x) := &\int_{\mathbb{R}_+} (\bar{u}^2 \p_x U_k + \bar{u} \bar{v} \p_y U_k + \bar{u}_{yyy}Q_k + 2 (\bar{u}_{yy} - \p_x p_E(x))U_k) U_k \langle \eta \rangle^{2n} x^{2k- \frac{1}{100}} \ud y, \\
\bold{D}_{k,n}(x) := & - \int_{\mathbb{R}_+}  \p_y^2 u^{(k)} U_k \langle \eta \rangle^{2n} x^{2k- \frac{1}{100}} \ud y \\
\bold{S}_{k,n}(x) := &  \int_{\mathbb{R}_+} G_k U_k \langle \eta \rangle^{2n} x^{2k- \frac{1}{100}} \ud y,
\end{align}
after which we have
\begin{align} \label{mj:1}
\bold{T}_{k,n}(x) + \bold{D}_{k,n}(x) = \bold{S}_{k,n}(x).
\end{align}

\vspace{2 mm}

\noindent \textit{Transport Terms:} We have 
\begin{align} \n
\bold{T}_{k,n}(x)  = &\int_{\mathbb{R}_+} (\bar{u}^2 \p_x U_k + \bar{u} \bar{v} \p_y U_k + \bar{u}_{yyy}Q_k + 2 (\bar{u}_{yy} - \p_x p_E(x))U_k) U_k \langle \eta \rangle^{2n} x^{2k - \frac{1}{100}} \ud y \\ \n
= & \frac{\p_x}{2} \int_{\mathbb{R}_+} \bar{u}^2 |U_k|^2 \langle \eta \rangle^{2n} x^{2k- \frac{1}{100}} \ud y - \int_{\mathbb{R}_+} \bar{u} \bar{u}_x |U_k|^2 \langle \eta \rangle^{2n} x^{2k- \frac{1}{100}} \ud y - k \int_{\mathbb{R}_+} \bar{u}^2 |U_k|^2 \langle \eta \rangle^{2n} x^{2k-1- \frac{1}{100}} \\ \n
& + \frac{n}{2}(1-m) \int_{\mathbb{R}_+} \bar{u}^2 |U_k|^2 \langle \eta \rangle^{2n-1} \eta x^{2k-1- \frac{1}{100}} \ud y - \frac12 \int_{\mathbb{R}_+} \p_y \{ \bar{u} \bar{v} \} |U_k|^2 x^{2k- \frac{1}{100}} \langle \eta \rangle^{2n} \ud y  \\ \n
& - n \int_{\mathbb{R}_+} \bar{u} \bar{v} |U_k|^2 \langle \eta \rangle^{2n-1} \frac{1}{x^{\frac12(1-m)}} x^{2k- \frac{1}{100}} \ud y - \frac12 \int_{\mathbb{R}_+} \bar{u}_{yyyy} |Q_k|^2 \langle \eta \rangle^{2n} x^{2k- \frac{1}{100}} \ud y \\ \n
& -  n \int_{\mathbb{R}_+} \bar{u}_{yyy}|Q_k|^2 \langle \eta \rangle^{2n-1} x^{2k- \frac{1}{100}} \frac{1}{x^{\frac12(1-m)}} \ud y  + \int_{\mathbb{R}_+} 2(\bar{u}_{yy} - \p_x p_E(x)) |U_k|^2 \langle \eta \rangle^{2n} x^{2k- \frac{1}{100}} \ud y \\ \label{as:in}
& + \frac{1}{100} \int_{\mathbb{R}_+} \bar{u}^2 |U_k|^2 x^{2k - 1- \frac{1}{100}} \\
= & \bold{T}^{(Main)}_{k,n}(x) + \bold{T}^{(Comm)}_{k,n}(x).   
\end{align}
We define above 
\begin{align} \n
\bold{T}^{(Main)}_{k,n}(x) := & \frac{\p_x}{2} \int_{\mathbb{R}_+} \bar{u}^2 |U_k|^2 \langle \eta \rangle^{2n} x^{2k - \frac{1}{100}} \ud y- \int_{\mathbb{R}_+} \bar{u} \bar{u}_x |U_k|^2 \langle \eta \rangle^{2n} x^{2k - \frac{1}{100}} \ud y \\ \n
&- \frac12 \int_{\mathbb{R}_+} \p_y \{ \bar{u} \bar{v} \} |U_k|^2 x^{2k - \frac{1}{100}} \langle \eta \rangle^{2n} \ud y- \frac12 \int_{\mathbb{R}_+} \bar{u}_{yyyy} |Q_k|^2 \langle \eta \rangle^{2n} x^{2k - \frac{1}{100}} \ud y \\
&+  \int_{\mathbb{R}_+} 2(\bar{u}_{yy} - \p_x p_E(x)) |U_k|^2 \langle \eta \rangle^{2n} x^{2k - \frac{1}{100}} \ud y + \frac{1}{100} \int_{\mathbb{R}_+} \bar{u}^2 |U_k|^2 x^{2k - 1- \frac{1}{100}}, 
\end{align}
and
\begin{align} \n
\bold{T}^{(Comm)}_{k,n}(x) := &- k \int_{\mathbb{R}_+} \bar{u}^2 |U_k|^2 \langle \eta \rangle^{2n} x^{2k-1 - \frac{1}{100}}+ \frac{n}{2}(1-m) \int_{\mathbb{R}_+} \bar{u}^2 |U_k|^2 \langle \eta \rangle^{2n-1} \eta x^{2k-1 - \frac{1}{100}} \ud y \\
&  - n \int_{\mathbb{R}_+} \bar{u} \bar{v} |U_k|^2 \langle \eta \rangle^{2n-1} \frac{1}{x^{\frac12(1-m)}} x^{2k - \frac{1}{100}} \ud y -  n \int_{\mathbb{R}_+} \bar{u}_{yyy}|Q_k|^2 \langle \eta \rangle^{2n-1} x^{2k - \frac{1}{100}} \frac{1}{x^{\frac12(1-m)}} \ud y 
\end{align}
For the $\bold{T}_{Main}$ contributions, we have: 
\begin{align}\n
\bold{T}_{Main}(x) = & \frac{\p_x}{2} \mathcal{E}_{k,n}(x) + \int_{\mathbb{R}_+} \bold{c}_U |U_k|^2 \langle \eta \rangle^2x^{2k - \frac{1}{100}} \ud y + \int_{\mathbb{R}_+} \bold{c}_Q |Q_k|^2 \langle \eta \rangle^2x^{2k - \frac{1}{100}} \ud y \\  \label{TMain:dec}
& + \frac{1}{100} \int_{\mathbb{R}_+} \bar{u}^2 |U_k|^2 x^{2k- 1 - \frac{1}{100}} \ud y, 
\end{align}
where we define the coefficients 
\begin{align}
\bold{c}_U :=  &\frac32(\bar{u}_{yy} - \p_x p_E(x)) \\
\bold{c}_Q := & - \frac12 \bar{u}_{yyyy}
\end{align}

\vspace{2 mm}

\noindent \textit{Transport Commutator Bounds:} We now perform bounds on the terms arising from $\bold{T}_{k,n}^{(Comm)}(x)$. We first of all use the trivial identity $\eta = \langle \eta \rangle - 1$ as well as the decomposition \eqref{v:deco} to rewrite the commutator as follows 
\begin{align} \n
\bold{T}^{(Comm)}_{k,n}(x) = & - k \int_{\mathbb{R}_+} \bar{u}^2 |U_k|^2 \langle \eta \rangle^{2n} x^{2k-1 - \frac{1}{100}}+ \frac{n}{2}(1-m) \int_{\mathbb{R}_+} \bar{u}^2 |U_k|^2 \langle \eta \rangle^{2n}  x^{2k-1- \frac{1}{100}} \ud y \\ \n
&  - \frac{n}{2}(1-m) \int_{\mathbb{R}_+} \bar{u}^2 |U_k|^2 \langle \eta \rangle^{2n-1}  x^{2k-1- \frac{1}{100}} \ud y + nm \int_{\mathbb{R}_+} \bar{u} |U_k|^2 \langle \eta \rangle^{2n} \frac{1}{x^{(1-m)}} x^{2k- \frac{1}{100}} \ud y\\ \n
& - n \int_{\mathbb{R}_+} \bar{u} \bar{v}_{\ast} |U_k|^2 \langle \eta \rangle^{2n-1} \frac{1}{x^{\frac12(1-m)}} x^{2k- \frac{1}{100}} \ud y -  n \int_{\mathbb{R}_+} \bar{u}_{yyy}|Q_k|^2 \langle \eta \rangle^{2n-1} x^{2k- \frac{1}{100}} \frac{1}{x^{\frac12(1-m)}} \ud y \\ \label{beebop:1}
=: & \sum_{i = 1}^6 \bold{T}^{(Comm, i)}_{k,n}(x) 
\end{align}
First of all, we notice that 
\begin{align} \label{beebop:2}
\bold{T}^{(Comm, 2)}_{k,n}(x) + \bold{T}^{(Comm, 4)}_{k,n}(x) \ge 0. 
\end{align}
To estimate $\bold{T}^{(Comm, 1)}_{k,n}(x)$, we will need the following identity which follows from \eqref{QU} and \eqref{inv:form},
\begin{align} \label{id:k:kp1}
U_k = \p_x U_{k-1} + \frac{\bar{u}_x}{\bar{u}} U_{k-1} + \p_x \Big( \frac{\bar{u}_y}{\bar{u}} \Big) Q_{k-1}.
\end{align} 
We therefore have 
\begin{align*}
|\bold{T}^{(Comm, 1)}_{k,n}(x)| \le & k \int_{\mathbb{R}_+} \bar{u}^2 |\p_x U_{k-1}|^2 \langle \eta \rangle^{2n} x^{2k-1- \frac{1}{100}} +  k \int_{\mathbb{R}_+} \bar{u}^2 |\frac{\bar{u}_x}{\bar{u}} U_{k-1}|^2 \langle \eta \rangle^{2n} x^{2k-1- \frac{1}{100}} \\
&+ k  \int_{\mathbb{R}_+} \bar{u}^2 |\p_x \Big( \frac{\bar{u}_y}{\bar{u}} \Big) Q_{k-1}|^2 \langle \eta \rangle^{2n} x^{2k-1- \frac{1}{100}} =: \sum_{i =1}^3 |\bold{T}^{(Comm, 1, i)}_{k,n}(x)|
\end{align*}
We estimate successively,  
\begin{align*}
|\bold{T}^{(Comm, 1, 1)}_{k,n}(x)| \lesssim & \delta_{k > 0} \mathcal{E}_{(k-1) + \frac12}(x), \\
|\bold{T}^{(Comm, 1, 2)}_{k,n}(x)| \lesssim & \delta_{k > 0} \| U_{k-1} x^{(k-1) - \frac12 + m- \frac{1}{200}} \|_{L^2}^2 \lesssim  \delta_{k > 0} \mathcal{CK}_{k-1,0}(x) \\
|\bold{T}^{(Comm, 1, 2)}_{k,n}(x)| \lesssim & \delta_{k > 0} \| \frac{ Q_{k-1}}{y} x^{(k-1) - \frac12 + m- \frac{1}{200}} \|_{L^2}^2 \lesssim  \delta_{k > 0} \mathcal{CK}_{k-1,0}(x).
\end{align*}
For $\bold{T}^{(Comm,3)}_{k,n}(x)$, we estimate using the previous $n$: 
\begin{align*}
|\bold{T}^{(Comm,3)}_{k,n}(x)| \lesssim & \delta_{n > 0} \|  U_{k} x^{k-\frac12 + m- \frac{1}{200}} \langle \eta \rangle^n \|_{L^2}  \|  U_{k} x^{k-\frac12 + m- \frac{1}{200}} \langle \eta \rangle^{n-1} \|_{L^2} \\
\lesssim &  \delta_{n > 0} \mathcal{CK}_{k,n}(x)^{\frac12} \mathcal{CK}_{k,n-1}(x)^{\frac12},
\end{align*}
and similarly for 
\begin{align*}
|\bold{T}^{(Comm,5)}_{k,n}(x)| \lesssim & \delta_{n > 0} \| \bar{u} x^{-m} \|_{\infty} \| \bar{v}_{\ast} x^{\frac12(1-m)} \|_\infty \|  U_{k} x^{k-\frac12 + m- \frac{1}{200}} \langle \eta \rangle^n \|_{L^2} \|  U_{k} x^{k-\frac12 + m- \frac{1}{200}} \langle \eta \rangle^{n-1} \|_{L^2} \\
\lesssim &  \delta_{n > 0} \mathcal{CK}_{k,n}(x)^{\frac12} \mathcal{CK}_{k,n-1}(x)^{\frac12}.
\end{align*}
Finally, we have 
\begin{align*}
|\bold{T}^{(Comm,6)}_{k,n}(x)| \lesssim & \delta_{n > 0} \| \bar{u}_{yyy} y^2 x^{\frac12(1-m)} x^{-m} \langle \eta \rangle^{2n} \|_{L^\infty} \|  \frac{Q_{k}}{y} x^{k-\frac12 + m- \frac{1}{200}}  \|_{L^2}^2 \\
\lesssim & \delta_{n > 0}\|  U_k x^{k-\frac12 + m- \frac{1}{200}}  \|_{L^2}^2 \lesssim  \delta_{n > 0} \mathcal{CK}_{k,n-1}(x). 
\end{align*}

\vspace{2 mm}

\noindent \textit{Diffusive Terms:} We now treat the diffusive term, for which we have a long sequence of integrations by parts to produce the following 
\begin{align} \n
\bold{D}_{k,n}(x) := & - \int_{\mathbb{R}_+}  \p_y^2 u^{(k)} U_k \langle \eta \rangle^{2n} x^{2k- \frac{1}{100}} \ud y \\ \n
= & \int_{\mathbb{R}_+} \p_y u^{(k)} \p_y U_k \langle \eta \rangle^{2n} x^{2k- \frac{1}{100}} \ud y + 2n \int_{\mathbb{R}_+} \p_y u^{(k)} U_k \langle \eta \rangle^{2n-1} \frac{1}{x^{\frac12(1-m)}} x^{2k- \frac{1}{100}} \ud y \\ \label{hju:1}
& + \p_y u^{(k)} U_k x^{2k}|_{y = 0} \\ \n
= &  \int_{\mathbb{R}_+} \p_y \{\bar{u} U_k + \bar{u}_y Q_k \} \p_y U_k \langle \eta \rangle^{2n} x^{2k- \frac{1}{100}} \ud y + 2n \int_{\mathbb{R}_+} \p_y\{ \bar{u} U_k + \bar{u}_y Q_k \} U_k  \frac{\langle \eta \rangle^{2n-1}}{x^{\frac12(1-m)}} x^{2k- \frac{1}{100}} \ud y \\
& + \p_y \{ \bar{u} U_k + \bar{u}_y Q_k \} U_k x^{2k- \frac{1}{100}}|_{y = 0} \\  \n
= &  \int_{\mathbb{R}_+} \bar{u}|\p_y U_k|^2  \langle \eta \rangle^{2n} x^{2k- \frac{1}{100}} \ud y + \int_{\mathbb{R}_+} 2 \bar{u}_y U_k \p_y U_k\langle \eta \rangle^{2n} x^{2k- \frac{1}{100}} \ud y + \int_{\mathbb{R}_+} \bar{u}_{yy} Q_k \p_y U_k \langle \eta \rangle^{2n} x^{2k- \frac{1}{100}} \ud y \\ \n
& + 2n \int_{\mathbb{R}_+} \bar{u} \p_y U_k U_k \frac{\langle \eta \rangle^{2n-1}}{x^{\frac12(1-m)}} x^{2k- \frac{1}{100}} \ud y +  4n \int_{\mathbb{R}_+} \bar{u}_y |U_k|^2 \frac{\langle \eta \rangle^{2n-1}}{x^{\frac12(1-m)}} x^{2k- \frac{1}{100}} \ud y \\
&+ 2n \int_{\mathbb{R}_+} \bar{u}_{yy} Q_k U_k \frac{\langle \eta \rangle^{2n-1}}{x^{\frac12(1-m)}} x^{2k- \frac{1}{100}} \ud y + 2 \bar{u}_y |U_k|^2 x^{2k- \frac{1}{100}}|_{y = 0} \\ \n
= &  \int_{\mathbb{R}_+} \bar{u}|\p_y U_k|^2  \langle \eta \rangle^{2n} x^{2k- \frac{1}{100}} \ud y - \int_{\mathbb{R}_+}  \bar{u}_{yy} |U_k|^2 \langle \eta \rangle^{2n} x^{2k- \frac{1}{100}} \ud y + \int_{\mathbb{R}_+} \bar{u}_{yy} Q_k \p_y U_k \langle \eta \rangle^{2n} x^{2k- \frac{1}{100}} \ud y \\ \n
& + 2n \int_{\mathbb{R}_+} \bar{u} \p_y U_k U_k \frac{\langle \eta \rangle^{2n-1}}{x^{\frac12(1-m)}} x^{2k- \frac{1}{100}} \ud y +  4n \int_{\mathbb{R}_+} \bar{u}_y |U_k|^2 \frac{\langle \eta \rangle^{2n-1}}{x^{\frac12(1-m)}} x^{2k- \frac{1}{100}} \ud y \\ \n
&+ 2n \int_{\mathbb{R}_+} \bar{u}_{yy} Q_k U_k \frac{\langle \eta \rangle^{2n-1}}{x^{\frac12(1-m)}} x^{2k- \frac{1}{100}} \ud y + 2 \bar{u}_y |U_k|^2 x^{2k- \frac{1}{100}}|_{y = 0} -\bar{u}_y |U_k|^2 x^{2k}|_{y = 0} \\
& -n \int_{\mathbb{R}_+}  \bar{u}_{y} |U_k|^2  \frac{\langle \eta \rangle^{2n-1}}{x^{\frac12(1-m)}} x^{2k- \frac{1}{100}} \ud y \\ \n
= &  \int_{\mathbb{R}_+} \bar{u}|\p_y U_k|^2  \langle \eta \rangle^{2n} x^{2k- \frac{1}{100}} \ud y - \int_{\mathbb{R}_+}  \bar{u}_{yy} |U_k|^2 \langle \eta \rangle^{2n} x^{2k- \frac{1}{100}} \ud y - \int_{\mathbb{R}_+} \bar{u}_{yy} | U_k|^2 \langle \eta \rangle^{2n} x^{2k- \frac{1}{100}} \ud y \\ \n
& + 2n \int_{\mathbb{R}_+} \bar{u} \p_y U_k U_k \frac{\langle \eta \rangle^{2n-1}}{x^{\frac12(1-m)}} x^{2k- \frac{1}{100}} \ud y +  4n \int_{\mathbb{R}_+} \bar{u}_y |U_k|^2 \frac{\langle \eta \rangle^{2n-1}}{x^{\frac12(1-m)}} x^{2k- \frac{1}{100}} \ud y \\ \n
&+ 2n \int_{\mathbb{R}_+} \bar{u}_{yy} Q_k U_k \frac{\langle \eta \rangle^{2n-1}}{x^{\frac12(1-m)}} x^{2k- \frac{1}{100}} \ud y + 2 \bar{u}_y |U_k|^2 x^{2k- \frac{1}{100}}|_{y = 0} -\bar{u}_y |U_k|^2 x^{2k}|_{y = 0} \\ \n
& -n \int_{\mathbb{R}_+}  \bar{u}_{y} |U_k|^2  \frac{\langle \eta \rangle^{2n-1}}{x^{\frac12(1-m)}} x^{2k- \frac{1}{100}} \ud y  - \int_{\mathbb{R}_+} \bar{u}_{yyy} Q_k U_k \langle \eta \rangle^{2n} x^{2k- \frac{1}{100}} \ud y \\
&-2n \int_{\mathbb{R}_+} \bar{u}_{yy} Q_k U_k \frac{\langle \eta \rangle^{2n-1}}{x^{\frac12(1-m)}} x^{2k- \frac{1}{100}} \ud y \\ \n
= &  \int_{\mathbb{R}_+} \bar{u}|\p_y U_k|^2  \langle \eta \rangle^{2n} x^{2k- \frac{1}{100}} \ud y - \int_{\mathbb{R}_+}  \bar{u}_{yy} |U_k|^2 \langle \eta \rangle^{2n} x^{2k- \frac{1}{100}} \ud y - \int_{\mathbb{R}_+} \bar{u}_{yy} | U_k|^2 \langle \eta \rangle^{2n} x^{2k- \frac{1}{100}} \ud y \\ \n
& + 2n \int_{\mathbb{R}_+} \bar{u} \p_y U_k U_k \frac{\langle \eta \rangle^{2n-1}}{x^{\frac12(1-m)}} x^{2k- \frac{1}{100}} \ud y +  4n \int_{\mathbb{R}_+} \bar{u}_y |U_k|^2 \frac{\langle \eta \rangle^{2n-1}}{x^{\frac12(1-m)}} x^{2k- \frac{1}{100}} \ud y \\ \n
&+ 2n \int_{\mathbb{R}_+} \bar{u}_{yy} Q_k U_k \frac{\langle \eta \rangle^{2n-1}}{x^{\frac12(1-m)}} x^{2k- \frac{1}{100}} \ud y + 2 \bar{u}_y |U_k|^2 x^{2k- \frac{1}{100}}|_{y = 0} -\bar{u}_y |U_k|^2 x^{2k}|_{y = 0} \\ \n
& -n \int_{\mathbb{R}_+}  \bar{u}_{y} |U_k|^2  \frac{\langle \eta \rangle^{2n-1}}{x^{\frac12(1-m)}} x^{2k- \frac{1}{100}} \ud y  +\frac12 \int_{\mathbb{R}_+} \bar{u}_{yyyy} |Q_k|^2 \langle \eta \rangle^{2n} x^{2k- \frac{1}{100}} \ud y \\
&-2n \int_{\mathbb{R}_+} \bar{u}_{yy} Q_k U_k \frac{\langle \eta \rangle^{2n-1}}{x^{\frac12(1-m)}} x^{2k- \frac{1}{100}} \ud y + n \int_{\mathbb{R}_+} \bar{u}_{yyy} |Q_k|^2 \frac{\langle \eta \rangle^{2n-1}}{x^{\frac12(1-m)}} x^{2k- \frac{1}{100}} \ud y \\
= & \bold{D}^{(Main)}_{k,n}(x) +  \bold{D}^{(Comm)}_{k,n}(x),
\end{align}
where we define 
\begin{align} \n
 \bold{D}^{(Main)}_{k,n}(x) := &  \mathcal{D}_{k,n}(x) +\bar{u}_y |U_k|^2 x^{2k- \frac{1}{100}}|_{y = 0} + \int_{\mathbb{R}_+} \bold{d}_U |U_k|^2 \langle \eta \rangle^{2n} x^{2k- \frac{1}{100}} \ud y \\ \label{Dmain:dec}
 &+ \int_{\mathbb{R}_+} \bold{d}_Q |Q_k|^2 \langle \eta \rangle^{2n} x^{2k- \frac{1}{100}},
\end{align}
and
\begin{align*}
 \bold{D}^{(Comm)}_{k,n}(x) := &  2n \int_{\mathbb{R}_+} \bar{u} \p_y U_k U_k \frac{\langle \eta \rangle^{2n-1}}{x^{\frac12(1-m)}} x^{2k- \frac{1}{100}} \ud y +  3n \int_{\mathbb{R}_+} \bar{u}_y |U_k|^2 \frac{\langle \eta \rangle^{2n-1}}{x^{\frac12(1-m)}} x^{2k- \frac{1}{100}} \ud y \\
 & + n \int_{\mathbb{R}_+} \bar{u}_{yyy} |Q_k|^2 \frac{\langle \eta \rangle^{2n-1}}{x^{\frac12(1-m)}} x^{2k- \frac{1}{100}} \ud y \\
 =: & \sum_{i = 1}^3  \bold{D}^{(Comm, i)}_{k,n}(x)
\end{align*}
with coefficients 
\begin{align}
\bold{d}_U := &-2 \bar{u}_{yy}, \\
\bold{d}_Q := & \frac12 \bar{u}_{yyyy}.
\end{align}

\vspace{2 mm}

\noindent \textit{Diffusive Commutator Bounds:} We estimate successively the diffusive commutator terms appearing above as follows: 
\begin{align*}
| \bold{D}^{(Comm, 1)}_{k,n}(x)| \lesssim & \delta_{n > 0} \| \sqrt{\bar{u}} \p_y U_k x^k \langle \eta \rangle^n x^{- \frac{1}{200}} \|_{L^2} \| U_{k} \langle \eta \rangle^{n-1} x^{k + m - \frac12- \frac{1}{200}} \|_{L^2} \lesssim \delta_{n > 0} \mathcal{D}_{k,n}^{\frac12} \mathcal{CK}_{m,n-1}^{\frac12}   \\
| \bold{D}^{(Comm, 2)}_{k,n}(x)| \lesssim & \delta_{n > 0} \| U_{k} \langle \eta \rangle^{n-1} x^{k + m - \frac12- \frac{1}{200}} \|_{L^2} \| U_{k} \langle \eta \rangle^{n} x^{k + m - \frac12- \frac{1}{200}} \|_{L^2} \lesssim  \delta_{n > 0} \mathcal{CK}_{k,n-1}^{\frac12} \mathcal{CK}_{k,n}^{\frac12}\\
| \bold{D}^{(Comm, 3)}_{k,n}(x)| \lesssim & \delta_{n > 0} \| \frac{Q_{k}}{y} \langle \eta \rangle^{n-1} x^{k + m - \frac12- \frac{1}{200}} \|_{L^2} \| \frac{Q_{k}}{y} \langle \eta \rangle^{n} x^{k + m - \frac12- \frac{1}{200}} \|_{L^2}\lesssim  \delta_{n > 0} \mathcal{CK}_{k,n-1}^{\frac12} \mathcal{CK}_{k,n}^{\frac12}.
\end{align*}

\vspace{2 mm}

\noindent \textit{Complete Energy Bound:} We now bring together the various bounds. Indeed, starting with \eqref{mj:1}, we now have 
\begin{align} \label{mj:2}
\bold{T}_{k,n}^{(Main)}(x) + \bold{D}^{(Main)}_{k,n}(x) = \bold{S}_{k,n}(x) - \bold{T}_{k,n}^{(Comm)}(x) + \bold{D}^{(Comm)}_{k,n}(x).
\end{align}
Using the identities \eqref{TMain:dec} and \eqref{Dmain:dec}, we have 
\begin{align} \n
&\bold{T}_{k,n}^{(Main)}(x) + \bold{D}^{(Main)}_{k,n}(x) \\ \n
 = & \frac{\p_x}{2} \mathcal{E}_{k,n}(x) + \mathcal{D}_{k,n}(x) + \bar{u}_y |U_k|^2 x^{2k- \frac{1}{100}}|_{y = 0} \\ \n
&+ \int_{\mathbb{R}_+} (\bold{c}_U + \bold{d}_U) |U_k|^2 \langle \eta \rangle^{2m}x^{2k- \frac{1}{100}} \ud y + \int_{\mathbb{R}_+} (\bold{c}_Q + \bold{d}_Q) |Q_k|^2 \langle \eta \rangle^{2m}x^{2k- \frac{1}{100}} \ud y \\ \n
& + \frac{1}{100} \int_{\mathbb{R}_+} \bar{u}^2 U_k^2 x^{2k - 1 - \frac{1}{100}} \ud y \\ \n
\ge & \frac{\p_x}{2} \mathcal{E}_{k,n}(x) + \mathcal{D}_{k,n}(x) + \bar{u}_y |U_k|^2 x^{2k- \frac{1}{100}}|_{y = 0} + \frac{3}{2} m \int_{\mathbb{R}_+}  x^{2m-1- \frac{1}{100}} |U_k|^2 \langle \eta \rangle^{2m}x^{2k- \frac{1}{100}} \ud y \\
& + \mathcal{CK}_{k,n}(x).
\end{align}
This completes the proof of the lemma. 
\end{proof}

\begin{lemma} \label{lem:trav:1} For any $0 < \delta << 1$, the following energy inequality is valid: 
\begin{align} \n
\frac{\p_x}{2} \mathcal{E}_{k+\frac12,n}^{(Y)}(x) + \mathcal{D}^{(Y)}_{k+\frac12,n}(x) \le &  C_\delta  \mathcal{D}_{k, n+1}(x) + \delta (  \mathcal{D}_{k+1, 0}(x) + \mathcal{CK}_{k+1,0}(x)   + \mathcal{D}^{(Z)}_{k + \frac12, 0}(x)) \\ \label{cal:2}
&+C |\bold{S}^{(Y)}_{k,n}(x)|,
\end{align}
where the source term is defined as 
\begin{align} \label{def:skY}
\bold{S}^{(Y)}_{k,n}(x) :=  \int_{\mathbb{R}_+} G_k\p_x U_k \langle \eta \rangle^{2n} x^{2k+1 - \frac{1}{100}} \ud y
\end{align}
\end{lemma}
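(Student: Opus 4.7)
The plan is to test equation \eqref{Pr:lin2} against the multiplier $\p_x U_k \cdot x^{2k+1-\frac{1}{100}} \langle\eta\rangle^{2n}$ and integrate in $y$. The diagonal term $\bar u^2 \p_x U_k \cdot \p_x U_k$ immediately produces the dissipation $\mathcal{D}^{(Y)}_{k+\frac12,n}(x)$ on the left-hand side. To extract the energy time-derivative $\tfrac12 \p_x \mathcal{E}^{(Y)}_{k+\frac12,n}(x)$, I would integrate the diffusive term $-\p_y^2 u^{(k)}\cdot \p_x U_k$ by parts in $y$, insert the inversion identity $\p_y u^{(k)} = \bar u \p_y U_k + 2\bar u_y U_k + \bar u_{yy} Q_k$, and then rewrite the principal piece $\bar u \p_y U_k \cdot \p_x \p_y U_k = \tfrac12 \bar u \p_x (\p_y U_k)^2$. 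The commutators in which $\p_x$ falls on $\bar u$ or on the weight $x^{2k+1-\frac{1}{100}}\langle\eta\rangle^{2n}$ are controlled using the Falkner--Skan scalings $|\bar u_x| \lesssim \bar u/x$ and $|\p_x \langle\eta\rangle^{2n}| \lesssim \langle\eta\rangle^{2n+1}/x$, producing contributions of $\mathcal{D}_{k,n+1}(x)$ type.

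The main obstacle is the transport cross-term $\bar u \bar v \p_y U_k \cdot \p_x U_k$, precisely because $\bar v$ grows linearly in $\eta$ at the outer scale. Young's inequality splits it into $\delta \bar u^2 |\p_x U_k|^2$ (absorbed into $\mathcal{D}^{(Y)}_{k+\frac12,n}$) plus $C_\delta \bar v^2 |\p_y U_k|^2$. Invoking the decomposition $\bar v = (-m\eta + \bar v_*)x^{-(1-m)/2}$ from \eqref{v:deco} gives $\bar v^2 \lesssim \langle\eta\rangle^2 x^{-(1-m)}$, so the latter term is bounded by
\[
C_\delta \|\p_y U_k \, \langle\eta\rangle^{n+1} x^{k+\frac{m}{2}-\frac{1}{200}}\|_{L^2_y}^2.
\]
Applying the interpolation inequality \eqref{train:1} with weight index $n+1$ then majorizes this by $C_\delta \mathcal{D}_{k,n+1}(x) + \delta \mathcal{D}^{(Z)}_{k+\frac12,0}(x)$, yielding the first and last contributions on the right-hand side of \eqref{cal:2}.

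The remaining linear terms are treated analogously. The Rayleigh pieces $\bar u_{yyy} Q_k \cdot \p_x U_k$ and $2(\bar u_{yy} - \p_x p_E) U_k \cdot \p_x U_k$, the weight commutator arising from $\p_y \langle\eta\rangle^{2n}$ in the $y$-integration by parts, and the $y=0$ boundary trace $2\bar u_y(x,0) U_k(x,0) \p_x U_k(x,0) x^{2k+1-\frac{1}{100}}$ are each split by Young's inequality into a small piece absorbed into $\mathcal{D}^{(Y)}_{k+\frac12,n}$ plus a large piece. The large pieces contain unweighted factors of $\p_x U_k$; these are in turn converted via \eqref{train:2}, applied with a small parameter $\lambda$, into $\lambda(\mathcal{D}_{k+1,0}(x) + \mathcal{CK}_{k+1,0}(x))$ plus pieces bounded by $C_\lambda (\mathcal{D}^{(Y)}_{k+\frac12,n} + \mathcal{D}_{k,0} + \mathcal{CK}_{k,0})$. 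Combined with the explicit Falkner--Skan bounds $\bar u_{yy}\sim x^{2m-1}$, $\bar u_{yyy}\sim x^{(5m-3)/2}$, and $\p_x p_E \sim m x^{2m-1}$, together with the Hardy-type control $|Q_k|\lesssim y\|U_k\|_{L^\infty_y}$, this accounts for the $\delta\mathcal{D}_{k+1,0} + \delta\mathcal{CK}_{k+1,0}$ terms in \eqref{cal:2}, with all lower-order remainders absorbed into $C_\delta \mathcal{D}_{k,n+1}(x)$. Retaining $\bold{S}^{(Y)}_{k,n}$ as the source and choosing $\delta$ small enough to absorb $\delta \mathcal{D}^{(Y)}_{k+\frac12,n}$ into the left-hand side completes the proof.
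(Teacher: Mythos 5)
Your overall strategy is the paper's: test \eqref{Pr:lin2} against $\p_x U_k\,\langle\eta\rangle^{2n}x^{2k+1-\frac{1}{100}}$, extract $\mathcal{D}^{(Y)}_{k+\frac12,n}$ from the diagonal term and $\frac{\p_x}{2}\mathcal{E}^{(Y)}_{k+\frac12,n}$ from the diffusion, and kill the dangerous $\bar{u}\bar{v}\,\p_y U_k\,\p_x U_k$ term using the linear growth structure \eqref{v:deco} together with the interpolation bounds \eqref{train:1}--\eqref{train:2}. Your handling of that cross term is marginally different from the paper's (the paper keeps $\sqrt{\bar{u}}$ attached to $\p_y U_k$ and does a weighted Cauchy--Schwarz landing directly in $\mathcal{D}_{k,n+1}^{1/2}\mathcal{D}^{(Y)\,1/2}_{k+\frac12,n}$, whereas you drop the $\bar{u}$ weight and recover it through \eqref{train:1}, paying the extra $\delta\mathcal{D}^{(Z)}_{k+\frac12,0}$ that the statement permits); both are fine.

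The one step that fails as written is your treatment of the diffusive term. If you integrate $-\int\p_y^2u^{(k)}\,\p_x U_k\,w\,\ud y$ by parts in $y$ \emph{before} expanding $\p_y u^{(k)}$, you pick up the boundary contribution $\p_y u^{(k)}(x,0)\,\p_x U_k(x,0)\,x^{2k+1-\frac{1}{100}} = 2\bar{u}_y(x,0)\,U_k(x,0)\,\p_x U_k(x,0)\,x^{2k+1-\frac{1}{100}}$, exactly as you note. But this cannot be ``split by Young's inequality into a piece absorbed into $\mathcal{D}^{(Y)}_{k+\frac12,n}$'': that functional is a degenerate-weighted \emph{bulk} integral, and neither it nor \eqref{train:2} (an $L^2_y$ bound) controls the pointwise trace $|\p_x U_k(x,0)|$; the only boundary functionals available, $\mathcal{B}_k$ and $\mathcal{B}^{(Z)}_{k+\frac12}$, control $U_k(x,0)$ and $\p_y U_k(x,0)$, not $\p_x U_k(x,0)$, and neither appears on the right-hand side of \eqref{cal:2}. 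The fix is to reverse the order of operations, as the paper does: first expand $\p_y^2 u^{(k)}=\bar{u}\p_y^2U_k+3\bar{u}_y\p_yU_k+3\bar{u}_{yy}U_k+\bar{u}_{yyy}Q_k$, and integrate by parts only the leading piece $-\int\bar{u}\,\p_y^2U_k\,\p_xU_k\,w$, whose boundary contribution carries a factor of $\bar{u}(x,0)=0$ and hence vanishes. With that reorganization the rest of your argument goes through.
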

\begin{proof} We multiply equation \eqref{Pr:lin2} by $\p_x U_k \langle \eta \rangle^{2n} x^{2k+1}$. This produces the identity 
\begin{align}
\bold{T}_{k,n}(x) + \bold{D}_{k,n}(x)  = \bold{S}^{(Y)}_{k,n}(x),
\end{align}
where 
\begin{align}
\bold{T}_{k,n}(x) := & \int_{\mathbb{R}_+} (\bar{u}^2 \p_x U_k + \bar{u} \bar{v} \p_y U_k + \bar{u}_{yyy}Q_k + 2 (\bar{u}_{yy} - \p_x p_E(x))U_k)\p_x U_k \langle \eta \rangle^{2n} x^{2k+1} \ud y   \\
\bold{D}_{k,n}(x) := & - \int_{\mathbb{R}_+}\p_y^2 u^{(k)} \p_x U_k \langle \eta \rangle^{2n} x^{2k+1- \frac{1}{100}} \ud y \\
\bold{S}^{(Y)}_{k,n}(x) := &   \int_{\mathbb{R}_+} G_k\p_x U_k \langle \eta \rangle^{2n} x^{2k+1- \frac{1}{100}} \ud y
\end{align}

\vspace{2 mm}

\noindent \textit{Transport Terms:} For the transport term, we obtain 
\begin{align*}
\bold{T}_{k,n}(x) \ge \mathcal{D}^{(Y)}_{k + \frac12, n}(x)  - \sum_{i = 1}^3 \bold{T}^{(Err, i)}_{k,n}(x),
\end{align*}
where we define the transport error terms as follows
\begin{align*}
\bold{T}^{(Err, 1)}_{k,n}(x) := & \int_{\mathbb{R}_+}  \bar{u} \bar{v} \p_y U_k \p_x U_k \langle \eta \rangle^{2n} x^{2k+1- \frac{1}{100}} \ud y  \\
\bold{T}^{(Err, 2)}_{k,n}(x) := &  \int_{\mathbb{R}_+}  \bar{u}_{yyy}Q_k \p_x U_k \langle \eta \rangle^{2n} x^{2k+1- \frac{1}{100}} \ud y \\
\bold{T}^{(Err, 3)}_{k,n}(x) := &  \int_{\mathbb{R}_+}  2 (\bar{u}_{yy} - \p_x p_E(x))U_k\p_x U_k \langle \eta \rangle^{2n} x^{2k+1- \frac{1}{100}} \ud y 
\end{align*}
Of these, the most dangerous to control is $\bold{T}^{(Err, 1)}_{k,n}(x)$ because the growth of $v$ as $\eta \rightarrow \infty$ creates a loss of weight which we need to absorb into $\p_y U_k$. Indeed, we bound these terms as follows 
\begin{align} \n
|\bold{T}^{(Err, 1)}_{k,n}(x)| \lesssim & \Big\| \frac{\bar{v}}{\sqrt{\bar{u}}} \langle \eta \rangle^{-1} x^{\frac12} \Big\|_{L^\infty} \| \bar{u} \p_x U_k x^{k + \frac12- \frac{1}{200}} \langle \eta \rangle^n \|_{L^2_y} \| \sqrt{\bar{u}} \p_y U_k \langle \eta \rangle^{n+1} x^{k- \frac{1}{200}} \|_{L^2} \\
\lesssim &  \mathcal{D}^{(Y)}_{k + \frac12, n}(x)^{\frac12}  \mathcal{D}_{k, n+1}(x)^{\frac12}, \\ \n
|\bold{T}^{(Err, 2)}_{k,n}(x)| \lesssim & \| \frac{ \bar{u}_{yyy} }{\bar{u}} \langle y \rangle x^{1-m} \langle \eta \rangle^{2n} \|_{L^\infty} \| \frac{Q_k}{y} x^{k + m - \frac12- \frac{1}{200}} \|_{L^2} \| \bar{u} \p_x U_k x^{k + \frac12- \frac{1}{200}} \|_{L^2} \\
\lesssim & \| U_k x^{k + m - \frac12} \|_{L^2} \mathcal{D}_{k + \frac12, 0}^{(Y)}(x)^{\frac12} \lesssim \mathcal{CK}_{k,0}(x)^{\frac12} \mathcal{D}_{k + \frac12, 0}^{(Y)}(x)^{\frac12} \\ \n
|\bold{T}^{(Err, 3)}_{k,n}(x)| \lesssim & \| \frac{\bar{u}_{yy} - \p_x p_E(x)}{\bar{u}} x^{1-m} \langle \eta \rangle^{2n} \|_{L^\infty} \| U_k x^{k + m - \frac12- \frac{1}{200}} \|_{L^2} \| \bar{u} \p_x U_k x^{k + \frac12- \frac{1}{200}} \|_{L^2} \\
\lesssim & \mathcal{CK}_{k,0}(x)^{\frac12} \mathcal{D}^{(Y)}_{k + \frac12, 0}(x)^{\frac12}.
\end{align}

\vspace{2 mm}

\noindent \textit{Diffusive Terms:} For the contributions from $\bold{D}_{k,n}(x)$, we proceed as follows 
\begin{align*}
\bold{D}_{k,n}(x) = &-\int_{\mathbb{R}_+} \p_y^2 u^{(k)}  \p_x U_k x^{2k+1- \frac{1}{100}} \langle \eta \rangle^{2n} \ud y \\
= & -\int_{\mathbb{R}_+} \p_y^2 \{ \bar{u} U_k + \bar{u}_y Q_k   \}  \p_x U_k x^{2k+1- \frac{1}{100}} \langle \eta \rangle^{2n} \ud y \\
= & -\int_{\mathbb{R}_+} \{ \bar{u} \p_y^2 U_k + 3 \bar{u}_y \p_y U_k + 3 \bar{u}_{yy} U_k + \bar{u}_{yyy} Q_k \} \p_x U_k x^{2k+1- \frac{1}{100}} \langle \eta \rangle^{2n} \ud y \\
= & \int_{\mathbb{R}_+} \bar{u} \p_y U_k \p_{xy} U_k x^{2k+1- \frac{1}{100}} \langle \eta \rangle^{2n} \ud y - 2 \int \bar{u}_y \p_y U_k \p_x U_k x^{2k+1- \frac{1}{100}} \langle \eta \rangle^{2n} \ud y \\
& - \int_{\mathbb{R}_+} 3 \bar{u}_{yy} U_k \p_x U_k x^{2k+1- \frac{1}{100}} \langle \eta \rangle^{2n} \ud y  - \int_{\mathbb{R}_+}  \bar{u}_{yyy} Q_k \p_x U_k x^{2k+1- \frac{1}{100}} \langle \eta \rangle^{2n} \ud y \\
& +2n \int_{\mathbb{R}_+} \bar{u} \p_y U_k \p_x U_k x^{2k+1- \frac{1}{100}} \frac{\langle \eta \rangle^{2n-1}}{x^{\frac12(1-m)}} \\
= & \frac{\p_x}{2} \int_{\mathbb{R}_+} \bar{u} |\p_y U_k|^2 x^{2k+1- \frac{1}{100}} \langle \eta \rangle^{2n} \ud y - 2 \int \bar{u}_y \p_y U_k \p_x U_k x^{2k+1- \frac{1}{100}} \langle \eta \rangle^{2n} \ud y \\
& - \int_{\mathbb{R}_+} 3 \bar{u}_{yy} U_k \p_x U_k x^{2k+1- \frac{1}{100}} \langle \eta \rangle^{2n} \ud y  - \int_{\mathbb{R}_+}  \bar{u}_{yyy} Q_k \p_x U_k x^{2k+1- \frac{1}{100}} \langle \eta \rangle^{2n} \ud y \\
& +2n \int_{\mathbb{R}_+} \bar{u} \p_y U_k \p_x U_k x^{2k+1- \frac{1}{100}} \frac{\langle \eta \rangle^{2n-1}}{x^{\frac12(1-m)}} \ud y -  \int_{\mathbb{R}_+} \frac{ \bar{u}_x}{2} |\p_y U_k|^2 x^{2k+1- \frac{1}{100}} \langle \eta \rangle^{2n} \ud y \\
& -  \frac{2k+1}{2} \int_{\mathbb{R}_+} \bar{u} |\p_y U_k|^2 x^{2k- \frac{1}{100}} \langle \eta \rangle^{2n} \ud y + \frac{n(1-m)}{2} \int_{\mathbb{R}_+} \bar{u} |\p_y U_k|^2 x^{2k- \frac{1}{100}} \langle \eta \rangle^{2n-1} \eta \ud y \\
= &\frac{\p_x}{2} \mathcal{E}^{(Y)}_{k,n}(x) + \sum_{i = 1}^7 \bold{D}^{(i)}_{k,n}(x).
\end{align*} 
We now estimate these error terms. First, we have 
\begin{align*}
|\bold{D}^{(1)}_{k,n}(x)| \lesssim & \| \bar{u}_y x^{\frac12(1-m)} x^{-m} \langle \eta \rangle^{2n} \|_{L^\infty} \| \p_y U_k x^{\frac{m}{2} + k- \frac{1}{200}} \|_{L^2} \| \p_x U_k x^{k + \frac12 + m- \frac{1}{200}} \|_{L^2} \\
\le & C (\delta_1 \mathcal{D}_{k+\frac12,0}^{(Z)}(x)^{\frac12} + C_{\delta_1} \mathcal{D}_{k, 0}(x)^{\frac12} ) ( \delta_2 \mathcal{D}_{k + 1, 0}(x)^{\frac12} + C_{\delta_2} \mathcal{D}^{(Y)}_{k + \frac12, 0}(x)^{\frac12}  ) \\
\le & C_\delta \mathcal{D}_{k, 0}(x) + \delta  \mathcal{D}_{k+\frac12,0}^{(Z)}(x) + \delta \mathcal{D}_{k + 1, 0}(x) + \delta \mathcal{D}^{(Y)}_{k + \frac12, 0}(x),
\end{align*}
where we have invoked the interpolation bounds \eqref{train:1} and \eqref{train:2}. 

For estimate the error term $\bold{D}^{(2)}_{k,n}(x)$, we will need to collect the following preliminary estimate. First, we note that 
\begin{align} \label{id:k:kp2}
U_{k+1} = \p_x U_{k} + \frac{\bar{u}_x}{\bar{u}} U_{k} + \p_x \Big( \frac{\bar{u}_y}{\bar{u}} \Big) Q_{k}.
\end{align} 
Therefore, 
\begin{align*}
\| \p_x U_k x^{k+\frac12 + m} \|_{L^2} \lesssim  &\| U_{k+1} x^{k+\frac12 + m- \frac{1}{200}} \|_{L^2} + \| \frac{\bar{u}_x}{\bar{u}} U_{k} x^{k+\frac12 + m- \frac{1}{200}} \|_{L^2}+  \|  \p_x \Big( \frac{\bar{u}_y}{\bar{u}} \Big) Q_{k} x^{k+\frac12 + m- \frac{1}{200}} \|_{L^2} \\
\lesssim &\| U_{k+1} x^{k+\frac12 + m- \frac{1}{200}} \|_{L^2} + \| \frac{\bar{u}_x}{\bar{u}} x \|_{L^\infty} \|U_{k} x^{k-\frac12 + m- \frac{1}{200}} \|_{L^2} \\
& + \| \p_x ( \frac{\bar{u}_y}{\bar{u}} ) x y \|_{L^\infty} \|\frac{Q_{k}}{y} x^{k-\frac12 + m- \frac{1}{200}} \|_{L^2}   \\
\lesssim & \mathcal{CK}_{k+1, 0}(x)^{\frac12} + \mathcal{CK}_{k,0}(x)^{\frac12}
\end{align*}
Using this bound, we have 
\begin{align*}
|\bold{D}^{(2)}_{k,n}(x)| \lesssim & \| \bar{u}_{yy} \langle \eta \rangle^{2n}  x^{-m} x^{(1-m)} \|_{L^\infty} \| U_k x^{k-\frac12 + m- \frac{1}{200}} \|_{L^2}  \| \p_x U_k x^{k+\frac12 + m- \frac{1}{200}} \|_{L^2} \\
\lesssim & \mathcal{CK}_{k,0}(x)^{\frac12}(\mathcal{CK}_{k+1, 0}(x)^{\frac12} + \mathcal{CK}_{k,0}(x)^{\frac12}),
\end{align*}
and very similarly,  
\begin{align*}
|\bold{D}^{(3)}_{k,n}(x)| \lesssim  & \| y \bar{u}_{yyy} \langle \eta \rangle^{2n}  x^{-m} x^{(1-m)} \|_{L^\infty} \| U_k x^{k-\frac12 + m- \frac{1}{200}} \|_{L^2}  \| \p_x U_k x^{k+\frac12 + m- \frac{1}{200}} \|_{L^2} \\
\lesssim & \mathcal{CK}_{k,0}(x)^{\frac12}(\mathcal{CK}_{k+1, 0}(x)^{\frac12} + \mathcal{CK}_{k,0}(x)^{\frac12}).
\end{align*}
To estimate $\bold{D}^{(4)}_{k,n}(x)$, it is convenient to split into the case when $\eta > 1$ and $\eta \le 1$ as follows 
\begin{align*}
\bold{D}^{(4)}_{k,n}(x) = & 2n \int_{\mathbb{R}_+} \bar{u} \p_y U_k \p_x U_k x^{2k+1- \frac{1}{100}} \frac{\langle \eta \rangle^{2n-1}}{x^{\frac12(1-m)}} \chi(\eta > 1) \ud y \\
& + 2n \int_{\mathbb{R}_+} \bar{u} \p_y U_k \p_x U_k x^{2k+1- \frac{1}{100}} \frac{\langle \eta \rangle^{2n-1}}{x^{\frac12(1-m)}} \chi(\eta \le 1) \ud y := \bold{D}^{(4, >)}_{k,n}(x) + \bold{D}^{(4, \le)}_{k,n}(x).
\end{align*}
For the far-field term, we estimate as follows
\begin{align*}
| \bold{D}^{(4, >)}_{k,n}(x)| \lesssim  &\| \frac{x^{\frac{m}{2}}}{\sqrt{\bar{u}}} \chi_{\eta > 1} \|_{L^\infty} \| \sqrt{\bar{u}} \p_y U_x x^{k- \frac{1}{200}} \langle \eta \rangle^n \|_{L^2} \| \bar{u} \p_x U_k x^{k + \frac12- \frac{1}{200}} \langle \eta \rangle^n \|_{L^2} \\
\lesssim &  \mathcal{D}_{k,n}(x)^{\frac12} \mathcal{D}^{(Y)}_{k+\frac12,n}(x)^{\frac12}.
\end{align*}
For the near-field term, we notice that weights of $\langle \eta \rangle$ are bounded by a constant. Therefore, we can estimate 
\begin{align*}
| \bold{D}^{(4, \le)}_{k,n}(x)| \lesssim  & \| x^{\frac{m}{2}} \p_y U_k x^{k- \frac{1}{200}} \|_{L^2} \| x^m \p_x U_k x^{k + \frac12- \frac{1}{200}} \|_{L^2} \\
\le & C (\delta_1 \mathcal{D}_{k+\frac12,0}^{(Z)}(x)^{\frac12} + C_{\delta_1} \mathcal{D}_{k, 0}(x)^{\frac12} ) ( \delta_2 \mathcal{D}_{k + 1, 0}(x)^{\frac12} + C_{\delta_2} \mathcal{D}^{(Y)}_{k + \frac12, 0}(x)^{\frac12}  ).
\end{align*}
The terms $\bold{D}^{(5)}_{k,n}(x), \bold{D}^{(6)}_{k,n}(x)$ and $\bold{D}^{(7)}_{k,n}(x)$ are easily seen to be bounded by the previous dissipation term: 
\begin{align*}
|\bold{D}^{(5)}_{k,n}(x)| \lesssim &  \| \frac{\bar{u}_x}{\bar{u}} x \|_{L^\infty} \| \sqrt{\bar{u}} \p_y U_k x^{k- \frac{1}{200}} \langle \eta \rangle^n \|_{L^2}^2 \lesssim \mathcal{D}_{k,n}(x), \\
|\bold{D}^{(6)}_{k,n}(x)| \lesssim &   \| \sqrt{\bar{u}} \p_y U_k x^{k- \frac{1}{200}} \langle \eta \rangle^n \|_{L^2}^2 \lesssim \mathcal{D}_{k,n}(x), \\
|\bold{D}^{(7)}_{k,n}(x)| \lesssim &\| \sqrt{\bar{u}} \p_y U_k x^{k- \frac{1}{200}} \langle \eta \rangle^n \|_{L^2}^2 \lesssim \mathcal{D}_{k,n}(x).
\end{align*}
This completes the proof of the lemma. 
\end{proof}

\begin{lemma} \label{lem:Tioga:1}For any $0 < \delta << 1$, the following energy inequality is valid:
\begin{align} \n
\frac{\p_x}{2} \mathcal{E}_{k+\frac12,n}^{(Z)}(x)+  \mathcal{B}^{(Z)}_{k+\frac12}(x)  + \mathcal{D}^{(Z)}_{k+\frac12,n}(x) \le& C_{\delta} ( \mathcal{D}_{k,n+1}(x) +  \mathcal{CK}_{k,n}(x) + \mathcal{B}_k(x) )+  \delta \mathcal{D}^{(Y)}_{k + \frac12,0}(x) \\ \label{cal:3}
& + C |\bold{S}^{(Z)}_{k,n}(x)|,
\end{align}
where the source term 
\begin{align} \label{szA}
\bold{S}^{(Z)}_{k,n}(x) := &  \int_{\mathbb{R}_+} \p_y G_k \p_y U_k \langle \eta \rangle^{2n} x^{2k + 1 - m - \frac{1}{100}} \ud y.
\end{align}
\end{lemma}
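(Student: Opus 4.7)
The plan is to differentiate equation \eqref{Pr:lin2} once in $y$ and take the $L^2$ pairing with
\begin{align*}
\p_y U_k \cdot w, \qquad w := \langle \eta \rangle^{2n} x^{2k+1-m-\frac{1}{100}},
\end{align*}
mirroring the pairings used in Lemmas \ref{X:est:1} (test against $U_k$) and \ref{lem:trav:1} (test against $\p_x U_k$). This pairing produces the source \eqref{szA} on the right-hand side directly, and has the structural advantage that a single integration by parts in $y$ generates both the target dissipation $\mathcal{D}^{(Z)}_{k+\frac12,n}$ and the boundary term $\mathcal{B}^{(Z)}_{k+\frac12}$.

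The heart of the argument is the diffusive term. Integration by parts in $y$ gives
\begin{align*}
-\int_{\mathbb{R}_+}\p_y^3 u^{(k)}\,\p_y U_k\,w\,\ud y = \int_{\mathbb{R}_+}\p_y^2 u^{(k)}\,\p_y^2 U_k\,w\,\ud y + \int_{\mathbb{R}_+}\p_y^2 u^{(k)}\,\p_y U_k\,\p_y w\,\ud y + \p_y^2 u^{(k)}(x,0)\,\p_y U_k(x,0)\,w(x,0).
\end{align*}
Twice differentiating \eqref{inv:form} gives $\p_y^2 u^{(k)} = \bar u\,\p_y^2 U_k + 3\bar u_y\,\p_y U_k + 3\bar u_{yy}\,U_k + \bar u_{yyy}\,Q_k$. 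The principal term in the first integral is $\mathcal{D}^{(Z)}_{k+\frac12,n}$; the cross pairings of $\p_y^2 U_k$ with $\p_y U_k$, $U_k$, $Q_k$ are handled by Cauchy--Schwarz together with the interpolation bounds \eqref{train:1} and \eqref{train:3}, costing at most $\delta\mathcal{D}^{(Z)}_{k+\frac12,n} + C_\delta(\mathcal{D}_{k,n}+\mathcal{CK}_{k,n})$. Using $\bar u(x,0)=0$, $Q_k(x,0)=0$, and the boundary Prandtl relation $\bar u_{yy}(x,0) = \p_x p_E(x)$, the boundary trace reduces to $3\bar u_y(x,0)|\p_y U_k(x,0)|^2 w(x,0) + 3\p_x p_E(x)\,U_k(x,0)\,\p_y U_k(x,0)\,w(x,0)$; the first piece is $3\mathcal{B}^{(Z)}_{k+\frac12}$ and the second is absorbed by AM--GM into $\mathcal{B}^{(Z)}_{k+\frac12}$ and $C_\delta\mathcal{B}_k$.

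For the transport block, $\p_y(\bar u^2\,\p_x U_k) = 2\bar u\bar u_y\,\p_x U_k + \bar u^2\,\p_x\p_y U_k$; pairing with $\p_y U_k\,w$ produces $\tfrac{\p_x}{2}\mathcal{E}^{(Z)}_{k+\frac12,n}$ along with standard $\p_x w$ commutators and a cross term $\int 2\bar u\bar u_y\,\p_x U_k\,\p_y U_k\,w\,\ud y$, absorbed via Cauchy--Schwarz and \eqref{train:2} into $\delta\mathcal{D}^{(Y)}_{k+\frac12,0}$ plus $C_\delta\mathcal{D}_{k,n}$. The remaining transport contributions $\p_y(\bar u\bar v\,\p_y U_k)$, $\p_y(\bar u_{yyy}Q_k)$, and $\p_y[2(\bar u_{yy}-\p_x p_E)U_k]$ produce coefficients times $\{U_k,\p_y U_k,Q_k,\p_y^2 U_k\}$, each handled by Cauchy--Schwarz, the explicit self-similar scaling of the background coefficients, and \eqref{train:1}--\eqref{train:3}, yielding $C_\delta(\mathcal{CK}_{k,n}+\mathcal{D}_{k,n}+\mathcal{B}_k)$ plus small multiples of $\mathcal{D}^{(Z)}_{k+\frac12,n}$.

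The main obstacle is the term $\int \bar u\bar v\,\p_y^2 U_k\,\p_y U_k\,w\,\ud y$ arising from $\p_y(\bar u\bar v\,\p_y U_k)$: because $\bar v$ grows linearly in $\eta$ by \eqref{v:deco}, a naive Cauchy--Schwarz loses one power of $\langle\eta\rangle$. Substituting $\bar v = (-m\eta + v_{FS,\ast})x^{-\frac12(1-m)}$ and splitting, the $-m\eta$ contribution is bounded by $\delta\mathcal{D}^{(Z)}_{k+\frac12,n} + C_\delta\mathcal{D}_{k,n+1}$, which is precisely the source of the upgraded weight $n\mapsto n+1$ on the right-hand side of \eqref{cal:3}; the bounded remainder $v_{FS,\ast}$ absorbs into $\mathcal{D}_{k,n}$. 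This is the same weight-cascade mechanism isolated in \eqref{bass:1}--\eqref{beebop:2} and motivates the downward choice $n = 9-k$ in the $Z$-level weights of \eqref{dfr:1}--\eqref{dfr:5}. Collecting all contributions and absorbing the $\delta$-small terms into the left-hand side delivers \eqref{cal:3}.
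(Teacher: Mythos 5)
Your proposal follows essentially the same route as the paper's proof of Lemma \ref{lem:Tioga:1}: differentiate \eqref{Pr:lin2} in $y$ and pair with $\p_y U_k \langle \eta\rangle^{2n} x^{2k+1-m-\frac{1}{100}}$, integrate by parts once to produce $\mathcal{D}^{(Z)}_{k+\frac12,n}$ and $\mathcal{B}^{(Z)}_{k+\frac12}$, identify the term $\int \bar u\bar v\,\p_y^2 U_k\,\p_y U_k$ as the source of the weight upgrade $\mathcal{D}_{k,n+1}$, and absorb the $\bar u\bar u_y\,\p_x U_k\,\p_y U_k$ cross term into $\delta\mathcal{D}^{(Y)}_{k+\frac12,0}$. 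The only cosmetic differences (invoking the decomposition \eqref{v:deco} rather than the bare bound $|\bar v|\lesssim \langle\eta\rangle x^{-\frac12(1-m)}$, and using the boundary relation $\bar u_{yy}(x,0)=\p_x p_E(x)$) do not change the argument.
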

\begin{proof} We multiply equation \eqref{Pr:lin2} by the quantity $\p_y U_k \langle \eta \rangle^{2n} x^{2k + 1 - m- \frac{1}{100}}$ to the vorticity formulation of the equation. This results in the identity 
\begin{align*}
\widehat{\bold{T}}_{k,n}(x) + \widehat{\bold{D}}_{k,n}(x) = \bold{S}^{(Z)}_{k,n}(x),
\end{align*}
where we define the terms 
\begin{align}
\widehat{\bold{T}}_{k,n}(x) := &\int_{\mathbb{R}_+} \p_y (\bar{u}^2 \p_x U_k + \bar{u} \bar{v} \p_y U_k + \bar{u}_{yyy}Q_k + 2 (\bar{u}_{yy} - \p_x p_E(x))U_k) \p_y U_k \langle \eta \rangle^{2n} x^{2k + 1 - m- \frac{1}{100}}\ud y,  \\
\widehat{\bold{D}}_{k,n}(x) := & - \int_{\mathbb{R}_+}\p_y^3 u^{(k)} \p_y U_k \langle \eta \rangle^{2n} x^{2k + 1 - m- \frac{1}{100}} \ud y, \\
\bold{S}^{(Z)}_{k,n}(x) := &  \int_{\mathbb{R}_+} \p_y G_k \p_y U_k \langle \eta \rangle^{2n} x^{2k + 1 - m- \frac{1}{100}} \ud y.
\end{align}

\vspace{2 mm}

\noindent \textit{Transport Terms:} We distribute the $\p_y$ to obtain 
\begin{align*}
\widehat{\bold{T}}_{k,n}(x) =& \int_{\mathbb{R}_+} \bar{u}^2 \p_{xy} U_k \p_y U_k \langle \eta \rangle^{2n} x^{2k+1-m- \frac{1}{100}} \ud y + \int_{\mathbb{R}_+} 2 \bar{u} \bar{u}_y \p_x U_k \p_y U_k \langle \eta \rangle^{2n} x^{2k+1-m- \frac{1}{100}} \ud y \\
& + \int_{\mathbb{R}_+} \p_y (\bar{u} \bar{v})| \p_y U_k|^2 \langle \eta \rangle^{2n} x^{2k+1 - m- \frac{1}{100}} \ud y + \int_{\mathbb{R}_+} \bar{u} \bar{v} \p_y^2 U_k \p_y U_k \langle \eta \rangle^{2n} x^{2k+1-m- \frac{1}{100}} \ud y \\
& + \int_{\mathbb{R}_+} \bar{u}_{yyyy} Q_k \p_y U_k \langle \eta \rangle^{2n} x^{2k+1-m- \frac{1}{100}} \ud y + \int_{\mathbb{R}_+} 3 \bar{u}_{yyy} U_k \p_y U_k \langle \eta \rangle^{2n} x^{2k+1-m- \frac{1}{100}} \ud y \\
& + \int_{\mathbb{R}_+} 2 (\bar{u}_{yy} - \p_x p_E) |\p_y U_k|^2 \langle \eta \rangle^{2n} x^{2k+1-m- \frac{1}{100}} \ud y \\
= & \frac{\p_x}{2} \int_{\mathbb{R}_+} \bar{u}^2 |\p_y U_k|^2 \langle \eta \rangle^{2n} x^{2k+1-m- \frac{1}{100}} \ud y + \int_{\mathbb{R}_+} 3 \bar{v} \bar{u}_y | \p_y U_k|^2 \langle \eta \rangle^{2n} x^{2k+1 - m- \frac{1}{100}} \ud y \\
& - \frac{2k+1-m- \frac{1}{100}}{2} \int_{\mathbb{R}_+} \bar{u}^2 |\p_y U_k|^2 \langle \eta \rangle^{2n} x^{2k-m- \frac{1}{100}} \ud y \\
&+ \frac{n(1-m)}{2} \int_{\mathbb{R}_+} \bar{u}^2 |\p_y U_k|^2 x^{2k-m- \frac{1}{100}} \langle \eta \rangle^{2n-1}\eta \ud y \\
&+ \int_{\mathbb{R}_+} 2 \bar{u} \bar{u}_y \p_x U_k \p_y U_k \langle \eta \rangle^{2n} x^{2k+1-m- \frac{1}{100}} \ud y +  \int_{\mathbb{R}_+} \bar{u} \bar{v} \p_y^2 U_k \p_y U_k \langle \eta \rangle^{2n} x^{2k+1-m- \frac{1}{100}} \ud y \\
& + \int_{\mathbb{R}_+} \bar{u}_{yyyy} Q_k \p_y U_k \langle \eta \rangle^{2n} x^{2k+1-m- \frac{1}{100}} \ud y + \int_{\mathbb{R}_+} 3 \bar{u}_{yyy} U_k \p_y U_k \langle \eta \rangle^{2n} x^{2k+1-m- \frac{1}{100}} \ud y \\
= & \frac{\p_x}{2} \mathcal{E}^{(Z)}_{k + \frac12,n}(x) + \sum_{i = 1}^7 \widehat{\bold{T}}^{(i)}_{k,n}(x). 
\end{align*}
We estimate the transport error terms appearing above in succession: 
\begin{align*}
|\widehat{\bold{T}}^{(1)}_{k,n}(x)| \lesssim & \| \frac{\bar{v}}{\langle \eta \rangle} x^{\frac12(1-m)} \|_{L^\infty} \| \bar{u}_y x^{-m} x^{\frac12(1-m)} \langle \eta \rangle^{2n+1} \|_{L^\infty} \| \p_y U_k x^{k + \frac{m}{2}- \frac{1}{200}} \|_{L^2}^2 \\
\le & \delta \mathcal{D}^{(Z)}_{k + \frac12, 0}(x) + C_\delta \mathcal{D}_{k, 0}(x), \\
|\widehat{\bold{T}}^{(2)}_{k,n}(x)| + |\widehat{\bold{T}}^{(3)}_{k,n}(x)| \lesssim & \| \frac{\bar{u}}{x^m} \|_{L^\infty} \| \sqrt{\bar{u}} \p_y U_k x^{k- \frac{1}{200}} \langle \eta \rangle^{n} \|_{L^2}^2 \lesssim \mathcal{D}_{k, n}(x) \\
|\widehat{\bold{T}}^{(4)}_{k,n}(x)| \lesssim & \| \bar{u}_y \langle \eta \rangle^{2n} x^{\frac12(1-m) - m} \|_{L^\infty} \| \bar{u} \p_x U_k x^{k  - \frac{1}{200}} \|_{L^2} \| \p_y U_k x^{k + \frac{m}{2}- \frac{1}{200}} \|_{L^2} \\
\lesssim & \mathcal{D}^{(Y)}_{k + \frac12,0}(x)^{\frac12}(C_\delta \mathcal{D}_{k, n}(x)^{\frac12} + \delta \mathcal{D}^{(Z)}_{k+\frac12, n}(x)^{\frac12} ), \\
|\widehat{\bold{T}}^{(5)}_{k,n}(x)| \lesssim & \| \frac{\bar{v}}{\langle \eta \rangle} x^{\frac12(1-m)} \|_{L^\infty} \| \sqrt{\bar{u}} \p_y U_k x^{k- \frac{1}{200}} \langle \eta \rangle^{n+1} \|_{L^2} \| \sqrt{\bar{u}} \p_y^2 U_k \langle \eta \rangle^n x^{k + \frac12 - \frac{m}{2}- \frac{1}{200} } \|_{L^2} \\
\lesssim & \mathcal{D}_{k, n+1}(x)^{\frac12} \mathcal{D}^{(Z)}_{k,n}(x)^{\frac12}, \\
|\widehat{\bold{T}}^{(6)}_{k,n}(x)| + |\widehat{\bold{T}}^{(7)}_{k,n}(x)| \lesssim & ( \| y \bar{u}_{yyyy}  \langle \eta \rangle^{2n}  x^{\frac{3}{2}(1-m) - m} \|_{L^\infty} + \| \bar{u}_{yyy} \langle \eta \rangle^{2n} x^{\frac{3}{2}(1-m) - m}\|_{L^\infty}) \\
& \times \| U_k x^{k - \frac12 + m- \frac{1}{200}} \|_{L^2} \| \p_y U_k x^{k + \frac{m}{2}- \frac{1}{200}} \|_{L^2} \\
\lesssim & \mathcal{CK}_{k,0}(x)^{\frac12}(C_\delta \mathcal{D}_{k, n}(x)^{\frac12} + \delta \mathcal{D}^{(Z)}_{k+\frac12, n}(x)^{\frac12} ).
\end{align*}

\vspace{2 mm}

\noindent \textit{Diffusive Terms:} For the diffusive terms, we have 
\begin{align*}
\widehat{\bold{D}}_{k,n}(x) = & \int_{\mathbb{R}_+} \p_y^2 u^{(k)} \p_y^2 U_k x^{2k + 1 -m- \frac{1}{100}} \langle \eta \rangle^{2n} \ud y + 2n  \int_{\mathbb{R}_+} \p_y^2 u^{(k)} \p_y U_k x^{2k + 1 -m- \frac{1}{100}} \frac{\langle \eta \rangle^{2n-1}}{x^{\frac12(1-m)}} \ud y \\
& + \p_y^2 u^{(k)} \p_y U_k x^{2k+1-m- \frac{1}{100}}|_{y = 0} \\ 
= &  \int_{\mathbb{R}_+} \p_y^2\{ \bar{u} U_k + \bar{u}_y Q_k \} \p_y^2 U_k x^{2k + 1 -m- \frac{1}{100}} \langle \eta \rangle^{2n} \ud y \\
&+ 2n  \int_{\mathbb{R}_+} \p_y^2 \{ \bar{u} U_k + \bar{u}_y Q_k \}\p_y U_k x^{2k + 1 -m- \frac{1}{100}} \frac{\langle \eta \rangle^{2n-1}}{x^{\frac12(1-m)}} \ud y \\
& + \p_y^2 \{ \bar{u} U_k + \bar{u}_y Q_k \} \p_y U_k x^{2k+1-m- \frac{1}{100}}|_{y = 0} \\
= & \int_{\mathbb{R}_+} \bar{u} |\p_y^2 U_k|^2 x^{2k + 1 -m- \frac{1}{100}} \langle \eta \rangle^{2n} \ud y + \int_{\mathbb{R}_+} 3 \bar{u}_y \p_y U_k \p_y^2 U_k x^{2k+1-m- \frac{1}{100}} \langle \eta \rangle^{2n} \ud y \\
& +  \int_{\mathbb{R}_+} 3 \bar{u}_{yy} U_k \p_y^2 U_k x^{2k+1-m- \frac{1}{100}} \langle \eta \rangle^{2n} \ud y + \int_{\mathbb{R}_+}  \bar{u}_{yyy} Q_k  \p_y^2 U_k x^{2k + 1 -m- \frac{1}{100}} \langle \eta \rangle^{2n} \ud y \\
& + 2n \int_{\mathbb{R}_+} \bar{u} \p_y^2 U_k \p_y U_k x^{2k+1 - m- \frac{1}{100}} \frac{\langle \eta \rangle^{2n-1}}{x^{\frac12(1-m)}} \ud y +  6n \int_{\mathbb{R}_+} \bar{u}_y | \p_y U_k|^2 x^{2k+1 - m- \frac{1}{100}} \frac{\langle \eta \rangle^{2n-1}}{x^{\frac12(1-m)}} \ud y \\
& + 6n \int_{\mathbb{R}_+} \bar{u}_{yy} U_k \p_y U_k x^{2k+1-m- \frac{1}{100}} \frac{\langle \eta \rangle^{2n-1}}{x^{\frac12(1-m)}} \ud y + 2n \int_{\mathbb{R}_+} \bar{u}_{yyy} Q_k \p_y U_k x^{2k+1-m- \frac{1}{100}} \frac{\langle \eta \rangle^{2n-1}}{x^{\frac12(1-m)}} \ud y \\
& + 3 \bar{u}_y |\p_y U_k|^2 x^{2k+1 - m- \frac{1}{100}}|_{y = 0} + \bar{u}_{yy} U_k \p_y U_k x^{2k+1-m- \frac{1}{100}}|_{y = 0} \\
= & \int_{\mathbb{R}_+} \bar{u} |\p_y^2 U_k|^2 x^{2k + 1 -m- \frac{1}{100}} \langle \eta \rangle^{2n} \ud y + \frac{3}{2} \bar{u}_y |\p_y U_k|^2 x^{2k+1 - m- \frac{1}{100}}|_{y = 0}  \\
& - \frac92 \int_{\mathbb{R}_+}  \bar{u}_{yy} |\p_y U_k|^2  x^{2k+1-m- \frac{1}{100}} \langle \eta \rangle^{2n} \ud y    - \int_{\mathbb{R}_+}  \bar{u}_{yyyy} Q_k  \p_y U_k x^{2k + 1 -m- \frac{1}{100}} \langle \eta \rangle^{2n} \ud y \\
&-4 \int_{\mathbb{R}_+}  \bar{u}_{yyy} U_k  \p_y U_k x^{2k + 1 -m- \frac{1}{100}} \langle \eta \rangle^{2n} \ud y  -2n \int_{\mathbb{R}_+}  \bar{u}_{yyy} Q_k  \p_y U_k x^{2k + \frac12 -\frac{m}{2}- \frac{1}{100}} \langle \eta \rangle^{2n-1} \ud y \\
&+ 2n \int_{\mathbb{R}_+} \bar{u} \p_y^2 U_k \p_y U_k x^{2k+1 - m- \frac{1}{100}} \frac{\langle \eta \rangle^{2n-1}}{x^{\frac12(1-m)}} \ud y +  6n \int_{\mathbb{R}_+} \bar{u}_y | \p_y U_k|^2 x^{2k+1 - m- \frac{1}{100}} \frac{\langle \eta \rangle^{2n-1}}{x^{\frac12(1-m)}} \ud y  \\
&+ 2n \int_{\mathbb{R}_+} \bar{u}_{yyy} Q_k \p_y U_k x^{2k+1-m- \frac{1}{100}} \frac{\langle \eta \rangle^{2n-1}}{x^{\frac12(1-m)}} \ud y  + 6n \int_{\mathbb{R}_+} \bar{u}_{yy} U_k \p_y U_k x^{2k+1-m- \frac{1}{100}} \frac{\langle \eta \rangle^{2n-1}}{x^{\frac12(1-m)}} \ud y  \\
&-2 \bar{u}_{yy} U_k \p_y U_k x^{2k+1-m- \frac{1}{100}}|_{y = 0} \\
= & \mathcal{D}^{(Z)}_{k + \frac12, n}(x) + \frac{3}{2} \bar{u}_y |\p_y U_k|^2 x^{2k+1 - m- \frac{1}{100}}|_{y = 0} + \sum_{i = 1}^{8} \widehat{\bold{D}}^{(i)}_{k,n}(x)  +  \widehat{\bold{B}}_{k,n}(x).
\end{align*}
Each of these terms will be estimated in succession. First, the bulk terms: 
\begin{align*}
|\widehat{\bold{D}}^{(1)}_{k,n}(x)| \lesssim & \| \bar{u}_{yy} x^{1 - 2m} \langle \eta \rangle^{2n}\|_{L^\infty} \| \p_y U_k x^{k + \frac{m}{2} - \frac{1}{200}} \|_{L^2_y}^2 \le \lambda \mathcal{D}^{(Z)}_{k + \frac12, 0} + C_\lambda \mathcal{D}_{k, 0}(x) \\
\sum_{i = 2}^3|\widehat{\bold{D}}^{(i)}_{k,n}(x)| \lesssim &(\| \bar{u}_{yyy} x^{\frac32 - \frac{5m}{2}} \langle \eta \rangle^{2n}\|_{L^\infty} + \| y\bar{u}_{yyyy} x^{\frac32 - \frac{5m}{2}} \langle \eta \rangle^{2n}\|_{L^\infty}) \\ 
& \times \| U_k x^{k + m - \frac12- \frac{1}{200}} \|_{L^2_y} \| \p_y U_k x^{k + \frac{m}{2}- \frac{1}{200}} \|_{L^2_y} \\
\le & C_\lambda (\mathcal{CK}_{k,0}(x) + \mathcal{D}_{k,0}(x)) + \lambda \mathcal{D}^{(Z)}_{k,0}(x), \\
|\widehat{\bold{D}}^{(4)}_{k,n}(x)| + |\widehat{\bold{D}}^{(7)}_{k,n}(x)| \lesssim & \| y \bar{u}_{yyy} x^{1 - 2m} \langle \eta \rangle^{2n-1} \|_{L^\infty} \| U_k x^{k - \frac12 + m- \frac{1}{200}} \|_{L^2_y} \| \p_y U_k x^{k + \frac{m}{2}- \frac{1}{200}} \|_{L^2_y} \\
\le & C_\lambda (\mathcal{CK}_{k,0}(x) + \mathcal{D}_{k,0}(x)) + \lambda \mathcal{D}^{(Z)}_{k,0}(x), \\
|\widehat{\bold{D}}^{(5)}_{k,n}(x)| \lesssim & \| \sqrt{\bar{u}} \p_y^2 U_k x^{k + \frac12 - \frac{m}{2}- \frac{1}{200}} \langle \eta \rangle^n \|_{L^2_y} \| \sqrt{\bar{u}} \p_y U_k x^{k- \frac{1}{200}} \langle \eta \rangle^{n-1} \|_{L^2_y} \\
\lesssim & \mathcal{D}^{(Z)}_{k + \frac12, n}(x)^{\frac12} \mathcal{D}_{k,n-1}(x)^{\frac12}, \\
|\widehat{\bold{D}}^{(6)}_{k,n}(x)| \lesssim & \| \bar{u}_y x^{\frac12 - \frac{3m}{2}} \langle \eta \rangle^{2n-1} \|_{L^\infty} \| \p_y U_k x^{k + \frac{m}{2}- \frac{1}{200}} \|_{L^2_y}^2 \le C_\lambda \mathcal{D}_{k, 0}(x) + \lambda \mathcal{D}_{k + \frac12,0}^{(Z)}(x) \\
|\widehat{\bold{D}}^{(8)}_{k,n}(x)| \lesssim & \| \bar{u}_{yy} x^{1 - 2m} \langle \eta \rangle^{2n-1} \|_{L^\infty} \| U_k x^{k + m - \frac12- \frac{1}{200}} \|_{L^2_y} \| \p_y U_k x^{k + \frac{m}{2}- \frac{1}{200}} \|_{L^2_y} \\
\le &  C_\lambda (\mathcal{CK}_{k,0}(x) + \mathcal{D}_{k,0}(x)) + \lambda \mathcal{D}^{(Z)}_{k,0}(x).
\end{align*} 
Next, the boundary term: 
\begin{align*}
 |\widehat{\bold{B}}_{k,n}(x)| \lesssim & \| \bar{u}_{yy} x^{1 - 2m} \|_{L^\infty}  (x^{-\frac14 + \frac{3m}{4}} |U_k| x^{k  - \frac{1}{200}})(x^{\frac14 + \frac{m}{4}} |\p_y U_k| x^{k- \frac{1}{200}}) \\
 \lesssim & ( \mathcal{B}_k(x))^{\frac12} (\mathcal{B}_{k + \frac12}^{(Z)}(x))^{\frac12}.
\end{align*}
This concludes the proof of the lemma. 
\end{proof}

\section{Quasilinear Energy Estimates} \label{sec:5}

\begin{lemma} Assume the bootstrap assumption \eqref{boots:1}. Then the following quasilinear energy bound is valid: 
\begin{align} \n
&\frac{\p_x}{2} \overline{\mathcal{E}}_5(x) + \overline{\mathcal{CK}}_5(x) + \overline{\mathcal{B}}_5(x) + \overline{\mathcal{D}}_5(x) \\ \label{cal:4}
\lesssim & C_\lambda \mathcal{I}_{4 + \frac12, 0}(x) + \lambda \mathcal{D}_{5,0} +  \eps^{\frac14} \mathcal{I}_5(x) + C| \bold{S}_{k} |.
\end{align}
\end{lemma}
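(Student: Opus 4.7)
The plan is to mimic the linear energy argument of Lemma \ref{X:est:1} at the top level $k=5$, but now working from the quasilinear identity \eqref{ql:sys} in the nonlinear good variable $\mathcal{U}_5$ rather than the linear $U_5$. Concretely, I will multiply \eqref{ql:sys} (with $k=5$) by $\mathcal{U}_5\, x^{10-\frac{1}{100}}$ (no $\langle \eta\rangle$ weight, since the target $\overline{\mathcal{E}}_5$ has none) and integrate over $\mathbb{R}_+$, yielding an identity
\begin{align*}
\overline{\bold{T}}_5(x) + \overline{\bold{D}}_5(x) = \int_{\mathbb{R}_+} H_5\, \mathcal{U}_5\, x^{10-\frac{1}{100}}\ud y,
\end{align*}
where $\overline{\bold{T}}_5$ collects the $\mu^2\p_x\mathcal{U}_5 + \bold{a}\mathcal{U}_5 + \bold{b}\mathcal{Q}_5 + \bold{c}\p_y\mathcal{U}_5$ contributions and $\overline{\bold{D}}_5$ comes from $-\p_y^2 u^{(5)}$.

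For the transport piece, I would imitate the linear computation producing \eqref{TMain:dec}, but with $(\bar u,\bar v)$ replaced by $(\mu,\nu)$ wherever the structural cancellations occur. The identity $\p_x\mu + \p_y\nu = 0$ (coming from $\p_x u+\p_y v=0$) lets half of $\bold{a}\mathcal{U}_5$ be grouped with $\bold{c}\p_y\mathcal{U}_5$ to give $\p_x[\mu^2|\mathcal{U}_5|^2]/2$ up to terms involving $\mu_x$, $\mu_{xy}$, $\mu_{yy}$. The remaining coefficient combinations should extract, as in the linear case, the pressure-generating contribution $\tfrac{3}{2}(\mu\mu_x+\nu\mu_y)$ applied to $|\mathcal{U}_5|^2$, which gives $\overline{\mathcal{CK}}^{(P)}_5\ge 0$ (droppable from the LHS if not retained) together with a term that combines with the $x^{-\frac{1}{100}}$ weight derivative to produce $\overline{\mathcal{CK}}_5(x)$. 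All discrepancies between the ``would-be linear'' coefficients ($\bar u\bar u_x$, $\bar u_{yyy}$, $\bar u_{yy}-\p_x p_E$) and their actual quasilinear counterparts are $O(\eps u)$-corrections, and by the $L^\infty$ bounds \eqref{jurassic:1}--\eqref{jurassic:3} and \eqref{trader:joes:1} together with the auxiliary bounds \eqref{ehg:alpha}--\eqref{ehg:gamma}, each picks up a factor of the form $\eps^{1/2} x^{-1/4 + \text{(powers of }m)}$ which integrates against $\overline{\mathcal{CK}}_5(x)$ to give $\eps^{1/4}\mathcal{I}_5(x)$.

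For the diffusive piece, I would substitute $u^{(5)} = \mu \mathcal{U}_5 + \mu_y \mathcal{Q}_5$ and repeat the long integration by parts in the proof of Lemma \ref{X:est:1}. The leading contribution yields $\overline{\mathcal{D}}_5(x) + \overline{\mathcal{B}}_5(x)$, while the remaining boundary-, $\p_y\mu$- and $\p_y^2\mu$-terms are of the schematic form $\int \mu_{yy}|\mathcal{U}_5|^2$ and $\int \mu_{yyy}\mathcal{Q}_5\mathcal{U}_5$. These are estimated by splitting $\mu_{\bullet}=\bar u_{\bullet}+\eps\, u_{\bullet}$: the $\bar u$-parts mirror the linear $\bold{c}_U,\bold{d}_U,\bold{c}_Q,\bold{d}_Q$ terms and are absorbed into $\overline{\mathcal{CK}}_5$; the $\eps u$-parts are controlled by the Sobolev embeddings \eqref{jurassic:2}, \eqref{jurassic:4}, \eqref{trader:joes:1} to produce $\eps^{1/4}\mathcal{I}_5(x)$. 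The Hardy-type bound $|\mathcal{Q}_5|\lesssim \sqrt{y}\,\|\mathcal{U}_5\|_{L^2}$ used in the norm-equivalence proof will be needed to convert $\mathcal{Q}_5$-terms back into $\mathcal{U}_5$-terms.

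The main obstacle will be the $\bold{c}\p_y\mathcal{U}_5$ and $\bar v\mu\p_y\mathcal{U}_5$ contributions, where the unweighted ($n=0$) estimate loses access to the downward-weight coercivity of \eqref{beebop:1}--\eqref{beebop:2}; here one must rely on the algebraic cancellation $\p_x\mu+\p_y\nu=0$ together with the fact that $\bar{v}$ grows only linearly in $\eta$ (via \eqref{v:deco}) and that the $\mathcal{Q}_5$-factors produced can, by the interpolation estimate \eqref{ohm:2}, be bounded by $C_\lambda \mathcal{I}_{4+\frac12,0}(x) + \lambda\,\overline{\mathcal{D}}_{5,0}(x)$, with $\lambda\,\overline{\mathcal{D}}_{5,0}(x)$ absorbed on the LHS and, via \eqref{norm:equi:1}, replaced by $\lambda \mathcal{D}_{5,0}+\eps^{1/4}\mathcal{I}_5$. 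Collecting everything yields \eqref{cal:4} with source term $\bold{S}_k := \int H_5 \mathcal{U}_5 x^{10-\frac{1}{100}}\ud y$, which is left for the nonlinear section.
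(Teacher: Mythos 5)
Your overall strategy --- multiplying \eqref{ql:sys} at $k=5$ by $\mathcal{U}_5\, x^{10-\frac{1}{100}}$, splitting into transport/diffusion/source, substituting $u^{(5)}=\mu\mathcal{U}_5+\mu_y\mathcal{Q}_5$ in the diffusive term, and converting top-order $L^2$ norms of $\mathcal{U}_5$ into the right-hand side via \eqref{ohm:1}--\eqref{ohm:2} --- is the paper's approach. Two remarks, one cosmetic and one substantive.

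The cosmetic one: the paper does not attempt the structural cancellation you describe in the transport part. Note that the coefficient of $\p_y\mathcal{U}_5$ in \eqref{ql:sys} is $\bold{c}=\bar v\mu$, not $\nu\mu$, so the divergence identity $\p_x\mu+\p_y\nu=0$ does not pair cleanly with $\mu^2\p_x\mathcal{U}_5$; you would generate an extra $\eps v\mu\p_y\mathcal{U}_5$ error. This is harmless because no coercive $\overline{\mathcal{CK}}^{(P)}_5$ appears in \eqref{cal:4} anyway: at top order every transport error term, including the $\bold{b}\mathcal{Q}_5$ and $\bold{c}\p_y\mathcal{U}_5$ contributions, is simply bounded by coefficient $L^\infty$ norms times $\|\mathcal{U}_5 x^{4.5+m-\frac{1}{200}}\|_{L^2}^2$, which \eqref{ohm:1} turns into $C_\lambda\mathcal{I}_{4+\frac12,0}+\lambda\mathcal{D}_{5,0}+\eps^{\frac14}\mathcal{I}_5$. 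Your worry about losing the downward-weight coercivity at $n=0$ is therefore moot.

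The substantive gap: in the diffusive part you must control $\int\eps u_{yy}\mathcal{Q}_5\p_y\mathcal{U}_5\, x^{10-\frac{1}{100}}$ and $\int\eps u_{yy}|\mathcal{U}_5|^2 x^{10-\frac{1}{100}}$. You propose to handle the $\eps u$-parts ``by the Sobolev embeddings \eqref{jurassic:2}, \eqref{jurassic:4}, \eqref{trader:joes:1}'', but none of the bootstrap $L^\infty$ bounds controls $u_{yy}$ pointwise --- they cover $u^{(k)}$, $\p_y u^{(k)}$ and $v^{(k)}$ only --- and placing $u_{yy}$ in $L^2$ leaves nowhere to put the remaining top-order factors. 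The paper's resolution is to substitute the equation \eqref{lov:1} for $u_{yy}$, i.e.\ $u_{yy}=(\bar u+\eps u)u_x+\bar v u_y+\bar u_x u+(\bar u_y+\eps u_y)v$, and then check that each resulting product ($\bar u u_x$, $uu_x$, $\bar v u_y$, $\bar u_x u$, $\bar u_y v$, $u_y v$) is bounded in the weighted coefficient norm $\|f x^{1-2m}\langle\eta\rangle\|_{L^\infty}$ using \eqref{jurassic:1}--\eqref{jurassic:6}. Without this substitution your estimate of the $\eps u_{yy}$ error terms does not close, so you should add that step explicitly.
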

\begin{proof} We multiply equation \eqref{ql:sys} by $\mathcal{U}_{k} x^{2k - \frac{1}{100}}$ which produces the identity 
\begin{align}
\bold{T}_{k} + \bold{D}_{k} = \bold{S}_k,
\end{align}
where we define these contributions as follows: 
\begin{align}
\bold{T}_{k} = & \int_{\mathbb{R}_+} ( \mu^2 \p_x \mathcal{U}_k + \bold{a} \mathcal{U}_k + \bold{b} \mathcal{Q}_k + \bold{c} \p_y \mathcal{U}_k ) \mathcal{U}_k x^{2k - \frac{1}{100}} \ud y \\
\bold{D}_{k} = & - \int_{\mathbb{R}_+} \p_y^2 u^{(k)} \mathcal{U}_k x^{2k - \frac{1}{100}} \ud y \\
\bold{S}_{k} = & \int_{\mathbb{R}_+} H_k \mathcal{U}_k x^{2k - \frac{1}{100}} \ud y, 
\end{align}
where we define $H_k$ in \eqref{ql:sys}. 

\vspace{2 mm}

\noindent \textit{Transport Terms:} We have 
\begin{align*}
\bold{T}_{k} = & \frac{\p_x}{2} \int_{\mathbb{R}_+} \mu^2 \mathcal{U}_k^2 x^{2k - \frac{1}{100}} \ud y + \int_{\mathbb{R}_+}(\bold{a} - \mu \mu_x) |\mathcal{U}_k|^2 x^{2k - \frac{1}{100}} \ud y - k \int_{\mathbb{R}_+} \mu^2 \mathcal{U}_k^2 x^{2k-1 - \frac{1}{100}} \ud y \\
& +\int_{\mathbb{R}_+}  \bold{b} \mathcal{Q}_k \mathcal{U}_k x^{2k - \frac{1}{100}}\ud y + \int_{\mathbb{R}_+} \bold{c} \p_y \mathcal{U}_k \mathcal{U}_k x^{2k - \frac{1}{100}} \ud y \\
= & \frac{\p_x}{2} \int_{\mathbb{R}_+} \mu^2 \mathcal{U}_k^2 x^{2k - \frac{1}{100}} \ud y + \sum_{i = 1}^4 \bold{T}^{(Err, i)}_{k}.
\end{align*}
We estimate these terms individually. First, we have 
\begin{align*}
|\bold{T}^{(Err, 1)}_{k}| \lesssim & \| (\bar{u}_x \mu + 2 \bar{v} \mu_y) x^{1-2m} \|_{L^\infty} \| x^{k + m-\frac12 - \frac{1}{200}} \mathcal{U}_k \|_{L^2_y}^2 \\
\lesssim &\Big( \| \bar{u}_x x^{1-m} \|_{L^\infty} (\| \bar{u} x^{-m} \|_{L^\infty} + \eps \| u x^{-m} \|_{L^\infty}) + \| \frac{\bar{v}}{\langle \eta \rangle} x^{\frac12(1-m)} \|_{L^\infty}(\| \bar{u}_y x^{\frac12 - \frac{3m}{2}} \langle \eta \rangle \|_{L^\infty} \\
&+ \eps \| u_y x^{\frac12 - \frac{3m}{2}}  \langle \eta \rangle\|_{L^\infty} ) \Big) \| x^{k + m-\frac12 - \frac{1}{200}} \mathcal{U}_k \|_{L^2_y}^2 \\
\lesssim & \overline{\mathcal{CK}}_{k,0}(x) \\
\le & C_\lambda \mathcal{I}_{4 + \frac12, 0}(x) + \lambda \mathcal{D}_{5,0} + \eps^{\frac14} \mathcal{I}_5(x),
\end{align*}
where we have used the bound \eqref{jurassic:1} and \eqref{ohm:1}. Second, we have 
\begin{align*}
|\bold{T}^{(Err, 2)}_{k}| \lesssim & \| \mu x^{-m} \|_{L^\infty}^2 \| x^{m+k-\frac12 - \frac{1}{200}} \mathcal{U}_k \|_{L^2_y}^2 \\
\lesssim & (\| \bar{u} x^{-m} \|_{L^\infty}^2 + \eps^2 \| u x^{-m} \|_{L^\infty}^2) \| x^{m+k-\frac12- \frac{1}{200}} \mathcal{U}_k \|_{L^2_y}^2 \\
\le & C_\lambda \mathcal{I}_{4 + \frac12, 0}(x) + \lambda \mathcal{D}_{5,0} + \eps^{\frac14} \mathcal{I}_5(x),
\end{align*}
where we have again used the bound \eqref{ohm:1}. Third, we have 
\begin{align*}
|\bold{T}^{(Err, 3)}_{k}| \lesssim & \| \bold{b} y x^{1-2m} \|_{L^\infty} \| x^{m+k-\frac12- \frac{1}{200}} \frac{\mathcal{Q}_k}{y} \|_{L^2_y} \| x^{m+k-\frac12- \frac{1}{200}} \mathcal{U}_k \|_{L^2_y} \\
\lesssim & \| \bold{b} y x^{1-2m} \|_{L^\infty} \| x^{m+k-\frac12- \frac{1}{200}} \mathcal{U}_k \|_{L^2_y}^2 \\
\le & C_\lambda \mathcal{I}_{4 + \frac12, 0}(x) + \lambda \mathcal{D}_{5,0} + \eps^{\frac14} \mathcal{I}_5(x)\\
\le & C_\lambda \mathcal{I}_{4 + \frac12, 0}(x) + \lambda \mathcal{D}_{5,0} + \eps^{\frac14} \mathcal{I}_5(x),
\end{align*}
Fourth, we have 
\begin{align*}
|\bold{T}^{(Err, 4)}_{k}| \lesssim & \| \frac{\bold{c}}{\sqrt{\bar{u}}} \frac{1}{\langle \eta \rangle} x^{\frac12 - m} \|_{L^\infty} \| \mathcal{U}_k \langle \eta \rangle x^{k + m - \frac12- \frac{1}{200}} \|_{L^2_y} \| \sqrt{\bar{u}} \p_y \mathcal{U}_k x^{k - \frac{1}{200}} \|_{L^2_y} \\
\le & C_\lambda \mathcal{I}_{4 + \frac12, 0}(x) + \lambda \mathcal{D}_{5,0} + \lambda \overline{\mathcal{D}}_{5,0} + \eps^{\frac14} \mathcal{I}_5(x).
\end{align*}
\vspace{2 mm}

\noindent \textit{Diffusion Terms:} Upon integrating by parts once in $y$, we have 
\begin{align*}
\bold{D}_{k} = & \int_{\mathbb{R}_+} \p_y u^{(k)} \p_y \mathcal{U}_k x^{2k - \frac{1}{100}}\ud y  +x^{2k- \frac{1}{100}} \p_y u^{(k)} \mathcal{U}_k|_{y = 0} \\
= & \int_{\mathbb{R}_+} \p_y \{ \mu \mathcal{U}_k + \mu_y \mathcal{Q}_k \} \p_y \mathcal{U}_k x^{2k - \frac{1}{100}} \ud y + x^{2k - \frac{1}{100}} \p_y \{ \mu \mathcal{U}_k + \mu_y \mathcal{Q}_k \} \mathcal{U}_k|_{y = 0} \\
= & \int_{\mathbb{R}_+} \mu |\p_y \mathcal{U}_k|^2x^{2k - \frac{1}{100}} \ud y + \int_{\mathbb{R}_+}2 \mu_y \mathcal{U}_k \p_y \mathcal{U}_k x^{2k - \frac{1}{100}} \ud y + \int_{\mathbb{R}_+} \mu_{yy} \mathcal{Q}_k \p_y \mathcal{U}_k x^{2k - \frac{1}{100}} \ud y \\
&+ 2x^{2k - \frac{1}{100}} \mu_y |\mathcal{U}_k|^2|_{y = 0} \\
= & \int_{\mathbb{R}_+} \mu |\p_y \mathcal{U}_k|^2x^{2k - \frac{1}{100}} \ud y -  \int_{\mathbb{R}_+}2 \mu_{yy} |\mathcal{U}_k|^2  x^{2k - \frac{1}{100}} \ud y + \int_{\mathbb{R}_+} \mu_{yy} \mathcal{Q}_k \p_y \mathcal{U}_k x^{2k - \frac{1}{100}} \ud y \\
& + x^{2k - \frac{1}{100}} \mu_y |\mathcal{U}_k|^2|_{y = 0} \\
= &x^{2k - \frac{1}{100}} \mu_y |\mathcal{U}_k|^2|_{y = 0} + \int_{\mathbb{R}_+} \mu |\p_y \mathcal{U}_k|^2x^{2k - \frac{1}{100}} \ud y -  \int_{\mathbb{R}_+}2 \bar{u}_{yy} |\mathcal{U}_k|^2  x^{2k - \frac{1}{100}} \ud y \\
 & + \int_{\mathbb{R}_+} \bar{u}_{yy} \mathcal{Q}_k \p_y \mathcal{U}_k x^{2k - \frac{1}{100}} \ud y + \int_{\mathbb{R}_+}\eps u_{yy} \mathcal{Q}_k \p_y \mathcal{U}_k x^{2k - \frac{1}{100}} \ud y - \int_{\mathbb{R}_+}2 \eps u_{yy} |\mathcal{U}_k|^2  x^{2k - \frac{1}{100}} \ud y \\
= &x^{2k - \frac{1}{100}} \mu_y |\mathcal{U}_k|^2|_{y = 0} + \int_{\mathbb{R}_+} \mu |\p_y \mathcal{U}_k|^2x^{2k - \frac{1}{100}} \ud y -  \int_{\mathbb{R}_+}2 \bar{u}_{yy} |\mathcal{U}_k|^2  x^{2k - \frac{1}{100}} \ud y + \sum_{i = 1}^3 \bold{D}_{k}^{(Err, i)}
\end{align*}
We now estimate the error terms appearing above. First, we have
\begin{align} \n
|\bold{D}_k^{(Err, 1)}| \le & \Big\| \frac{\bar{u}_{yy}}{\sqrt{\bar{u}}} x^{\frac12- m} y \Big\|_{L^\infty} \Big\| \frac{\mathcal{Q}_k}{y} x^{k - \frac12 + m - \frac{1}{200}} \Big\|_{L^2_y} \| \sqrt{\bar{u}} \p_y \mathcal{U}_k x^{k- \frac{1}{200}} \|_{L^2_y} \\ \label{est:hu:1}
\lesssim & \Big\| \frac{\bar{u}_{yy}}{\sqrt{\bar{u}}} x^{\frac12- m} y \Big\|_{L^\infty} \| \mathcal{U}_k x^{k - \frac12 + m- \frac{1}{200}} \|_{L^2_y} \| \sqrt{\bar{u}} \p_y \mathcal{U}_k x^{k- \frac{1}{200}} \|_{L^2_y} \\ \n
\lesssim & \| \bar{u}_{yy} x^{1- 2m} \langle \eta \rangle \|_{L^\infty} (C_\lambda \mathcal{I}_{4 + \frac12, 0}(x) + \lambda \mathcal{D}_{5,0} + \eps^{\frac14} \mathcal{I}_5(x))^{\frac12} (\overline{\mathcal{D}}_{k,0}(x))^{\frac12}.
\end{align}
Motivated by the bound above, we are led to define the ``coefficient norm" (stated here for an abstract function, $f(x, y)$):
\begin{align}
\| f \|_{\text{Coeff},1} := \| f x^{1-2m} \langle \eta \rangle \|_{L^\infty}.
\end{align}
To estimate $\bold{D}_k^{(Err, 2)}$, we invoke the equation \eqref{lov:1} to substitute for $u_{yy}$ as follows:
\begin{align*}
\bold{D}_k^{(Err, 2)} = & \int_{\mathbb{R}_+}\eps ( \bar{u} + \eps u) u_x \mathcal{Q}_k \p_y \mathcal{U}_k x^{2k - \frac{1}{100}} \ud y +  \int_{\mathbb{R}_+}\eps \bar{v} u_y  \mathcal{Q}_k \p_y \mathcal{U}_k x^{2k - \frac{1}{100}} \ud y \\
& +   \int_{\mathbb{R}_+}\eps\bar{u}_x u  \mathcal{Q}_k \p_y \mathcal{U}_k x^{2k - \frac{1}{100}} \ud y + \int_{\mathbb{R}_+}\eps (\bar{u}_y + \eps u_y) v  \mathcal{Q}_k \p_y \mathcal{U}_k x^{2k - \frac{1}{100}} \ud y = \sum_{i = 1}^4 \bold{D}_k^{(Err, 2, i)}.
\end{align*}
In order to bound these terms, we notice due to \eqref{est:hu:1} it suffices to check that each coefficient appearing above is bounded in the norm $\| \cdot \|_{\text{Coeff},1}$. Indeed, we have 
\begin{align*}
\| \bar{u} u_x \|_{\text{Coeff},1} = & \| \bar{u} u_x x^{1-2m} \langle \eta \rangle \|_{L^\infty} \lesssim \| \bar{u} x^{-m} \|_{L^\infty} \| u_x x^{1-m} \langle \eta \rangle \|_{L^\infty} \lesssim \eps^{-\frac12}, \\
\| u u_x \|_{\text{Coeff},1} = & \| u u_x x^{1-2m} \langle \eta \rangle \|_{L^\infty} \lesssim \| u x^{-m} \|_{L^\infty} \| u_x x^{1-m} \langle \eta \rangle \|_{L^\infty} \lesssim \eps^{-1}, \\
\| \bar{v} u_y \|_{\text{Coeff},1} = & \| \bar{v} u_y x^{1-2m} \langle \eta \rangle \|_{L^\infty} \lesssim \| \frac{\bar{v}}{\langle \eta \rangle} x^{\frac12(1-m)} \|_{L^\infty} \| u_y x^{\frac12 - \frac{3m}{2}} \langle \eta \rangle^2 \|_{L^\infty} \lesssim \eps^{-\frac12}, \\
\| \bar{u}_x u \|_{\text{Coeff},1} = & \| \bar{u}_x u x^{1-2m} \langle \eta \rangle \|_{L^\infty} \lesssim \| \bar{u}_x x^{-m + 1} \|_{L^\infty} \| u x^{-m} \langle \eta \rangle \|_{L^\infty} \lesssim \eps^{-\frac12}, \\
\| \bar{u}_y v \|_{\text{Coeff},1} = &\| \bar{u}_y v x^{1-2m} \langle \eta \rangle \|_{L^\infty} \lesssim \| \bar{u}_y x^{\frac12 - \frac{3m}{2}} \langle \eta \rangle^2 \|_{L^\infty} \| \frac{v}{\langle \eta \rangle} x^{\frac12(1-m)} \|_{L^\infty} \lesssim \eps^{-\frac12} \\
\| u_y v \|_{\text{Coeff},1} = &\| u_y v x^{1-2m} \langle \eta \rangle \|_{L^\infty} \lesssim \| u_y x^{\frac12 - \frac{3m}{2}} \langle \eta \rangle^2 \|_{L^\infty} \| \frac{v}{\langle \eta \rangle} x^{\frac12(1-m)} \|_{L^\infty} \lesssim \eps^{-1} 
\end{align*}
all upon invoking the bounds \eqref{jurassic:1} -- \eqref{jurassic:3}. From here, we have 
\begin{align*}
|\bold{D}_k^{(Err, 2)} | \lesssim & \eps \| u_{yy} \|_{\text{Coeff},1} (C_\lambda \mathcal{I}_{4 + \frac12, 0}(x) + \lambda \mathcal{D}_{5,0} + \eps^{\frac14} \mathcal{I}_5(x))^{\frac12} (\overline{\mathcal{D}}_{k,0}(x))^{\frac12} \\
\lesssim & \eps^{\frac12} (C_\lambda \mathcal{I}_{4 + \frac12, 0}(x) + \lambda \mathcal{D}_{5,0} + \eps^{\frac14} \mathcal{I}_5(x))^{\frac12} (\overline{\mathcal{D}}_{k,0}(x))^{\frac12}.
\end{align*}
We now move to the estimate of $\bold{D}_k^{(Err, 3)}$, for which we first notice again upon invoking \eqref{lov:1}
\begin{align*}
\| u_{yy} x^{1-2m}\|_{L^\infty} \lesssim &\| \bar{u} u_x x^{1-2m} \|_{L^\infty} + \eps \| u u_x x^{1-2m} \|_{L^\infty} + \| \bar{v} u_y x^{1-2m} \|_{L^\infty} \\
& + \| \bar{u}_x u x^{1-2m} \|_{L^\infty}  + \| \bar{u}_y v x^{1-2m} \|_{L^\infty} + \eps \| u_y v x^{1-2m} \|_{L^\infty} \lesssim \eps^{-\frac12},
\end{align*}
where we have appealed to the bootstrap bounds \eqref{jurassic:1} -- \eqref{jurassic:3}. Correspondingly, we have 
\begin{align*}
|\bold{D}_k^{(Err, 3)}| \lesssim & \eps \| u_{yy} x^{1-2m} \|_{L^\infty} \| \mathcal{U}_k x^{k - \frac12 + m - \frac{1}{200} }\|_{L^2}^2 \lesssim \eps^{\frac12}  \| \mathcal{U}_k x^{k - \frac12 + m- \frac{1}{200} }\|_{L^2}^2 \\
\lesssim & \eps^{\frac12} C_\lambda \mathcal{I}_{4 + \frac12, 0}(x) + \lambda \mathcal{D}_{5,0} + \eps^{\frac14} \mathcal{I}_5(x).
\end{align*}
This concludes the proof of the lemma. 
\end{proof}

\section{Bounds on Error Terms} \label{sec:6}

\subsection{Linear Error Terms}

\begin{lemma} \label{lem:61} The following bounds are valid on $F_{\text{Comm},k}$:
\begin{align} \n
\| \frac{1}{\sqrt{\bar{u}}} F_{\text{Comm},k} \langle \eta \rangle^{10-k} x^{k + \frac12 - \frac{m}{2}- \frac{1}{200}} \|_{L^2_y}^2 \lesssim   & \sum_{k' = 0}^{k-1} d_{k', 10-k'}(x) +  \sum_{k' = 0}^{k-1} c_{k', 10-k'}(x) \\ \label{tom:petty:1}
&+  \sum_{k' = 0}^{k-1} d_{k' + \frac12, 10-(k' + 1)}^{(Y)}(x).
\end{align}
\end{lemma}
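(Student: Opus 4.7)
The plan is to treat the four sums $F^{(i)}_{\text{Comm},k}$ in \eqref{Fcommk} separately and show that each is absorbed into one of the three families $d_{k',\cdot}$, $c_{k',\cdot}$, or $d^{(Y)}_{k'+\frac12,\cdot}$ on the right of \eqref{tom:petty:1}. In each case, I will place the coefficient (an $x$-derivative of a background FS quantity) into $L^\infty_y$ and the remaining unknown factor into $L^2_y$ with the prescribed $x$- and $\eta$-weight.

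The preliminary input is a sharp scaling of the background derivatives. From the self-similar form \eqref{psiFSform}, writing $\xi=\sqrt{(m+1)/2}\,\eta$ with $\eta = y x^{-(1-m)/2}$ and repeatedly differentiating in $x$, each application of $\p_x$ contributes one additional factor of $x^{-1}$ (times a bounded polynomial in $\xi$ whose coefficients involve $f$ and its derivatives). This yields, for every $j\ge 0$, pointwise estimates of the form $|\p_x^j\bar u|\lesssim x^{m-j}g_1(\eta)$, $|\p_x^j\bar u_y|\lesssim x^{(3m-1)/2-j}g_2(\eta)$, and $|\p_x^j\bar v|\lesssim x^{(m-1)/2-j}\langle\eta\rangle$, where $g_1$ is bounded and $g_2$ decays rapidly as $\eta\to\infty$. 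The linear growth in the third bound is precisely the content of the decomposition \eqref{v:deco}.

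With these in hand, the first three sums reduce to bookkeeping. For $F^{(1)}_{\text{Comm},k}$ one uses $1/\sqrt{\bar u}\lesssim x^{-m/2}$ and the bound on $\p_x^{k-k'}\bar u$, which combined with the prescribed $x^{k+\frac12-m/2-1/200}$ collapses to $x^{k'+\frac12-1/200}$ on the factor $u^{(k'+1)}$: this is exactly the weight of $d^{(Y)}_{k'+\frac12,10-k}$, which is controlled by $d^{(Y)}_{k'+\frac12,9-k'}$ since $10-k\le 9-k'$ when $k'\le k-1$. The term $F^{(2)}_{\text{Comm},k}$ is identical after absorbing one extra $x^{-1}$ from $\p_x^{k-k'+1}\bar u$, producing $c_{k',10-k}\lesssim c_{k',10-k'}$. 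For $F^{(3)}_{\text{Comm},k}$, the bound on $\p_x^{k-k'}\bar v$ loses one power of $\langle\eta\rangle$, so one lands in $d_{k',11-k}\lesssim d_{k',10-k'}$, again using the constraint $k'\le k-1$.

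The nonlocal sum $F^{(4)}_{\text{Comm},k}$ is the only nontrivial step. Since $v^{(k')}\big|_{y=0}=0$ and $\p_y v^{(k')}=-u^{(k'+1)}$, a weighted Hardy inequality converts $\int|v^{(k')}|^2 W(x,y)\,\ud y$ into $\int|u^{(k'+1)}|^2 y^2 W(x,y)\,\ud y$. After bringing in the factor $|\p_x^{k-k'}\bar u_y|^2/\bar u$ and using $y^2=\eta^2 x^{1-m}$ (with $\bar u\gtrsim x^m$ in the far field or $\bar u\sim x^m\eta$ near the wall), together with the rapid $\eta$-decay of $\p_x^{k-k'}\bar u_y$ that dominates any polynomial weight, the effective coefficient becomes a bounded multiple of $x^{2k'+1-1/100}$, uniform in $\eta$. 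Thus $F^{(4)}$'s contribution is controlled by $d^{(Y)}_{k'+\frac12,9-k'}$. The Hardy step, and the accompanying verification that the near-wall and far-field regimes both yield the same $x$-power, are the only nonroutine inputs; the rest is careful weight accounting with a uniform reserve of one $\eta$-weight from the constraint $k'\le k-1$.
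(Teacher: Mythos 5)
Your proposal matches the paper's proof essentially verbatim: the same term-by-term decomposition of \eqref{Fcommk}, the same $L^\infty$--$L^2$ splitting with the coefficient weights, the same relaxation $10-k\le 9-k'$ for $k'\le k-1$, and the same Hardy-inequality plus near/far ($\eta\lessgtr 1$) treatment of the nonlocal term $F^{(4)}_{\text{Comm},k}$. The only quibble is that $1/\sqrt{\bar u}\lesssim x^{-m/2}$ is false pointwise near $y=0$; the correct bookkeeping (as in the paper) keeps the ratio $\p_x^{k-k'}\bar u/\bar u$ together and uses that the numerator vanishes linearly at the wall at the same rate as $\bar u$, which your final weight count implicitly relies on anyway.
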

\begin{proof} We proceed term by term through \eqref{Fcommk}, which we label $I_1, \dots, I_4$. First, we have 
\begin{align*}
|I_1| \lesssim & \| \frac{1}{\sqrt{\bar{u}}} F^{(1)}_{\text{Comm},k} \langle \eta \rangle^{10-k} x^{k + \frac12 - \frac{m}{2}- \frac{1}{200}} \|_{L^2_y}^2 \lesssim  \sum_{k' = 0}^{k-1} \| \frac{1}{\bar{u}} \p_x^{k-k'} \bar{u} u^{(k'+1)} \langle \eta \rangle^{10-k} x^{k+ \frac12- \frac{1}{200}} \|_{L^2_y}^2 \\
\lesssim &\sum_{k' = 0}^{k-1} \Big\| \frac{\p_x^{k-k'}}{\bar{u}} x^{k-k'} \Big\|_{L^\infty}^2 \| u^{(k')}_x \langle \eta \rangle^{10-k} x^{k' + \frac12- \frac{1}{200}} \|_{L^2_y}^2 \\
\lesssim & \sum_{k' = 0}^{k-1} d_{k' + \frac12, 10-k}^{(Y)}(x) \\
\lesssim & \sum_{k' = 0}^{k-1} d_{k' + \frac12, 10-(k' + 1)}^{(Y)}(x).
\end{align*}
Second, we have 
\begin{align*}
|I_2| \lesssim &\| \frac{1}{\sqrt{\bar{u}}} F^{(2)}_{\text{Comm},k} \langle \eta \rangle^{10-k} x^{k + \frac12 - \frac{m}{2}- \frac{1}{200}} \|_{L^2_y}^2 \lesssim  \sum_{k' = 0}^{k-1} \| \frac{1}{\bar{u}}\p_x^{k-k'+1} \bar{u} u^{(k')} \langle \eta \rangle^{10-k} x^{k+ \frac12- \frac{1}{200}} \|_{L^2_y}^2 \\
\lesssim &  \sum_{k' = 0}^{k-1} \Big\| \frac{\p_x^{k-k' + 1} \bar{u}}{\bar{u}} x^{k-k' + 1} \Big\|_{L^\infty}^2 \| u^{(k')} \langle \eta \rangle^{10-k} x^{k' - \frac12- \frac{1}{200}} \|_{L^2_y}^2 \\
\lesssim & \sum_{k' = 0}^{k-1} c_{k', 10-k}(x) \\
\lesssim & \sum_{k' = 0}^{k-1} c_{k', 10-k'}(x).
\end{align*}
Third, we have 
\begin{align*}
|I_3| \lesssim &\| \frac{1}{\sqrt{\bar{u}}} F^{(3)}_{\text{Comm},k} \langle \eta \rangle^{10-k} x^{k + \frac12 - \frac{m}{2}- \frac{1}{200}} \|_{L^2_y}^2 \lesssim  \sum_{k' = 0}^{k-1} \| \frac{1}{\bar{u}}\p_x^{k-k'} \bar{v} u^{(k')}_y \langle \eta \rangle^{10-k} x^{k+ \frac12- \frac{1}{200}} \|_{L^2_y}^2 \\
\lesssim & \sum_{k' = 0}^{k-1} \Big\| \frac{\p_x^{k-k'} \bar{v}}{\bar{u}} \frac{1}{\langle \eta \rangle} x^{k-k' + \frac12 + \frac{m}{2}}\Big\|_{L^\infty}^2 \| u^{(k')}_y x^{k' - \frac{m}{2}- \frac{1}{200}} \langle \eta \rangle^{11-k} \|_{L^2_y}^2 \\
\lesssim &  \sum_{k' = 0}^{k-1} d_{k', 10-k'}(x).
\end{align*}
Fourth, we have 
\begin{align*}
|I_4| \lesssim &\| \frac{1}{\sqrt{\bar{u}}} F^{(4)}_{\text{Comm},k} \langle \eta \rangle^{10-k} x^{k + \frac12 - \frac{m}{2}- \frac{1}{200}} \|_{L^2_y}^2 \lesssim \sum_{k' = 0}^{k-1} \| \frac{1}{\bar{u}} \p_x^{k-k'} \bar{u}_{y} v^{(k')} \langle \eta \rangle^{10-k} x^{k+ \frac12- \frac{1}{200}} \|_{L^2_y}^2 \\
\lesssim & \sum_{k' = 0}^{k-1} \| \frac{1}{\bar{u}} \p_x^{k-k'} \bar{u}_{y} v^{(k')} \langle \eta \rangle^{10-k} x^{k+ \frac12- \frac{1}{200}} \chi(\eta \le 1) \|_{L^2_y}^2 + \sum_{k' = 0}^{k-1} \| \frac{1}{\bar{u}} \p_x^{k-k'} \bar{u}_{y} v^{(k')} \langle \eta \rangle^{10-k} x^{k+ \frac12- \frac{1}{200}} \chi(\eta > 1) \|_{L^2_y}^2 \\
:= & I_{4, \le} + I_{4, >}.  
\end{align*}
First, we treat the far-field term, where $\bar{u} \gtrsim x^m$. In this case, we have 
\begin{align*}
|I_{4, >}| \lesssim &\sum_{k' = 0}^{k-1} \|  \p_x^{k-k'} \bar{u}_{y} v^{(k')} \langle \eta \rangle^{10-k} x^{k+ \frac12-m- \frac{1}{200}} \chi(\eta > 1) \|_{L^2_y}^2 \\
\lesssim &\sum_{k' = 0}^{k-1} \|  \p_x^{k-k'} \bar{u}_{y} y x^{k-k' - m} \langle \eta \rangle^{10-k}  \|_{L^\infty}^2 \Big\| \frac{ v^{(k')} }{y} x^{k' + \frac12- \frac{1}{200}}  \Big\|_{L^2_y}^2 \\
\lesssim &\sum_{k' = 0}^{k-1}  \Big\| u^{(k')}_x x^{k' + \frac12- \frac{1}{200}}  \|_{L^2_y}^2 \\
\lesssim & \sum_{k' = 0}^{k-1} d_{k' + \frac12, 0}^{(Y)}(x).
\end{align*}
Next, we treat the near-field term. In this case $\bar{u} \gtrsim x^{m} \eta = x^m \frac{y}{x^{\frac12(1-m)}}$. Consequently, we have 
\begin{align*}
|I_{4, \le}| \lesssim &\sum_{k' = 0}^{k-1} \|  \p_x^{k-k'} \bar{u}_{y} \frac{ v^{(k')}}{y} \langle \eta \rangle^{10-k} x^{k+ \frac12-m- \frac{1}{200}} x^{\frac12(1-m)} \chi(\eta \le 1) \|_{L^2_y}^2 \\
\lesssim &\sum_{k' = 0}^{k-1} \| \p_x^{k-k'} \bar{u}_y x^{(k-k') + \frac12 - \frac{3m}{2}} \langle \eta \rangle^{10-k} \|_{L^\infty}^2 \Big\| \frac{v^{(k')}}{y} x^{k' + \frac12- \frac{1}{200}} \|_{L^2_y}^2 \\
\lesssim &\sum_{k' = 0}^{k-1}  \Big\| u^{(k')}_x x^{k' + \frac12- \frac{1}{200}}  \|_{L^2_y}^2 \\
\lesssim & \sum_{k' = 0}^{k-1} d_{k' + \frac12, 0}^{(Y)}(x).
\end{align*}
\end{proof}

\subsection{Nonlinear Error Terms}

\begin{lemma} \label{lem:62} For $0 \le k \le 4$, 
\begin{align} \label{bob:seger:1}
\| \frac{1}{\bar{u}} \p_x^k \mathcal{Q}[u, v] \langle \eta \rangle^{10-k } x^{k + \frac12- \frac{1}{200}} \|_{L^2_y}^2 \lesssim & \eps^{-1} \Big( \sum_{k' = 0}^{4} d_{k', 10-k'}(x) +  \sum_{k' = 0}^{4} d_{k' + \frac12, 10-(k' + 1)}^{(Y)}(x) \Big).
\end{align}
\end{lemma}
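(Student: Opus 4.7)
The plan is to expand $\p_x^k \mathcal{Q}[u,v]$ via the Leibniz rule,
\[
\p_x^k \mathcal{Q}[u,v] = \sum_{j=0}^{k} \binom{k}{j} \bigl( u^{(j)} u^{(k-j+1)} + v^{(j)} u_y^{(k-j)} \bigr),
\]
and bound each term by an $L^\infty_y \cdot L^2_y$ H\"older pairing, placing the factor with fewer $\p_x$-derivatives (and hence better pointwise decay) in $L^\infty$. Every one of the embeddings \eqref{jurassic:1}--\eqref{trader:joes:1} carries a prefactor $\eps^{-1/2}$; squaring then produces exactly the $\eps^{-1}$ on the right-hand side of \eqref{bob:seger:1}. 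Since $0 \le k \le 4$ and the embeddings are valid up to sufficiently high index, in each product at least one of the two factors will admit an $L^\infty$ bound from that list.

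For the convective piece $u^{(j)} u^{(k-j+1)}$, write $k+1 = j + (k-j+1)$ and put the factor with fewer $x$-derivatives in $L^\infty$: if $j \le (k+1)/2$ use \eqref{jurassic:2} on $u^{(j)}$ to extract $\eps^{-1/2}\langle \eta \rangle^{-3}x^{-1/4-j+m/4+1/200}$, and then bound the remaining factor through $d^{(Y)}_{(k-j)+\frac12, 10-(k-j+1)}(x)$; in the opposite case, swap the two roles using \eqref{jurassic:2} on $u^{(k-j+1)}$ and control $u^{(j)}$ by $c_{j, 10-j}$ (or $d^{(Y)}_{(j-1)+\frac12,\cdot}$ when $j \ge 1$). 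A bookkeeping check shows the $x$-weights combine as $x^{2(k+\frac12) -1/100}$ and the $\langle \eta \rangle$ weight $10-k$ is split as $3 + (10-k-3)$, so the residual $L^2$ mass is dominated by the corresponding $d$ or $d^{(Y)}$ function (with the spare room of $\langle \eta \rangle^{-3}$ and the factor $x^{-1/2 -3m/2}$ absorbing any mismatch that appears in the far field $\eta \gtrsim 1$ where $\bar u \gtrsim x^m$).

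For the transport piece $v^{(j)} u_y^{(k-j)}$, the factor $1/\bar u$ is the delicate point because $\bar u$ vanishes linearly at $y=0$. I would split, as in the proof of \eqref{tom:petty:1} for $I_4$, into the far field ($\eta \ge 1$, where $\bar u \gtrsim x^m$) and the near field ($\eta \le 1$, where $\bar u \gtrsim x^m \eta$). In the far field, place $u_y^{(k-j)}$ in $L^\infty$ via \eqref{jurassic:4} (or \eqref{trader:joes:1} when extra $\langle \eta \rangle$ weight is needed), absorb $1/\bar u$ into $x^{-m}$, and bound $\| v^{(j)} \langle \eta \rangle^{-3} \cdot \text{weights}\|_{L^2}$ using $v^{(j)} = -\int_0^y u_x^{(j)}\,\ud y'$ together with the far-field Hardy trick $|v^{(j)}|/y \lesssim \|u_x^{(j)}\|_{L^\infty_y}$, reducing the bound to $d^{(Y)}_{j-\frac12,10-j}$. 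In the near field, the factor $1/(\bar u \cdot y) \lesssim x^{-m-\frac12(1-m)}$ makes $v^{(j)}/(\bar u y)$ amenable to Hardy, again giving $d^{(Y)}_{j-\frac12,\cdot}$ against $u_y^{(k-j)}$ controlled in $L^\infty$ through \eqref{jurassic:4}; when instead $v^{(j)}$ is the $L^\infty$-placed factor we invoke \eqref{jurassic:6} for $\frac{v^{(j)}}{\bar u}$ directly.

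The main technical obstacle is precisely this bookkeeping around the vanishing of $\bar u$ at $y=0$: one must verify that, after the near/far-field split and the appropriate Hardy inequality in $v^{(j)}$, every case terminates in one of the four admissible right-hand side quantities $d_{k',10-k'}$ or $d^{(Y)}_{k'+\frac12, 10-(k'+1)}$ for some $0 \le k' \le 4$, with matching $x$- and $\langle \eta \rangle$-weights. Once that accounting is done, the $\eps^{-1/2}$ factors in the $L^\infty$ bounds combine to give the claimed $\eps^{-1}$ prefactor and the lemma follows. The remaining cases involving $c_{k',n}$ are subsumed because $c_{k',n} \lesssim d^{(Y)}_{(k'-1)+\frac12,n}$ (by Hardy in $x$ using the initial data) and contribute to the same right-hand side structure as in Lemma \ref{lem:61}.
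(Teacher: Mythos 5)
Your proposal follows essentially the same route as the paper: Leibniz expansion of $\p_x^k\mathcal{Q}$, an $L^\infty$–$L^2$ H\"older pairing that places the factor with fewer $x$-derivatives in $L^\infty$ via the bootstrap bounds \eqref{jurassic:1}--\eqref{trader:joes:1} (yielding the $\eps^{-1}$ after squaring), and a near/far split in $\eta$ plus a Hardy-type reduction of $v^{(j)}$ for the transport term — this is precisely the paper's treatment, including its $\mathcal{K}_1$ ($j\le 2$, put $v^{(j)}/\bar u$ in $L^\infty$) versus $\mathcal{K}_2$ ($j\ge 3$, hence $k-j\le 1$, use the $\langle\eta\rangle^8$-weighted bound \eqref{trader:joes:1} on $u_y^{(k-j)}$) dichotomy. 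Two bookkeeping slips should be corrected: since $v^{(j)}=-\int_0^y u^{(j)}_x$, the Hardy step is the $L^2$ inequality $\|v^{(j)}/y\|_{L^2_y}\lesssim\|u^{(j)}_x\|_{L^2_y}$ landing in $d^{(Y)}_{j+\frac12,\cdot}$ (not $d^{(Y)}_{j-\frac12,\cdot}$, and not the pointwise bound $|v^{(j)}|/y\le\|u^{(j)}_x\|_{L^\infty_y}$, which would leave no factor in $L^2$ to produce a dissipation functional); also $c_{k',n}=d^{(Y)}_{(k'-1)+\frac12,n}$ holds identically from the definitions for $k'\ge1$, so no ``Hardy in $x$'' is needed there.
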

\begin{proof} We first address the term $u u_x$, which will contribute: 
\begin{align} \n
 &\sum_{k' = 0}^k \| \frac{1}{\bar{u}} \p_x^{k-k'} u \p_x^{k'} u_x x^{k + \frac12- \frac{1}{200}} \langle \eta \rangle^{10-k} \|_{L^2_y} \\
 = & \sum_{k' = 0}^k \bold{1}_{k - k' \le 3} \| \frac{1}{\bar{u}} \p_x^{k-k'} u \p_x^{k'} u_x x^{k + \frac12- \frac{1}{200}} \langle \eta \rangle^{10-k} \|_{L^2_y} +  \bold{1}_{k = 4} \| \frac{1}{\bar{u}} \p_x^{4} u  u_x x^{4 + \frac12- \frac{1}{200}} \langle \eta \rangle^{6} \|_{L^2_y}.
\end{align} 
Above, we have split out the case when $k - k' = 4$, and noticed that this can only happen if $k =4$ and $k' = 0$. In this case, one notices furthermore that the second term on the right-hand side above is identical to the term when $k = 4$ and $k' = 3$ from the first term on the RHS above. Therefore, it suffices to estimate just the first term, for all $0 \le k \le 4$. In this case, we have 
\begin{align*}
 &\sum_{k' = 0}^k \bold{1}_{k - k' \le 3} \| \frac{1}{\bar{u}} \p_x^{k-k'} u \p_x^{k'} u_x x^{k + \frac12- \frac{1}{200}} \langle \eta \rangle^{10-k} \|_{L^2_y}^2 \\
 \lesssim & \sum_{k' = 0}^k \bold{1}_{k - k' \le 3} \Big\| \frac{1}{\bar{u}} \p_x^{k-k'} u  \langle \eta \rangle x^{k-k' + \frac14 + \frac{3m}{4}} \|_{L^\infty}^2 \| u^{(k')}_x x^{k' + \frac12- \frac{1}{200}} \langle \eta \rangle^{10-(k+1)'} \|_{L^2_y} \\
 \lesssim & \eps^{-1} \sum_{k' = 0}^4 d_{k' + \frac12, 10 - (k'+1)}^{(Y)}(x). 
\end{align*}
We next address the term $v u_y$, which will contribute: 
\begin{align*}
&\sum_{k' = 0}^k \| \frac{1}{\bar{u}} \p_x^{k-k'} u_y \p_x^{k'} v x^{k + \frac12- \frac{1}{200}} \langle \eta \rangle^{10-k} \|_{L^2_y}^2 \\
= & \sum_{k' = 0}^k \bold{1}_{k' \le 2} \| \frac{1}{\bar{u}} \p_x^{k-k'} u_y \p_x^{k'} v x^{k + \frac12- \frac{1}{200}} \langle \eta \rangle^{10-k} \|_{L^2_y}^2 +  \sum_{k' = 0}^k \bold{1}_{3 \le k' \le 4}  \| \frac{1}{\bar{u}} \p_x^{k-k'} u_y \p_x^{k'} v x^{k + \frac12- \frac{1}{200}} \langle \eta \rangle^{10-k} \|_{L^2_y}^2 \\
=: & \mathcal{K}_1 + \mathcal{K}_2. 
\end{align*}
To estimate $\mathcal{K}_1$, we have 
\begin{align*}
|\mathcal{K}_1| \lesssim &  \sum_{k' = 0}^k \bold{1}_{k' \le 2} \Big\| \frac{1}{\bar{u}} v^{(k')} x^{k' + \frac34 + \frac{5m}{4}} \Big\|_{L^\infty}^2 \| u^{(k-k')}_y x^{k-k' - \frac{m}{2}- \frac{1}{200}} \langle \eta \rangle^{10-k} \|_{L^2_y}^2 \\
\lesssim & \eps^{-1}  \sum_{k' = 0}^k \| u^{(k-k')}_y x^{k-k' - \frac{m}{2}- \frac{1}{200}} \langle \eta \rangle^{10-(k-k')} \|_{L^2_y}^2 \\
\lesssim & \eps^{-1} \sum_{k' = 0}^4 d_{k', 10-k'}(x). 
\end{align*}
To estimate $\mathcal{K}_2$, we notice that $3 \le k' \le 4$ implies that $0 \le k - k' \le 1$ (since $k \le 4$). This allows us to invoke the estimate \eqref{trader:joes:1}. We proceed as follows:
\begin{align*}
|\mathcal{K}_2| \lesssim &  \sum_{k' = 0}^k \bold{1}_{3 \le k' \le 4}  \| \frac{1}{\bar{u}} \p_x^{k-k'} u_y \p_x^{k'} v x^{k + \frac12- \frac{1}{200}} \langle \eta \rangle^{10-k} \chi(\eta \ge 1) \|_{L^2_y}^2 \\
& + \sum_{k' = 0}^k \bold{1}_{3 \le k' \le 4}  \| \frac{1}{\bar{u}} \p_x^{k-k'} u_y \p_x^{k'} v x^{k + \frac12- \frac{1}{200}} \langle \eta \rangle^{10-k} \chi(\eta \le 1) \|_{L^2_y}^2 =: \mathcal{K}_{2, \ge} + \mathcal{K}_{2,\le}. 
\end{align*}
For the bound of $\mathcal{K}_{2,\ge}$, we again invoke the bound that $\bar{u} \ge x^m$ in the support of $\chi(\eta \ge 1)$. We also use that $k \ge 3$ implies that $\langle \eta \rangle^{10-k} \le \langle \eta \rangle^7$. This then gives 
\begin{align*}
|\mathcal{K}_{2,\ge}| \lesssim &  \sum_{k' = 0}^k \bold{1}_{3 \le k' \le 4}  \|  \p_x^{k-k'} u_y \p_x^{k'} v x^{k + \frac12-m- \frac{1}{200}} \langle \eta \rangle^{10-k} \chi(\eta \ge 1) \|_{L^2_y}^2 \\
\lesssim &  \sum_{k' = 0}^k \bold{1}_{3 \le k' \le 4}  \|  \p_x^{k-k'} u_y \p_x^{k'} v x^{k + \frac12-m- \frac{1}{200}} \langle \eta \rangle^{7} \chi(\eta \ge 1) \|_{L^2_y}^2 \\
\lesssim &  \sum_{k' = 0}^k \bold{1}_{3 \le k' \le 4}  \|  \p_x^{k-k'} u_y \langle \eta \rangle^7y x^{k-k'  - m} \|_{L^\infty}^2 \Big\| \frac{  \p_x^{k'} v}{y} x^{k' + \frac12- \frac{1}{200}} \Big\|_{L^2_y}^2 \\
\lesssim &  \sum_{k' = 0}^k \bold{1}_{3 \le k' \le 4}  \|  \p_x^{k-k'} u_y \langle \eta \rangle^8 x^{k-k' + \frac12 - \frac{3m}{2}} \|_{L^\infty}^2 \Big\| \frac{  \p_x^{k'} v}{y} x^{k' + \frac12- \frac{1}{200}} \Big\|_{L^2_y}^2 \\
\lesssim & \eps^{-1} \sum_{k' = 0}^k  \| u^{(k')}_x x^{k' + \frac12- \frac{1}{200}} \|_{L^2_y}^2 \\
\lesssim & \eps^{-1} \sum_{k' = 0}^4 d^{(Y)}_{k' + \frac12, 0}.
\end{align*}
The treatment of the case $\mathcal{K}_{2,\le}$ is completely analogous. 
\end{proof}

\begin{lemma} \label{lem:63} For $k = 5$, the quadratic term defined in \eqref{def:qklo} satisfies the following bound: 
\begin{align}   \label{bob:seger:2}
\|   \mathcal{Q}_{5,lo}[u, v]  x^{5 + \frac12-m- \frac{1}{200}} \|_{L^2_y}^2 \lesssim & \eps^{-1} \Big( \sum_{k' = 0}^{5} d_{k', 0}(x) +  \sum_{k' = 0}^{4} d_{k' + \frac12, 0}^{(Y)}(x) \Big).
\end{align}
\end{lemma}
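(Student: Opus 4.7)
The plan is to imitate the structure of the proof of \eqref{bob:seger:1} (Lemma \ref{lem:62}) with careful attention to the fact that at $k = 5$ we no longer have the luxury of freely placing any single factor in $L^\infty$ via \eqref{jurassic:1}--\eqref{trader:joes:1}. First I would expand $\mathcal{Q}_{5,\text{lo}}[u,v]$ according to \eqref{def:qklo} into two sums: the ``quadratic'' piece $\sum_{k'=0}^{4}\binom{5}{k'} u^{(5-k')} u^{(k'+1)}$ and the ``transport'' piece $\sum_{k'=0}^{4}\binom{5}{k'} u_y^{(5-k')} v^{(k')}$. The crucial point is that because $k'$ ranges only in $\{0,1,2,3,4\}$, in each bilinear product at least one factor has index $\le 3$ for the $u^{(\cdot)}$ or $u_y^{(\cdot)}$ terms, and at worst index $4$ for the $v^{(\cdot)}$ terms.

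For the quadratic sum $u^{(5-k')} u^{(k'+1)}$, by symmetry one of $5-k'$ or $k'+1$ lies in $\{1,2,3\}$. I would place that factor in $L^\infty$ using \eqref{jurassic:2} (which supplies the $\langle \eta\rangle^3$ integrability weight and costs $\eps^{-1/2}$), and apply Cauchy--Schwarz so that the remaining high-order factor is measured in $L^2_y$ with the appropriate $x$-weight. Depending on whether that remaining index is $\le 4$ or is exactly $5$, the result is absorbed into $d_{k', 0}$ or $d^{(Y)}_{k' + \frac12, 0}$ (using $u^{(5)} = \partial_x u^{(4)}$ to convert the latter). Squaring the Hölder bound gives the desired $\eps^{-1}$ prefactor.

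For the transport sum $u_y^{(5-k')} v^{(k')}$, I would split into two regimes. When $k' \le 2$, place $v^{(k')}/\bar u$ in $L^\infty$ via \eqref{jurassic:6}, pair with $\bar u u_y^{(5-k')}$ in $L^2_y$, and absorb into $d_{5-k', 0}$. When $k' \in \{3,4\}$ the bound \eqref{jurassic:6} no longer applies; but since $5 - k' \in \{1,2\}$ I can instead put $u_y^{(5-k')}$ into $L^\infty$ using \eqref{trader:joes:1}, whose heavier weight $\langle \eta \rangle^8$ is precisely what is needed to defeat the linear $\eta$-growth of $v^{(k')}$. To then measure $v^{(k')}$ in $L^2_y$, I would mimic the splitting $\chi(\eta\le 1) + \chi(\eta\ge 1)$ executed at the end of Lemma \ref{lem:62}: in the far field use $\bar u \gtrsim x^m$, and in the near field invoke the divergence-free identity $v^{(k')} = -\int_0^y u^{(k'+1)}\,dy'$ and Hardy's inequality to trade $v^{(k')}/y$ for $u^{(k'+1)} = u_x^{(k')}$, which lives in $d^{(Y)}_{k' + \frac12, 0}$.

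The main obstacle is precisely the cases $k' \in \{3,4\}$ of the transport sum, where both the $L^\infty$ bounds on $v^{(k')}$ fail and the high tangential order forces us through the divergence-free detour. The viability of this step hinges on the compatibility of the $\langle \eta\rangle^8$ weight in \eqref{trader:joes:1} with the linear-in-$\eta$ growth of $v^{(k')}$, and this is exactly what \eqref{trader:joes:1} was designed to handle (cf.\ the proof \eqref{reprep:2}). Once all six products are bounded in this way, summing yields the stated estimate, with the $\eps^{-1}$ coming from two factors of $\eps^{-1/2}$ squared in the Hölder inequality.
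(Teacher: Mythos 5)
Your overall strategy coincides with the paper's: the same splitting of \eqref{def:qklo} into the quadratic and transport sums, the same use of symmetry to reduce the quadratic sum to the case where the $L^\infty$ factor has index in $\{1,2,3\}$, the same dichotomy $k'\le 2$ versus $k'\in\{3,4\}$ for the transport sum, and the same Hardy/divergence-free conversion of $v^{(k')}/y$ into $u_x^{(k')}$ so that the high-order factor lands in $d^{(Y)}_{k'+\frac12,0}$. (Your variant for $k'\le 2$, putting $v^{(k')}/\bar u$ in $L^\infty$ via \eqref{jurassic:6} rather than $v^{(k')}$ itself via \eqref{jurassic:5}, costs an extra harmless factor of $\bar u \lesssim x^m$ and still closes.)

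The one step that fails as written is the $L^\infty$ control of $u_y^{(5-k')}$ for $k'\in\{3,4\}$: you invoke \eqref{trader:joes:1}, which is only stated for $0\le k\le 1$, so it covers $k'=4$ (where $5-k'=1$) but not $k'=3$ (where $5-k'=2$). The paper uses \eqref{jurassic:4} instead, which is valid for $k\le 3$ and carries the weight $\langle\eta\rangle^3$. Note also that the heavy $\langle\eta\rangle^8$ weight is not what this step requires: since the target norm \eqref{bob:seger:2} at $k=5$ carries no $\langle\eta\rangle$ weight at all, the only $\eta$-growth to absorb is the single factor $y=\eta\, x^{\frac12(1-m)}$ produced when you write $u_y^{(5-k')}v^{(k')}=(y\,u_y^{(5-k')})\cdot(v^{(k')}/y)$, and $\langle\eta\rangle^3$ is ample. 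Replacing \eqref{trader:joes:1} by \eqref{jurassic:4} repairs the argument and makes it identical to the paper's. Finally, the near/far splitting in $\eta$ that you import from Lemma \ref{lem:62} is unnecessary here, since \eqref{bob:seger:2} has no $1/\bar u$ prefactor.
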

\begin{proof} We first treat the term 
\begin{align*}
&\|  \sum_{k' = 0}^4 u^{(5-k')} u^{(k' + 1)}  x^{k + \frac12-m- \frac{1}{200}} \|_{L^2_y}^2 \\
\lesssim & \| \sum_{k' = 0}^{2} u^{(5-k')} u^{(k' + 1)}  x^{k + \frac12-m- \frac{1}{200}} \|_{L^2_y}^2 + \|\sum_{k' = 3}^{4} u^{(5-k')} u^{(k' + 1)} x^{k + \frac12-m- \frac{1}{200}} \|_{L^2_y}^2.
\end{align*}
We notice that the second sum on the RHS above (corresponding to $k' = 3, 4$) is symmetric (each such term is represented in the first sum). Therefore, it suffices to estimate the first sum. We have 
\begin{align*}
&\| \sum_{k' = 0}^2 u^{(5-k')} u^{(k' + 1)} x^{5 + \frac12-m- \frac{1}{200}} \|_{L^2_y}^2 \lesssim  \sum_{k' = 0}^2 \| u^{(k'+1)} x^{k' + 1-m} \|_{L^\infty}^2 \| u^{(4-k')}_x x^{(4-k') + \frac12- \frac{1}{200}} \|_{L^2_y}^2 \\
\lesssim & \eps^{-1} \sum_{k' = 0}^2 \| u^{(4-k')}_x x^{(4-k') + \frac12- \frac{1}{200}} \|_{L^2_y}^2 \lesssim \eps^{-1} \sum_{k' = 2}^4 d^{(Y)}_{k' + \frac12, 0}(x). 
\end{align*}
It remains to treat the term
\begin{align*}
&\|  \sum_{k' = 0}^{4} \p_y u^{(5-k')} v^{(k')}  x^{5 +\frac12-m- \frac{1}{200}} \|_{L^2_y}^2 \\
\lesssim & \sum_{k' = 0}^5 \bold{1}_{k' \le 2} \| \p_x^{5-k'} u_y \p_x^{k'} v x^{5 + \frac12-m- \frac{1}{200}} \langle \eta \rangle^{5} \|_{L^2_y}^2 +  \sum_{k' = 0}^5 \bold{1}_{3 \le k' \le 4}  \|  \p_x^{5-k'} u_y \p_x^{k'} v x^{5 + \frac12-m- \frac{1}{200}} \langle \eta \rangle^{5} \|_{L^2_y}^2 \\
=: & \mathcal{K}_1 + \mathcal{K}_2. 
\end{align*}
For $\mathcal{K}_1$, we put the $v$ term in $L^\infty$ as follows: 
\begin{align*}
\mathcal{K}_1 \lesssim & \sum_{k' = 0}^5 \bold{1}_{k' \le 2} \|\p_x^{k'} v x^{k' + \frac12 - \frac{m}{2} }\|_{L^\infty}^2 \| \p_x^{5-k'} u_y x^{5-k'- \frac{m}{2}- \frac{1}{200}}  \|_{L^2_y}^2 \\
\lesssim &  \eps^{-1} \sum_{k' = 0}^5 \bold{1}_{k' \le 2}  \| \p_x^{5-k'} u_y x^{5-k'- \frac{m}{2}- \frac{1}{200}} \|_{L^2_y}^2 \\
\lesssim & \eps^{-1} \sum_{k' = 3}^5 d_{k', 0}(x). 
\end{align*}
For $\mathcal{K}_2$, we put the $u_y$ term in $L^\infty$ as follows. Notice when $k' =  3, 4$, $5- k' \le 2$, and so we can use the bound \eqref{jurassic:4}.
\begin{align*}
\mathcal{K}_2 \lesssim &  \sum_{k' = 3}^4  \|  \p_x^{5-k'} u_y   y x^{5-k'-m} \|_{L^\infty}^2 \| \frac{\p_x^{k'} v}{y} x^{k' + \frac12- \frac{1}{200}} \|_{L^2_y}^2 \\
\lesssim &  \sum_{k' = 3}^4  \|  \p_x^{5-k'} u_y   \langle \eta \rangle x^{5-k' + \frac12 - \frac{3m}{2}} \|_{L^\infty}^2 \| \frac{\p_x^{k'} v}{y} x^{k' + \frac12- \frac{1}{200}} \|_{L^2_y}^2 \lesssim  \eps^{-1} \sum_{k' = 3}^4  \| u^{(k')}_x x^{k' + \frac12- \frac{1}{200}} \|_{L^2_y}^2 \\
\lesssim & \eps^{-1}\sum_{k' = 3}^4 d^{(Y)}_{k' + \frac12, 0}.
\end{align*}

\end{proof}

Pairing the bounds \eqref{tom:petty:1}, \eqref{bob:seger:1}, and \eqref{bob:seger:2} with the bounds \eqref{G:League:na:1} -- \eqref{G:League:na:4} yields the following: 
\begin{corollary}\label{wehave}The following bounds are valid on $F_{\text{Comm},k}$:
\begin{align} \label{tom:petty:2}
\| \frac{1}{\sqrt{\bar{u}}} F_{\text{Comm},k} \langle \eta \rangle^{10-k} x^{k + \frac12 - \frac{m}{2}- \frac{1}{200}} \|_{L^2_y}^2 \le & C_\lambda \widehat{\mathcal{I}}_{(k-1) + \frac12}(x) + \lambda \mathcal{D}_{k,0}(x), \\ \label{tp:3}
\| \frac{1}{\bar{u}} \p_x^k \mathcal{Q}[u, v] \langle \eta \rangle^{10-k } x^{k + \frac12- \frac{1}{200}} \|_{L^2_y}^2 \lesssim & \eps^{-1}  \widehat{\mathcal{I}}_5(x), \\ \label{tp:4}
\|   \mathcal{Q}_{5,lo}[u, v]  x^{5 + \frac12-m- \frac{1}{200}} \|_{L^2_y}^2 \lesssim & \eps^{-1} \widehat{\mathcal{I}}_5(x)
\end{align}
\end{corollary}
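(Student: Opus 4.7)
\medskip

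\noindent\textbf{Proof proposal for Corollary \ref{wehave}.} The plan is to observe that this corollary is a direct translation of Lemmas \ref{lem:61}, \ref{lem:62}, and \ref{lem:63}, in which the bounds were phrased in terms of the ``$u$-side'' quantities $d_{k',n}(x)$, $c_{k',n}(x)$, $d^{(Y)}_{k'+\frac12,n}(x)$, into the ``$U$-side'' functionals $\mathcal{D}_{k',n}$, $\mathcal{CK}_{k',n}$, $\mathcal{D}^{(Y)}_{k'+\frac12,n}$ appearing in the definition \eqref{defn:hat:IK} of $\widehat{\mathcal{I}}_k$ and $\widehat{\mathcal{I}}_{k+\frac12}$. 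The bridge between the two sides is precisely the equivalence lemma \eqref{G:League:na:1}--\eqref{G:League:na:4}. No further analysis is required: the proof is pure bookkeeping.

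For \eqref{tom:petty:2}, I first invoke \eqref{tom:petty:1} to express the LHS as a finite sum of $d_{k',10-k'}(x)$, $c_{k',10-k'}(x)$ (for $k'=0,\dots,k-1$) and $d^{(Y)}_{k'+\frac12,10-(k'+1)}(x)=d^{(Y)}_{k'+\frac12,9-k'}(x)$. I then apply \eqref{G:League:na:1} and \eqref{G:League:na:2} term by term to absorb the $d_{k',10-k'}$ and $c_{k',10-k'}$ contributions into $\mathcal{D}_{k',10-k'}+\mathcal{CK}_{k',10-k'}$, each of which belongs to $\widehat{\mathcal{I}}_{(k-1)+\frac12}(x)$. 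For the $d^{(Y)}_{k'+\frac12,9-k'}$ terms, I apply \eqref{G:League:na:3} to obtain
\begin{equation*}
d^{(Y)}_{k'+\frac12,9-k'}(x)\;\lesssim\;\lambda\,\mathcal{D}_{k'+1,0}(x)+C_\lambda\!\left(\mathcal{D}^{(Y)}_{k'+\frac12,9-k'}(x)+\mathcal{I}_{\le k',0}(x)\right),
\end{equation*}
where both $\mathcal{D}^{(Y)}_{k'+\frac12,9-k'}$ and the constituents of $\mathcal{I}_{\le k',0}$ fit inside $\widehat{\mathcal{I}}_{(k-1)+\frac12}(x)$. The genuinely new contribution is $\lambda\,\mathcal{D}_{k'+1,0}$: for $k'\le k-2$ this still lies in $\widehat{\mathcal{I}}_{(k-1)+\frac12}$, while for $k'=k-1$ it produces the marginal term $\lambda\,\mathcal{D}_{k,0}(x)$ on the RHS of \eqref{tom:petty:2}. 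This is the one step deserving attention: it is the only place where the ``top'' dissipation $\mathcal{D}_{k,0}$ escapes $\widehat{\mathcal{I}}_{(k-1)+\frac12}$ and requires the explicit $\lambda$-loss.

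For \eqref{tp:3}, I apply \eqref{bob:seger:1} to reduce to sums of $d_{k',10-k'}(x)$ and $d^{(Y)}_{k'+\frac12,10-(k'+1)}(x)$ over $k'=0,\dots,4$, each multiplied by $\eps^{-1}$. Applying \eqref{G:League:na:1} and \eqref{G:League:na:3} with a fixed $\lambda$ (say $\lambda=1$, since the nonlinear term carries a harmless constant) converts everything into $\mathcal{D}$, $\mathcal{CK}$, and $\mathcal{D}^{(Y)}$ contributions indexed up to $k'=5$; all of these are, by definition, subsumed in $\widehat{\mathcal{I}}_5(x)$. The bound \eqref{tp:4} is handled identically via \eqref{bob:seger:2} and \eqref{G:League:na:1}, \eqref{G:League:na:3}, with the mild observation that the weight $\langle\eta\rangle^{10-k}$ is not needed (one is already at the top regularity level $k=5$ where $10-k=5$ and the constant $n=0$ suffices).

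The only genuine obstacle is the indexing care needed in the first estimate: one must verify that each converted term actually lives in $\widehat{\mathcal{I}}_{(k-1)+\frac12}$ rather than in $\widehat{\mathcal{I}}_{k+\frac12}$, which forces the $\lambda\,\mathcal{D}_{k,0}$ loss to be isolated explicitly. The remaining two estimates are ``safe'' because they terminate at the maximal regularity index $k=5$, so the corresponding $\lambda$-loss from \eqref{G:League:na:3} would demand $\mathcal{D}_{6,0}$, which simply does not exist in our scheme; hence we simply fix $\lambda\sim 1$ and absorb the constant into $\lesssim\eps^{-1}\widehat{\mathcal{I}}_5$.
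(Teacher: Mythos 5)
Your proposal is correct and takes essentially the same route as the paper, which likewise obtains the corollary by simply pairing Lemmas \ref{lem:61}--\ref{lem:63} with the equivalence bounds \eqref{G:League:na:1}--\eqref{G:League:na:4}. One trivial slip in your closing remark: for \eqref{tp:3}--\eqref{tp:4} the worst $\lambda$-loss from \eqref{G:League:na:3} comes from $k'=4$ and is $\mathcal{D}_{5,0}$, not a nonexistent $\mathcal{D}_{6,0}$, but this is harmless since $\mathcal{D}_{5,0}$ is already subsumed in $\widehat{\mathcal{I}}_5(x)$.
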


\section{Completing the Bootstrap and Proof of Theorem \ref{thm:1}} \label{sec:7}

\begin{proposition} \label{prop:main:1} Assume $X_\ast$ is such that the bootstrap \eqref{boots:1} is valid. Then in fact the constant of $10$ is improved to $5$: 
\begin{align} \label{boots:2}
\sup_{0 \le x \le X_\ast} \mathcal{E}_{\text{Quasi}}(x) + \int_0^{X_\ast} (\mathcal{D}(x) + \mathcal{CK}(x) + \mathcal{B}(x) ) dx \le 5 \mathcal{E}_{\text{Init}}, 
\end{align}
Therefore $X_\ast = \infty$. 
\end{proposition}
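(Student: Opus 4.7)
The plan is to build one master energy identity by taking a weighted linear combination of all the per-level estimates established so far. Concretely, I will sum the integer-level bound \eqref{cal:1} with weight $\sigma_k$ and index $n = 10-k$ for $0 \le k \le 4$; the half-level bounds \eqref{cal:2} and \eqref{cal:3} with weights $\sigma_{k+\frac12}^{(Y)}$ and $\sigma_{k+\frac12}^{(Z)}$ and index $n = 9-k$ for $0 \le k \le 4$; and the quasilinear bound \eqref{cal:4} at $k = 5$ weighted by $\sigma_5$. Adding these up yields an identity of the schematic form
\begin{align*}
\tfrac{\p_x}{2}\mathcal{E}_{\text{Quasi}}(x) + \mathcal{D}(x) + \mathcal{CK}(x) + \mathcal{CK}^{(P)}(x) + \mathcal{B}(x) \le \text{RHS}_{\text{comm}}(x) + \text{RHS}_{\text{source}}(x),
\end{align*}
where on the left I have used \eqref{norm:equi:1}--\eqref{norm:equi:3} to swap $\overline{\mathcal{D}}_5, \overline{\mathcal{CK}}_5, \overline{\mathcal{B}}_5$ for their linear counterparts modulo an $\eps^{1/4}\mathcal{I}_5$ error.

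The choice of $\sigma$'s is what drives the argument. I will fix the hierarchical ordering $\sigma_k \gg \sigma_{k+\frac12}^{(Y)} = \sigma_{k+\frac12}^{(Z)} \gg \sigma_{k+1}$ with ratios that are small compared to the absolute constants in \eqref{cal:1}--\eqref{cal:4}. With this scaling, all the commutator-type contributions appearing in $\text{RHS}_{\text{comm}}$ are absorbed cleanly: the term $\delta_{n>0}\mathcal{CK}_{k,n-1}$ from \eqref{cal:1} is dominated by the same-level CK with index $n = 10-k$ because the weight budget $n$ was designed exactly so that $n-1 \le 10 - k$; the term $\delta_{k>0}\mathcal{CK}_{k-1,n}$ is absorbed into the previous regularity level's CK functional, which carries a much larger $\sigma$; and the half-level errors $\lambda\mathcal{D}_{k+1,0} + \lambda\mathcal{D}^{(Z)}_{k+\frac12,0}$ from \eqref{cal:2} are absorbed into $\sigma_{k+1}\mathcal{D}_{k+1,10-(k+1)}$ and $\sigma_{k+\frac12}^{(Z)}\mathcal{D}^{(Z)}_{k+\frac12,9-k}$ respectively, provided $\lambda$ is chosen small relative to $\sigma_{k+1}/\sigma_{k+\frac12}^{(Y)}$ (similarly for \eqref{cal:3}). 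The crucial positive coercivity \eqref{beebop:2} from the favorable $v_{FS,m}$ structure makes the potentially dangerous $\mathbf{T}^{(Comm, 2)} + \mathbf{T}^{(Comm, 4)}$ commutator sign-definite, so it requires no absorption.

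For the source terms, I will apply Cauchy--Schwarz to \eqref{Skn:def:1}, \eqref{def:skY}, \eqref{szA} and invoke Corollary \ref{wehave}. The linear commutator $F_{\text{comm},k}$ contributes at most $C_\lambda\widehat{\mathcal{I}}_{(k-1)+\frac12} + \lambda\mathcal{D}_{k,0}$, where the first piece is a strictly lower-level quantity (absorbed by the larger $\sigma_{k-1}$ weights) and the second is absorbed into $\sigma_k\mathcal{D}_{k,10-k}$. The nonlinear piece $\eps\p_x^k\mathcal{Q}[u,v]$, bounded by \eqref{tp:3}, contributes $\eps^2 \cdot \eps^{-1}\widehat{\mathcal{I}}_5 = \eps\,\widehat{\mathcal{I}}_5$ after squaring, which, combined with the paired CK factor via AM-GM, yields $\sqrt{\eps}(\mathcal{D}+\mathcal{CK}+\mathcal{B})$, absorbable for small $\eps$. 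At the top quasilinear level $k=5$ the nonlinear source $\eps\mathcal{Q}_{5,\text{lo}}$ is handled analogously via \eqref{tp:4}, and the residual $\overline{\mathcal{E}}_5$ errors from \eqref{cal:4} are controlled by the norm equivalence \eqref{norm:equi:1}--\eqref{norm:equi:3} and the interpolation \eqref{ohm:1}--\eqref{ohm:2}.

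Integrating in $x$ from $1$ to $X_\ast$ and using the initial-data bound $\mathcal{E}_{\text{Quasi}}(1) \le \mathcal{E}_{\text{Init}}$ (which follows from the assumed $H^{3k_0}_y$ control on $u_{P,IN} - u_{FS;m}(1,\cdot)$ via the compatibility map $\mathcal{T}_k$ of \eqref{defTkdata}), I obtain
\begin{align*}
\sup_{1 \le x \le X_\ast}\mathcal{E}_{\text{Quasi}}(x) + \int_1^{X_\ast}\!\bigl(\mathcal{D} + \mathcal{CK} + \mathcal{CK}^{(P)} + \mathcal{B}\bigr)\,dx \le \mathcal{E}_{\text{Init}} + (C\eps^{1/4} + C\sqrt{\eps})\cdot 10\mathcal{E}_{\text{Init}},
\end{align*}
which for $\eps < \eps_\ast(m)$ sufficiently small improves the bootstrap constant from $10$ to $5$ as stated in \eqref{boots:2}. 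The hard part I anticipate is the bookkeeping: one must simultaneously verify that for every pair $(k,n)$ with $n = 10-k$ or $n = 9-k$, each error term produced on the right-hand side falls into a strictly ``earlier'' slot (lower $k$, or same $k$ with lower $n$, or same level but accompanied by a $\lambda$ or $\eps$ prefactor), so that a single monotone choice of $\sigma$'s closes the scheme. This is where the downward weight cascade and the precise interplay between the $\langle\eta\rangle^n$ cost in the growing $v_{FS}$ commutator and the pressure CK gain must be tracked with care.
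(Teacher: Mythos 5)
Your proposal follows essentially the same route as the paper's proof: bound the source terms $\bold{S}_{k,n}$, $\bold{S}^{(Y)}_{k,n}$, $\bold{S}^{(Z)}_{k,n}$ via Cauchy--Schwarz and Corollary \ref{wehave}, insert them into \eqref{cal:1}--\eqref{cal:4}, take the hierarchically weighted linear combination with $\sigma_k > \sigma_{k+\frac12}^{(Y)} = \sigma_{k+\frac12}^{(Z)} > \sigma_{k+1}$ to close the cascade, pass to the linear functionals via \eqref{norm:equi:1}--\eqref{norm:equi:3}, and integrate in $x$ absorbing the $O(\eps^{1/4})\widehat{\mathcal{I}}_5$ remainder to improve the constant from $10$ to $5$. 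This matches the paper's Steps 1--3 in both structure and the key absorption mechanisms, so no further comment is needed.
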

\begin{proof} The proof follows in three steps which are delineated below. 

\vspace{2 mm}

\noindent \underline{Step 1: Source Terms} We estimate using the definition \eqref{Skn:def:1} and \eqref{Pr:lin2} 
\begin{align} \n
|\bold{S}_{k,10-k}(x)| \lesssim & |\int F_{k,comm} U_k x^{2k- \frac{1}{100}} \langle \eta \rangle^{20-2k}| +  |\int \eps \p_x^k \mathcal{Q}[u,v] U_k x^{2k- \frac{1}{100}} \langle \eta \rangle^{20-2k}| \\ \n
\lesssim & (\| F_{k,comm} x^{k -m + \frac12- \frac{1}{200}} \langle \eta \rangle^{10-k} \|_{L^2_y} + \eps \| \p_x^k \mathcal{Q} x^{k -m + \frac12- \frac{1}{200}} \langle \eta \rangle^{10-k} \|_{L^2_y} ) \\ \n
& \times  \| U_k x^{k + m - \frac12- \frac{1}{200}} \langle \eta \rangle^{10-k} \|_{L^2_y} \\ \label{midwest:1}
\lesssim &\Big(C_\lambda \widehat{\mathcal{I}}_{(k-1) + \frac12}(x)^{\frac12} + \lambda \mathcal{D}_{k,0}(x)^{\frac12}  + \eps^{\frac12} \widehat{\mathcal{I}}_5(x)^{\frac12} \Big) \mathcal{CK}_{k,10-k}(x)^{\frac12},
\end{align}
for $0 \le k \le 4$. Next, for $k = 5$, we have 
\begin{align} \n
|\bold{S}_{5,0}(x)| \lesssim &  |\int F_{5,comm} \mathcal{U}_5 x^{10 - \frac{1}{100}} | +  |\int \eps \mathcal{Q}_{5,lo}[u,v] \mathcal{U}_5 x^{10- \frac{1}{100}} | \\ \n
\lesssim & (\| F_{5,comm} x^{5 + \frac12 - m- \frac{1}{200}} \|_{L^2_y} + \eps \| \mathcal{Q}_{5,lo}[u, v] x^{5 + \frac12 - m- \frac{1}{200}} \|_{L^2_y}) \| \mathcal{U}_5 x^{5 + m - \frac12- \frac{1}{200}} \|_{L^2_y} \\ \n
\lesssim & \Big(C_\lambda \widehat{\mathcal{I}}_{4 + \frac12}(x)^{\frac12} + \lambda \mathcal{D}_{5,0}(x)^{\frac12}  + \eps^{\frac12}  \widehat{\mathcal{I}}_5(x)^{\frac12} \Big)\overline{\mathcal{CK}}_{5,0}(x)^{\frac12} \\ \label{merced:1}
\lesssim & \Big(C_\lambda \widehat{\mathcal{I}}_{4 + \frac12}(x)^{\frac12} + \lambda \mathcal{D}_{5,0}(x)^{\frac12}  + \eps^{\frac12}  \widehat{\mathcal{I}}_5(x)^{\frac12} \Big)(C_\lambda \mathcal{I}_{4 + \frac12, 0}(x) + \lambda \mathcal{D}_{5,0}(x) + \eps^{\frac14} \mathcal{I}_5(x))^{\frac12}
\end{align}
where we have invoked the bounds \eqref{tom:petty:2} -- \eqref{tp:4} as well as the bound \eqref{ohm:1}. Next, recalling the definition \eqref{def:skY}, we have 
\begin{align} \n
|\bold{S}^{(Y)}_{k,10-(k+1)}(x)| \lesssim & |\int F_{k,comm} \p_x U_k x^{2k+1- \frac{1}{100}} \langle \eta \rangle^{18-2k}| +  |\int \eps \p_x^k \mathcal{Q}[u,v]\p_x U_k x^{2k+1- \frac{1}{100}} \langle \eta \rangle^{18-2k}| \\ \n
\lesssim & (\| F_{k,comm} x^{k -m + \frac12- \frac{1}{200}} \langle \eta \rangle^{10-k} \|_{L^2_y} + \eps \| \p_x^k \mathcal{Q} x^{k -m + \frac12- \frac{1}{200}} \langle \eta \rangle^{10-k} \|_{L^2_y} ) \\
& \times  \| \p_x U_k x^{k + m + \frac12- \frac{1}{200}} \langle \eta \rangle^{10-(k+1)} \|_{L^2_y} \\ \n
\lesssim &\Big(C_\lambda \widehat{\mathcal{I}}_{(k-1) + \frac12}(x)^{\frac12} + \lambda \mathcal{D}_{k,0}(x)^{\frac12}  + \eps^{\frac12} \widehat{\mathcal{I}}_5(x)^{\frac12} \Big) \\ \label{merced:2}
& \times (C_\lambda \mathcal{D}_{k + \frac12, 9-k}^{(Y)}(x) + \lambda \mathcal{D}_{k + 1, 0}(x) + \lambda ( \mathcal{D}_{k,0}(x) + \mathcal{CK}_{k,0}(x)))^{\frac12},
\end{align}
where we have invoked the bound \eqref{train:2} as well as \eqref{tom:petty:2} -- \eqref{tp:3}. Next, we recall the definition \eqref{szA}, after which we proceed as follows. Notice that both $F_{comm,k}|_{y = 0} = 0$ and $\p_x^k \mathcal{Q}[u,v]|_{y = 0} = 0$ which allows us to integrate by parts in $y$ which produces 
\begin{align*}
\bold{S}^{(Z)}_{k,9-k}(x) = &  -\int_{\mathbb{R}_+} G_k \p_{y}^2 U_k \langle \eta \rangle^{18-2k} x^{2k + 1 - m- \frac{1}{100}} \ud y   - (18-2k) \int_{\mathbb{R}_+} G_k \p_{y} U_k \langle \eta \rangle^{{17-2k}} x^{2k + \frac12 - \frac{m}{2}- \frac{1}{100}} \ud y \\
= & \bold{S}^{(Z, 1)}_{k,9-k}(x) + \bold{S}^{(Z, 2)}_{k,9-k}(x).
\end{align*} 
We first estimate 
\begin{align} \n
|\bold{S}^{(Z, 1)}_{k,9-k}(x)| \lesssim & \| \frac{1}{\sqrt{\bar{u}}} F_{k,comm} x^{k + \frac12 - \frac{m}{2}- \frac{1}{200}} \langle \eta \rangle^{9-k} \|_{L^2_y} \| \sqrt{\bar{u}} \p_y^2 U_k x^{k + \frac12 - \frac{m}{2}- \frac{1}{200}} \langle \eta \rangle^{9-k} \|_{L^2_y} \\ \n
& +  \| \frac{1}{\sqrt{\bar{u}}} \eps \p_x^k \mathcal{Q}[u,v] x^{k + \frac12 - \frac{m}{2}- \frac{1}{200}} \langle \eta \rangle^{9-k} \|_{L^2_y} \| \sqrt{\bar{u}} \p_y^2 U_k x^{k + \frac12 - \frac{m}{2}- \frac{1}{200}} \langle \eta \rangle^{9-k} \|_{L^2_y} \\ \label{merced:3}
\lesssim & \Big(C_\lambda \widehat{\mathcal{I}}_{(k-1) + \frac12}(x)^{\frac12} + \lambda \mathcal{D}_{k,0}(x)^{\frac12}  + \eps^{\frac12} \widehat{\mathcal{I}}_5(x)^{\frac12} \Big)  (\mathcal{D}^{(Z)}_{k + \frac12, 9-k}(x))^{\frac12},
\end{align}
and very similarly, 
\begin{align} \n
|\bold{S}^{(Z, 2)}_{k,9-k}(x)| \lesssim & \| F_{k,comm} x^{k + \frac12 - m- \frac{1}{200}} \langle \eta \rangle^{9-k} \|_{L^2_y} \| \p_y U_k x^{k + \frac{m}{2}- \frac{1}{200}} \langle \eta \rangle^{9-k} \|_{L^2_y} \\ \n
& +  \| \eps \p_x^k \mathcal{Q}[u,v] x^{k + \frac12 -m- \frac{1}{200}} \langle \eta \rangle^{9-k} \|_{L^2_y} \| \p_y U_k x^{k + \frac{m}{2}- \frac{1}{200}} \langle \eta \rangle^{9-k} \|_{L^2_y} \\ \label{merced:4}
\lesssim & \Big(C_\lambda \widehat{\mathcal{I}}_{(k-1) + \frac12}(x)^{\frac12} + \lambda \mathcal{D}_{k,0}(x)^{\frac12}  + \eps^{\frac12} \widehat{\mathcal{I}}_5(x)^{\frac12} \Big)  \Big((\mathcal{D}_{k, 9-k}(x))^{\frac12} +( \mathcal{D}^{(Z)}_{k + \frac12, 0}(x))^{\frac12}\Big).
\end{align}
\vspace{2 mm}

\noindent \underline{Step 2: Linear Combination} Inserting the bounds \eqref{midwest:1}, \eqref{merced:1}, \eqref{merced:2}, \eqref{merced:3}, and \eqref{merced:4} into \eqref{cal:1}, \eqref{cal:4}, \eqref{cal:2}, \eqref{cal:3} respectively we find that there are constants (here $0 \le k \le 4$)
\begin{align} \n
&\frac{\p_x}{2} \mathcal{E}_{k,10-k}(x) + \mathcal{CK}_{k,10-k}(x) + \mathcal{CK}^{(P)}_{k,10-k}(x) +  \mathcal{B}_{k}(x) + \mathcal{D}_{k,10-k}(x) \\ \label{pin:1}
&  \qquad \qquad \qquad \qquad   \qquad \qquad   \qquad \qquad \le   C_k \widehat{\mathcal{I}}_{(k-1) + \frac12}(x)  + C_k \eps^{\frac12} \widehat{\mathcal{I}}_5(x), \\ \n
&\frac{\p_x}{2} \mathcal{E}_{k+\frac12,9-k}^{(Y)}(x) + \mathcal{D}^{(Y)}_{k+\frac12,9-k}(x) \le   C_{\delta_k}  \mathcal{D}_{k, 10-k}(x) + \delta_k (  \mathcal{D}_{k+1, 0}(x) + \mathcal{CK}_{k+1,0}(x)   + \mathcal{D}^{(Z)}_{k + \frac12, 0}(x)) \\ \label{pin:2}
& \qquad \qquad \qquad \qquad   \qquad \qquad   \qquad \qquad  + C_k \eps^{\frac12} \widehat{\mathcal{I}}_5(x), \\ \n
&\frac{\p_x}{2} \mathcal{E}_{k+\frac12,9-k}^{(Z)}(x)+  \mathcal{B}^{(Z)}_{k+\frac12}(x)  + \mathcal{D}^{(Z)}_{k+\frac12,9-k}(x) \le C_{\delta_k} ( \mathcal{D}_{k,10-k}(x) +  \mathcal{CK}_{k,10-k}(x) + \mathcal{B}_k(x) ) \\ \label{pin:3}
& \qquad \qquad \qquad \qquad   \qquad \qquad   \qquad \qquad +  \delta_k \mathcal{D}^{(Y)}_{k + \frac12,0}(x) + C_k \eps^{\frac12} \widehat{\mathcal{I}}_5(x), \\ \label{pin:4}
&\frac{\p_x}{2} \overline{\mathcal{E}}_{5,0}(x) +\overline{\mathcal{CK}}_{5,0}(x) +  \overline{\mathcal{CK}}^{(P)}_{5,0}(x)+  \overline{\mathcal{B}}_{0}(x) + \overline{\mathcal{D}}_{5,0}(x) \le C_5 \widehat{\mathcal{I}}_{4 + \frac12}(x)  + C_5 \eps^{\frac12} \widehat{\mathcal{I}}_5(x).
\end{align}
We now choose $\delta_k$ ($0 \le k \le 4$) according to the rule $100 \delta_k C_{k+1} \le \frac{1}{100}$. Subsequently taking the linear combination of $\eqref{pin:2}_k + \eqref{pin:3}_k + 100 \delta_k \eqref{pin:1}_{k+1}$ results in closing the bound for the $k+\frac12$ level and $k+1$ level in terms of level $k$ and below (for some new constants $A_k$):
\begin{align}
&\frac{\p_x}{2} \mathcal{E}_{k+\frac12,9-k}^{(Y)}(x) + \mathcal{D}^{(Y)}_{k+\frac12,9-k}(x) \le   A_k  \widehat{I}_{k}+ A_k \eps^{\frac12} \widehat{\mathcal{I}}_5(x), \\ \n
&\frac{\p_x}{2} \mathcal{E}_{k+\frac12,9-k}^{(Z)}(x)+  \mathcal{B}^{(Z)}_{k+\frac12}(x)  + \mathcal{D}^{(Z)}_{k+\frac12,9-k}(x) \le A_k \widehat{I}_{k}+ A_k \eps^{\frac12} \widehat{\mathcal{I}}_5(x), \\
&\frac{\p_x}{2} \mathcal{E}_{k+1,10-(k+1)}(x) + \mathcal{CK}_{k+1,10-(k+1)}(x)  + \mathcal{CK}^{(P)}_{k+1,10-(k+1)}(x)  +  \mathcal{B}_{k+1}(x) + \mathcal{D}_{k+1,10-(k+1)}(x) \\
& \qquad \qquad \le   C_k \widehat{\mathcal{I}}_{(k-1) + \frac12}(x)  + C_k \eps^{\frac12} \widehat{\mathcal{I}}_5(x).
\end{align}
From here, it is clear there exists a choice of (decreasing) weights $1 = \sigma_0 < \sigma_{0 + \frac12}^{(Y)} = \sigma_{0 + \frac12}^{(Z)} < \sigma_1 < \dots$ depending only on the constants $A_k, C_k$ appearing in the estimates above such that the linear combination defined by \eqref{mellow:1} -- \eqref{mellow:4} satisfies
\begin{align} \label{qwayseye:1}
\frac{\p_x}{2} \mathcal{E}_{\text{Quasi}} +\frac{1}{2} \mathcal{D}_{\text{Quasi}} +\frac{1}{2} \mathcal{CK}_{\text{Quasi}}  +  \mathcal{CK}^{(P)}_{\text{Quasi}} + \frac{1}{2} \mathcal{B}_{\text{Quasi}} \le  \eps^{\frac14} \widehat{I}_5(x).
\end{align} 
\vspace{2 mm}

\noindent \underline{Step 3: Closing the Bootstrap} Inequality \eqref{qwayseye:1} is valid as long as the bootstrap hypothesis \eqref{boots:1} is valid. We integrate this inequality to produce 
\begin{align}
\sup_{0 \le x \le X_\ast} \mathcal{E}_{\text{Quasi}} + \int_0^{X_\ast} \frac12 \mathcal{D}_{\text{Quasi}} +\frac12 \mathcal{CK}_{\text{Quasi}}+ \mathcal{CK}^{(P)}_{\text{Quasi}} +\frac12 \mathcal{B}_{\text{Quasi}} \le \mathcal{E}_{\text{Quasi}}(0) +  \int_0^{X_\ast}  \eps^{\frac14} \widehat{I}_5(x).
\end{align}
First of all, we notice that due to the equivalence we have proven in \eqref{norm:equi:1} -- \eqref{norm:equi:3}, we obtain 
\begin{align}
\sup_{0 \le x \le X_\ast} \mathcal{E}_{\text{Quasi}} + \int_0^{X_\ast} \frac12\mathcal{D}+ \frac12\mathcal{CK} + \mathcal{CK}^{(P)}+ \frac12\mathcal{B} \le 2 \mathcal{E}_{\text{Quasi}}(0) + 2 \int_0^{X_\ast}  \eps^{\frac14} \widehat{I}_5(x).
\end{align}
Recalling the definition of $\widehat{I}_5(x)$, \eqref{defn:hat:IK}, we notice that the final term on the right-hand side can be absorbed to the left, which gives 
\begin{align}
\sup_{0 \le x \le X_\ast} \mathcal{E}_{\text{Quasi}} + \int_0^{X_\ast} \frac12\mathcal{D}+\frac12 \mathcal{CK}+ \mathcal{CK}^{(P)}+\frac12 \mathcal{B} \le \frac{2}{1- \eps^{1/5}} \mathcal{E}_{\text{Quasi}}(0) 
\end{align}
To conclude the argument, we notice trivially that $\mathcal{E}_{\text{Quasi}}(0) \le 2 \mathcal{E}_{Init}$. Therefore, we have 
\begin{align}
\sup_{0 \le x \le X_\ast} \mathcal{E}_{\text{Quasi}} + \int_0^{X_\ast} \frac12\mathcal{D}+\frac12 \mathcal{CK}+ \mathcal{CK}^{(P)}+\frac12 \mathcal{B} \le \frac{4}{1- \eps^{1/5}} \mathcal{E}_{Init}, 
\end{align}
which improves the bootstrap \eqref{boots:1}. 

\vspace{2 mm}

\noindent \underline{Step 4: Improved Decay Rates:} At this point, we notice the following identities: 
\begin{align}
\p_x \{ x^{3m} \mathcal{E}_{k,n} \} \le & x^{3m} (\mathcal{E}_{k,n} + \mathcal{CK}^{(P)}_{k,n}  ), \\
\p_x \{ x^{3m} \mathcal{E}^{(Y)}_{k + \frac12 ,n} \} \le & x^{3m} (  \mathcal{E}^{(Y)}_{k + \frac12,n} + C_m \mathcal{D}_{k,n} ), \\
\p_x \{ x^{3m} \mathcal{E}^{(Z)}_{k + \frac12,n} \} \le & x^{3m} (  \mathcal{E}^{(Z)}_{k + \frac12,n} + C_m \mathcal{D}_{k,n} ).
\end{align}
We first absorb the right-hand side of \eqref{qwayseye:1} into the dissipation terms-CK as follows
\begin{align} \label{qwayseye41}
\frac{\p_x}{2} \mathcal{E}_{\text{Quasi}} +\frac{1}{4} \mathcal{D}_{\text{Quasi}} +\frac{1}{4} \mathcal{CK}_{\text{Quasi}}  +  \mathcal{CK}^{(P)}_{\text{Quasi}} + \frac{1}{4} \mathcal{B}_{\text{Quasi}} \le 0.
\end{align} 
Therefore, multiplying both sides by $x^{3m}$ we obtain 
\begin{align} \label{qwayseye:2}
\frac{\p_x}{2} (x^{3m} \widetilde{\mathcal{E}}_{\text{Quasi}}) +\frac{1}{4} (x^{3m} \widetilde{\mathcal{D}}_{\text{Quasi}} )+\frac{1}{4} (x^{3m} \widetilde{\mathcal{CK}}_{\text{Quasi}}) + \frac{1}{4}(x^{3m} \widetilde{\mathcal{B}}_{\text{Quasi}} )\le 0,
\end{align} 
where the functionals $\widetilde{\mathcal{E}}_{\text{Quasi}}$, etc... are obtained by potentially changing the constants $\sigma_{k}$ that appear in the linear combination. From here, we find that 
\begin{align}
\sup_{0 \le x \le X_\ast} (x^{3m} \mathcal{E}) + \int_0^{X_\ast} x^{3m}( \mathcal{D}+ \mathcal{CK}+ \mathcal{B}) \lesssim C_m \mathcal{E}_{Init}.
\end{align}
For the final step, we note that to estimate the initial datum, we use the definition of $\mathcal{T}_k$, \eqref{defTkdata}, as follows: 
\begin{align*}
\mathcal{E}_{init} \lesssim & \sum_{k = 0}^5 \| \bar{u} U_{IN; k} \langle y \rangle^{10-k} \|_{L^2_y}^2 + \sum_{k = 0}^4 (\| \sqrt{\bar{u}} \p_y U_{IN; k} \langle y \rangle^{9-k} \|_{L^2_y}^2 + \| \bar{u} \p_y U_{IN; k} \langle y \rangle^{9-k} \|_{L^2_y}^2) \\
\lesssim & \sum_{k = 0}^5 \| \bar{u} \mathcal{T}_k[ U_{IN}] \langle y \rangle^{10-k} \|_{L^2_y}^2 + \sum_{k = 0}^4 (\| \sqrt{\bar{u}} \p_y \mathcal{T}_k[ U_{IN}] \langle y \rangle^{9-k} \|_{L^2_y}^2 + \| \bar{u} \p_y \mathcal{T}_k[ U_{IN}] \langle y \rangle^{9-k} \|_{L^2_y}^2) \\
\lesssim & \| u_{IN} \langle y \rangle^{10} \|_{H^{15}_y}.
 \end{align*}
This proves the decay rates in the main theorem upon first reproving a variant of the Sobolev embedding \eqref{greer:1} upon multiplying the left-hand side by $x^{3m}$, and subsequently multiplying the inequalities \eqref{G:League:na:1} -- \eqref{G:League:na:4} also by $x^{3m}$ (which are $x$ by $x$). Note that we have proven the main result when the top order derivative is $k_0 = 5$ and hence $0 \le k \le 3$ in the main theorem and with weight $\langle \eta \rangle^3$ according to \eqref{jurassic:1}. However, once the nonlinear iteration has been closed we may repeat these estimates to recover the sharp weight in $\langle \eta \rangle$ as well as obtain higher derivative bounds as long as the compatibility conditions are satisfied.  
\end{proof}

\section{Sharp Decay Estimates \& Proof of Theorem \ref{thm:2}} \label{sec:8}

\subsection{Weighted Functionals}

In this section we close estimates on the following weighted versions of our energy-CK-dissipation functionals: 
\begin{align}
\widehat{\mathcal{E}}_{k,n}(x) := &\int_{\mathbb{R}_+} \bar{u}^2 U_k^2 x^{2k - \frac{1}{100}} \langle \eta \rangle^{2n} \langle \psi \rangle \ud y, \\
\widehat{\mathcal{CK}}_{k,n}(x) := &\frac{1}{100} \int_{\mathbb{R}_+} \bar{u}^2 U_k^2 x^{2k -1 - \frac{1}{100}} \langle \eta \rangle^{2n} \langle \psi \rangle \ud y, \\
\widehat{\mathcal{CK}}_{k,n}^{(P)}(x) := & \int_{\mathbb{R}_+} (-\frac{3}{2}\p_x p_E(x))U_k^2 x^{2k- \frac{1}{100}} \langle \eta \rangle^{2n} \langle \psi \rangle  \ud y  \\
\widehat{\mathcal{D}}_{k,n}(x) := & \int_{\mathbb{R}_+} \bar{u} |\p_y U_k|^2 x^{2k- \frac{1}{100}} \langle \eta \rangle^{2n}  \langle \psi \rangle \ud y, \\
\mathcal{B}_{k}(x) := &  \bar{u}_y |U_k(x, 0)|^2 x^{2k- \frac{1}{100}}.
\end{align}

We also need the following ``half-level" energy-CK-dissipation functionals: 
\begin{align}
\widehat{\mathcal{E}}^{(Y)}_{k+\frac12,n}(x) := & \int_{\mathbb{R}_+} \bar{u} |\p_y U_k|^2 x^{1 - \frac{1}{100}+ 2k} \langle \eta \rangle^{2n} \langle \psi \rangle \ud y, \\
\widehat{\mathcal{D}}^{(Y)}_{k + \frac12,n}(x) := & \int_{\mathbb{R}_+} \bar{u}^2 |\p_x U_k|^2 x^{1 - \frac{1}{100}+ 2k} \langle \eta \rangle^{2n} \langle \psi \rangle \ud y,
\end{align} 
and 
\begin{align}
\widehat{\mathcal{E}}^{(Z)}_{k+\frac12,n}(x) := & \int_{\mathbb{R}_+} \bar{u}^2 |\p_y U_k|^2 x^{2k + 1 - m- \frac{1}{100}}\langle \eta \rangle^{2n}\langle \psi \rangle \ud y, \\
\widehat{\mathcal{D}}^{(Z)}_{k + \frac12,n}(x) := &\int_{\mathbb{R}_+} \bar{u} |\p_y^2 U_k|^2 x^{1 + 2k-m- \frac{1}{100}} \langle \eta \rangle^{2n} \langle \psi \rangle \ud y, \\
\mathcal{B}^{(Z)}_{k + \frac12}(x) := &  \bar{u}_y |\p_y U_k(x, 0)|^2 x^{2k + 1 -m- \frac{1}{100}}
\end{align} 
Our total energy functional will be as follows: 
\begin{align} 
\widehat{\mathcal{E}}(x) := & \sum_{k = 0}^4  \sigma_{k} \widehat{\mathcal{E}}_{k,10-k}(x) + \sum_{k = 0}^3  (\sigma_{k + \frac12}^{(Y)} \widehat{\mathcal{E}}^{(Y)}_{k+ \frac12,9-k}(x)  + \sigma_{k + \frac12}^{(Z)} \widehat{\mathcal{E}}^{(Z)}_{k+ \frac12,9-k}(x)), \\ 
\widehat{\mathcal{D}}(x) := &\sum_{k = 0}^4 \sigma_{k} \widehat{\mathcal{D}}_{k,10-k}(x) + \sum_{k = 0}^3  (\sigma_{k + \frac12}^{(Y)}  \widehat{\mathcal{D}}^{(Y)}_{k+ \frac12,9-k}(x) + \sigma_{k + \frac12}^{(Z)}\widehat{ \mathcal{D}}^{(Z)}_{k+ \frac12,9-k}(x) ), \\
\widehat{\mathcal{CK}}(x) := &  \sum_{k = 0}^4 \sigma_{k} \widehat{\mathcal{CK}}_{k,10-k}(x) , \\
\widehat{\mathcal{CK}}^{(P)}(x) := &  \sum_{k = 0}^4 \sigma_{k} \widehat{\mathcal{CK}}^{(P)}_{k,10-k}(x), \\
\mathcal{B}(x) := &\sum_{k = 0}^4 \sigma_{k} \mathcal{B}_{k}(x) + \sum_{k = 0}^3 \sigma_{k + \frac12}^{(Z)} \mathcal{B}^{(Z)}_{k+ \frac12}(x).
\end{align}
We need a weighted analogue of the $\widehat{\mathcal{I}}_{k}(x), \widehat{\mathcal{I}}_{k + \frac12}(x)$:  
\begin{align} \n
\widehat{\mathcal{J}}_{ k}(x) := & \sum_{k' = 0}^k  (  \widehat{\mathcal{D}}_{k,10-k}(x) +  \widehat{ \mathcal{CK}}_{k,10-k}(x) +   \mathcal{B}_{k}(x)   )+  \bold{1}_{k \ge 1} \sum_{k' = 0}^{k-1}  ( \widehat{\mathcal{D}}^{(Y)}_{k + \frac12,9-k}(x) + \widehat{\mathcal{D}}^{(Z)}_{k + \frac12,9-k}(x) \\ \label{defn:hat:IK}
& + \mathcal{B}^{(Z)}_{k + \frac12}(x)  ) , \\ \n
\widehat{\mathcal{J}}_{k + \frac12}(x) := & \sum_{k' = 0}^k  (  \widehat{\mathcal{D}}_{k,10-k}(x) +  \widehat{ \mathcal{CK}}_{k,10-k}(x) +   \mathcal{B}_{k}(x)   )+   \sum_{k' = 0}^{k}  ( \widehat{\mathcal{D}}^{(Y)}_{k + \frac12,9-k}(x) + \widehat{\mathcal{D}}^{(Z)}_{k + \frac12,9-k}(x) \\
&+ \mathcal{B}^{(Z)}_{k + \frac12}(x)  ).
\end{align}

\subsection{Virial Estimates}

\begin{lemma} For any $k, n \ge 0$, the following energy inequality is valid: 
\begin{align} \n
&\frac{\p_x}{2} \widehat{\mathcal{E}}_{k,n}(x) + \widehat{\mathcal{CK}}_{k,n}(x)  + \widehat{\mathcal{CK}}^{(P)}_{k,n}(x) +  \mathcal{B}_{k}(x) + \widehat{\mathcal{D}}_{k,n}(x) \\ \label{cal:1:hat}
& \qquad \lesssim  \delta_{n > 0} \widehat{\mathcal{CK}}_{k,n-1}(x) +  \delta_{k > 0} \widehat{\mathcal{CK}}_{k-1,n}(x) +C |\widehat{\bold{S}}_{k,n}(x)|, 
\end{align}
where 
\begin{align} \label{Skn:def:1:hat}
\widehat{\bold{S}}_{k,n}(x) := \int_{\mathbb{R}_+} G_k U_k x^{2k - \frac{1}{100}} \langle \eta \rangle^{2n} \langle \psi \rangle \ud y. 
\end{align}

\end{lemma}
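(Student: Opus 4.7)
The plan is to run the proof of Lemma \ref{X:est:1} essentially verbatim, pairing the equation \eqref{Pr:lin2} against the new test function $U_k \langle \bar\psi \rangle \langle \eta \rangle^{2n} x^{2k - 1/100}$, where $\bar\psi$ is the Falkner--Skan background stream function (so that $\bar\psi_y = \bar u$ and $\bar\psi_x = -\bar v$). The key structural observation is that the weight $\langle \bar\psi \rangle$ is transported trivially by the background Prandtl flow: $\bar u \,\bar\psi_x + \bar v\, \bar\psi_y = -\bar u\bar v + \bar v \bar u = 0$, and therefore
\begin{align*}
\bar u^2\,\partial_x \langle \bar\psi \rangle + \bar u \bar v\,\partial_y \langle \bar\psi \rangle \;=\; \bar u\,\tfrac{\bar\psi}{\langle \bar\psi \rangle}\bigl(\bar u \bar\psi_x + \bar v\bar\psi_y\bigr) \;=\; 0.
\end{align*}
This is the ``virial'' cancellation that makes the weight inert against the leading transport part of $\mathcal L_m$.

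First I would decompose the identity as $\widehat{\bold{T}}_{k,n} + \widehat{\bold{D}}_{k,n} = \widehat{\bold{S}}_{k,n}$ exactly as in \eqref{mj:1}. For the transport block, the IBP in $x$ and $y$ against $\bar u^2 \partial_x U_k + \bar u \bar v \partial_y U_k$ produces $\tfrac12\partial_x\widehat{\mathcal{E}}_{k,n}$ plus commutator terms of the form $\tfrac{U_k^2}{2}[\bar u^2 \partial_x W + \partial_y(\bar u \bar v) W + \bar u \bar v \partial_y W]$, where $W = \langle \bar\psi \rangle \langle \eta \rangle^{2n} x^{2k-1/100}$. Splitting $\partial_x W$ and $\partial_y W$ by the product rule, the portion where the derivatives land on $\langle \eta \rangle^{2n} x^{2k-1/100}$ reproduces line-for-line the old commutator $\bold{T}^{(Comm)}_{k,n}$ (now dressed with a harmless bounded factor $\langle \bar\psi \rangle/\langle \bar\psi \rangle$ effectively replaced by $\langle \bar\psi \rangle$), and gives the two commutator terms on the right of \eqref{cal:1:hat} by the same $\bar v_\ast$ decomposition \eqref{v:deco} used before. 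The portion where the derivative lands on $\langle \bar\psi \rangle$ is exactly the combination $\bar u^2 \partial_x \langle \bar\psi \rangle + \bar u \bar v \partial_y \langle \bar\psi \rangle$, which vanishes identically; no new transport commutator arises. The lower order transport pieces involving $\bar u_{yyy}Q_k$ and $2(\bar u_{yy} - \partial_x p_E)U_k$ are handled exactly as in the original proof; the weight $\langle \bar\psi \rangle$ rides along and contributes only $L^\infty$ factors bounded by $1 + \bar\psi$, which is absorbed into $\widehat{\mathcal{CK}}_{k,n}$ (the pressure term in particular produces $\widehat{\mathcal{CK}}^{(P)}_{k,n}$ directly from the $-\tfrac32 \partial_x p_E$ coefficient, as in \eqref{TMain:dec}).

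For the diffusion block I would IBP the factor $-\partial_y^2 u^{(k)}$ once against $U_k \langle \bar\psi \rangle \langle \eta \rangle^{2n} x^{2k-1/100}$; the boundary term at $y=0$ evaluates with $\bar\psi(x,0)=0$, hence $\langle \bar\psi \rangle|_{y=0}=1$, and reproduces $\mathcal{B}_k(x)$ as in the original derivation \eqref{Dmain:dec}. The main bulk term using \eqref{mama:1} gives $\widehat{\mathcal{D}}_{k,n}$ plus the same coefficient integrals $\bold{d}_U$, $\bold{d}_Q$ as before. The genuinely new commutators are those where $\partial_y$ lands on $\langle \bar\psi \rangle$, producing a factor $\partial_y \langle \bar\psi \rangle = \bar u \cdot \bar\psi/\langle \bar\psi\rangle$; crucially $|\bar\psi/\langle \bar\psi\rangle| \le 1$, so $|\partial_y \langle \bar\psi \rangle| \le \bar u$ and terms of the form $\int |\partial_y u^{(k)}||U_k|\bar u \langle \eta \rangle^{2n} x^{2k-1/100}\,dy$ split via Cauchy--Schwarz and Young into $\delta\,\widehat{\mathcal{D}}_{k,n} + C_\delta\,\widehat{\mathcal{CK}}_{k,n}$ after applying \eqref{mama:1}; these absorb on the left of \eqref{cal:1:hat}.

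The main (and essentially only) obstacle is verifying the cancellation $\bar u^2 \partial_x \langle \bar\psi \rangle + \bar u \bar v \partial_y \langle \bar\psi \rangle = 0$ \emph{before} any other rewriting, since otherwise one would naively generate a commutator growing like $\bar\psi \sim x^m y$ at infinity, which is not integrable in $x$ and would destroy the global scheme; this is the same conceptual point as the coercivity observation \eqref{beebop:2} for the $\langle \eta \rangle$-weight, and it is the reason the stream function (not $\eta$, not $y$) is the correct weight for upgrading to the enhanced decay of Theorem \ref{thm:2}. Once this cancellation is recognized, every remaining estimate is a mechanical repetition of the proof of Lemma \ref{X:est:1} with $\langle \bar\psi \rangle$ inserted and $|\bar\psi/\langle \bar\psi\rangle|\le 1$ used wherever the weight is differentiated.
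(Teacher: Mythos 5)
Your first cancellation is exactly the one the paper uses: pairing against $U_k\langle\psi\rangle\langle\eta\rangle^{2n}x^{2k-1/100}$ and exploiting $\bar u^2\p_x\langle\psi\rangle+\bar u\bar v\p_y\langle\psi\rangle=0$ so that the leading transport block produces no new commutator. The gap is in what you do with the \emph{remaining} weight-commutators, i.e.\ the terms where $\p_y$ lands on $\langle\psi\rangle$ and produces a factor $\p_y\langle\psi\rangle=\bar u\,\psi/\langle\psi\rangle$. These arise not only in the diffusion block but also in the transport block, from the $\bar u_{yyy}Q_kU_k=\tfrac12\bar u_{yyy}\p_y(Q_k^2)$ piece (which you claim is ``handled exactly as in the original proof'' with the weight merely riding along --- it is not: the integration by parts that converts it to $-\tfrac12\int\bar u_{yyyy}Q_k^2(\cdots)$ necessarily differentiates the weight and leaves $-\tfrac12\int\bar u_{yyy}\bar u\,Q_k^2\tfrac{\psi}{\langle\psi\rangle}(\cdots)$). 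Your proposed estimate of the diffusive ones --- Cauchy--Schwarz and Young into $\delta\,\widehat{\mathcal{D}}_{k,n}+C_\delta\,\widehat{\mathcal{CK}}_{k,n}$ --- fails quantitatively. When the derivative hits $\langle\eta\rangle^{2n}$ you gain $x^{-\frac12(1-m)}$, and e.g.\ $\bar u_y\,x^{-\frac12(1-m)}\langle\psi\rangle\sim x^{2m-1}\langle\psi\rangle=\bar u^2x^{-1}\langle\psi\rangle$ matches the $\widehat{\mathcal{CK}}$ weight; but when it hits $\langle\psi\rangle$ you instead gain only $\bar u$ and lose the factor $\langle\psi\rangle$, so the resulting coefficients are of size $\bar u\bar u_y\sim x^{2m-\frac12(1-m)}$ against an allowed $\bar u^2x^{-1}\langle\psi\rangle\sim x^{2m-1}\langle\psi\rangle$. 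The ratio is $x^{\frac12(1+m)}/\langle\psi\rangle$, which is unbounded in $x$ throughout the intermediate region $1\ll\langle\psi\rangle\ll x^{\frac12(1+m)}$ (roughly $x^{-\frac14(1+m)}\lesssim\eta\lesssim 1$). So these terms cannot be absorbed into $\widehat{\mathcal{CK}}_{k,n}$ with any constant.

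The paper never estimates these terms: it tracks them exactly and shows they cancel in pairs across the two blocks. After the transport cancellation, $\bold{T}^{(Weight)}$ reduces to $-\tfrac12\int\bar u\bar u_{yyy}Q_k^2$, while the diffusive weight-commutators (obtained by expanding $\p_yu^{(k)}=\bar u\p_yU_k+2\bar u_yU_k+\bar u_{yy}Q_k$ and integrating by parts once more on the $\bar u^2\p_yU_k\,U_k$ piece) produce $\int\bar u\bar u_yU_k^2$ and $\int\bar u\bar u_{yy}Q_kU_k$ contributions that cancel among themselves, leaving exactly $+\tfrac12\int\bar u\bar u_{yyy}Q_k^2$, which annihilates the transport leftover. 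This second layer of exact algebraic cancellation is the essential ingredient your proposal is missing; without it the scheme does not close.
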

\begin{proof} For simplicity of the resulting expressions, we prove it in the case of $n = 0, k = 0$, with the inclusion of general $(n, k)$ following as in Lemma \ref{X:est:1}. Essentially, we focus on the commutator terms arising from the inclusion of the weight $\psi$. We record here the following computations: 
\vspace{2 mm}

\noindent \textit{Transport Terms:} We have 
\begin{align} \n
\bold{T}(x)  = &\int_{\mathbb{R}_+} (\bar{u}^2 \p_x U + \bar{u} \bar{v} \p_y U + \bar{u}_{yyy}Q + 2 (\bar{u}_{yy} - \p_x p_E(x))U) U \psi  \ud y \\ \n
= & \bold{T}^{(Main)}(x) + \bold{T}^{(Weight)}(x).   
\end{align}
We define above the weighted version of \eqref{as:in}:
\begin{align} \n
\bold{T}^{(Main)}(x) := & \frac{\p_x}{2} \int_{\mathbb{R}_+} \bar{u}^2 U^2 x^{- \frac{1}{100}}  \psi \ud y- \int_{\mathbb{R}_+} \bar{u} \bar{u}_x |U|^2 x^{ - \frac{1}{100}}  \psi \ud y \\ \n
&- \frac12 \int_{\mathbb{R}_+} \p_y \{ \bar{u} \bar{v} \} |U|^2 x^{- \frac{1}{100}}  \psi \ud y- \frac12 \int_{\mathbb{R}_+} \bar{u}_{yyyy} |Q|^2  x^{ - \frac{1}{100}}  \psi \ud y \\
&+  \int_{\mathbb{R}_+} 2(\bar{u}_{yy} - \p_x p_E(x)) |U|^2 x^{ - \frac{1}{100}}  \psi \ud y + \frac{1}{100} \int_{\mathbb{R}_+} \bar{u}^2 |U|^2 x^{ - 1- \frac{1}{100}} \psi \ud y, 
\end{align}
and the terms which arise as commutators due to the $\psi$ weight: 
\begin{align} \n
\bold{T}^{(Weight)}(x) := & - \frac{1}{2} \int_{\mathbb{R}_+} \bar{u}^2 U^2 x^{- \frac{1}{100}}  \p_x \psi \ud y - \frac{1}{2} \int_{\mathbb{R}_+} \bar{u} \bar{v} U^2 x^{-\frac{1}{100}}  \p_y \psi \ud y  - \frac{1}{2} \int_{\mathbb{R}_+} \bar{u}_{yyy} Q^2  \p_y \psi \ud y \\ \n
=& - \frac{1}{2} \int_{\mathbb{R}_+} \bar{u}^2 U^2 x^{- \frac{1}{100}} (-\bar{v}) - \frac{1}{2} \int_{\mathbb{R}_+} \bar{u} \bar{v} U^2 x^{-\frac{1}{100}}  \bar{u} \ud y  - \frac{1}{2} \int_{\mathbb{R}_+} \bar{u} \bar{u}_{yyy} Q^2  \ud y  \\ \label{hghg}
= & - \frac{1}{2} \int_{\mathbb{R}_+} \bar{u} \bar{u}_{yyy} Q^2  \ud y.
\end{align}
Above, we have used the special cancellation of the transport terms $\bar{u}^2 \p_x + \bar{u} \bar{v} \p_y$ which arises due to the choice of $\psi$ weight. It will turn out the remaining term above will be cancelled by a diffusive commutator term. 

\vspace{2 mm}

\noindent \textit{Diffusive Terms:} We have 
\begin{align*}
 \bold{D}^{(Main)}(x) := & \int_{\mathbb{R}_+} \bar{u} |\p_y U|^2 x^{-\frac{1}{100}} \psi \ud y  +\bar{u}_y |U|^2 x^{- \frac{1}{100}}|_{y = 0} - \int_{\mathbb{R}_+} 2 \bar{u}_{yy} |U|^2 \psi \ud y \\
 &+\frac12 \int_{\mathbb{R}_+} \bar{u}_{yyyy} |Q|^2  x^{- \frac{1}{100}} \psi \ud y,
\end{align*}
and 
\begin{align*}
 \bold{D}^{(Weight)}(x) := & -  \int_{\mathbb{R}_+} \bar{u} \bar{u}_y U^2  \ud y + \frac{1}{2} \int_{\mathbb{R}_+} \bar{u} \bar{u}_{yyy} Q^2  \ud y - \int_{\mathbb{R}_+} \bar{u} \bar{u}_{yy} Q U  \ud y \\
 &+ 2(1-\delta) \int_{\mathbb{R}_+} \bar{u} \bar{u}_y U^2  \ud y   -  \int_{\mathbb{R}_+} \bar{u} \bar{u}_y U^2 \ud y   + \int \bar{u} \bar{u}_{yy} Q U \psi^{-\delta} \ud y \\
 = &   \frac{1}{2} \int_{\mathbb{R}_+} \bar{u} \bar{u}_{yyy} Q^2  \ud y.
\end{align*}
\end{proof}

\begin{lemma} \label{tioga:2}
For any $0 < \delta << 1$, the following energy inequality is valid: 
\begin{align} \n
\frac{\p_x}{2} \widehat{\mathcal{E}}_{k+\frac12,n}^{(Y)}(x) + \widehat{\mathcal{D}}^{(Y)}_{k+\frac12,n}(x) \le &  C_\delta  \widehat{\mathcal{D}}_{k, n+1}(x) + \delta (  \widehat{\mathcal{D}}_{k+1, 0}(x) + \widehat{\mathcal{CK}}_{k+1,0}(x)   + \widehat{\mathcal{D}}^{(Z)}_{k + \frac12, 0}(x)) \\ \label{cal:2:vir}
&+C |\widehat{\bold{S}}^{(Y)}_{k,n}(x)|,
\end{align}
where the source term is defined as 
\begin{align} \label{def:skY:vir}
\widehat{\bold{S}}^{(Y)}_{k,n}(x) :=  \int_{\mathbb{R}_+} G_k\p_x U_k \langle \eta \rangle^{2n} x^{2k+1 - \frac{1}{100}} \langle \psi \rangle \ud y.
\end{align}
\end{lemma}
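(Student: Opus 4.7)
The proof will follow the blueprint of Lemma \ref{lem:trav:1}, but with the virial weight $\langle \psi \rangle$ inserted throughout. Specifically, I would multiply the linearized equation \eqref{Pr:lin2} by $\p_x U_k \langle \eta \rangle^{2n} x^{2k+1-\frac{1}{100}} \langle \psi \rangle$ and integrate in $y$, producing the identity
\begin{align*}
\widehat{\bold{T}}^{(Y)}_{k,n}(x) + \widehat{\bold{D}}^{(Y)}_{k,n}(x) = \widehat{\bold{S}}^{(Y)}_{k,n}(x),
\end{align*}
where $\widehat{\bold{T}}$ collects the transport contributions and $\widehat{\bold{D}}$ the diffusive contribution $-\int \p_y^2 u^{(k)} \p_x U_k \langle \eta \rangle^{2n} x^{2k+1-\frac{1}{100}} \langle \psi \rangle \, dy$. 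The main terms will, after integration by parts, reproduce the weighted dissipation $\widehat{\mathcal{D}}^{(Y)}_{k+\frac12,n}$ and the time derivative $\frac{\p_x}{2} \widehat{\mathcal{E}}^{(Y)}_{k+\frac12,n}$, by the same algebraic manipulations performed in Lemma \ref{lem:trav:1}.

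Most of the error terms appearing in Lemma \ref{lem:trav:1} transfer verbatim: the three transport errors $\bold{T}^{(Err,i)}$ and the seven diffusive commutators $\bold{D}^{(i)}$ are each bounded by a product of two $L^2$-type norms, and inserting $\langle \psi \rangle$ amounts to upgrading every $\mathcal{D}_{k,n}, \mathcal{CK}_{k,n}, \mathcal{D}^{(Y)}_{k+\frac12,n}, \mathcal{D}^{(Z)}_{k+\frac12,n}$ to their $\widehat{\cdot}$-analogues. The interpolation identities \eqref{union:1}--\eqref{union:3} still hold unchanged and can be invoked with the weight $\langle \psi \rangle$ pulled through. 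The source term bound is identical in form to \eqref{def:skY}, with $\p_x U_k$ traded against $G_k$ in the weighted norm.

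The genuinely new contributions are the \emph{weight commutators}, i.e.\ the terms where $\p_x$ or $\p_y$ lands on $\langle \psi \rangle$. Using the key identities $\p_x \psi = -\bar{v}$ and $\p_y \psi = \bar{u}$, these produce, among others, a term of the form
\begin{align*}
\frac{1}{2} \int_{\mathbb{R}_+} \bar{u} |\p_y U_k|^2 x^{2k+1-\frac{1}{100}} \langle \eta \rangle^{2n} \, \frac{\psi}{\langle \psi \rangle} (-\bar{v}) \, dy,
\end{align*}
together with analogous pieces coming from IBP'ing the $\bar{u} \p_y^2 U_k$ part of the diffusion against $\p_y \langle \psi \rangle = \bar{u} \cdot \psi/\langle \psi \rangle$. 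I would handle these exactly as in the treatment of $\bold{T}^{(Comm)}_{k,n}$ in Lemma \ref{X:est:1}: insert the decomposition \eqref{v:deco} of $\bar{v}$ to split each commutator into its leading $-m\eta x^{-\frac12(1-m)}$ piece (which, combined with the $(1-m)$ contribution from $\p_x x^{2k+1}$ type adjustments, yields a \emph{favorably signed} quantity for $m \ge 0$ by the virial inequality chain \eqref{beebop:1}--\eqref{beebop:2}) and a bounded remainder $\bar{v}_\ast$-piece which is absorbed into $C_\delta \widehat{\mathcal{D}}_{k,n+1}$ together with a small multiple of $\widehat{\mathcal{D}}^{(Y)}_{k+\frac12,n}$.

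The main obstacle is the asymmetry between testing against $\p_x U_k$ (which broke the exact transport cancellation $\bar{u}^2 \p_x \psi + \bar{u}\bar{v} \p_y \psi = 0$ exploited in the preceding lemma) and the weight's linear growth in $\psi \sim \bar{u} y$. Concretely, the $\psi$-commutator coupling $\bar{v}$ to $|\p_y U_k|^2$ forces me to place the weight $\langle \psi \rangle$ entirely on the $\p_y U_k$ factor, producing exactly the weighted diffusion $\widehat{\mathcal{D}}_{k,n+1}$ (with the extra $\langle \eta \rangle$ weight absorbed by the $n \mapsto n+1$ shift), rather than allowing any redistribution. This is the reason the right-hand side of \eqref{cal:2:vir} requires $\widehat{\mathcal{D}}_{k,n+1}$ at the same $\widehat{\cdot}$-level, and not a lower-weight quantity; verifying that the resulting combination can still be closed via the decreasing cascade $\{\sigma_{k+\frac12}^{(Y)}\}$ alongside the favorable-sign identity \eqref{beebop:2} is the decisive structural check.
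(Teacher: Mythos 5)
Your proposal follows essentially the same route as the paper: apply the multiplier $\p_x U_k \langle \eta \rangle^{2n} x^{2k+1-\frac{1}{100}}\langle\psi\rangle$, carry over the error-term estimates of Lemma \ref{lem:trav:1} with hatted functionals, and isolate the new weight commutators from the diffusion term via $\p_y\psi=\bar{u}$ and $\p_x\psi=-\bar{v}$, which is exactly what the paper does. The only (harmless) difference is that you invoke the sign decomposition \eqref{v:deco} for the $\bar{u}\bar{v}|\p_y U_k|^2$ commutator, whereas the paper simply bounds it in absolute value by $C_\delta\widehat{\mathcal{D}}_{k,n+1}$ using the boundedness of $\bar{v}\langle\eta\rangle^{-1}x^{\frac12(1-m)}$, since this term is already dissipation-level and integrable in $x$ without any sign structure.
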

\begin{proof} This follows upon applying the multiplier $\p_x U_k x^{2k + 1 - \frac{1}{100}} \langle \eta \rangle^{2n} \psi$, and following the proof of Lemma \ref{lem:trav:1}. For simplicity, we carry out the calculations for the case $k = n = 0$, with the adaptation to general $(k, n)$ as in Lemma \ref{lem:trav:1}. Moreover, we do not repeat all the details; instead we remark that the only difference is the treatment of commutator terms arising from the weight $\psi$. These arise from the diffusive term, namely 
\begin{align*}
- \int_{\mathbb{R}_+} \bar{u} U_{yy} U_x x^{1- \frac{1}{100}} \psi \ud y =& \int_{\mathbb{R}_+} \bar{u}^2 U_y U_x x^{1-\frac{1}{100}}  \ud y + \int_{\mathbb{R}_+} \bar{u}_y U_{y} U_x x^{1- \frac{1}{100}} \psi \ud y \\
&+ \int_{\mathbb{R}_+} \bar{u} U_{y} U_{xy} x^{1- \frac{1}{100}} \psi \ud y \\ \n
= & \int_{\mathbb{R}_+} \bar{u}^2 U_y U_x x^{1-\frac{1}{100}} \ud y + \int_{\mathbb{R}_+} \bar{u}_y U_{y} U_x x^{1- \frac{1}{100}} \psi \ud y \\
&+ \frac{\p_x}{2} \int_{\mathbb{R}_+} \bar{u} U_{y}^2 x^{1- \frac{1}{100}} \psi \ud y - \frac12 \int_{\mathbb{R}_+} \bar{u}_x U_{y}^2 x^{1- \frac{1}{100}} \psi \ud y \\
& - \frac{1 - \frac{1}{100}}{2} \int_{\mathbb{R}_+} \bar{u} U_{y}^2 x^{- \frac{1}{100}} \psi \ud y + \frac{1}{2} \int_{\mathbb{R}_+} \bar{u} \bar{v} U_y^2 x^{1- \frac{1}{100}}.
\end{align*}  
There are two commutators due to $\psi$ appearing above, namely the first and last term on the right-hand side, which we treat as follows. First,  
\begin{align*}
| \int_{\mathbb{R}_+} \bar{u}^2 U_y U_x x^{1-\frac{1}{100}}  \ud y| = & | \int_{\mathbb{R}_+} \bar{u}^2 U_y U_x x^{1-\frac{1}{100}} \frac{1}{\psi} \psi \ud y| \\
\lesssim & \Big\| \frac{\bar{u}}{\psi} x^{\frac12(1-m)} \Big\|_{L^\infty} \| U_y x^{\frac{m}{2}} \psi^{\frac12} \|_{L^2_y} \| \bar{u} U_x x^{\frac12 - \frac{1}{200}} \psi^{\frac12 } \|_{L^2_y}
\end{align*}
Second, we have 
\begin{align*}
|\int_{\mathbb{R}_+} \bar{u} \bar{v} U_y^2 x^{1 - \frac{1}{100}} \ud y| \lesssim \Big\| \frac{\bar{v}}{\langle \eta \rangle} x^{\frac12 - \frac{m}{2}} \Big\|_{L^\infty} \Big\| \frac{\bar{u}}{\psi} x^{\frac12(1-m)} \Big\|_{L^\infty} \| U_y x^{\frac{m}{2}} \langle \eta \rangle^{\frac12} x^{- \frac{1}{200}} \psi^{\frac12}\|_{L^2_y}^2
\end{align*}
\end{proof}

\begin{lemma}For any $0 < \delta << 1$, the following energy inequality is valid:
\begin{align} \n
\frac{\p_x}{2} \widehat{\mathcal{E}}_{k+\frac12,n}^{(Z)}(x)+  \mathcal{B}^{(Z)}_{k+\frac12}(x)  + \widehat{\mathcal{D}}^{(Z)}_{k+\frac12,n}(x) \le& C_{\delta} ( \widehat{\mathcal{D}}_{k,n+1}(x) + \widehat{ \mathcal{CK}}_{k,n}(x) + \mathcal{B}_k(x) )+  \delta \widehat{\mathcal{D}}^{(Y)}_{k + \frac12,0}(x) \\ \label{cal:3:vir}
& + C |\widehat{\bold{S}}^{(Z)}_{k,n}(x)|,
\end{align}
where the source term 
\begin{align} \label{szA:vir}
\widehat{\bold{S}}^{(Z)}_{k,n}(x) := &  \int_{\mathbb{R}_+} \p_y G_k \p_y U_k \langle \eta \rangle^{2n} x^{2k + 1 - m - \frac{1}{100}} \langle \psi \rangle \ud y.
\end{align}
\end{lemma}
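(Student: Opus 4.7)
The plan is to mimic the proof of Lemma \ref{lem:Tioga:1}: multiply the vorticity form of \eqref{Pr:lin2} (i.e.\ apply $\p_y$ to the equation) by the weighted test function
\[
\p_y U_k \, \langle \eta \rangle^{2n} \, x^{2k+1-m-\frac{1}{100}} \, \langle \psi \rangle,
\]
integrate over $\mathbb{R}_+$, and organize the resulting identity as
$\widehat{\bold{T}}_{k,n}(x) + \widehat{\bold{D}}_{k,n}(x) = \widehat{\bold{S}}^{(Z)}_{k,n}(x)$,
exactly as before. Every term that appeared in the proof of Lemma \ref{lem:Tioga:1} reappears verbatim with a harmless $\psi$ factor, and has already been bounded by the right-hand side of \eqref{cal:3:vir}; so the work reduces to tracking the new commutator terms generated by the weight $\psi$.

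The crucial algebraic input is
\[
\p_x \psi = -\bar{v}, \qquad \p_y \psi = \bar{u},
\]
which produces the identity $\bar{u}^2 \p_x \psi + \bar{u}\bar{v}\, \p_y \psi = 0$. Consequently, when $\p_y$ falls on $\psi$ in the transport part $\bar{u}^2 \p_{xy}U_k + \bar{u}\bar{v}\p_y^2 U_k + \dots$ (after integrating by parts in $x$ and $y$ to bring derivatives off $U_k$), the two most dangerous commutators exactly cancel, just as in the computation \eqref{hghg} used at the bottom order. The remaining transport commutators involving $\p_y\psi$ in front of the lower-order coefficients $\bar{u}_{yyy}Q_k$ and $(\bar{u}_{yy} - \p_x p_E)U_k$ factors absorb a $\bar{u}$ and are handled by the same $L^\infty$ coefficient bounds as in Lemma \ref{lem:Tioga:1}, producing terms controlled by $C_\delta(\widehat{\mathcal{D}}_{k,n+1}+\widehat{\mathcal{CK}}_{k,n}) + \delta \widehat{\mathcal{D}}^{(Y)}_{k+\frac12,0}$ after Cauchy--Schwarz.

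The diffusive integration by parts creates two new types of commutators: $\p_y$ hitting $\psi$ in the step $-\int \p_y^3 u^{(k)}\, \p_y U_k\, \psi$-weighted produces boundary terms at $y=0$ (unchanged since $\psi(x,0)=0$ or contributes only a bounded multiple depending on the regularization of $\langle\psi\rangle$; in particular no new boundary term beyond $\mathcal{B}^{(Z)}_{k+\frac12}$), and interior commutators carrying a factor $\bar{u} = \p_y\psi$. Each of these interior commutators has one fewer $\p_y$ falling on $U_k$ and gains a factor of $\bar{u}$; applying Cauchy--Schwarz with splits into the $\eta\ge1$ regime (where $\bar{u}\gtrsim x^m$ gives immediate control by $\widehat{\mathcal{D}}_{k,n}$ and $\widehat{\mathcal{CK}}_{k,n}$) and the $\eta\le 1$ regime (where $\bar{u}\lesssim x^m\eta$ converts one $\bar{u}$ weight into a Hardy-type gain, producing $\widehat{\mathcal{D}}_{k,n+1}$) matches the stated bound. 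Similarly, the $x$-derivative commutator $-\frac12\int \bar{u} |\p_y U_k|^2\, \p_x\psi = \frac12\int \bar{u}\bar{v}|\p_y U_k|^2$ is absorbed using $\|\bar{v}\langle \eta\rangle^{-1} x^{\frac12(1-m)}\|_{L^\infty}\lesssim 1$ together with the weight index shift $n \to n+1$.

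The main obstacle, and the reason this lemma is nontrivial, is that the weight $\psi$ grows linearly in $y$ for $\eta\to\infty$, so a naive commutator $\int \bar{u} \psi |\p_y U_k|^2 \cdot (\text{coefficient})$ does \emph{not} sit in $\widehat{\mathcal{D}}^{(Z)}_{k+\frac12,n}$ alone. The algebraic cancellation $\bar{u}^2\p_x\psi+\bar{u}\bar{v}\p_y\psi=0$ is what prevents this growth from destroying the global-in-$x$ estimate; without it the transport commutators would generate an unsigned term growing in $y$ that could not be absorbed by the dissipation. Once this cancellation is in place, the remainder of the proof is a routine adaptation of Lemma \ref{lem:Tioga:1} with each weight inequality now acquiring an extra $\langle\psi\rangle$ factor on both sides.
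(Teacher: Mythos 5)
Your proposal is correct and follows essentially the same route as the paper, which itself simply states that the lemma follows by repeating the argument of Lemma \ref{lem:Tioga:1} with the multiplier $\p_y U_k\, x^{2k+1-m-\frac{1}{100}}\langle\eta\rangle^{2n}\psi$ and observing that the $\psi$-commutators are of identical form to those already handled in the weighted $(Y)$-estimate (Lemma \ref{tioga:2}). You correctly isolate the key cancellation $\bar{u}^2\p_x\psi + \bar{u}\bar{v}\p_y\psi = 0$ and the coefficient bounds $\|\bar{u}\psi^{-1}x^{\frac12(1-m)}\|_{L^\infty}$, $\|\bar{v}\langle\eta\rangle^{-1}x^{\frac12(1-m)}\|_{L^\infty}$ used to absorb the remaining commutators, so your sketch in fact supplies more detail than the paper's own proof.
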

\begin{proof} This follows upon repeating the arguments in Lemma \ref{lem:Tioga:1} with the multiplier (again in vorticity form) $\p_y U_k x^{2k+1 - m - \frac{1}{100}} \langle \eta \rangle^{2n} \psi$. The commutator terms arising from the inclusion of the weight $\psi$ are of an identical form to Lemma \ref{tioga:2}. We omit repeating the details. 
\end{proof}

We have the analogue of Corollary \ref{wehave}:
\begin{lemma}\label{wehave:2}The following bounds are valid on $F_{\text{Comm},k}$ for $0 \le k \le 3$:
\begin{align} \label{tom:petty:278}
\| \frac{1}{\sqrt{\bar{u}}} F_{\text{Comm},k} \langle \eta \rangle^{10-k} x^{k + \frac12 - \frac{m}{2}- \frac{1}{200}} \langle \psi \rangle^{\frac12} \|_{L^2_y}^2 \le & C_\lambda \widehat{\mathcal{J}}_{(k-1) + \frac12}(x) + \lambda \widehat{\mathcal{D}}_{k,0}(x), \\ \label{tp:783}
\| \frac{1}{\bar{u}} \p_x^k \mathcal{Q}[u, v] \langle \eta \rangle^{10-k } x^{k + \frac12- \frac{1}{200}}\langle \psi \rangle^{\frac12} \|_{L^2_y}^2 \lesssim & \eps^{-1}  \widehat{\mathcal{J}}_3(x).
\end{align}
\end{lemma}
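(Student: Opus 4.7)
The strategy is to mirror the proofs of Lemmas \ref{lem:61}, \ref{lem:62} essentially verbatim, threading the additional virial weight $\langle \psi \rangle^{1/2}$ through each step. First I would introduce weighted auxiliary functionals $\widehat{d}_{k,n}, \widehat{c}_{k,n}, \widehat{d}^{(Y)}_{k+\frac12,n}, \widehat{d}^{(Z)}_{k+\frac12,n}$ as the $\langle\psi\rangle$-weighted analogues of $d_{k,n}$, etc., and establish the weighted versions of \eqref{G:League:na:1}--\eqref{G:League:na:4}. Since those equivalence proofs only multiply/divide by $\bar{u}$ and its derivatives (never by $\psi$), the weight $\langle\psi\rangle$ passes through as a spectator and the estimates hold with identical coefficients.

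For \eqref{tom:petty:278} I would proceed through the four contributions $F^{(i)}_{comm,k}$ in \eqref{Fcommk} exactly as in Lemma \ref{lem:61}. In each term, the background factor $\p_x^{k-k'}\bar{u}$, $\p_x^{k-k'}\bar{v}$, or $\p_x^{k-k'}\bar{u}_y$ is placed in $L^\infty_y$ using the self-similar scaling $\p_x \sim x^{-1}$, while the perturbation ($u^{(k'+1)}, u^{(k')}, u^{(k')}_y$, or $v^{(k')}$) is placed in $L^2_y$ against the weight $\langle\psi\rangle$. The $L^\infty$ bounds on the background profiles are identical to those used in Lemma \ref{lem:61}, and the $L^2$ norms are then controlled by the weighted $\widehat{d}_{k',n}, \widehat{c}_{k',n}, \widehat{d}^{(Y)}_{k'+\frac12,n}$, and hence by $\widehat{\mathcal{J}}_{(k-1)+\frac12}$ via the weighted equivalences. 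A small amount of $\widehat{\mathcal{D}}_{k,0}$ arises (producing the $\lambda\widehat{\mathcal{D}}_{k,0}$ remainder) exactly as in Lemma \ref{lem:61}, through the interpolation \eqref{train:2}-type step converting $\p_x U_{k'}$ to $U_{k'+1}$.

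For \eqref{tp:783} the proof mirrors Lemma \ref{lem:62}. Each product $\p_x^{k-k'}u\cdot\p_x^{k'}u_x$ and $\p_x^{k-k'}u_y\cdot\p_x^{k'}v$ is split according to whether $k'$ or $k-k'$ is smaller, placing the factor with fewer derivatives in $L^\infty$ via \eqref{jurassic:1}--\eqref{jurassic:6} or \eqref{trader:joes:1}, and the remaining factor in the $\langle\psi\rangle$-weighted $L^2$. Since $k\le 3$, only the $L^\infty$ bounds at indices $k'\le 3$ are invoked, which are already available. The $\eps^{-1/2}$ from each $L^\infty$ bound multiplies the $L^2$ quantity to produce the $\eps^{-1}$ on the right.

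The main obstacle---and the reason the statement is restricted to $k\le 3$ rather than $k\le 4$ as in the unweighted Corollary \ref{wehave}---is that $\langle\psi\rangle^{1/2}$ grows roughly like $\bar{u}^{1/2}y^{1/2}\sim x^{m/2}(x^{(1-m)/2}\eta)^{1/2}$ for large $\eta$, so effectively consuming one factor of $\langle\eta\rangle$ and a half power of $x$ relative to the unweighted estimates. This forces a reduction of the weight cascade depth by one and hence a drop by one in the admissible derivative range; within this range all mechanics of Lemmas \ref{lem:61}--\ref{lem:63} carry over once the weighted equivalences have been established.
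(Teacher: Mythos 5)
Your proposal matches the paper's (implicit) argument: the paper offers no separate proof of this lemma, presenting it as the direct analogue of Corollary \ref{wehave}, i.e.\ rerunning Lemmas \ref{lem:61}--\ref{lem:62} and the equivalences \eqref{G:League:na:1}--\eqref{G:League:na:4} with the extra $\langle \psi \rangle$ weight attached to the $L^2$ factors, exactly as you describe. Your accounting of why the weight is harmless in the pointwise equivalences, where the $\lambda\widehat{\mathcal{D}}_{k,0}$ term originates, and why the derivative range drops is consistent with the paper's scheme.
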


By repeating almost verbatim the arguments in Proposition \ref{prop:main:1}, we obtain: 
\begin{proposition} The following estimate is valid:  
\begin{align} \label{boots:2}
\sup_{0 \le x < \infty} \widehat{\mathcal{E}}(x) + \int_0^{\infty} (\widehat{\mathcal{D}}(x) + \widehat{\mathcal{CK}}(x) + \mathcal{B}(x) ) dx \le 5 \widehat{\mathcal{E}}_{\text{Init}}, 
\end{align} 
\end{proposition}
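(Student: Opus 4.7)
The plan is to mimic the bootstrap argument of Proposition \ref{prop:main:1}, but now working with the $\psi$-weighted functionals $\widehat{\mathcal{E}}$, $\widehat{\mathcal{D}}$, $\widehat{\mathcal{CK}}$, $\widehat{\mathcal{B}}$, and relying on the fact that Proposition \ref{prop:main:1} has already been established. In particular, the global-in-$x$ control of $\mathcal{E}_{\text{Quasi}}, \mathcal{D}, \mathcal{CK}, \mathcal{B}$ from \eqref{boots:2} means that all the pointwise bounds \eqref{jurassic:1}--\eqref{trader:joes:1} are already valid unconditionally, so we do not need a separate nonlinear bootstrap for the weighted quantities: the nonlinear terms can be treated as a priori perturbative using the unweighted control.

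First, I would record the source term estimates. Using \eqref{tom:petty:278}--\eqref{tp:783}, along with the analogue of the interpolation inequalities \eqref{train:2}--\eqref{train:3} weighted by $\langle \psi \rangle$ (which are straightforward to derive since $\psi$ is independent of $y$-differentiation in the weights that matter), we obtain for $0 \le k \le 3$:
\begin{align*}
|\widehat{\bold{S}}_{k,10-k}(x)| \lesssim & \bigl( C_\lambda \widehat{\mathcal{J}}_{(k-1)+\frac12}(x)^{\frac12} + \lambda \widehat{\mathcal{D}}_{k,0}(x)^{\frac12} + \eps^{\frac12} \widehat{\mathcal{J}}_3(x)^{\frac12} \bigr) \widehat{\mathcal{CK}}_{k,10-k}(x)^{\frac12}, \\
|\widehat{\bold{S}}^{(Y)}_{k,9-k}(x)| \lesssim & \bigl( C_\lambda \widehat{\mathcal{J}}_{(k-1)+\frac12}(x)^{\frac12} + \lambda \widehat{\mathcal{D}}_{k,0}(x)^{\frac12} + \eps^{\frac12} \widehat{\mathcal{J}}_3(x)^{\frac12} \bigr) \widehat{\mathcal{D}}^{(Y)}_{k+\frac12,9-k}(x)^{\frac12},
\end{align*}
and similarly for $\widehat{\bold{S}}^{(Z)}_{k,9-k}(x)$, exactly mirroring \eqref{midwest:1}, \eqref{merced:2}, \eqref{merced:3}--\eqref{merced:4}. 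These bounds are inserted into \eqref{cal:1:hat}, \eqref{cal:2:vir}, \eqref{cal:3:vir} to obtain a weighted analogue of the system \eqref{pin:1}--\eqref{pin:4} (though here we only need to go up to $k = 4$, with no quasilinear top layer, since the $\psi$-weighted quantities stop at $k = 4$).

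Next, I would choose the ordered family of small constants $\{\delta_k\}_{k=0}^{3}$ and scalar weights $1 = \sigma_0 < \sigma^{(Y,Z)}_{\frac12} < \sigma_1 < \cdots < \sigma_4$ exactly as in Step 2 of Proposition \ref{prop:main:1}, so that taking the linear combination $\eqref{cal:2:vir}_k + \eqref{cal:3:vir}_k + 100\delta_k \eqref{cal:1:hat}_{k+1}$ telescopes and yields
\begin{align*}
\frac{\p_x}{2} \widehat{\mathcal{E}}(x) + \frac{1}{2} \widehat{\mathcal{D}}(x) + \frac{1}{2} \widehat{\mathcal{CK}}(x) + \widehat{\mathcal{CK}}^{(P)}(x) + \frac{1}{2} \mathcal{B}(x) \le \eps^{\frac14} \widehat{\mathcal{J}}_3(x),
\end{align*}
exactly as in \eqref{qwayseye:1}. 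Since $\widehat{\mathcal{J}}_3(x)$ is controlled by $\widehat{\mathcal{D}} + \widehat{\mathcal{CK}} + \mathcal{B}$ (by definition \eqref{defn:hat:IK}), the right-hand side is absorbed into the left for $\eps$ small. Integrating in $x$ from $1$ to any $X$ then yields the claimed global bound
\begin{align*}
\sup_{0 \le x \le X} \widehat{\mathcal{E}}(x) + \int_1^{X} (\widehat{\mathcal{D}} + \widehat{\mathcal{CK}} + \mathcal{B})\, dx \le 5\, \widehat{\mathcal{E}}_{\text{Init}},
\end{align*}
and sending $X \to \infty$ gives the proposition.

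The main obstacle—and the reason this scheme only works with the virial weight $\psi$ and not an arbitrary weight function—is handling the $\psi$-commutators inside the transport operator. The calculation in \eqref{hghg} is the heart of the matter: the weight $\psi = \int_0^y \bar{u}\, dy'$ satisfies $\p_x \psi = -\bar{v}$ and $\p_y \psi = \bar{u}$, so the commutator from the transport part $\bar{u}^2 \p_x + \bar{u}\bar{v}\p_y$ produces $-\frac12 \bar{u}^2 U^2 (-\bar{v}) - \frac12 \bar{u}\bar{v}U^2 \bar{u} = 0$, and only the subleading $\bar{u}_{yyy}Q^2$ piece survives, which is in turn cancelled by an analogous diffusive commutator. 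Once this exact cancellation is secured at the base level $(k,n) = (0,0)$, the higher order commutators generated by $\langle \eta \rangle^{2n}$ and $x^{2k-\frac{1}{100}}$ weights are of the same type already controlled in Lemmas \ref{X:est:1}, \ref{lem:trav:1}, \ref{lem:Tioga:1}. The argument then combines with the Nash-type inequality alluded to in the introductory remark to upgrade the decay rate by the additional factor claimed in Theorem \ref{thm:2}.
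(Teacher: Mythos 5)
Your proposal is correct and follows essentially the same route as the paper, whose proof of this proposition consists precisely of the instruction to repeat the argument of Proposition \ref{prop:main:1} with the $\psi$-weighted functionals, using the weighted energy inequalities \eqref{cal:1:hat}, \eqref{cal:2:vir}, \eqref{cal:3:vir}, the weighted source bounds of Lemma \ref{wehave:2}, and the key transport--diffusion cancellation \eqref{hghg}. Your additional observation that the already-established unweighted global control renders the $L^\infty$ bounds \eqref{jurassic:1}--\eqref{trader:joes:1} unconditional, so that no fresh bootstrap is needed for the weighted scheme, is a correct and useful clarification of what ``repeating almost verbatim'' entails.
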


\subsection{Nash Type Inequality}

From here, we may conclude using a Nash-type argument. First, we establish
\begin{lemma} Define $q(\eta)$ to be a smooth function such that there are two finite, nonzero constants $c_0, C_0$ satisfying $c_0 \le \frac{q(\eta)}{x^{-m} \bar{u}} \le C_0$. Then the following lower bound is valid: 
\begin{align} \label{nash:type:1}
\int_{\mathbb{R}_+} \bar{u} |\p_y U|^2 \ud y \gtrsim \min \{ x^{- \frac14 + \frac{3m}{4}} \frac{\Big( \int_{\mathbb{R}_+} \bar{u}^2 U^2 \ud y \Big)^{\frac52}}{\Big( \int_{\mathbb{R}_+} \bar{u}^2 U^2 y q(\eta) x^m \ud y \Big)^{\frac32}}, x^m \frac{\Big( \int_{\mathbb{R}_+} \bar{u}^2 U^2 \ud y \Big)^3}{\Big( \int_{\mathbb{R}_+} \bar{u}^2 U^2 y q(\eta) x^m \ud y \Big)^2} \}.
\end{align}
\end{lemma}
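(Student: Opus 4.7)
The plan is to recognize this as a Nash-type interpolation inequality and reduce to a scale-invariant form. I first change to the self-similar variable $\eta = y/x^{\frac12(1-m)}$, and write $\bar u(x,y) = x^m h(\eta)$ with $h(\eta) := \tilde f'(\eta)$ so that $h(\eta) \asymp \min(\eta, 1)$. Setting $\tilde U(x, \eta) = U(x, y)$, a direct calculation gives
\begin{align*}
\int_{\mathbb{R}_+} \bar u^2 U^2 \ud y &= x^{(3m+1)/2} \tilde E, \qquad \tilde E := \int_0^\infty h^2 \tilde U^2 \ud \eta, \\
\int_{\mathbb{R}_+} \bar u^2 U^2 y q(\eta) x^m \ud y &\asymp x^{2m+1} \tilde W, \qquad \tilde W := \int_0^\infty \eta h^3 \tilde U^2 \ud \eta, \\
\int_{\mathbb{R}_+} \bar u |\p_y U|^2 \ud y &= x^{(3m-1)/2} \tilde D, \qquad \tilde D := \int_0^\infty h \tilde U_\eta^2 \ud \eta,
\end{align*}
and bookkeeping of the $x$-powers reveals that the target inequality is equivalent to the scale-invariant claim $\tilde D \gtrsim \min\{\tilde E^{5/2}/\tilde W^{3/2},\, \tilde E^3/\tilde W^2\}$.

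Next, I carry out a Nash-type splitting in $\eta$. Fixing a cutoff $\eta_* > 0$, I split $\tilde E = \tilde E_< + \tilde E_>$ across $\eta_*$. The far-field piece is controlled by Chebyshev against the $\eta h$ weight contained in $\tilde W$: since $\eta h(\eta) \ge \eta_* h(\eta_*)$ for $\eta \ge \eta_*$,
\[
\tilde E_> = \int_{\eta_*}^\infty h^2 \tilde U^2 \ud \eta \le \frac{1}{\eta_* h(\eta_*)} \int_0^\infty \eta h^3 \tilde U^2 \ud \eta = \frac{\tilde W}{\eta_* h(\eta_*)},
\]
which gives $\tilde W/\eta_*^2$ when $\eta_* \le 1$ and $\tilde W/\eta_*$ when $\eta_* \ge 1$. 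The near-field piece requires bounding $\tilde E_<$ by $\tilde D$ up to a power of $\eta_*$. Using that $f := h \tilde U$ satisfies $f(0) = 0$ (since $h(0) = 0$), a Cauchy-Schwarz plus Hardy argument handling the degeneracy of $h$ at the origin yields an estimate of the form $\tilde E_< \lesssim \eta_*^\alpha \tilde D$, with $\alpha = 4$ in the regime $\eta_* \le 1$ and $\alpha = 3$ in the regime $\eta_* \ge 1$.

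I then optimize over $\eta_*$ in each regime. In the regime $\eta_* \le 1$, balancing $\eta_*^4 \tilde D$ against $\tilde W/\eta_*^2$ gives $\eta_*^6 \sim \tilde W/\tilde D$, with resulting bound $\tilde D \gtrsim \tilde E^3/\tilde W^2$; this optimum lies in the regime precisely when $\tilde W \le \tilde D$. In the regime $\eta_* \ge 1$, balancing $\eta_*^3 \tilde D$ against $\tilde W/\eta_*$ gives $\eta_*^4 \sim \tilde W/\tilde D$ and $\tilde D \gtrsim \tilde E^{5/2}/\tilde W^{3/2}$; this optimum lies in the regime precisely when $\tilde W \ge \tilde D$. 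Since at least one of the two alternatives always applies, the minimum of the two bounds holds unconditionally, which is the claimed inequality.

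The main obstacle is the near-field estimate $\tilde E_< \lesssim \eta_*^\alpha \tilde D$, because $h$ vanishes linearly at the origin and so $\tilde D = \int h \tilde U_\eta^2 \ud \eta$ degenerates there. A naive Cauchy-Schwarz bound of the form $\bigl(\int_0^\eta \tilde U_{\eta'} \ud\eta'\bigr)^2 \le \bigl(\int_0^\eta h^{-1} \ud\eta'\bigr)\bigl(\int_0^\eta h \tilde U_{\eta'}^2 \ud\eta'\bigr)$ fails because the weight integral diverges logarithmically at $\eta = 0$. The way around this is to work with the lifted variable $f = h \tilde U$, which vanishes at the origin, so that $\tilde D$ can be rewritten, via the expansion $\tilde U_\eta = (f/h)_\eta = f_\eta/h - f h'/h^2$ and an integration by parts, in a Hardy-coercive form bounding $\int_0^{\eta_*} f_\eta^2 \ud \eta$ modulo lower-order terms controlled by the structure of $h$; the desired power $\eta_*^\alpha$ then emerges from applying FTC to $f^2(\eta) \le \eta \int_0^\eta f_{\eta'}^2 \ud \eta'$ followed by integration in $\eta$, with the two exponents $\alpha = 4$ and $\alpha = 3$ reflecting the different behaviors of $h$ on the two sides of $\eta = 1$.
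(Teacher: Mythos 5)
Your overall architecture — pass to self-similar variables, reduce to the scale-invariant claim $\tilde D \gtrsim \min\{\tilde E^{5/2}/\tilde W^{3/2},\,\tilde E^{3}/\tilde W^{2}\}$, split at a cutoff $\eta_*$, bound the far field by Chebyshev against $\tilde W$ and the near field by a Hardy argument against $\tilde D$, and optimize in $\eta_*$ — is exactly the paper's proof (the paper's Case 1/Case 2 dichotomy, unwound from physical variables, is precisely $\tilde W \le \tilde D$ versus $\tilde W \ge \tilde D$, with optimal cutoffs $(\tilde W/\tilde D)^{1/5}$ and $(\tilde W/\tilde D)^{1/3}$ respectively). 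However, your near-field exponents are wrong, and this breaks the proof. The correct exponents are $\alpha = 3$ for $\eta_* \le 1$ and $\alpha = 2$ for $\eta_* \ge 1$: for $\eta_* \le 1$ one has $h \asymp \eta$ there, so $\tilde E_< \lesssim \eta_*^2 \int_0^{\eta_*}\tilde U^2$, and the Hardy step $\int_0^{\eta_*}\tilde U^2 \lesssim \int_0^{\eta_*}\eta^2\tilde U_\eta^2 + (\text{cutoff terms}) \lesssim \eta_*\tilde D + \cdots$ costs only one more power of $\eta_*$, giving $\eta_*^3\tilde D$; for $\eta_* \ge 1$ one uses $h \lesssim 1$ and $\sup_{\eta\le\eta_*}\eta^2/h \lesssim \eta_*^2$, giving $\eta_*^2\tilde D$. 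Your claimed bound $\tilde E_< \lesssim \eta_*^4\tilde D$ is actually false: take $\tilde U$ a unit bump supported on $[\eta_*/2,\eta_*]$ with $\eta_* \ll 1$; then $\tilde E_< \sim \eta_*^3$ while $\eta_*^4\tilde D \sim \eta_*^4$, and the combined inequality $\tilde E \lesssim \tilde W/\eta_*^2 + \eta_*^4\tilde D$ also fails for this family after minimizing over $\eta_*$ (it would force $\tilde E \lesssim L^{10/3}$ for a bump at scale $L$, but $\tilde E \sim L^3$).

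Because of this, your assignment of conclusions to regimes is reversed and your stated conclusions in each regime are not provable. With the correct exponents, the regime $\tilde W \le \tilde D$ (cutoff $\le 1$) yields $\tilde D \gtrsim \tilde E^{5/2}/\tilde W^{3/2}$ and the regime $\tilde W \ge \tilde D$ yields $\tilde D \gtrsim \tilde E^{3}/\tilde W^{2}$ — the opposite of what you wrote. Your version is refuted by the same small bump at scale $L \ll 1$: there $\tilde W \sim L^5 \ll 1 \sim \tilde D$, so your Case 1 would assert $\tilde D \gtrsim \tilde E^3/\tilde W^2 \sim L^{-1} \gg 1$, contradicting $\tilde D \sim 1$ (the true, and sharp, bound in this regime is $\tilde E^{5/2}/\tilde W^{3/2} \sim 1$). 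There is also an internal arithmetic slip: balancing $\tilde W/\eta_*$ against $\eta_*^3\tilde D$ gives $\tilde E \lesssim \tilde W^{3/4}\tilde D^{1/4}$, i.e.\ $\tilde D \gtrsim \tilde E^4/\tilde W^3$, not $\tilde E^{5/2}/\tilde W^{3/2}$ as you claim. Fixing the two near-field exponents to $3$ and $2$ repairs all of this and reproduces the paper's argument.
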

\begin{proof} We define the following ($x$-dependent) quantities: 
\begin{align}
\gamma^2 := \int_{\mathbb{R}_+} \bar{u}^2 U^2 \ud y, \qquad A^2 := \int_{\mathbb{R}_+} \bar{u}^2 U^2 y q(\eta) x^m \ud y, \qquad B^2 = \int_{\mathbb{R}_+} \bar{u} |\p_y U|^2 \ud y. 
\end{align}
It is convenient to split the argument into two cases. 

\vspace{2 mm}

\noindent \underline{Case 1:  $\frac{A}{B} \le x^{\frac34 + \frac{m}{4}}$}. In this case, we define the parameter (again, $x$-dependent)
\begin{align}
\alpha = \frac{1}{x^{\frac{3}{10} + \frac{m}{10}}} \Big( \frac{A}{B} \Big)^{\frac25} \le 1. 
\end{align}
We subsequently use $\alpha$ as a threshold to cutoff: 
\begin{align}
\int_{\mathbb{R}_+} \bar{u}^2 U^2 \ud y \le \int_{\mathbb{R}_+} \bar{u}^2 U^2 \chi(\eta \le \alpha) \ud y + \int_{\mathbb{R}_+} \bar{u}^2 U^2 \chi(\eta > \alpha) \ud y = K_{near} + K_{Far}.
\end{align}
We estimate the far-field component using $A$ as follows: 
\begin{align*}
|K_{Far}| = \int_{\mathbb{R}_+} \bar{u}^2 U^2 \frac{y q(\eta) x^m}{y q(\eta) x^m} \chi(\eta > \alpha) \ud y \lesssim \frac{x^{-m}}{\alpha^2 x^{\frac12(1-m)}} A^2.
\end{align*}
Above, we have used the lower bound $q(\eta) \ge \eta \ge \alpha$ since $\alpha < 1$. For the near-field component, we integrate by parts as follows:
\begin{align*}
|K_{Near}| \le & \alpha^2 x^{2m} \int_{\mathbb{R}_+}  U^2 \chi(\eta \le \alpha) \ud y := \overline{K}_{Near}
\end{align*}
To estimate $\overline{K}_{Near}$, we have 
\begin{align} \n
\overline{K}_{Near} \le & \alpha^2 x^{2m} \int_{\mathbb{R}_+} \p_y \{ y \} U^2 \chi(\eta \le \alpha) \ud y \\ \n
= &  -2\alpha^2 x^{2m} \int_{\mathbb{R}_+} y U U_y \chi(\eta \le \alpha) \ud y  -\alpha^2 x^{2m} \int_{\mathbb{R}_+}  U^2 \frac{y}{\alpha x^{\frac12(1-m)}} \chi'(\eta \le \alpha) \ud y \\ \label{close:nash:1}
=& : K_{Near, Near} + K_{Near, Far}.
\end{align}
The $K_{Near, Far}$ can be treated as the far-field term using that on the support of $\chi'(\eta \le \alpha)$, we have $\eta \sim \alpha$ and therefore $\bar{u} \sim x^m \alpha$:
\begin{align} \label{close:nash:2}
|K_{Near, Far}| \lesssim \int \bar{u}^2 U^2 \chi'(\eta \le \alpha) \ud y \lesssim \frac{x^{-m}}{\alpha^2 x^{\frac12(1-m)}} A^2.
\end{align}
For $K_{Near, Near}$, we have by Young's inequality, 
\begin{align} \n
|K_{Near, Near}| \le & \frac12 \overline{K}_{Near} + C\alpha^2 x^{2m} \int y^2 U_y^2 \chi(\eta \le \alpha) \ud y \\ \n
\le & \frac12 \overline{K}_{Near} + C\alpha^2 x^{2m} x^{1-m} \int \eta^2 U_y^2 \chi(\eta \le \alpha) \ud y \\  \n
\le & \frac12 \overline{K}_{Near} + C\alpha^3 x^{2m} x^{1-m} \int x^{-m} \bar{u} U_y^2 \chi(\eta \le \alpha) \ud y \\ \label{close:nash:3}
\le & \frac12 \overline{K}_{Near} + C\alpha^3 x B^2. 
\end{align}
Inserting \eqref{close:nash:3}, \eqref{close:nash:2} into \eqref{close:nash:1} and using the $1/2$ factor to absorb the $\overline{K}_{Near}$ to the left-hand side closes the following estimate for $\overline{K}_{Near}$ (and therefore also the estimate for $K_{Near}$):
\begin{align*}
|\overline{K}_{Near}| \lesssim \frac{x^{-m}}{\alpha^2 x^{\frac12(1-m)}} A^2 + \alpha^3 x B^2.
\end{align*}
Consolidating the bounds and finally invoking our choice of the parameter $\alpha$, we have
\begin{align} \n
\gamma^2 \lesssim & \frac{x^{-m}}{\alpha^2 x^{\frac12(1-m)}} A^2 + \alpha^3 x B^2 \\ \n
\lesssim &   \frac{x^{-m}}{( \frac{1}{x^{\frac{3}{10} + \frac{m}{10}}} \Big( \frac{A}{B} \Big)^{\frac25})^2 x^{\frac12(1-m)}} A^2 + ( \frac{1}{x^{\frac{3}{10} + \frac{m}{10}}} \Big( \frac{A}{B} \Big)^{\frac25})^3 x B^2  \\ \label{case:1:bd}
\lesssim & x^{\frac{1}{10} - \frac{3m}{10}} A^{\frac65} B^{\frac45}.
\end{align}
This concludes the treatment of Case 1. 

\vspace{2 mm}

\noindent \underline{Case 2:  $\frac{A}{B} > x^{\frac34 + \frac{m}{4}}$}. In this case, we define the parameter (again, $x$-dependent)
\begin{align}
\alpha = \frac{1}{x^{\frac12 + \frac{m}{6}}} \Big( \frac{A}{B} \Big)^{\frac23}> 1. 
\end{align}
We again use $\alpha$ as a threshold to cutoff: 
\begin{align}
\int_{\mathbb{R}_+} \bar{u}^2 U^2 \ud y \le \int_{\mathbb{R}_+} \bar{u}^2 U^2 \chi(\eta \le \alpha) \ud y + \int_{\mathbb{R}_+} \bar{u}^2 U^2 \chi(\eta > \alpha) \ud y = J_{near} + J_{Far}.
\end{align}
We begin with the estimate of $J_{Far}$. Here, since $\alpha > 1$, the weight function $q(\eta)$ is bounded below. Therefore, we have 
\begin{align*}
|J_{Far}| = \int_{\mathbb{R}_+} \bar{u}^2 U^2 \frac{y q(\eta) x^m}{y q(\eta) x^m} \chi(\eta > \alpha) \ud y \lesssim \frac{x^{-m}}{\alpha x^{\frac12(1-m)}} A^2.
\end{align*}
 For the near-field component, we have to use a cheaper bound on $\bar{u}$, namely 
\begin{align*}
|J_{Near}| \le &  x^{2m} \int_{\mathbb{R}_+}  U^2 \chi(\eta \le \alpha) \ud y := \overline{J}_{Near}.
\end{align*}
To estimate $\overline{J}_{Near}$, we again have by integration by parts 
\begin{align} \n
|\overline{J}_{Near}| =& x^{2m} \int_{\mathbb{R}_+} \p_y \{ y \} U^2 \chi(\eta \le \alpha) \ud y \\
= & - 2 x^{2m} \int_{\mathbb{R}_+}  y U U_y \chi(\eta \le \alpha) \ud y-  x^{2m} \int_{\mathbb{R}_+}   U^2 \frac{y}{\alpha x^{\frac12(1-m)}} \chi'(\eta \le \alpha) \ud y \\ \label{pbf:3}
=& \overline{J}_{Near, Near} + \overline{J}_{Near, Far}.
\end{align}
For the $\overline{J}_{Near, Far}$, we use that on the support of $\chi'(\eta \le \alpha)$, we have $\eta \sim \alpha > 1$, and therefore $x^{2m} \sim \bar{u}^2$, after which we estimate in much the same way as $J_{Far}$: 
\begin{align} \label{pbf:2}
|\overline{J}_{Near, Far}| \lesssim \frac{x^{-m}}{\alpha x^{\frac12(1-m)}} A^2.
\end{align}
For the $\overline{J}_{Near, Near}$ term, we have by Cauchy-Schwartz and Young's
\begin{align} \n
|\overline{J}_{Near, Near}| \le & \frac12 |\overline{J}_{Near}| + Cx^{2m} \int_{\mathbb{R}_+} y^2 U_y^2 \chi(\eta \le \alpha) \\ \n
\le &  \frac12 |\overline{J}_{Near}| + Cx^{2m} x^{1-m} \int_{\mathbb{R}_+} \eta^2 U_y^2 \chi(\eta \le \alpha) \\ \n
\le &  \frac12 |\overline{J}_{Near}| + Cx^{2m} x^{1-m} \alpha^2 \int_{\mathbb{R}_+} q(\eta) U_y^2 \chi(\eta \le \alpha) \\ \n
\le &  \frac12 |\overline{J}_{Near}| + Cx^{2m} x^{1-m}x^{-m} \alpha^2 \int_{\mathbb{R}_+} (x^{m} q(\eta)) U_y^2 \chi(\eta \le \alpha) \\ \label{pbf:1}
\le &  \frac12 |\overline{J}_{Near}| + C x \alpha^2 B^2. 
\end{align}
We insert \eqref{pbf:1}, \eqref{pbf:2} into \eqref{pbf:3} to close the estimate on $\overline{J}_{Near}$ (and therefore on $J_{Near}$ itself) as follows: 
\begin{align*}
\overline{J}_{Near} \lesssim \frac{x^{-m}}{\alpha x^{\frac12(1-m)}} A^2 + x \alpha^2 B^2
\end{align*}
Consolidating the bounds on $J_{Near}$ and $J_{Far}$ and using our choice of $\alpha$, we have 
\begin{align} \n
\gamma^2 \lesssim & \frac{x^{-m}}{\alpha x^{\frac12(1-m)}} A^2 + x \alpha^2 B^2 \\ \n
\lesssim & \frac{x^{-m}}{(\frac{1}{x^{\frac12 + \frac{m}{6}}} \Big( \frac{A}{B} \Big)^{\frac23}) x^{\frac12(1-m)}} A^2 + x (\frac{1}{x^{\frac12 + \frac{m}{6}}} \Big( \frac{A}{B} \Big)^{\frac23})^2 B^2 \\ \label{case:2:bd}
\lesssim & x^{-\frac{m}{3}} A^{\frac43} B^{\frac23}. 
\end{align}
We are now done with treating Case 2. 

Combining the two cases, \eqref{case:1:bd} and \eqref{case:2:bd}, we have 
\begin{align}
\gamma^2 \lesssim \max\{  x^{\frac{1}{10} - \frac{3m}{10}} A^{\frac65} B^{\frac45}  , x^{-\frac{m}{3}} A^{\frac43} B^{\frac23}\}, 
\end{align}
and 
\begin{align}
B^2 \gtrsim \min \{ x^{- \frac14 + \frac{3m}{4}} \frac{\gamma^5}{A^3}, x^m \frac{\gamma^6}{A^4} \}.
\end{align}
\end{proof}

\subsection{ODE Arguments}

\begin{lemma} \label{lem:ODE} Suppose $f(\Gamma)$ satisfies 
\begin{align}
f(\Gamma) \ge c_\ast \min\{ x^{-\frac14 + \frac{3m}{4}} \Gamma^{\frac52}, x^m \Gamma^3 \} 
\end{align}
for any $c_\ast > 0$. Let $\Gamma(x) \ge 0$ satisfy 
\begin{align}
\frac{\p_x}{2} \Gamma + f(\Gamma) \le 0, \qquad \Gamma(0) = \Gamma_0.
\end{align}
Then 
\begin{align}
\Gamma \lesssim x^{-\frac12 - \frac{m}{2}}.
\end{align}
\end{lemma}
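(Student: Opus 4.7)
The plan is an ODE comparison against the explicit barrier $B(x) := K x^{-\alpha}$, where $\alpha := \tfrac12 + \tfrac{m}{2}$ and $K$ is a large constant to be chosen. The rate $\alpha$ is dictated by the scaling of both branches of the minimum: substituting $\Gamma = Kx^{-\alpha}$ yields
\[
x^{-\tfrac14+\tfrac{3m}{4}}\,(Kx^{-\alpha})^{5/2} = K^{5/2}\, x^{-\tfrac32-\tfrac{m}{2}}, \qquad x^{m}\,(Kx^{-\alpha})^{3} = K^{3}\,x^{-\tfrac32-\tfrac{m}{2}},
\]
both of which match $|B'(x)| = \alpha K\, x^{-\tfrac32-\tfrac{m}{2}}$. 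This common scaling is precisely what allows the barrier to work irrespective of which branch of the minimum is active along the flow.

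First I would verify that $B$ is a strict supersolution. For $K \ge 1$ one has $\min\{K^{5/2}, K^{3}\} = K^{5/2}$, whence
\[
f(B(x)) \;\ge\; c_\ast K^{5/2}\, x^{-\tfrac32-\tfrac{m}{2}},
\]
and the supersolution condition $\tfrac12 B'(x) + f(B(x)) \ge 0$ reduces to the algebraic inequality $2c_\ast K^{3/2} \ge \alpha$. Choosing any $K$ with $K \ge \max\{1,\Gamma_0\}$ and $2c_\ast K^{3/2} > \alpha$ then secures $\tfrac12 B'(x) + f(B(x)) > 0$ \emph{strictly} for every $x > 0$.

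Next I would run a first-touching comparison. Since $\Gamma' \le -2f(\Gamma) \le 0$, the function $\Gamma$ is nonincreasing with $\Gamma(x) \le \Gamma_0 \le K$. As $B(x) \to \infty$ when $x \downarrow 0$, necessarily $B > \Gamma$ on some initial interval $(0, x_1]$. If $\tau := \inf\{\,x > 0 : \Gamma(x) \ge B(x)\,\}$ were finite, continuity would force $\Gamma(\tau) = B(\tau)$, and since $B - \Gamma > 0$ on $(0,\tau)$ with value $0$ at $\tau$, the left-derivative satisfies $(B-\Gamma)'(\tau) \le 0$. However, the strict supersolution estimate at $\tau$ yields
\[
(B-\Gamma)'(\tau) \;=\; B'(\tau) - \Gamma'(\tau) \;\ge\; B'(\tau) + 2f(B(\tau)) \;=\; \bigl(2c_\ast K^{5/2} - \alpha K\bigr)\,\tau^{-\tfrac32-\tfrac{m}{2}} \;>\; 0,
\]
a contradiction. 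Hence $\Gamma(x) \le K\, x^{-1/2-m/2}$ for all $x \ge 0$, giving the desired decay.

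I do not anticipate serious obstacles. The only mild subtlety is the nonsmoothness of the minimum in the lower bound on $f$ at the crossover $\Gamma = x^{-\alpha}$; but the barrier argument bypasses this entirely, as it only requires evaluating $f$ at the explicit barrier $B$, on which both branches of the minimum are computable expressions of the form $K^{p} x^{-3/2-m/2}$ with $p \in \{5/2, 3\}$, and therefore automatically dominate the required supersolution threshold for $K$ large.
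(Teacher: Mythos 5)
Your proof is correct, and it takes a cleaner route than the paper's. Both arguments are ODE comparisons, but the paper proceeds by solving the pure $\Gamma^{5/2}$ equation explicitly to get $\Gamma_{\text{Up}}$, introducing the crossover curve $\lambda(x) = x^{-\frac12-\frac{m}{2}}$ on which the two branches of the minimum coincide, and then running a comparison principle separately on each maximal interval where $\Gamma > \lambda$ (where the $\frac52$ branch is the active one), patching across the crossing points $x_n$. Your observation that the power law $Kx^{-\frac12-\frac{m}{2}}$ turns \emph{both} branches into the same expression $K^{p}x^{-\frac32-\frac{m}{2}}$, $p\in\{\frac52,3\}$, lets you collapse all of this into a single static supersolution plus a first-touching argument: you never need to know which branch of the minimum is active along the actual trajectory, only at the would-be touching point, where $\Gamma(\tau)=B(\tau)$ makes the evaluation explicit. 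The algebra checks out ($\tfrac12 B' + f(B) \ge (c_\ast K^{5/2}-\tfrac{\alpha K}{2})x^{-\frac32-\frac{m}{2}} > 0$ once $2c_\ast K^{3/2}>\alpha$ and $K\ge 1$, and the monotonicity of $\Gamma$ plus the blow-up of $B$ at the left endpoint gives the initial ordering). The trade-off is minor: the paper's explicit solution \eqref{expl:1} also exhibits the transient behavior of $\Gamma_{\text{Up}}$ for small $x$, whereas your barrier only delivers the asymptotic rate — but that is all the lemma asserts. One cosmetic point: in your final display the last relation should be $\ge$ rather than $=$, since you only have a lower bound on $f(B(\tau))$, not an identity.
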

\begin{proof} First, we claim there exists constants $c_{\ast \ast} > 0$ and $\Gamma_{\ast \ast} > 0$ so that the trajectory defined by 
\begin{align} \label{gamma:up}
\frac{\p_x}{2} \Gamma_{\text{Up}} + c_{\ast \ast} x^{-\frac14 + \frac{3m}{4}} \Gamma_{\text{Up}}^{\frac52} = 0, \qquad \Gamma_{\text{Up}}(0) =  \Gamma_{\ast \ast},
\end{align}
satisfies 
\begin{align}
\Gamma(x) \le \Gamma_{\text{Up}}(x), \qquad x \ge 1. 
\end{align}
The explicit solution to \eqref{gamma:up} is of the form 
\begin{align} \label{expl:1}
\Gamma_{\text{Up}}(x) = \frac{1}{[c_{\ast \ast}( x^{\frac34(1 + m)} - 1) + \Gamma_{\ast \ast}^{-\frac32}]^{\frac23}}.
\end{align}
We set the two terms in the min equal to each other to define the curve $\lambda(x) := x^{-\frac12 - \frac{m}{2}}$. Next, we consider the trajectory 
\begin{align*}
\frac{\p_x}{2}\Lambda + c_{\ast} x^{-\frac14 + \frac{3m}{4}} \Lambda^{\frac52} = 0, \qquad \Lambda(0) =  \Gamma_{0},
\end{align*}
By choosing $c_{\ast \ast}, \Gamma_{\ast \ast}$ appropriately, we ensure the following inequalities: 
\begin{align}
\Gamma_{\text{Up}}(x) > \Lambda(x), \qquad \Gamma_{\text{Up}}(x) > \lambda(x),
\end{align}
and we also note that from the explicit solution, \eqref{expl:1}, the following decay estimate is valid:  
\begin{align*}
\Gamma_{\text{Up}}(x) \lesssim x^{-\frac12 - \frac{m}{2}}.
\end{align*}
Now, consider the trajectory $\Gamma(x)$. If $x$ is such that $\Gamma(x) \le \lambda(x)$, then we are automatically done. There are two ways in which $x$ could be such that $\Gamma(x) > \lambda(x)$. Either $x \in [1, x_1]$ or $x \in [x_n, x_{n+1}]$ for some $n \in \mathbb{N}$, where $\{x_n\}_{n = 1}^\infty$ are the crossings of $\Gamma$ with the curve $\lambda(x)$, in increasing order. Assume $x \in [1, x_1]$. Then for all $x \in [1, x_1)$, $\Gamma(x) > \lambda(x)$, and therefore satisfies the equation: 
\begin{align*}
\frac{\p_x}{2}\Gamma + c_{\ast} x^{-\frac14 + \frac{3m}{4}} \Gamma^{\frac52} \le 0.
\end{align*} 
By comparison principle, $\Gamma \le \Lambda < \Gamma_{\text{Up}}(x)$. Consider now the case when $x \in [x_n, x_{n+1}]$. Then $\Gamma(x_n) = \lambda(x_n)$. We now use comparison principle as follows:
\begin{align}
\frac{\p_x}{2}\Gamma + c_{\ast} x^{-\frac14 + \frac{3m}{4}} \Gamma^{\frac52} \le 0 = \frac{\p_x}{2} \Gamma_{\text{Up}} + c_{\ast \ast} x^{-\frac14 + \frac{3m}{4}} \Gamma_{\text{Up}}^{\frac52}, 
\end{align} 
and 
\begin{align}
 \qquad \Gamma(x_n) = \lambda(x_n) < \Gamma_{\text{Up}}(x_n). 
\end{align}
By comparison principle again, $\Gamma < \Gamma_{\text{Up}}(x)$ for $x \in [x_n, x_{n+1}]$. 
\end{proof}

\begin{proof}[Proof of Theorem \ref{thm:2}] The proceed follows in four steps.

\vspace{2 mm}

\noindent \textit{Step 1: Truncated Energy-Dissipation-CK Inequality:} We introduce the following functionals: 
\begin{align} 
\mathcal{E}(x) := & \sum_{k = 0}^3  \sigma_{k} \mathcal{E}_{k,10-k}(x) + \sum_{k = 0}^2  (\sigma_{k + \frac12}^{(Y)} \mathcal{E}^{(Y)}_{k+ \frac12,9-k}(x)  + \sigma_{k + \frac12}^{(Z)} \mathcal{E}^{(Z)}_{k+ \frac12,9-k}(x)), \\ 
\mathcal{D}(x) := &\sum_{k = 0}^3 \sigma_{k} \mathcal{D}_{k,10-k}(x) + \sum_{k = 0}^2  (\sigma_{k + \frac12}^{(Y)}  \mathcal{D}^{(Y)}_{k+ \frac12,9-k}(x) + \sigma_{k + \frac12}^{(Z)} \mathcal{D}^{(Z)}_{k+ \frac12,9-k}(x) ), \\
\mathcal{CK}(x) := &  \sum_{k = 0}^3 \sigma_{k} \mathcal{CK}_{k,10-k}(x), \\
\mathcal{CK}^{(P)}(x) := &  \sum_{k = 0}^3 \sigma_{k} \mathcal{CK}^{(P)}_{k,10-k}(x)  \\
\mathcal{B}(x) := &\sum_{k = 0}^3 \sigma_{k} \mathcal{B}_{k}(x) + \sum_{k = 0}^2 \sigma_{k + \frac12}^{(Z)} \mathcal{B}^{(Z)}_{k+ \frac12}(x) 
\end{align}
We first develop the following energy-CK-dissipation identity: 
\begin{align} \label{dfrty:1}
\frac{\p_x}{2} \mathcal{E} +\frac{1}{4} \mathcal{D} +\frac{1}{4} \mathcal{CK}  +  \mathcal{CK}^{(P)} + \frac{1}{4} \mathcal{B} \le 0.
\end{align} 
Again multiplying by $x^{3m}$, we have 
\begin{align} \label{dfrty:1}
\frac{\p_x}{2} (x^{3m} \mathcal{E}) +\frac{1}{4} (x^{3m}\mathcal{D}) \le 0.
\end{align} 

\vspace{2 mm}

\noindent \textit{Step 2: Nash Inequality:} As a direct consequence of our Nash inequality, \eqref{nash:type:1}, as well as our virial estimates, \eqref{boots:2}, we have for a potentially small constant $c_\ast > 0$, 
\begin{align} \label{dfrty:4}
\frac{\p_x}{2}\Gamma(x) +c_\ast \min \{  x^{- \frac14 + \frac{3m}{4}} \Gamma^{\frac52}, x^m \Gamma^3 \} \le 0.
\end{align} 
for $\Gamma := x^{3m} \mathcal{E}$. 

\vspace{2 mm}

\noindent \textit{Step 3: ODE Lemma:} By invoking Lemma \ref{lem:ODE}, we have 
\begin{align*}
\Gamma \lesssim x^{-\frac12 - \frac{m}{2}}, \qquad  \mathcal{E} \lesssim x^{-\frac12 - \frac{7m}{2}}.
\end{align*} 

\vspace{2 mm}

\noindent \textit{Step 4: Sobolev Embedding:} By invoking the standard Sobolev interpolation, we obtain the main result, \eqref{thm:main:est:1}.

\end{proof}

\appendix

\section{Derivation of Power Law from Potential Flow} \label{app:pl}

We are considering a background solution to stationary Euler equations as shown below.
\begin{figure}[h]
\centering
\begin{tikzpicture}
\draw[ultra thick, dashed, -] (0,0) -- (5,0);
\draw[ultra thick, blue, ->] (0,0) -- (2,2);
\draw[ultra thick, blue, ->] (2,2) -- (3,3);
\draw[ultra thick, blue, ->] (-4,0) -- (-2,0);
\draw[ultra thick, blue, ->] (-2,0) -- (0,0);
\node [below] at (-2, 2) {$\Delta \psi_E = 0$};
\node [below] at (4,1) {Solid Wedge};
\node [below] at (2.4,1) {\textcolor{red}{$\beta \frac{\pi}{2}$}};
\node [below] at (1.3,2.5) {\textcolor{blue}{$\psi_E = 0$}};
\node [below] at (-2, 0.6) {\textcolor{blue}{$\psi_E = 0$}};
\draw[ultra thick,red] (2,0) arc (0:45:2);
\end{tikzpicture}
\caption{Background Euler Flow} \label{fig:bkg:eul}
\end{figure}
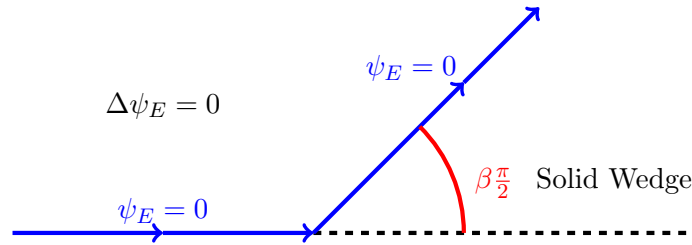
We can solve this by going to polar coordinates; the domain is $\beta \frac{\pi}{2} < \theta < \pi$ and $r > 0$. This yields an explicit solution 
\begin{align} \label{psiE}
\psi_E(\theta, r) = & r^{\frac{2}{2-\beta}} \sin( \frac{2}{2-\beta} (\pi - \theta)), 
\end{align}
We introduce the crucial parameter
\begin{align}
m = \frac{\beta}{2-\beta} \iff \beta = \frac{2m}{m+1}
\end{align}
This parameter enters through the elementary identity $\frac{2}{2-\beta} = 1 +m$. 

It is convenient to introduce the shifted angle 
\begin{align}
\varphi := \theta - \beta \frac{\pi}{2}, \qquad 0 < \varphi < \pi - \beta \frac{\pi}{2} = \frac{1}{1+m} \pi.
\end{align}
In terms of $\varphi$ and the parameter $m$ the stream function reads (we abuse notation)
\begin{align}
\psi_E(\varphi, r) = & r^{1 + m} \sin( \pi - (1 + m) \varphi)
\end{align}

We now introduce our coordinate system 
\begin{align}
x = r \cos(\theta - \beta \frac{\pi}{2}) = r \cos(\varphi), \qquad Y := r \sin(\theta - \beta \frac{\pi}{2}) = r \sin(\varphi). 
\end{align}
We record the identities 
\begin{align}
\p_Y = &\sin(\varphi) \p_r + \frac{\cos(\varphi)}{r} \p_{\varphi}, \\
\p_x = & \cos(\varphi) \p_r - \frac{\sin(\varphi )}{r} \p_{\varphi}
\end{align}
Then we get 
\begin{align*}
u_E = & \p_Y \{ r^{1 + m} \sin(\pi - (1 + m) \varphi)\} \\
= & \sin(\varphi) \p_r \{ r^{1 + m} \sin(\pi - (1+m) \varphi) \} + \frac{\cos(\varphi)}{r} \p_{\varphi} \{ r^{1 + m} \sin(\pi - (1+m) \varphi) \} \\
= & (1 + m) r^m \sin(\varphi) \sin(\pi - (1 +m) \varphi) - (1 + m) r^m \cos(\varphi) \cos(\pi - (1 + m) \varphi) \\
= & - (1 + m) r^m \cos( \pi - m \varphi   ) \\
= &  (1 + m) r^m \cos(m \varphi), 
\end{align*}
and 
\begin{align*}
v_E = & (\bar{v}_E) = - \p_x \{  r^{1 + m} \sin(\pi - (1 + m) \varphi) \} \\
= & - \cos(\varphi) \p_r \{ r^{1 + m} \sin(\pi - (1 + m) \varphi) \} +  \frac{\sin(\varphi )}{r} \p_{\varphi} \{ r^{1 + m} \sin(\pi - (1 + m) \varphi) \} \\
= & - (1 + m) r^m \cos(\varphi) \sin(\pi - (1 + m) \varphi) - (1+m) r^m \sin(\varphi) \cos(\pi - (1 +m) \varphi) \\
= & - (1 + m) r^m \sin( \pi - m \varphi  ) \\
= & - (1 + m) r^m \sin(m\varphi), 
\end{align*}
where we have used the identities 
\begin{align*}
\sin(a) \sin(b) - \cos(a) \cos(b) = & - \cos(a + b), \\
 \cos(\pi - a) = & - \cos(a), \\
\sin(a) \cos(b) + \sin(b) \cos(a) = & \sin(a + b), \\
 \sin(\pi - a) = & \sin(a). 
\end{align*}
We remark that $v_E$ \textit{does not} vanish on $\varphi = \frac{1}{1+m} \pi$ because the definition of $v_E$ is $\p_x$, that is, adapted to the Cartesian coordinates as opposed to $\p_\varphi$ of the stream function. However, we do importantly retain that $v_E|_{\varphi = 0} = 0$. 

It is now convenient to introduce 
\begin{align} \label{Euler}
\phi_E(x, Y) = \psi_E(\varphi, r), \qquad [u_E, v_E](x, Y) = [ \p_Y \phi_E, -\p_{x} \phi_E](x, Y). 
\end{align}
We compute the Euler trace (we abuse notation) $u_E(\tau) = u_E(\tau, 0)$. 
\begin{align} \n
\frac{d}{d\theta}|_{\theta = \beta \frac{\pi}{2}} \psi_E(\theta, r) = & \frac{d}{d\theta}|_{\theta = \beta \frac{\pi}{2}} \phi_E(r \cos(\theta - \beta \frac{\pi}{2}), r \sin(\theta - \beta \frac{\pi}{2})) \\
= & r \p_Y \phi_E(\tau, 0) =  \tau u_E(\tau, 0). 
\end{align}
On the other hand, using \eqref{psiE}, we obtain 
\begin{align}
\frac{d}{d\theta}|_{\theta = \beta \frac{\pi}{2}} \psi_E(\theta, r) = \frac{2}{2-\beta} r^{\frac{2}{2-\beta}}= \frac{2}{2-\beta} \tau^{\frac{2}{2-\beta}}
\end{align}
Equating these two identities, we obtain 
\begin{align} \label{uE}
u_E(\tau) = u_E(\tau, 0) = \frac{2}{2-\beta} \tau^{\frac{\beta}{2-\beta}} = \frac{2}{2-\beta} \tau^m = (1+m) \tau^m. 
\end{align}

\begin{remark} We observe that \eqref{uE} is exactly the outer Euler flow corresponding to the $\beta$ FS profile.   
\end{remark}

\vspace{5 mm}

\noindent \textbf{Acknowledgements:} The author gratefully acknowledges support from NSF DMS-2306528 and a UC Davis Hellman Foundation Fellowship.

\bibliographystyle{abbrv}
\bibliography{biblio_SI_FSPaper}

\end{document}